\newtheorem{theorem}{Theorem}[section]
\newtheorem*{theorem*}{Theorem}
\newtheorem{lemma}[theorem]{Lemma}
\newtheorem{corollary}[theorem]{Corollary}
\newtheorem*{corollary*}{Corollary}
\newtheorem{proposition}{Proposition}[section]
\theoremstyle{definition}
\newtheorem*{definition*}{\sc Definition}
\newtheorem{example}{\bf Example}
\newtheorem*{remark*}{\bf Remark}
\newtheorem*{example*}{\bf Example}
\newtheorem*{remarks}{\bf Remarks}
\newcommand{\loc}{{\rm loc}}
\newcommand{\Real}{{\rm Re\,}}
\newcommand{\Imag}{{\rm Im\,}}
\newcommand{\clos}{{\rm clos}}
\newcommand{\sgn}{{\rm sgn\,}}
\def\expandafter\normalsize\expandafter{%
    \normalsize
    \setlength\abovedisplayshortskip{8pt}
    \setlength\belowdisplayshortskip{8pt}
}
\begin{document}

\title[Theory of Kolmogorov operator in spaces $L^p$ and $C_\infty$]{On the theory of the Kolmogorov operator \\ in the spaces $L^p$ and $C_\infty$. I}

\author{D.\,Kinzebulatov and Yu.\,A.\,Sem\"{e}nov} 

\address{Universit\'{e} Laval, D\'{e}partement de math\'{e}matiques et de statistique, 1045 av.\,de la M\'{e}decine, Qu\'{e}bec, QC, G1V 0A6, Canada}

\email{damir.kinzebulatov@mat.ulaval.ca}

\thanks{The research of D.K. is supported by the Natural Sciences and Engineering Research Council of Canada and the Fonds de recherche du Qu\'{e}bec -- Nature et technologies}

\address{University of Toronto, Department of Mathematics, 40 St.\,George Str, Toronto, ON, M5S 2E4, Canada}

\email{semenov.yu.a@gmail.com}

\begin{abstract}
We establish the basic results concerning the problem of constructing operator realizations of the formal differential expression $\nabla \cdot a \cdot \nabla - b \cdot \nabla$ with measurable matrix $a$ and vector field $b$ having critical-order singularities as the generators of Markov semigroups in $L^p$ and $C_\infty$.
\end{abstract}

\keywords{Markov semigroups, form-bounded vector fields, regularity of solutions, Feller semigroups}

\subjclass[2000]{31C25, 47B44 (primary), 35D70 (secondary)}

\maketitle

\section*{Introduction}

Let $\mathcal L^d$ be the Lebesgue measure on $\mathbb R^d$, $d \geq 3$. Let $\Omega$ be an open set in $\mathbb R^d$. 
Our object of study is the formal differential operator
$$
(-\nabla \cdot a \cdot \nabla + b \cdot \nabla)u(x) =  - \sum_{i,k=1}^d \partial_{ x_i} \big (a_{ik}(x) \partial_{x_k} u(x) \big) + \sum_{i=1}^d b_i(x) \partial_{x_i} u(x), \quad x \in \Omega.
$$
Under rather general assumptions on the matrix $a$,
\begin{align*}
& a = a^* : \Omega \rightarrow \mathbb{R}^d \otimes \mathbb{R}^d , \quad a \in \big[ L^1_\loc (\Omega, \mathcal{L}^d) \big]^{d\times d},\\
& \sigma I \leq a(x) \text{ for some constant } \sigma > 0 \text{ and $\mathcal{L}^d$ a.e. } \;\;x \in \Omega,
\tag{$H_1$}
\end{align*}
we will construct by means of the theory of the quadratic forms in Hilbert spaces, three operator realizations $A_D, A_{iD}, A_N$ of $-\nabla \cdot a \cdot \nabla$ in $L^2 = L^2(\Omega,\mathcal L^d).$ Each of these realizations is the (minus) generator of a symmetric Markov semigroup and inherits some basic properties of the classical Dirichlet and Neumann extensions of $-\Delta.$ Let $A$ denote one of these operators and let $\{e^{-tA_r}, t \geq 0 \}_{1 \leq r < \infty }$ be the collection of consistent $(e^{-tA_p} \upharpoonright L^p \cap L^q = e^{-tA_q} \upharpoonright L^p \cap L^q, 1 \leq p, q < \infty, \; A_2 \equiv A )$ $C_0$ semigroups on the scale $[1,\infty[$ of $L^r$ spaces.

For any $\mathcal L^d$ measurable $b: \Omega \rightarrow \mathbb R^d$ we define in $L^r$ the maximal operator $B_r \supset b \cdot \nabla$ of domain $\{f \in L^r \cap W^{1,1}_\loc(\Omega) \mid b\cdot \nabla f \in L^r \}.$

$\mathbf 1.$ Assuming that
\[
b_a := \sqrt{b \cdot a^{-1} \cdot b} \in L^d + L^\infty, \tag{$C_d$}
\]
we will prove, essentially using specific properties of the symmetric Markov semigroup $e^{-tA}$ and the structure of $B_r,$ that $B_r$ \textit{is $A_r$ bounded (with relative bound zero) in $L^r$ for all} $r \in ]1,\frac{2d}{d+2}].$

The interval $]1,\frac{2d}{d+2}]$ cannot be enlarged to $[1,\frac{2d}{d+2}]$ under the assumption $(C_d).$

 By means of the standard tools of the perturbation theory for linear operators one concludes that the algebraic sum $A_p + B_p$ of domain $D(A_p) \cap D(B_p)$, $p \in ]1, \frac{2d}{d+2}]$ is the (minus) generator of a quasi bounded $C_0$ semigroup in $L^p,$ say $\Lambda_p \equiv \Lambda_p(a,b).$ Moreover, essentially using specific properties of the symmetric Markov semigroup $e^{-tA}$ we will prove that $T^t_p := e^{-t\Lambda_p}$ is a Markov semigroup (i.e.\,positivity preserving, quasi contraction, $L^\infty$ contraction $C_0$ semigroup), so the whole family $\{T^t_r\}_{1 < r < \infty }$ is well defined. Let $-\Lambda_r$ denote the generator of $T^t_r.$ Then
 
 1) $\Lambda_r \supset A_r + B_r, \; 1 < r < \infty$ and $\Lambda_r = A_r +B_r$ only for $r \in ]1,\frac{2d}{d+2}].$
 
 2)  In place of $A$ one can substitute the form-sum $A \dot{+} V$ provided that $Vf(x)=V(x)f(x),$ $V$ is $\mathcal L^d$ measurable, $0 \leq V$ and $D(A^\frac{1}{2})\cap D(V^\frac{1}{2})$ is dense in $L^2.$ The simplest sufficient condition is $V \in L^1_{\loc}$, see, however, the example in \cite{StV}.

\smallskip

$\mathbf 2.$ In order to treat more singular $b$'s we introduce the class $\mathbf F_\delta(A)$ of form-bounded vector fields. We say that $b:\Omega \rightarrow \mathbb R^d$ belongs to $\mathbf F_\delta(A)$, and write $b \in \mathbf F_\delta(A)$ if and only if $b$ is $\mathcal L^d$ measurable, $b_a^2 \in L^1_\loc$ and there are constants $\delta > 0, \; 0 \leq \lambda < \infty$ such that $$\|b_a (\lambda + A)^{-\frac{1}{2}} \|_{2 \rightarrow 2} \leq \sqrt{\delta}.$$

Under the following assumptions on $(a,b):$
 \[
a\in (H_1), \; b \in \mathbf F_\delta(A), \; \delta < 4, \text{ and } b_a^2 \in L^1 + L^\infty  \tag{$\ast$}
 \]
we will construct an operator realization $\Lambda_r(a,b)$ of $-\nabla \cdot a \cdot \nabla + b \cdot \nabla$ in $L^r$ for every $r \in I_c:=[\frac{2}{2-\sqrt{\delta}},\infty[$ such that $-\Lambda_r(a,b)$ is the generator of a Markov semigroup. This semigroup is holomorphic for $r \in I^o_c :=]\frac{2}{2-\sqrt{\delta}},\infty[.$ 
 For every $r \in I^o_c,$ 
\begin{equation}
\label{conv_prop}
\tag{$\ast\ast$} 
 e^{-t\Lambda_r(a,b)}:=s \mbox{-} L^r \mbox{-} \lim_{n\uparrow \infty} e^{-t\Lambda_r(a,b_n)},
 \end{equation}
  where $b_n := \mathbf 1_n b,$ $\mathbf 1_n$ denotes the characteristic function of the set $\{x \in \Omega \mid b_a (x) \leq n \}$;
 \[ 
   \|e^{-t\Lambda_r(a,b)}\|_{r\rightarrow q}\leq c e^{t\omega_r} t^{-(\frac{1}{r}-\frac{1}{q})\frac{d}{2}}, \qquad \omega_r= \frac{\lambda \delta}{2(r-1)}, \; r < q \leq \infty, \;r \in I_c;
\]
The interval $I_c$ is called the interval of contraction solvability.

Note that any $b$ satisfying the assumption $(C_d)$ belongs also to $\mathbf F_0 (A) := \bigcap_{\delta > 0} \mathbf F_\delta(A).$ In particular, for each $n=1,2,\dots,$ $b_n$ satisfies $(C_d)$ and belongs to $\mathbf F_\delta(A).$ Again in place of $A$ one can substitute $A \dot{+} V$ provided that $V$ is $\mathcal L^d$ measurable, $0 \leq V$ and $D(A^\frac{1}{2})\cap D(V^\frac{1}{2})$ is dense in $L^2.$

For $0<\delta<1$, \eqref{conv_prop} can be viewed as a (fundamental) property of the semigroup $e^{-t\Lambda_r(a,b)}$, however, for $1 \leq \delta < 4$, \eqref{conv_prop} becomes the principal means of construction of $e^{-t\Lambda_r(a,b)}$.
   
Next, we will prove that, for more regular matrices  the constraint $ b_a^2 \in L^1 + L^\infty$ in $(\ast)$ is \textit{superfluous}. This is true in particular for any $a\in (H_u),$ the class of uniformly elliptic matrices (i.e.\,$a\in (H_1)$ and $a(x) \leq \xi I$ for some constant $\xi$ and $\mathcal L^d$ a.e. $x\in \Omega).$

Moreover, it will be shown that if
\[
(a,b)=(a \in (H_u), \;b \in \mathbf F_\delta(A), \; \delta < 4),
\]
then, $e^{-t\Lambda_p(a,b)} \upharpoonright L^r \cap L^p, \; p \in I_c^o,$ extends to a bounded holomorphic semigroup in $L^r$ for each $r \in I_m-I_c,$ where  $I_m := ]\frac{2}{2- \frac{d-2}{d} \sqrt{\delta}}, \infty[.$
 \footnote{The maximal interval of quasi bounded solvability for $A - V$, with $0 \leq V \leq \delta A+c(\delta)$, $0<\delta<1,$ is $\hat I_m:=]r(\delta), r'(\delta) [,$ $r'(\delta) := \frac{2}{1 - \sqrt{1-\delta}}\frac{d}{d-2}.$ This was proved in 1995 by Yu.A.\;Sem\"enov, based on ideas set forth in \cite{SV}. The fact that the semigroup associated with the Schr\"{o}dinger  operator $-\Delta - V$, $V \in L^{\frac{d}{2},\infty}$, can be extended to a $C_0$ semigroup on $L^r(\mathbb R^d)$ for every $r\in \hat I_m$  was first observed in \cite{KPS}.}

 We will present examples of $(a,b)$ which show that $I_c, I_m$ are maximal, as well as examples of $(a,b)=(a \in (H_u), \; b \in \mathbf F_\delta(A), \; \delta > 4)$ which show that the constraint $\delta < 4$ has direct bearing on the subject matter. 

The class $\mathbf F_\delta(A)$ contains vector fields $b$ having critical-order singularities: the basic properties of $\Lambda_r(a,b)$ (including the smoothness of $D(\Lambda_r(a,b))$) exhibit  quantitative dependence on the value of $\delta$ (note that $b \in \mathbf{F}_\delta(A)$ if and only if $cb \in \mathbf{F}_{c^2\delta}(A)$, $c>0$, so $\delta$ effectively plays the role of a ``coupling constant'' for $b\cdot\nabla$). See examples in sections \ref{fbd_sect} and \ref{weak_fbd_sect}.

Now consider the following assumption on $(a,b):$
\[
a \in (H_1) \text{ and } b \in \mathbf F_\delta(A), \; \delta < 1.
\]
(removing in $(\ast)$ the constraint $ b_a^2 \in L^1 + L^\infty$ but restricting the range of $\delta$).

Using old ideas of J.-L.\,Lions and E.\,Hille we will construct in $L^2$ a Markov semigroup $e^{-t\Lambda(a,b)},$ which possesses some important properties:
\[
e^{-t\Lambda_r(a,b)} = s \mbox{-} L^r \mbox{-} \lim_{n\uparrow \infty} e^{-t\Lambda_r(a,b_n)}, \quad (2 \leq r < \infty)
\]
whenever $\{b_n \} \subset \mathbf F_\delta(A)$ and $b_n \rightarrow b \; \; \mathcal L^d$ a.e.
\[ 
   \|e^{-t\Lambda_r(a,b)}\|_{r\rightarrow q}\leq c e^{t\omega_r} t^{-(\frac{1}{r}-\frac{1}{q})\frac{d}{2}}, \qquad \omega_r= \frac{\lambda \delta}{2(r-1)}, \; 2 \leq r < q \leq \infty.
\]
 $\Lambda \supset A + B, \; D(\Lambda)\subset D(A^\frac{1}{2}),$ the resolvent set of $-\Lambda$ contains $\mathcal O := \{\zeta \mid \Real \zeta > \lambda \}.$  
\[
(\zeta + \Lambda)^{-1} = (\zeta + A)^{-\frac{1}{2}}(1+T_\zeta)^{-1} (\zeta + A)^{-\frac{1}{2}}, \; \|T_\zeta\|_{2\rightarrow 2}\leq \sqrt{\delta} \qquad (\zeta \in \mathcal O).
\]

Under more restrictive assumption on $(a,b)$,
\[
a \in (H_u) \text{ and } b \in \mathbf F_\delta(A), \; \delta < 1,
\]
the results obtained above by different techniques are unified. In particular, the interval $[2, \infty[$ extends to $I_c$ and $I_m.$

It is useful (in some cases necessary) to have the convergence
\[
e^{-t\Lambda_r(a,b)} = s \mbox{-} L^r \mbox{-} \lim_{n\uparrow \infty} e^{-t\Lambda_r(a_n,b_n)},
\]
where $a,a_n \in (H_1),$ $b \in \mathbf F_\delta(A), \; b_n \in \mathbf F_\delta(A_n), \; \delta < 4,$ $a_n, b_n$ bounded and smooth, and $a_n \rightarrow a, \; b_n \rightarrow b $ $\mathcal L^d$ a.e.

We prove this convergence, although under
the additional constraints $a \in (H_u)$ and $\delta <1$. To make this result \textit{unconditional} we 
need to address
 the following problem: Given $(a,b)=(a \in (H_u), b \in \mathbf F_\delta(A))$, to construct $(a_n,b_n)$ with the claimed properties.
In the simplest case $\Omega = \mathbb R^d, \; a = a_n = I$ we solve the problem simply putting $b_n = E_n(b\mathbf 1_{\Omega_n}),$ where $\Omega_n = B(0,n)$ and $E_nf=e^{\varepsilon_n \Delta}f$ or $E_n f:= \gamma_{\varepsilon_n} * f, \; \gamma_{\varepsilon_n}$ is the K. Friedrichs mollifier,  with evident modifications for $\Omega \subset \mathbb R^d.$ $(\Omega = \bigcup_n \Omega_n, \; \Omega_n \Subset \Omega_{n+1}, \; \Omega_n$ are open and bounded). 

We also mention the following result. Set $\nabla_i u(x):=\partial_{x_i}u(x)$, $(\nabla a)_k = \sum_{i=1}^d (\nabla_i a_{ik})$ and $b:=(\nabla a)$,
so formally $-\nabla \cdot a \cdot \nabla + b \cdot \nabla = - a \cdot \nabla^2 \; (\equiv -\sum_{i,k}^d a_{ik} \nabla_i \nabla_k)$. Set $a_n = E_n a$, $b_n:=(\nabla a_n)$. Fix $\delta < \infty.$ Then $b \in \mathbf F_{\delta}(A) \Rightarrow b_n \in \mathbf F_\delta(A_n)$ provided that the matrix $a \in (H_u)$ is diagonal and $|b|\in L^2 + L^\infty.$

\smallskip

$\mathbf 3.$ Let $C_\infty=C_\infty(\mathbb R^d)$ denote the space of all continuous functions vanishing at infinity endowed with the $\sup$-norm. Our next concern is: to find a subclass of $(H_u)$ and constraints on $\delta$ which allow to construct in $C_\infty$ a $C_0$ semigroup associated with $-\nabla \cdot a \cdot \nabla + b \cdot \nabla$ whenever $b \in \mathbf F_\delta(A).$

As the first step we consider the case
\[
(a,b)=(I,b \in \mathbf F_\delta) \text{ with } \sqrt{\delta} < 1 \wedge \frac{2}{d-2}, \; d \geq 3.
\]
The constraint on $\delta$ allows to establish the following fundamental fact \cite{KS}:
\[
(\zeta + \Lambda_q)^{-1} L^q \subset W^{1,qj}, \; \zeta \in \rho(-\Lambda_q), \; j = \frac{d}{d-2}, \tag{$\star$}
\]
whenever $q \in \big]2 \vee (d-2), \frac{2}{\sqrt{\delta}}\big[\;\; ( \subset I_c^o).$
In particular, $e^{-t \Lambda_r} L^r \subset C^{0,1-\frac{d}{qj}}$ for any $r\in ]\frac{2}{2-\sqrt{\delta}}, q].$

Note that $(\star)$ is a ``trivial" fact only for $(d,q)=(3,2)$. Indeed, $(b \in \mathbf F_\delta, \; \delta < 1)$ implies that $\Lambda_2 = -\Delta + b \cdot \nabla, \; D(\Lambda_2) = W^{2,2}\subset W^{1,2j}.$ Thus, if $d=3,$ then $(\zeta + \Lambda_2)^{-1} L^2 \subset W^{1,6} \subset C^{0,\frac{1}{2}}.$ However, already for $d=4,$ $(\zeta + \Lambda_2)^{-1} L^2 \subset W^{1,p}, \; p=d,$ not $p > d.$

We emphasize that the assumption $b \in \mathbf{F}_\delta$ does not guarantee 
$W^{2,r}$ estimates on $(\zeta + \Lambda_r)^{-1}L^r$ for $r$ large enough to conclude that, for any $t>0$, $e^{-t \Lambda_r} L^r \subset C^{0,\alpha}.$

We will also discuss the analogue  of ($\star$) for $(a,b\in \mathbf F_\delta(A)),$ $a(x)=I+c|x|^{-2}x \otimes x$, $c>-1.$

Armed with $(\star)$ we will prove that
\[
 s \mbox{-} C_\infty \mbox{-} \lim_{n\uparrow \infty} e^{-t\Lambda_{C_\infty}(b_n)}, \quad b_n = E_n(b \mathbf 1_{B(0,n)})
\]
exists uniformly in $t\in [0,1],$ and hence determines a $C_0$ semigroup $e^{-t\Lambda_{C_\infty}}$ (a Feller semigroup), whose generator is an appropriate realization of $\Delta - b \cdot \nabla$ on $C_\infty$.
(This result has been established in \cite{KS} under the additional assumption $| b | \in L^2 + L^\infty$.)
We emphasize that in general there is no direct connection between $\Lambda_{C_\infty}$ and the algebraic sum $-\Delta + b \cdot\nabla$ even for $b \in [L^\infty]^d - [C]^d$. \textit{In particular, $C_c^\infty \not\subset D(\Lambda_{C_\infty})$}. The same remark applies to $\Lambda_r$, $r>2$, whenever $|b| \in L^q - L^r $, $q<r$.

\smallskip

\textbf{4.~}Next, consider the case ($\Omega=\mathbb R^d$)
\[
(a,b)=(I,b \in \mathbf F_\delta^\frac{1}{2} )\text{ with } \delta < 1, \; d \geq 3.
\] 
By definition, $b \in \mathbf F_\delta^{\scriptscriptstyle 1/2}$ if and only if $|b|\in L^1_\loc$ and there exists $0 \leq \lambda= \lambda_\delta$ and  $$\||b|^\frac{1}{2}(\lambda - \Delta)^{-\frac{1}{4}} \|_{2 \rightarrow 2} \leq \sqrt{\delta}.$$
Recalling the definition of the Kato class $\mathbf K^{d+1}_\delta$: $ \||b|(\lambda - \Delta)^{-\frac{1}{2}} \|_{1\rightarrow 1} \leq \delta,$ it is a simple matter to conclude that $\mathbf K^{d+1}_\delta \subset \mathbf F_\delta^{\scriptscriptstyle 1/2}$ while $\bigcap_{\delta> 0}\mathbf K^{d+1}_\delta - \bigcup_{\delta < \infty} \mathbf F_\delta \neq \varnothing.$ Taking into account that by interpolation, $\mathbf F_{\delta^2} \subset \mathbf F_\delta^{\scriptscriptstyle 1/2},$ it is clear that $\mathbf F_{\delta^2} \subsetneqq \mathbf F_\delta^{\scriptscriptstyle 1/2}.$

 To deal with such general class of vector fields we will again use ideas of E.\,Hille and J.-L.\,Lions (alternatively, ideas of E.\,Hille and H.F.\,Trotter) to construct the generator $-\Lambda \equiv -\Lambda(b)$ (an operator realization of $\Delta - b\cdot \nabla )$ of a quasi bounded holomorphic semigroup in $L^2.$ This operator has some remarkable properties. Namely, $\rho(-\Lambda) \supset \mathcal O := \{\zeta \mid \Real \zeta > \lambda \},$ and, for every $\zeta \in \mathcal O,$
\begin{align*}
(\zeta +\Lambda)^{-1} = & J_\zeta^3 (1 + H^*_\zeta S_\zeta))^{-1} J_\zeta \\
= & J^4_\zeta -  J_\zeta^3 H^*_\zeta (1 + S_\zeta H^*_\zeta)^{-1} S_\zeta J_\zeta; \\
\|H^*_\zeta S_\zeta \| \leq & \delta, \quad \|(\zeta + \Lambda)^{-1} \|_{2 \rightarrow 2} \leq |\zeta|^{-1} (1-\delta)^{-1}; \\
\|e^{-t \Lambda_r} \|_{r \rightarrow q} \leq & c \; e^{t\lambda} t^{-\frac{d}{2}(\frac{1}{r}-\frac{1}{q})}, \quad 2 \leq r < q \leq \infty;
\end{align*}
where $J_\zeta := (\zeta -\Delta)^{-\frac{1}{4}}, \; H_\zeta := |b|^\frac{1}{2} J_{\bar{\zeta}}, \; S := b^\frac{1}{2} \cdot \nabla J^3_\zeta, \; b^\frac{1}{2} := |b|^{-\frac{1}{2}}b$.

(In particular, $D(\Lambda) \subset \mathcal W^{\frac{3}{2},2}$, the Bessel potential space. In this regard, we note that the Kato-Lions-Lax-Milgram-Nelson Theorem applied to the operator $-\Delta + b \cdot \nabla$ requires $b \in \mathbf{F}_\delta$, $\delta<1$, a more restrictive assumption, while giving a weaker regularity result: $D(\Lambda) \subset W^{1,2}$).

As in the case $b \in \mathbf F_\delta$, it is reasonable to expect that there exists a 
quantitative
dependence between the value of $\delta$ and smoothness of the solutions to the equation $(\zeta + \Lambda_r)u = f, \; \zeta \in \rho(-\Lambda_r), \; f \in L^r.$ Such a dependence does exist. Set
\[
m_d:= \pi^\frac{1}{2}(2 e)^{-\frac{1}{2}} d^\frac{d}{2} (d-1)^{-\frac{d-1}{2}}, \quad \kappa_d := \frac{d}{d-1}, \quad r_\mp:= \frac{2}{1 \pm \sqrt{1-m_d \delta}}.
\]
It will be established that if $b \in \mathbf F_\delta^{\scriptscriptstyle 1/2}$ and $m_d \delta < 1,$ then $( e^{-t \Lambda_r(b)}, \; r \in [2,\infty[ )$ extends by continuity to a quasi bounded $C_0$ semigroup in $L^r$ for all $r \in ]r_-, \infty [.$ For every $r \in I_s := ]r_-, r_+[,$ the semigroup is holomorphic, the resolvent set $\rho(-\Lambda_r(b))$ contains the half-plane $\mathcal O := \{ \zeta \in \mathbb C \mid \Real \zeta > \kappa_d \lambda_\delta \},$ and the resolvent admits the representation
 \[
 (\zeta + \Lambda_r(b))^{-1} = (\zeta - \Delta)^{-1} - Q_r (1 + T_r)^{-1} G_r, \quad \zeta \in \mathcal O, \tag{$\star\star$}
 \]
 where $Q_r, G_r, T_r$ are bounded linear operators on $L^r$;
 $D(\Lambda_r(b)) \subset W^{1+\frac{1}{q},r} \;(q> r).$
 
 In particular, for $m_d \delta < 4 \frac{d-2}{(d-1)^2},$ there exists $r \in I_s, \; r > d - 1,$ such that $(\zeta + \Lambda_r(b))^{-1} L^r \subset C^{0, \gamma}, \gamma < 1 - \frac{d-1}{r}.$ 

The results above yield the following: Let $b \in \mathbf F_\delta^{\scriptscriptstyle 1/2}$ for some $\delta$ such that $m_d \delta < 4 \frac{d-2}{(d-1)^2}.$  Let $\{b_n\}$ be any sequence of bounded smooth vector fields, such that $b_n \rightarrow b$ strongly in $L^1_\loc,$ and, for a given $\varepsilon > 0$ and some $\delta_1 \in ]\delta, \delta + \varepsilon],$ $\{b_n\} \subset \mathbf F_{\delta_1}^{\scriptscriptstyle 1/2}$ with $\lambda \neq \lambda(n)$. Then
\[
 s \mbox{-} C_\infty \mbox{-} \lim_{n\uparrow \infty} e^{-t\Lambda_{C_\infty}(b_n)}  \tag{$\star\star\star$}
\]
exists uniformly in $t\in [0,1],$ and hence determines a $C_0$ semigroup $e^{-t\Lambda_{C_\infty}(b)}.$

The results $(\star\star)$, $(\star\star\star)$ can be obtained via direct investigation in $L^r$ of the operator-valued function $\Theta_r(\zeta,b)$ defined by the right hand side of $(\star\star)$ without appealing to $L^2$ theory (but again appealing to the ideas of E.\,Hille and H.\,F.\,Trotter) \cite{Ki2}.

\smallskip

There is an extensive literature on regularity of solutions to elliptic and parabolic equations having unbounded coefficients that are smooth outside of a discrete set, see, in particular, \cite{BGRT, CEF, FoL, LR, MPPS, MPSR, Met, MST, R} and references therein. In this work we adhere to the principle that the regularity properties of solutions should depend on the integral characteristics of the coefficients (here, on the relative bound $\delta$). Thus, as a by-product, we allow coefficients to be discontinuous (unbounded) e.g.\,on a dense set.

\smallskip

In the next parts of this work we will extend our regularity results 
in the spaces $L^p$ and $C_\infty$
to the operator
$A + b \cdot \nabla$, and with time-dependent coefficients $a$, $b$.

\smallskip

\noindent\textbf{Acknowledgments.} We are grateful to the anonymous referee for a number of valuable comments that helped to
improve the paper.

\smallskip

\setcounter{tocdepth}{2}
\tableofcontents

\section{Markov generators associated with $-\nabla \cdot a \cdot \nabla$}

\label{markov_gen_sect}

Throughout the paper we denote by $\mathcal B(X,Y)$ the space of bounded linear operators between complex Banach spaces $X \rightarrow Y$, endowed with the operator norm $\|\cdot\|_{X \rightarrow Y}$;  $\mathcal B(X):=\mathcal B(X,X)$. Set $\|\cdot\|_{p \rightarrow q}:=\|\cdot\|_{L^p \rightarrow L^q}$. 

\medskip

\noindent\textbf{1.~}Let $X$ be a set and $\mu$ a measure on $X.$ 
Fix $p\in [1,\infty[.$ A $C_0$ semigroup $T^t$, $t \geq 0$, of quasi contractions
on $L^p=L^p(X,\mu)$ (i.e.~$\|T^tf\|_p \leq e^{\omega_p t}\|f\|_p$, $f \in L^p$) is called Markov if, for each $t > 0$,
\begin{equation}\label{eqn:i}
	T^t L^p_+ \subset L^p_+, \tag{i}
\end{equation}
\begin{equation}\label{eqn:ii}
	(f \in L^p, |f| \leq 1) \Rightarrow |T^t f| \leq 1. \tag{ii}
\end{equation}

With each Markov semigroup $T^t$ we associate a collection $\{T_r^t\}_{p \leq r < \infty}$ of consistent quasi contraction $C_0$ semigroups on the scale $[p,\infty[$ of $L^r$ spaces as follows.

Since $\|T^t\|_{p \rightarrow p} \leq e^{\omega_p t}$ and $\|T^t f\|_\infty \leq \|f\|_\infty$ ($f \in L^p \cap L^\infty$), by the Riesz Interpolation Theorem,
$\|T^t f\|_r \leq e^{\frac{p}{r}\omega_p t}\|f\|_r$ $(f \in L^p \cap L^\infty)$  for all $r \in [p, \infty]$. Since $L^p \cap L^\infty$ is a dense subspace of $L^r$ for each $r \in [p, \infty[$,
$T^t\upharpoonright L^p \cap L^\infty : L^r \rightarrow L^r$ extends by continuity to a semigroup in $L^r$:
\[
T^t_r := (T^t \upharpoonright L^p \cap L^\infty)^{\text{\rm clos}}_{L^r \rightarrow L^r}.
\]
Next, by H\"older's inequality:
\[
	\|(T^t_r - 1) f\|_r \leq 2 \|f\|^{1 - \frac{p}{r}}_\infty \|(T^t - 1) f\|^{\frac{p}{r}}_p \;\;\;
	(f \in L^p \cap L^\infty),
\]
and hence we see that $T^t_r$ is strongly continuous. 

\smallskip

Set $T^t_p := T^t$. 
Then $T^t_r$ and $T^t_q$, $r, q \in [p, \infty[$ are \textit{consistent}: 
$$T^t_r \upharpoonright L^r \cap L^q = T^t_q \upharpoonright L^r \cap L^q.$$
Let $-A_r$ denote the generator of $T^t_r$ and $A:=A_p$. Then 
\begin{equation*}
	A_r \upharpoonright \mathcal{D}(A_r) \cap \mathcal{D}(A_q)
	= A_q \upharpoonright \mathcal{D}(A_r) \cap \mathcal{D}(A_q) \;\;\; (p \leq r, q < \infty).
\end{equation*}
because $\mathcal{E} := (1+A)^{-1} [L^1 \cap L^\infty]$ is a core of $A_r$ for all $r \in [p, \infty[ .$

\begin{definition*}Let $A \geq 0$ be a self-adjoint operator in $L^2(X, \mu)$. The semigroup $T^t = e^{-tA}$, $t \geq 0$, is called symmetric Markov if, for all $t > 0$,
\[
	T^t L^2_+ \subset L^2_+, \;\text{ and } 
	(f \in L^2, |f| \leq 1) \Rightarrow |T^t f| \leq 1.
\]
\end{definition*}

With each symmetric Markov semigroup $T^t$ we associate the collection $\{T^t_r\}_{1 \leq r \leq \infty}$ of contraction semigroups defined by
\begin{eqnarray*}
	T^t_r & := & (T^t \upharpoonright L^1 \cap L^\infty)^{\text{\rm clos}}_{L^r \rightarrow L^r} \;\;\; (r \in [1, \infty[), \\
	T^t_\infty & := & (T^t_1)^*.
\end{eqnarray*}
It is easily seen that the semigroup $T^t_r$ is strongly continuous for each $r \in ]1, \infty[$. (It is also known that $T^t_1$ is strongly continuous as well, see e.g.\,\cite[Prop.\,1.8]{LS}.)  We have $A_r \upharpoonright \mathcal{D}(A_r) \cap \mathcal{D}(A_q)
	= A_q \upharpoonright \mathcal{D}(A_r) \cap \mathcal{D}(A_q)$ ($1 \leq r, q < \infty$).

\smallskip

\textbf{2.~}Let $\Omega$ be an open set in $\mathbb{R}^d, \; d \geq 3,$
\begin{align*}
& a = a^* : \Omega \rightarrow \mathbb{R}^d \otimes \mathbb{R}^d , \quad a \in \big[ L^1_\loc (\Omega, \mathcal{L}^d) \big]^{d\times d},\\
& \sigma I \leq a(x) \text{ for some constant } \sigma > 0 \text{ and }\mathcal{L}^d \text{ a.e. }x \in \Omega.
\tag{$H_1$}
\end{align*}

One can define at least three realizations of the differential expression $-\nabla \cdot a \cdot \nabla$ in $L^2=L^2(\Omega,\mathcal{L}^d) $ as (minus) symmetric Markov generators: $A_D , A_{iD} , A_N ,$ the Dirichlet, intermediate Dirichlet, and generalized Neumann \cite{Se_RN}:

Let $\mathcal{T}$ denote a collection of all closed, symmetric non-negative quadratic forms in $L^2.$
Define the pre-Dirichlet form $\varepsilon$ by setting
\[
\varepsilon [u,v]:= \langle \nabla u \cdot a \cdot \nabla \bar{v} \rangle, \quad D(\varepsilon)=C^1_c(\Omega) \times C^1_c(\Omega), 
\]
where
$$
\langle f\rangle:=\int_\Omega f d\mathcal L^d, \quad \langle f, g\rangle := \langle f\bar{g}\rangle,
$$
and
\[
\nabla u \cdot a \cdot \nabla v (x) := \sum_{i,k=1}^d a_{ik}(x) \partial_{x_i} u(x)\partial_{x_k} v(x).
\]

Let
\begin{align*}
 \mathcal{T}_M := \{\tau \in \mathcal{T} \mid \tau \leftrightarrow T, \; e^{-sT}, s>0, \text{ is a symmetric Markov semigroup} \}
\end{align*}
($\tau \leftrightarrow T$ denotes the one to one correspondence (Appendix \ref{MCT_sect})).

Let
\begin{align*}
 \mathcal{T}_M (\varepsilon) := \{\tau \in \mathcal{T}_M \mid \tau \supset \varepsilon \}.
\end{align*}
It is said that $\tau \in \mathcal{T}_M$ is local if $\tau[f,g]=0$ whenever $(f,g) \in D(\tau), f, g \geq 0,$ and $f \wedge g =0.$

Consider the following extensions of $\varepsilon$ :
\begin{align*}
\tau_D & := \varepsilon^{\rm clos} \;\; (\text{the closure of } \varepsilon),\\
\tau_{iD} & \supset \tau_D , \quad D(\tau_{iD}):=\{u \in W^{1,2}_0 (\Omega) \mid \langle \nabla u \cdot a \cdot \nabla \bar{u} \rangle < \infty \}, \\
\tau_N & \supset \tau_{iD} , \quad D(\tau_N) :=\{u \in W^{1,2} (\Omega) \mid \langle \nabla u \cdot a \cdot \nabla \bar{u} \rangle < \infty \} .
\end{align*}

\begin{lemma}
$\tau_D , \tau_{iD} , \tau_N \in \mathcal{T}_M (\varepsilon)$, and are local.
\end{lemma}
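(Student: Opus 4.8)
The plan is to verify, in order: that $\varepsilon$ is closable (so that $\tau_D$ is well defined), that $\tau_{iD}$ and $\tau_N$ are closed, that all three forms lie in $\mathcal T$ and extend $\varepsilon$, that each is Markovian in the sense of the Beurling--Deny criteria (hence lies in $\mathcal T_M$), and finally that each is local. The single computational fact driving everything is that $\sigma I\le a$ gives the coercivity bound $\|\nabla u\|_2\le\sigma^{-1/2}\|a^{1/2}\nabla u\|_2$ whenever $\nabla u\in[L^1_\loc]^d$ and $a^{1/2}\nabla u\in[L^2]^d$; here $a^{1/2}$ is the (measurable, locally $L^2$) positive square root of $a$ and $a^{-1/2}=(a^{1/2})^{-1}$, with $\|a^{-1/2}(x)\|\le\sigma^{-1/2}$ a.e. For closability I would argue: if $u_n\in C^1_c(\Omega)$, $u_n\to 0$ in $L^2$ and $\varepsilon[u_n-u_m]=\|a^{1/2}\nabla(u_n-u_m)\|_2^2\to 0$, then $a^{1/2}\nabla u_n\to g$ in $[L^2]^d$ for some $g$, hence $\nabla u_n=a^{-1/2}(a^{1/2}\nabla u_n)\to a^{-1/2}g$ in $[L^2]^d$; but $u_n\to 0$ in $L^2$ forces $\nabla u_n\to 0$ in $\mathcal D'$, so $a^{-1/2}g=0$ a.e., thus $g=0$ a.e.\ and $\varepsilon[u_n]\to 0$. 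So $\tau_D:=\varepsilon^{\clos}$ exists, and the same computation identifies $D(\tau_D)\subseteq W^{1,2}_0(\Omega)$ with $\tau_D[u]=\langle\nabla u\cdot a\cdot\nabla\bar u\rangle$. For $\tau_{iD}$ and $\tau_N$: a sequence Cauchy in the form norm $\tau[\cdot]+\|\cdot\|_2^2$ is Cauchy in $W^{1,2}$ by the coercivity bound, hence converges in $W^{1,2}(\Omega)$, resp.\ in the closed subspace $W^{1,2}_0(\Omega)$, to some $u$; identifying the $[L^2]^d$-limit of $a^{1/2}\nabla u_n$ with $a^{1/2}\nabla u$ shows $u$ is in the domain and $u_n\to u$ in form norm, so these forms are closed. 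Since $C^1_c(\Omega)\subseteq D(\tau_D)\subseteq D(\tau_{iD})\subseteq D(\tau_N)$ (for $u\in C^1_c$, $\langle\nabla u\cdot a\cdot\nabla\bar u\rangle<\infty$ as $\nabla u$ is bounded with compact support and $a\in L^1_\loc$) and $C^1_c(\Omega)$ is $L^2$-dense, the three forms are densely defined; they are symmetric and $\ge 0$ since $a=a^*\ge 0$; and $\tau_D\subseteq\tau_{iD}\subseteq\tau_N$, each extending $\varepsilon$.

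For the Markov property it suffices, by the first and second Beurling--Deny criteria (Appendix \ref{MCT_sect}), to show that the unit contraction operates: for real $u$ in the domain, $v:=(0\vee u)\wedge 1$ and $|u|$ lie in the domain with $\tau[v]\le\tau[u]$ and $\tau[|u|]=\tau[u]$; the reduction from complex to real $u$ is routine, all three forms being real ($\bar u$ in the domain with conjugate value). For $\tau_N$ this is immediate from the Stampacchia chain rule: $v\in W^{1,2}(\Omega)$ with $\nabla v=\mathbf 1_{\{0<u<1\}}\nabla u$, so $\tau_N[v]=\int_{\{0<u<1\}}\nabla u\cdot a\cdot\nabla u\le\tau_N[u]$, and $\nabla|u|=(\sgn u)\nabla u$ gives $\tau_N[|u|]=\tau_N[u]$. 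The same works for $\tau_{iD}$ since $(0\vee u)\wedge 1$ and $|u|$ map $W^{1,2}_0(\Omega)$ into itself. For $\tau_D$, whose domain is only an abstract completion, I would instead verify the Markovian property on the core $C^1_c(\Omega)$ via smooth approximations of the unit contraction: for each $\epsilon>0$ choose $\phi_\epsilon\in C^\infty(\mathbb R)$ with $\phi_\epsilon(0)=0$, $\phi_\epsilon(t)=t$ on $[0,1]$, $-\epsilon\le\phi_\epsilon\le 1+\epsilon$ and $0\le\phi_\epsilon'\le 1$; then $u\in C^1_c(\Omega)\Rightarrow\phi_\epsilon(u)\in C^1_c(\Omega)$ (its support lies in that of $u$ because $\phi_\epsilon(0)=0$) and $\varepsilon[\phi_\epsilon(u)]=\langle|\phi_\epsilon'(u)|^2\,\nabla u\cdot a\cdot\nabla\bar u\rangle\le\varepsilon[u]$, so by the standard closure theorem for Markovian forms (Appendix \ref{MCT_sect}) $\tau_D=\varepsilon^{\clos}$ is a Dirichlet form. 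Hence all three forms correspond to symmetric Markov semigroups, i.e.\ $\tau_D,\tau_{iD},\tau_N\in\mathcal T_M(\varepsilon)$.

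It remains to establish locality. Since $\tau_D$ and $\tau_{iD}$ are restrictions of $\tau_N$ (the same quadratic form on smaller domains, hence by polarization the same bilinear form), I would only prove $\tau_N$ local. Let $f,g\in D(\tau_N)\subseteq W^{1,2}(\Omega)$ with $f,g\ge 0$ and $f\wedge g=0$. By Stampacchia's theorem $\nabla g=0$ a.e.\ on $\{g=0\}\supseteq\{f>0\}$ and $\nabla f=0$ a.e.\ on $\{f=0\}\supseteq\{g>0\}$, while on $\{f=g=0\}$ both gradients vanish a.e.; thus at a.e.\ point at least one of $\nabla f,\nabla g$ is $0$, so $\nabla f\cdot a\cdot\nabla\bar g=0$ a.e., and $\tau_N[f,g]=\langle\nabla f\cdot a\cdot\nabla\bar g\rangle=0$ (the integrand lying in $L^1$ by the Cauchy--Schwarz inequality for the form). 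The same chain of equalities gives $\tau_D[f,g]=\tau_{iD}[f,g]=0$, so all three forms are local.

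The one genuinely delicate point is the Markov property of $\tau_D$: because $D(\tau_D)$ is an abstract form-closure rather than a concrete Sobolev space, and the unit contraction $(0\vee\,\cdot\,)\wedge 1$ is merely Lipschitz, one cannot push it through the core by mollification (the indicators $\mathbf 1_{\{0<u_n<1\}}$ appearing in the approximating gradients do not converge). Working with the smooth approximants $\phi_\epsilon$ that fix $C^1_c(\Omega)$---which is precisely why one imposes $\phi_\epsilon(0)=0$---circumvents this; everything else is routine bookkeeping built on the single coercivity bound $\sigma\|\nabla u\|_2^2\le\varepsilon[u]$.
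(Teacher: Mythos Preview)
Your proof is correct but follows a genuinely different route from the paper's. The paper truncates the matrix, setting $a^n := \sigma I + (a-\sigma I)(I+\tfrac{1}{n}a)^{-1}$ so that $\sigma I\le a^n\le (n+\sigma)I$ and $a^n\uparrow a$; the forms $\tau^n$ built from $a^n$ on $E=W^{1,2}_0$ or $W^{1,2}$ are then uniformly elliptic Dirichlet forms, for which membership in $\mathcal T_M$ and locality are textbook facts, and the Monotone Convergence Theorem for forms (Appendix~\ref{MCT_sect}) passes these properties to the limit $\tau$, which in particular shows $\varepsilon$ is closable. You instead work directly with the unbounded $a$, using only the coercivity $\sigma I\le a$ to get closability and closedness, and then verify the Beurling--Deny criteria and locality by hand via the Sobolev chain rule. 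Your approach is more self-contained and avoids the MCT machinery; the paper's approach is shorter because it reduces to known results, and it has the side benefit that the approximating matrices $a^n$ reappear later (see the Remarks following the lemma, where $e^{-sA_p(a)}=s\text{-}\lim_n e^{-sA_p(a^n)}$ is noted). Two small points: your parenthetical references ``(Appendix~\ref{MCT_sect})'' for the Beurling--Deny criteria and for the closure-of-Markovian-forms theorem are misplaced---that appendix contains only the monotone convergence theorem; the facts you invoke are in \cite{FOT} or \cite{Da1}, which the paper also cites. Also, your remark that $\phi_\epsilon(u)\in C^1_c$ because $\phi_\epsilon(0)=0$ is exactly the Fukushima test-function device the paper itself alludes to.
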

\begin{proof}
Define
\[
a^n (\cdot) := \sigma I + (a(\cdot)- \sigma I) \bigg( I+  \frac{1}{n} a(\cdot) \bigg)^{-1} , \quad n=1,2,\dots 
\]
Clearly, $\sigma I \leq a^n(\cdot) \leq (n+\sigma) I$ and $a^n(\cdot) \leq a^{n+1}(\cdot) \leq a(\cdot) $ $\mathcal{L}^d$ a.e.

Let $E=W^{1,2}_0(\Omega)$ or $W^{1,2}(\Omega).$ Define
\begin{align*}
& \tau^n[u,v]:= \langle \nabla u \cdot a^n \cdot \nabla \bar{v} \rangle, \quad D(\tau^n) = E,\\
& \varepsilon^n[u,v]:= \langle \nabla u \cdot a^n \cdot \nabla \bar{v} \rangle, \quad D(\varepsilon^n)= D(\varepsilon).
\end{align*}
It is well known that $\tau^n \in \mathcal{T}_M (\varepsilon^n)$, and are local (by verifying the Beurling-Deny conditions, using properties of Sobolev's spaces, such as ``$C^\infty(\Omega) \cap W^{1,2}(\Omega)$ is dense in $W^{1,2}(\Omega)$", and Fukushima's test function $f_\epsilon = \epsilon \phi (\epsilon^{-1} f),$ $ \phi$ is a $C^\infty$ function from $\mathbb{R}$ to $[0,\infty[,$ $\phi(0)=0,$ $|\phi^\prime(x)|\leq 1$ and $\phi(x)=|x|-1$ if $|x| \geq 2$;  otherwise, see \cite[Sect.\,1.4]{FOT} or \cite[Theorems 1.3.5, 1.3.9]{Da1}).

Define $\tau$ by
\begin{align*}
\tau[u,v] & := \lim_n \tau^n [u,v], \\
D(\tau) & := \{u \in E \mid \sup_n \langle \nabla u \cdot a^n \cdot \nabla \bar{u} \rangle < \infty \}\\
& = \{u \in E \mid \langle \nabla u \cdot a \cdot \nabla \bar{u} \rangle < \infty \}.
\end{align*}
By the Monotone Convergence Theorem for a non-decreasing sequence of closed symmetric non-negative quadratic forms (Appendix \ref{MCT_sect}), $\tau \in \mathcal{T}$ and, obviously,  $\tau \in \mathcal{T}_M(\varepsilon)$ and is local.  Since $\tau \supset \varepsilon,$ $\varepsilon$ is closable. Thus $\tau_D$ is well defined. Now it is clear that $\tau_D$ belongs to $\mathcal{T}_M(\varepsilon)$ and is local.
\end{proof}
\begin{remarks} 1. The operators $A_D, \;A_{iD}, \; A_N$ associated with $\tau_D, \; \tau_i, \; \tau_N$, respectively, possess some nice properties. $C^\infty_c(\Omega)$ is a form core for $A_D.$ If $A_2(a)$ stands for $A_{iD}$  or $A_N,$ then $e^{-s A_p(a)} = s\mbox{-}\lim_n e^{-s A_p(a^n)}, p \in ]1, \infty [.$ At least $e^{-s A_D}$ and $e^{-s A_{iD}}$ have nice embedding properties $(L^p \rightarrow L^q).$

2. $\tau_D$ is the maximal element of $\mathcal{T}_M(\varepsilon)$ endowed with the semi-order
$\tau_1 \prec \tau_2 \Leftrightarrow D(\tau_1) \supset D(\tau_2)$, $\tau_1[u] \leq \tau_2[u], u\in D(\tau_2)$.
\end{remarks}

\bigskip

 \section{$b \cdot \nabla$ is $A_r$ bounded}

\label{strong_bdd_sect}

Let $\Omega$ be an open set in $\mathbb{R}^d, \; d \geq 3,$ and $L^p=L^p(\Omega,\mathcal{L}^d).$ Let $a:\Omega \rightarrow  \mathbb{R}^d \otimes \mathbb{R}^d$ be a symmetric $\mathcal{L}^d$ measurable strictly positive ($a \in (H_1)$) matrix.
Let $b: \Omega \rightarrow \mathbb{C}^d$ be $\mathcal{L}^d$ measurable and define $B_r = B_r(b)$ to be the maximal operator of $b \cdot \nabla$ in $L^r, \;\;1 \leq r < \infty,$ of domain
\[
D(B_r) := \{ h \in L^r \mid |\nabla h | \in L^1_\loc \; \text{ and } \; b \cdot \nabla h \in L^r \}.
\] 
Throughout this section $A \equiv A(a)$ denotes $A_D$, $A_{iD}$, $A_{N}$. In case of $A_N$ we also assume that $\partial \Omega \in C^{0,1}.$

\begin{definition*}
We say that $b \cdot \nabla$ is $A_r$ bounded if $\|B_r ( \lambda + A_r )^{-1} \|_{r \rightarrow  r}<\infty$ for some $\lambda>0$.
\end{definition*}

\noindent\textbf{1.~}Set $j := \frac{d}{d-2}$ and $j^\prime = \frac{d}{2}.$

\begin{proposition}
\label{thm:markwest}
Assume that $b^2_a := b \cdot a^{-1} \cdot \bar{b} \leq W + C$ $\mathcal L^d$ a.e.\;for a function $W \in L^{j^\prime}$ and a non-negative constant $C$. Then, for every $r \in ] 1, \frac{2d}{d+2} ],$ the operator $B_r$ is $A_r$ bounded and the inequality
\[
\|B_r ( \lambda + A_r )^{-1} \|_{r \rightarrow  r} \leq 4 \bigg ( \frac{C}{(r-1) \lambda} \bigg )^{1/2} + 4 c(r,j) \|W \|_{j^\prime}^{1/2} \quad \quad ( \lambda > 0 )
\]
holds with the constant $c(r,j) \rightarrow  \infty$ as $r \rightarrow  1.$ If $b^2_a \in L^\infty,$ then the assertion above is valid for all $1<r \leq 2.$ Moreover, for a given $\eta >0,$ we can choose $W_1$, $C_1$ and then $\lambda > 0$ such that $b_a^2 \leq W_1+C_1$ and
\[
4 \bigg( \frac{C_1}{(r-1) \lambda} \bigg )^{1/2} + 4 c(r,j) \|W_1 \|_{j^\prime}^{1/2} < \eta.
\]
\end{proposition}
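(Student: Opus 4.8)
\emph{Plan.} The idea is to bound $\|B_r(\lambda+A_r)^{-1}f\|_r$ for $f$ in the dense subspace $L^r\cap L^2$ and then extend to all of $L^r$ by density (using that $B_r$ is closed). For such $f$ put $u:=(\lambda+A_r)^{-1}f=(\lambda+A_2)^{-1}f\in D(A_2)\subset D(A^{1/2})$; since $a\geq\sigma I$ this already yields $|\nabla u|\in L^2\subset L^1_\loc$. Writing $|\nabla_a u|:=(\nabla u\cdot a\cdot\nabla\bar u)^{1/2}$, the Cauchy--Schwarz inequality for the Hermitian form $x,y\mapsto\bar x\cdot a^{-1}\cdot y$ applied to $\bar b$ and $a\nabla u$ gives the pointwise bound $|b\cdot\nabla u|\leq b_a\,|\nabla_a u|$, and since $b_a^2\leq W+C$ implies $b_a\leq\sqrt W+\sqrt C$, it suffices to estimate $\|\sqrt W\,|\nabla_a u|\|_r$ and $\||\nabla_a u|\|_r$ separately. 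Splitting a complex $f$ into its four nonnegative parts, using that $(\lambda+A_r)^{-1}$ preserves positivity (the Markov property) and that $w\mapsto|\nabla_a w|$ is subadditive, we reduce to $f\geq0$, hence $u\geq0$, at the price of the factor $4$.

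The engine is the identity obtained by testing $(\lambda+A)u=f$ against $u^{2\beta-1}$ for $\beta\in[\tfrac12,1]$:
\[
\lambda\langle u^{2\beta}\rangle+\frac{2\beta-1}{\beta^{2}}\langle|\nabla_a u^{\beta}|^{2}\rangle=\langle f\,u^{2\beta-1}\rangle\leq\|f\|_r\,\|u\|^{\,2\beta-1}_{(2\beta-1)r'}.
\]
For $\beta<1$ this must first be derived with $u$ replaced by the Lipschitz truncations $\phi_{\varepsilon,N}(u)$, $\phi_{\varepsilon,N}(t)=((t\wedge N)+\varepsilon)^{\beta}-\varepsilon^{\beta}$ — here the Markovian (Beurling--Deny) structure of $\tau$ guarantees $\phi_{\varepsilon,N}(u)\in D(\tau)$ and controls $\tau[\phi_{\varepsilon,N}(u)]$ — and then recovered in the limit $N\uparrow\infty$, $\varepsilon\downarrow0$. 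For the $\sqrt C$-term I would take $\beta=r/2$, admissible for all $r\in]1,2]$ and requiring no Sobolev inequality: the identity gives $\|u\|_r\leq\lambda^{-1}\|f\|_r$ and $\langle|\nabla_a u^{r/2}|^{2}\rangle\leq\tfrac{r^{2}}{4(r-1)\lambda^{r-1}}\|f\|_r^{r}$; writing $|\nabla_a u|=\tfrac2r\,u^{1-r/2}|\nabla_a u^{r/2}|$ and applying H\"older with the conjugate exponents $\tfrac{2}{2-r}$ and $\tfrac2r$ then yields $\||\nabla_a u|\|_r\leq\big(\tfrac{1}{(r-1)\lambda}\big)^{1/2}\|f\|_r$, hence the contribution $4\big(\tfrac{C}{(r-1)\lambda}\big)^{1/2}\|f\|_r$.

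For the $\sqrt W$-term I would instead choose $\beta=\beta(r):=\tfrac{r(d-2)}{2(d-2r)}$, the unique value for which $(2\beta-1)r'=2\beta j$, so that after one use of the Sobolev inequality $\|u^{\beta}\|_{2j}^{2}\leq c_S\sigma^{-1}\langle|\nabla_a u^{\beta}|^{2}\rangle$ — valid for $A_D,A_{iD}$ since then $u^{\beta}\in W^{1,2}_0(\Omega)$, and for $A_N$ thanks to $\partial\Omega\in C^{0,1}$ — the inequality closes and produces $\|u\|_{2\beta j}\leq c_1(r,d)\|f\|_r$ and $\langle|\nabla_a u^{\beta}|^{2}\rangle\leq c_2(r,d)\|f\|_r^{2\beta}$. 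Note $\beta(r)\in[\tfrac12,1]$ is equivalent to $r\in[1,\tfrac{2d}{d+2}]$ — exactly the admissible range — with $\beta(\tfrac{2d}{d+2})=1$, where no truncation is needed. Writing $|\nabla_a u|=\tfrac1\beta\,u^{1-\beta}|\nabla_a u^{\beta}|$ and applying H\"older to $\langle W^{r/2}u^{r(1-\beta)}|\nabla_a u^{\beta}|^{r}\rangle$ with reciprocal exponents $\tfrac rd$ (for $W^{r/2}$, using $W\in L^{j'}$), $1-\tfrac rd-\tfrac r2$ (for $u^{r(1-\beta)}$; with this $\beta$ one has $r(1-\beta)\big(1-\tfrac rd-\tfrac r2\big)^{-1}=2\beta j$, and at $r=\tfrac{2d}{d+2}$ the $u$-factor disappears altogether), and $\tfrac r2$ (for $|\nabla_a u^{\beta}|^{r}$) — these sum to $1$ identically — gives $\|\sqrt W\,|\nabla_a u|\|_r\leq c(r,j)\|W\|_{j'}^{1/2}\|f\|_r$, with $c(r,j)\to\infty$ as $r\downarrow1$ because $\beta(r)\to\tfrac12$ and $\tfrac{\beta^{2}}{2\beta-1}\to\infty$. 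Combining the two terms and restoring the factor $4$ gives the stated bound; if $b_a^2\in L^\infty$ we take $W=0$, $C=\|b_a^2\|_\infty$, and only the first argument is needed, valid for all $r\in]1,2]$. The last assertion follows by splitting $W=W_1+W_2$ with $\|W_1\|_{j'}$ arbitrarily small and $W_2$ bounded (possible since $W\in L^{j'}$), putting $C_1:=C+\|W_2\|_\infty$ so that $b_a^2\leq W_1+C_1$, and then choosing first $W_1$ with $4c(r,j)\|W_1\|_{j'}^{1/2}<\eta/2$ and afterwards $\lambda$ so large that $4\big(\tfrac{C_1}{(r-1)\lambda}\big)^{1/2}<\eta/2$.

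I expect the principal obstacle to be the rigorous justification of the form identity when $\beta<1$: since $t\mapsto t^{\beta}$ is not Lipschitz at the origin and $t\mapsto t^{2\beta-1}$ is unbounded, one must run the computation for the regularized truncations, keep every intermediate quantity finite, verify that the test functions genuinely lie in $D(\tau)$ — which is precisely where the Markovian structure of $e^{-tA}$ is used — and check that the Sobolev and H\"older estimates survive the passage to the limit. The remaining work — the exponent bookkeeping, the verification of $r(1-\beta(r))\big(1-\tfrac rd-\tfrac r2\big)^{-1}=2\beta(r)j$, and tracking the $\lambda$-dependence of the constants — is routine.
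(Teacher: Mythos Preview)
Your approach is essentially the paper's: pointwise Cauchy--Schwarz $|b\cdot\nabla u|\le b_a|\nabla_a u|$, then the Markov $L^r$-inequality $\tfrac{4}{pp'}\|A^{1/2}u^{p/2}\|_2^2\le\langle A_p u,u^{p-1}\rangle$, with $p=r$ for the $C$-term and your $p=2\beta(r)=\tfrac{r(d-2)}{d-2r}$ for the $W$-term, plus H\"older and one Sobolev step. The exponent choice is identical (the paper's constraint $(p-1)^{-1}\ge r'(r^{-1}-j'^{-1})$ is saturated exactly at your $2\beta(r)$).

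The only noteworthy difference is tactical. The paper takes $h\in L^1\cap L^\infty_+$ from the outset, so $u=(\lambda+A_r)^{-1}h$ lies in $D(A_p)\cap L^\infty$ for every $p$ and the inequality $u^{p/2}\in D(A^{1/2})$ with the form bound is a direct citation of the abstract $L^r$-inequalities for symmetric Markov generators (the paper's Theorem~\ref{thm:markovest}); no truncations $\phi_{\varepsilon,N}$ are needed. The passage to general $h\in L^r$ is then done by truncating $b$ (replace $b$ by $\mathbf 1_{\{|b|\le n\}}b$), using that $\nabla$ is closed $L^r\to[L^r]^d$, and Fatou's lemma as $n\uparrow\infty$. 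Your extension ``by density, using that $B_r$ is closed'' is a little loose---closedness of the maximal $B_r$ does not by itself give $u\in D(B_r)$ for the limit $u$---but the paper's $b$-truncation/Fatou argument fixes this with no extra ideas.
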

\begin{proof}
Set $u = ( \lambda + A_r )^{-1} h , \; h \in L^1 \cap L_+^\infty.$ Let $1 < p \leq 2.$  Since $u \in D(A_p) \Rightarrow u_p := u^\frac{p}{2} \in D(A^{1/2})$ (Appendix \ref{markov_sect}, Theorem \ref{thm:markovest}), we conclude that $\nabla u_p \in L^2.$ (We use that $D(A^\frac{1}{2}) \subset E$ where $E=W_0^{1,2}$ if $A=A_D \text{ or } A_{iD},$ and $E=W^{1,2}$ if $A=A_N$.) Since $2 \geq p$ and $u \in L^\infty,$ 
\[
\nabla u = \nabla u_p^{2/p} =(2/p)u_p^\frac{2-p}{p} \nabla u_p = (2/p)u^\frac{2-p}{2} \nabla u_p.
\]
Now, $|b\cdot \nabla u|^2 \leq b^2_a \nabla u \cdot a \cdot \nabla \bar{u} = (2/p)^2 b^2_a u^{2-p} \nabla u_p \cdot a \cdot \nabla \bar{u}_p,$  and so by H\"older's inequality,
\begin{align*}
\|b \cdot \nabla u \|_r^2 & \leq (2/p)^2 \|b^2_a u^{2-p} \|_\frac{r}{2-r} \big \langle A^{1/2} u_p, A^{1/2} u_p \big \rangle \\
& \leq (p-1)^{-1} \|b^2_a u^{2-p} \|_\frac{r}{2-r} \big \langle A_p u, u^{p-1} \big \rangle. 
\end{align*}
If $b^2_a \leq C,$ then (take $p=r$) $\|b \cdot \nabla u \|_r^2 \leq \frac{C}{r-1} \|u\|_r^{2-r} \big \langle A_r u, u^{r-1} \big \rangle,$ and since $\|u\|_r \leq \lambda^{-1} \|h\|_r$ and $\langle A_r u, u^{r-1} \rangle \;(= \langle (\lambda + A_r)u,u^{r-1}\rangle - \lambda \langle u^r \rangle \leq \langle h, u^{r-1}\rangle )  \leq  \|h\|_r \|u\|_r^{r-1},$ 
\[
\|b \cdot \nabla u \|_r \leq \sqrt{\frac{C}{(r-1)\lambda}} \; \|h \|_r.
\tag{$\bullet$}
\]

If $b^2_a \leq W$, then the same argument (with $r<p \leq 2$,  $(p-1)^{-1} \geq r'\left(r^{-1}-j'^{-1}\right)$)
yields
\[
\|b \cdot \nabla u \|_r^2 \leq (p-1)^{-1} \|h\|_r \|u\|^{p-1}_{r^\prime (p-1)} \|W u^{2-p} \|_\frac{r}{2-r}.
\]
Therefore, by H\"older's inequality and the embedding $(\lambda + A_r)^{-1}: L^r \rightarrow  L^q , \; r^{-1} -q^{-1} \leq  j^{\prime -1},$ $\|b \cdot \nabla u \|_r \leq c(r,j) \|W \|_{j^\prime}^{1/2} \|h\|_r$, $h \in L^1 \cap L_+^\infty.$ It remains to pass in this inequality and in $(\bullet)$ to an arbitrary $h \in L_+^r.$ Using the facts that the weak gradient is closed in $L^r,$ and $L^1 \cap L^\infty$ is dense in $L^r,$ we have $\|\mathbf{1}_{\{|b|\leq n\}}b \cdot \nabla u \|_r \leq c(r,j) \|W \|_{j^\prime}^{1/2} \|h\|_r$ for all $h \in L_+^r.$ The use of Fatou's Lemma now completes the proof.
\end{proof}

\begin{remarks} 1. The implication $h \in D(A_r) \Rightarrow \nabla h \in \big[ L^r \big]^d$ for $r \in ]1, 2]$ follows now e.g. from a priori estimate $\| \nabla (1+A)^{-1} h \|_r \leq C(r) \|h \|_r, \; h \in L^1 \cap L^\infty,$ which in turn is a simple corollary of ($\bullet$). We also mention that, in general, even the condition $b^2_a \in L^\infty$ allows $b$ to be unbounded.

2. Let us recall a simple corollary of the Hille Perturbation Theorem (see e.g.\,\cite[Ch.\,IX, sect.\,2.2]{Ka}):

\textit{Let $e^{-t A}$ be a symmetric Markov semigroup, $K$ a linear operator in $L^r$ for some $r \in ]1,\infty[.$  If for some $\lambda > 0$ $\|K (\lambda + A_r)^{-1} \|_{r \rightarrow  r} < \frac{1}{2},$ then $-\Lambda_r := -A_r - K$ of domain $D(A_r)$ is the generator of a quasi bounded holomorphic semigroup on $L^r$.}

Thus, Proposition \ref{thm:markwest} implies that the (minus) algebraic sum $A_r + B_r$ is the generator of a holomorphic semigroup in $L^r$ for every $r \in ]1, \frac{2d}{d+2}].$

Sometimes one can employ the Miyadera Perturbation Theorem \cite{Mi} (see also \cite{Vo}):
\begin{theorem}
Let $e^{-t A}$ be a symmetric Markov semigroup, $K$ a linear operator in $L^r$ for some $r \in [1,\infty[.$ 
If for some $\lambda > 0$ $\|K (\lambda + A_r)^{-1} \|_{r \rightarrow  r} < \infty,$
and there exist $s>0$ and $\beta<1$ such that
$$
\int_0^s \|Ke^{-tA_r}\|_{r \rightarrow r}dt \leq \beta,
$$ 
then $-\Lambda_r := -A_r - K$ of domain $D(A_r)$ is the generator of a quasi bounded $C_0$ semigroup on $L^r$.
\end{theorem}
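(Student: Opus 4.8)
The plan is to construct $e^{-t\Lambda_r}$ directly as a Dyson--Phillips series, treating $-K$ as a perturbation of $-A_r$ and letting the integral bound $\int_0^s\|Ke^{-tA_r}\|_{r\to r}\,dt\le\beta<1$ play the role that a resolvent-norm estimate $\|K(\lambda+A_r)^{-1}\|_{r\to r}<1$ would play in the Hille perturbation theorem recalled above (that estimate being unavailable here). Write $h(t):=\|Ke^{-tA_r}\|_{r\to r}$. The hypothesis gives $h\in L^1(0,s)$ with $\int_0^s h\le\beta$; in particular $h(t_0)<\infty$ for some $t_0\in(0,s)$, and since $e^{-tA_r}$ is an $L^r$-contraction, $h(t)=\|Ke^{-t_0A_r}e^{-(t-t_0)A_r}\|_{r\to r}\le h(t_0)$ for $t\ge t_0$, so $h\in L^1_{\loc}[0,\infty)$ and $h$ is bounded near infinity. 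Also $\|K(\lambda+A_r)^{-1}\|_{r\to r}<\infty$ says $K$ is $A_r$-bounded, so $D(A_r)\subseteq D(K)$ and $\|Kf\|_r\le \|K(\lambda+A_r)^{-1}\|_{r\to r}\|(\lambda+A_r)f\|_r$.

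I would set $U_0^t:=e^{-tA_r}$ and inductively $U_{n+1}^t:=-\int_0^t e^{-(t-\sigma)A_r}\,K\,U_n^\sigma\,d\sigma$. The heart of the argument is the estimate that carries the constant $\beta$ through every iterate: with $g_n(t):=\sup_{\|f\|_r\le1}\|KU_n^tf\|_r$, the identity $KU_{n+1}^t=-\int_0^t(Ke^{-(t-\sigma)A_r})(KU_n^\sigma)\,d\sigma$ (legitimate because $K$ is closed and the integrand is integrable) gives $g_{n+1}\le h\ast g_n$, hence $g_n\le h^{\ast(n+1)}$, and a triangular Young inequality then yields for $t\le s$ that $\int_0^t g_n\le\big(\int_0^t h\big)^{n+1}\le\beta^{n+1}$, whence $\|U_{n+1}^t\|_{r\to r}\le\int_0^t g_n\le\beta^{n+1}$. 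Therefore $U^t:=\sum_{n\ge0}U_n^t$ converges in $\mathcal B(L^r)$ uniformly on $[0,s]$, with $\|U^t\|_{r\to r}\le(1-\beta)^{-1}$ and $\int_0^t\|KU^\sigma\|_{r\to r}\,d\sigma\le\beta(1-\beta)^{-1}$ there, and $U^0=\Id$.

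Summing the recursion, $U^t$ satisfies the Duhamel equation $U^t=e^{-tA_r}-\int_0^t e^{-(t-\sigma)A_r}KU^\sigma\,d\sigma$ on $[0,s]$; each $U_n^\cdot f$ is $L^r$-continuous, hence so is $U^\cdot f$, and strong continuity at $0$ is immediate. Uniqueness of bounded solutions of the Duhamel equation (a Gronwall argument using $\int_0^\cdot h\le\beta<1$) yields the semigroup law on $[0,s]$, which I would then extend to all $t\ge0$ by $U^t:=(U^s)^kU^{t-ks}$ for $ks\le t<(k+1)s$; this produces a $C_0$ semigroup with $\|U^t\|_{r\to r}\le(1-\beta)^{-1-t/s}=:Me^{\omega t}$, i.e.\ quasi bounded.

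The final step is to identify the generator $-\Lambda_r$ of $U^t$ with $-(A_r+K)$ on $D(A_r)$. For $f\in D(A_r)\subseteq D(K)$ one shows $KU^\sigma f\to Kf$ in $L^r$ as $\sigma\downarrow0$: writing $KU^\sigma f=Ke^{-\sigma A_r}f-\int_0^\sigma(Ke^{-(\sigma-\tau)A_r})(KU^\tau f)\,d\tau$, the first term tends to $Kf$ since $e^{-\sigma A_r}f\to f$ in the graph norm of $A_r$ and $K$ is $A_r$-bounded, while the second is at most $\big(\sup_{\tau\le s}\|KU^\tau f\|_r\big)\int_0^\sigma h\to0$; dividing the Duhamel equation by $t$ and letting $t\downarrow0$ then gives $f\in D(\Lambda_r)$ and $\Lambda_r f=(A_r+K)f$, i.e.\ $\Lambda_r\supseteq A_r+K$. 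For the reverse inclusion it suffices that $\mu+A_r+K:D(A_r)\to L^r$ be onto for some $\mu$, and this is where the Miyadera hypothesis enters decisively: $\|K(\mu+A_r)^{-1}\|_{r\to r}\le\int_0^\infty e^{-\mu t}h(t)\,dt\to0$ as $\mu\to+\infty$ (dominated convergence on $(0,s)$ since $h\in L^1(0,s)$, plus $h$ bounded near infinity), so for large $\mu$ one has $\|K(\mu+A_r)^{-1}\|_{r\to r}<1$, hence $1+K(\mu+A_r)^{-1}$ is boundedly invertible in $\mathcal B(L^r)$, $\mu+A_r+K=(1+K(\mu+A_r)^{-1})(\mu+A_r)$ is a bijection of $D(A_r)$ onto $L^r$, and since $\mu+\Lambda_r$ is also injective and extends it, the two coincide; thus $\Lambda_r=A_r+K$ with $D(\Lambda_r)=D(A_r)$. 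I expect the main obstacle to be precisely this domain identification: everything through the construction of $U^t$ is the routine Dyson--Phillips machinery, but the inclusion $D(\Lambda_r)\subseteq D(A_r)$ needs the time-integrated smallness to be upgraded to resolvent smallness at large $\mu$ — something mere $A_r$-boundedness of $K$ would not provide — while a secondary technical point throughout is the interchange of $K$ with the (Bochner) integrals, justified by closedness of $K$ together with the integrability bounds produced in the second step.
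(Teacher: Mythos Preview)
The paper does not prove this theorem; it is merely quoted, with references to Miyadera \cite{Mi} and Voigt \cite{Vo}, inside the remarks following Proposition~\ref{thm:markwest}. So there is no ``paper's own proof'' to compare against. What you have written is essentially the classical Miyadera--Voigt argument via the Dyson--Phillips expansion: build $U^t=\sum_n U_n^t$, control the series through the convolution bound $g_n\le h^{*(n+1)}$ and $\int_0^s h\le\beta<1$, obtain the Duhamel equation and the semigroup law, and identify the generator by upgrading the time-integral smallness to $\|K(\mu+A_r)^{-1}\|_{r\to r}\le\int_0^\infty e^{-\mu t}h(t)\,dt\to0$ as $\mu\to\infty$. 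The outline is correct and is exactly what the cited references do.

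One point to tighten: you invoke ``$K$ is closed'' to pull $K$ through Bochner integrals, but closedness of $K$ is not among the hypotheses (only $\|K(\lambda+A_r)^{-1}\|_{r\to r}<\infty$ is). This is easily repaired. Either observe that $K$ is bounded from $(D(A_r),\|\cdot\|_{A_r})$ to $L^r$ and check that the relevant integrals converge in the graph norm (do it first for $f\in D(A_r)$, where $t\mapsto e^{-tA_r}f$ is graph-norm continuous, then pass to $f\in L^r$ by density using the a~priori bounds); or, more directly, define the auxiliary family $V_n^t$ by $V_0^t:=Ke^{-tA_r}$ and $V_{n+1}^t:=-\int_0^t(Ke^{-(t-\sigma)A_r})V_n^\sigma\,d\sigma$ as bounded operators from the outset, run the convolution estimate on $\|V_n^t\|_{r\to r}$, and only identify $V_n^t=KU_n^t$ a~posteriori. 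With either fix the interchange-of-$K$-and-integral steps, including the Laplace-transform identity you use for the domain identification, go through.
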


3.~Even in the case $a =I$, the assumption $W \in L^{j^\prime}$ cannot be weakened to $W \in L^p$ for some $p<j^\prime$. Also, this assumption does not guarantee that $B_1$ is $A_1$ bounded. Of course, if $a=I$ and $|b|\in L^d$, then $B_r$ is $A_r$ bounded for each $r\in ]1,d[$. 

If $a\in (H_u)$ and $|b|\in L^d$ ($\Omega=\mathbb R^d$), then $B_r$ is $A_r$ bounded for each $r\in ]1,2+\varepsilon]$, where $0<\varepsilon<d-2$ depends on the ellipticity constants $(\sigma,\xi)$. Indeed, for every  $r \in [\frac{2d}{d+2},2+\varepsilon]$ $$\|B_r (\mu+A_r)^{-1}f \|_r \leq \|b\|_{d} \|\nabla (\mu+A)^{-1}f\|_{s}, \quad s:=\frac{dr}{d-r}, \qquad (f \in C_c)$$ where
$
\|\nabla (\mu+A)^{-1}f\|_{s} \leq \|\nabla (\mu+A)^{-1}(1-\Delta)^{\frac{1}{2}}\|_{s \rightarrow s}\|(1-\Delta)^{-\frac{1}{2}}f\|_{s}.
$
By the N.\,Meyers Embedding Theorem, $\|\nabla (\mu+A)^{-1}(1-\Delta)^{\frac{1}{2}}\|_{s \rightarrow s}<\infty$ provided that $\xi/\sigma$ is sufficiently close to $1$ (Appendix \ref{meyers_sect}), and, clearly, $\|(1-\Delta)^{-\frac{1}{2}}f\|_{s} \leq C_S\|f\|_r$, which now yields the required.

\end{remarks}

\noindent\textbf{2.~}Now we specify the results of previous subsection for $b: \Omega \rightarrow  \mathbb R^d.$
Denote $T^t_r = e^{-t(\lambda + \Lambda_r(b))},$ where $\Lambda_r(b)= A_r + B_r$ of domain $D(\Lambda_r(b)) = D(A_r).$ Since $ b$ is real valued, $T^t_r \Real L^r \subset \Real L^r.$ We claim that $T^t_r$ is a positivity preserving $L^\infty$ contraction semigroup. We prove this here \textit{only for }$b_a \in L^\infty$ by verifying the criteria of R.\,\,Phillips and G.\,\,Stampacchia for $T^t_2$ (Appendix \ref{phillips_sect}).

Let $b_a^2 \leq C$, $C<4\lambda$.
Set $\Gamma = \lambda + \Lambda_2(b).$ It is seen that, for $ f \in D(A),$ $\Real \langle \Gamma f, f \rangle \geq 0,$ (Indeed, by the quadratic inequality, $\Real \langle \Gamma f,f\rangle \geq \lambda \|f\|_2^2 - \frac{1}{4} \langle b \cdot a^{-1} \cdot b, f^2 \rangle \geq \big(\lambda-\frac{C}{4}\big) \|f\|_2^2 \geq 0$ where at the last step we used $b_a^2 \leq C$). Thus, $T^t_2$ is a contraction. For any $f \in \Real L^2$ define $f_+ = f \vee 0.$ If $f = \Real f \in D(A),$ then
\[
\langle \Gamma f, f_+ \rangle = \lambda \langle  f_+^2 \rangle + \langle A f, f_+ \rangle + \langle b \cdot \nabla f, f_+ \rangle.
\]
Taking into account that $A$ is local and $f_+ \in D(A^\frac{1}{2}) \subset W^{1,2}_0,$ we obtain
\begin{align*}
\langle A f, f_+ \rangle  & = \langle \nabla f \cdot a \cdot \nabla f_+ \rangle = \langle \nabla f_+ \cdot a \cdot \nabla f_+ \rangle, \\
|\langle b \cdot \nabla f, f_+ \rangle | & \leq \langle b \cdot a^{-1} \cdot b f_+^2 \rangle^\frac{1}{2} \langle \nabla f_+ \cdot a \cdot \nabla f_+ \rangle^\frac{1}{2} \\
& \leq \frac{C}{4} \langle  f_+^2 \rangle + \langle \nabla f_+ \cdot a \cdot \nabla f_+ \rangle.
\end{align*}
Thus $\langle \Gamma f, f_+ \rangle \geq 0$ for all $f =\Real f \in D(A) = D(\Gamma).$ 

Next, let $f \in D(A).$ Then $f_\wedge := (1 \wedge |f|) \sgn f$ and $f-f_\wedge$ are from $D(A^\frac{1}{2}) $ (e.g.\,by the Beurling-Deny Theorem, see \cite[sect.\,1]{LS})
\[
\nabla (f-f_\wedge) = \mathbf 1_{|f|>1}\big[\big(1-\frac{1}{|f|} \big) \nabla f + \frac{f}{|f|^2} \nabla |f| \big] \text{ and } \Real \bar{f} \nabla f = |f| \nabla |f|.
\]
Setting $\psi = \mathbf 1_{|f|>1} (|f|-1)$ we have
\begin{align*}
\Real \langle \Gamma f, f-f_\wedge \rangle & = \lambda \langle |f|, \psi \rangle + \langle\nabla \psi \cdot a \cdot \nabla \psi \rangle + \langle b \cdot \nabla \psi, \psi \rangle\\
& \geq \big( \lambda - \frac{C}{4} \big) \langle |f|, \psi \rangle \geq 0.
\end{align*}
Thus $\Real \langle \Gamma f, f-f_\wedge \rangle \geq 0$ for all $f \in D(A)= D(\Gamma).$

\begin{corollary}
 $T^t_2$ is a Markov semigroup.
\end{corollary}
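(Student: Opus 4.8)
The plan is to obtain the Corollary directly from the three estimates established immediately above, combined with the Phillips--Stampacchia criteria recalled in Appendix~\ref{phillips_sect}; essentially all of the work has already been done. Recall that, with $\lambda$ chosen large enough (so that both $4\lambda>C$ and $\|B_2(\lambda+A_2)^{-1}\|_{2\rightarrow 2}\le\sqrt{C/\lambda}<\tfrac12$, the latter by the estimate $(\bullet)$ in the proof of Proposition~\ref{thm:markwest}), the Hille perturbation theorem quoted in Remark~2 after Proposition~\ref{thm:markwest} shows that $-\Gamma=-\lambda-\Lambda_2(b)$, $D(\Gamma)=D(A)$, generates a holomorphic $C_0$ semigroup on $L^2$, which is precisely $T^t_2$. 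So the only thing to do is to feed the three inequalities into the criteria.

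First, $\Real\langle\Gamma f,f\rangle\ge0$ for $f\in D(\Gamma)$ says that $-\Gamma$ is accretive, hence $T^t_2$ is an $L^2$-contraction semigroup. Next, $\langle\Gamma f,f_+\rangle\ge0$ for every real $f\in D(\Gamma)$ is exactly the Phillips--Stampacchia positivity condition, so $T^t_2 L^2_+\subset L^2_+$. Finally, $\Real\langle\Gamma f,f-f_\wedge\rangle\ge0$ for every $f\in D(\Gamma)$, with $f_\wedge=(1\wedge|f|)\sgn f$, is the Phillips--Stampacchia criterion for $L^\infty$-contractivity, so $(|f|\le1)\Rightarrow(|T^t_2 f|\le1)$. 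Together these say that $T^t_2$ is a positivity preserving, $L^\infty$-contraction, $C_0$ contraction semigroup on $L^2$, i.e.\ a Markov semigroup in the sense of Section~\ref{markov_gen_sect} (with $\omega_2=0$).

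I do not expect a real obstacle here: the substantive content is the accretivity and the two quadratic-form inequalities, all verified on the full domain $D(\Gamma)=D(A)$ in the paragraphs preceding the Corollary, together with the statement of the Phillips--Stampacchia criteria in the Appendix. The one point that needs a moment's care is that those criteria be invoked with the inequalities holding on all of $D(\Gamma)$, not merely on a core; but since $D(\Gamma)=D(A)$ and the computations above are carried out for an arbitrary $f\in D(A)$, this is automatic, and the proof reduces to citing Appendix~\ref{phillips_sect}.
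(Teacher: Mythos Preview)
Your proposal is correct and matches the paper's approach exactly: the Corollary is stated without a separate proof because the three inequalities $\Real\langle\Gamma f,f\rangle\ge 0$, $\langle\Gamma f,f_+\rangle\ge 0$, and $\Real\langle\Gamma f,f-f_\wedge\rangle\ge 0$ have just been verified on $D(\Gamma)=D(A)$, and the paper has already announced (two paragraphs earlier) that it is ``verifying the criteria of R.\,Phillips and G.\,Stampacchia for $T^t_2$ (Appendix~\ref{phillips_sect}).'' Your write-up simply makes explicit the citation of Propositions~\ref{prop1} and~\ref{prop2}, which is all that remains.
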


Now let $r \in ]1,2[.$ By the construction of $\Lambda_r$ and $\Lambda_2,$ $T^t_r f = T^t_2 f$ whenever $f \in L^r \cap L^2.$ Therefore, for each $r \in ]1, 2[,$  the semigroup $T^t_r$ preserves positivity and is $L^\infty$ contraction.

\bigskip

\section{$b\cdot \nabla$ is $A$ form-bounded}

\label{fbd_sect}

Throughout this section we are assuming that $\Omega$ is an open set of $\mathbb{R}^d, \; d \geq 3,$ $a:\Omega \rightarrow  \mathbb{R}^d \otimes \mathbb{R}^d,$ a symmetric $\mathcal{L}^d$ measurable strictly positive ($a \in (H_1)$) matrix. 

By $A$ we denote one of the $A_D, A_{iD}, A_N.$ In the results concerning $(L^p,L^q)$ estimates for $A_N$ we also assume that $\partial \Omega \in C^{0,1}.$

\begin{definition*}
We say that a $b:\mathbb R^d \rightarrow \mathbb{R}^d$ belongs to $\mathbf{F}_\delta(A)$, the class of $A$ form-bounded vector fields, and write $b \in \mathbf{F}_\delta(A)$, if $b_a:= \sqrt{b\cdot a^{-1} \cdot b} \in L^2_\loc$ and there exists a constant $0 \leq \lambda =\lambda_\delta<\infty$ such that 
$$\|b_a(\lambda + A)^{-1/2}\|_{2 \rightarrow 2} \leq \sqrt{\delta}.$$
(Equivalently, $b_a^2 \leq \delta A + c(\delta)$ in the sense of quadratic forms for some constant $c(\delta)$ $(=\lambda \delta)$.) 
 \end{definition*}

Write $\mathbf{F}_\delta \equiv \mathbf{F}_\delta(-\Delta)$. (Clearly, $b \in \mathbf{F}_{\sigma^2 \delta} \Rightarrow b \in \mathbf{F}_\delta(A)$.)

\begin{example} 
\label{ex0}
1. The inclusion 
$|b| \in L^d+L^\infty$ $\Rightarrow b \in \mathbf{F}_0:=\bigcap_{\delta>0}\mathbf{F}_\delta$
follows from the Sobolev Embedding Theorem.

2. For $|b| \in L^{d,\infty}$,
one can verify, using \cite[Prop.~2.5, 2.6, Cor.~2.9]{KPS}:
\begin{align*}
b \in \mathbf{F}_{\delta_1}, \quad \text{ with } \sqrt{\delta_1}&=\||b| (\lambda - \Delta)^{-\frac{1}{2}} \|_{2 \rightarrow 2} \\ & \leq
\|b\|_{d,\infty} \Omega_d^{-\frac{1}{d}} \||x|^{-1} (\lambda - \Delta)^{-\frac{1}{2}} \|_{2 \rightarrow 2} \\ & \leq \|b\|_{d,\infty} \Omega_d^{-\frac{1}{d}}2^{-1}\frac{\Gamma\bigl(\frac{d-2}{4} \bigr)}{\Gamma\bigl(\frac{d+2}{4} \bigr)}=\|b\|_{d,\infty} \Omega_d^{-\frac{1}{d}} \frac{2}{d-2}.
\end{align*}
where $\Omega_d=\pi^{\frac{d}{2}}\Gamma(\frac{d}{2}+1)$ is the volume of the unit ball in $\mathbb R^d$.

3. By Hardy's inequality, $b(x):=\sqrt{\delta} \frac{d-2}{2}|x|^{-2}x \in \mathbf{F}_{\delta}$, $\delta>0$. (And, of course, $b \not\in \mathbf{F}_{\delta_2}$ if $\delta_2<\delta$.)

4.~For every $\varepsilon>0$ one can find $b \in \mathbf{F}_\delta$ such that $|b| \not\in L^{2+\varepsilon}_{\loc}$, e.g.\,consider
$$
b^2(x)=C\frac{\mathbf{1}_{B(0,1+a)} - \mathbf{1}_{B(0,1-a)}}{\big| |x|-1\big|^{-1}(-\ln\big||x|-1\big|)^b}, \quad b>1, \quad 0<a<1.
$$

5.~Let $\mathbf{W}_s$ denote the class of vector fields $b$ such that $|b|^2$ is in the class of Chang-Wilson-Wolff ($s>1$):
$$
\left\{v \in L_{\loc}^s: \|v\|_{W_s}:=\sup_Q \frac{1}{|Q|}\int_Q |v(x)|\, l(Q)^2 \varphi\big(|v(x)|\,l(Q)^2 \big) dx<\infty\right\},
$$
where $|Q|$ and $l(Q)$ are the volume and the side length of a cube $Q$, respectively,
$\varphi:[0,\infty[ \rightarrow [1,\infty[$ is an increasing function such that
$
\int_1^\infty \frac{dx}{x\varphi(x)}<\infty.
$
By \cite{CWW}, if $b \in \mathbf{W}_s$, then $b \in \mathbf{F}_{\delta}$, $\delta=\delta\big(\|b^2\|_{W_s}\big)<\infty$.

The class $\mathbf{W}_s$ contains, in particular, the vector fields $b$ with $|b|^2$ in the Campanato-Morrey class ($s>1$)
$$
\left\{v \in L_{\loc}^s: \biggl(\frac{1}{|Q|}\int_Q |v(x)|^s dx \biggr)^{\frac{1}{s}} \leq c_s l(Q)^{-2} \text{ for all cubes $Q$}\right\}
$$
(write $b \in \mathbf{C}_s$).
(For the complete diagram of the spaces of vector fields $b:\mathbb R^d \rightarrow \mathbb R^d$ considered in this paper in connection with the operator $-\Delta + b \cdot \nabla$ see section \ref{weak_fbd_sect}.)
\end{example}

\begin{remark*}
If $K$ is a positivity preserving linear operator on $L^p$ or $C_\infty$ 
(e.g.~a Markov semigroup or the resolvent of its generator),
then $$|K f| \leqslant K|f|, \quad (f \in L^p \text{ or } C_\infty, \text{respectively}).$$
This well known fact will be extensively used below. (For a proof, if needed, see \cite[Prop.\,1.5]{LS}.)
\end{remark*}

\label{standard_sect}
\label{Lr_sect} 

If $b \in \mathbf{F}_\delta(A), \; 0<\delta < 1$, then the Hille-Lions theory (section \ref{unif_elliptic_2}) yields the following: In $L^2$, there exists an operator realization $\Lambda \equiv \Lambda(b)$ of the formal differential operator $A + b \cdot \nabla$   such that $D(\Lambda) \subset D(A^\frac{1}{2}), \; \rho(-\Lambda) \supset \mathcal{O} := \{\zeta \in \mathbb C \mid \Real \zeta > \lambda_o = \frac{\lambda\delta}{2(1+\sqrt{1-\delta})}\}.$ If $\zeta \in \mathcal O,$ then
 \[
 R_\zeta = R_{a,\zeta}^\frac{1}{2} (1 + T)^{-1} R_{a,\zeta}^\frac{1}{2}, \\
\tag{$i_0$} 
 \]
 where $R_\zeta \equiv R_\zeta(b) := (\zeta + \Lambda)^{-1}, \;\; R_{a,\zeta} :=(\zeta +A)^{-1}, \; T := R^\frac{1}{2}_{a,\zeta} b \cdot \nabla R^\frac{1}{2}_{a,\zeta}.$
 \[
 e^{-t\Lambda(b_n)} \overset{s}\rightarrow e^{-t\Lambda} \text{ as } n \uparrow \infty,\tag{$ii_0$}
 \]
 \[
(1 + A)^\frac{1}{2} \big(R_\zeta(b_n) - R_\zeta(b) \big) \overset{s} \rightarrow 0  \quad \text{ as } n \uparrow \infty, \tag{$iii_0$}
\]
 where $b_n:=\mathbf{1}_n b,$ $\mathbf{1}_n$ is the indicator of $\{x \in \Omega: b_a(x) \leq n\}$. 
\begin{remark*}
 \[
e^{-t(\lambda_o +\Lambda)} \text{ is a Markov semigroup. }
 \tag{$i$}
 \]
For all $2 \leq r < q \leq \infty$ and $t > 0,$ 
\[
\|e^{-t \Lambda_r} \|_{r \rightarrow q} \leq c \; e^{\frac{2}{r} \lambda_o t} t^{-\frac{d}{2} (\frac{1}{r}-\frac{1}{q})}. \tag{$ii$} 
\]
Indeed, $(i)$ follows from $(ii_0)$ and the results of section \ref{strong_bdd_sect}.2. Also, slightly modifying the arguments from section \ref{strong_bdd_sect}.2, it is not difficult to verify directly that $\lambda_o + \Lambda(b)$ obeys the conditions of R. Phillips and G. Stampacchia. In turn, $(i_0) + (i)$ entails $(ii)$ by modifying the arguments from the proof of Theorem \ref{thm:markweak} below. 
\end{remark*}

Now, we develop the semigroup theory of $-\nabla \cdot a \cdot \nabla + b \cdot \nabla$ in the $L^r \equiv L^r(\Omega, \mathcal{L}^d)$ spaces in the case $0 < \delta < 4.$ 

\begin{theorem}
\label{thm:markeast}
Assume that $b \in \mathbf{F}_\delta(A)$ for some $0 < \delta < 4.$ If $1 \leq \delta < 4$ also assume that $b_a \in L^2 + L^\infty.$ Set $r_\delta := \frac{2}{2-\sqrt{\delta}}.$ Then $\nabla \cdot a \cdot \nabla - b \cdot \nabla$ has an operator realization  $- \Lambda_r(b)$ in $L^r$ for any $r \in I_c := [r_\delta, \infty[$ as the generator of a positivity preserving, $L^\infty$ contraction, quasi contraction $C_0$ semigroup on $L^r$. In full:

Let $\mathbf 1_n$ denote the indicator of $\{x \in \Omega \mid  \; b_a(x) \leq n \}$ and set $b_n := \mathbf 1_n b.$ Then
\[
e^{-t \Lambda_r(b)} := s\mbox{-}L^r\mbox{-}\lim_{n\rightarrow \infty} e^{-t \Lambda_r(b_n)}  \quad (r \in I_c^o := ]r_\delta, \infty [ ); \tag{$\ast$}
\]
\[
e^{-t \Lambda_{r_\delta}(b)} := \bigg[e^{-t \Lambda_r(b)} \upharpoonright L^1 \cap L^r \bigg]^{clos}_{L^{r_\delta} \rightarrow L^{r_\delta} } \quad \quad (  r \in I_c^o ); \tag{$\ast\ast$}
\]
\[
\|e^{-t \Lambda_r(b)} \|_{r \rightarrow r} \leq e^{t \omega_r}, \quad \omega_r = \frac{\lambda \delta}{2(r-1)}. \quad (r \in I_c );
\]
There is a constant $c = c(\delta, d)$ such that the $(L^r, L^q)$ estimate
\[
\|e^{-t \Lambda_r(b)} \|_{r \rightarrow q} \leq c \; e^{t \omega_r} \; t^{-\frac{d}{2} (\frac{1}{r}-\frac{1}{q})} \tag{$\ast\ast\ast$} 
\]
is valid for all $r_\delta \leq r < q \leq \infty, \;t > 0.$

For each $r \in I_c^o, \; e^{-t\Lambda_r(b)}$ is a holomorphic semigroup of quasi contractions on the sector
\[
|\arg t | \leq \frac{\pi}{2}-\theta_r, \;0 < \theta_r < \frac{\pi}{2}, \; \tan \theta_r \leq \mathcal K(2- r^\prime \sqrt{\delta})^{-1}, \tag{$\ast\ast\ast\;\ast$}
\]
where $\mathcal K = \frac{|r-2|}{\sqrt{r-1}} +r^\prime \sqrt{\delta}$ if $r\leq 2r_\delta$ and $\mathcal K=\frac{r-2 +r\sqrt{\delta}}{\sqrt{r-1}}$ if  $ r>2r_\delta.$  
\end{theorem}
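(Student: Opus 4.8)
\medskip

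\noindent\emph{Proof (plan).} We obtain $\Lambda_r(b)$ as a limit of the regular operators $\Lambda_r(b_n)$, $b_n=\mathbf 1_nb$, and carry every asserted estimate through that limit; $\delta<4$ enters only through the finiteness of $r_\delta$ and the $L^r$-coercivity it produces for $r>r_\delta$, whereas $b_a^2\in L^1+L^\infty$ (imposed when $1\le\delta<4$) is used \emph{only} in the passage to the limit. Since $b_{n,a}\le b_a$ pointwise, each $b_n$ belongs to $\mathbf F_\delta(A)$ with the \emph{same} $\delta$ and $\lambda$ and has $b_{n,a}\in L^\infty$; hence Proposition~\ref{thm:markwest} (the case $b_{n,a}^2\in L^\infty$), Section~\ref{strong_bdd_sect}.2, and the Markov-semigroup machinery of Section~\ref{markov_gen_sect}.1 furnish, for each fixed $n$, a consistent family $\{e^{-t\Lambda_r(b_n)}\}_{1\le r<\infty}$ of positivity-preserving, $L^\infty$-quasi-contraction $C_0$ semigroups with $\Lambda_r(b_n)\supset A_r+B_r(b_n)$. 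All that follows is a uniform-in-$n$ statement about this family.

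\smallskip

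\noindent\emph{Uniform $L^r$ dissipativity and sectoriality.} Fix $r\in I_c$ and take $0\le u\in D(A_r)\cap L^\infty$. Then $v:=u^{r/2}\in D(A^{1/2})$ by Theorem~\ref{thm:markovest}, one has $|b_n\cdot\nabla u|\le b_{n,a}|a^{1/2}\nabla u|$, $\langle A_r u,u^{r-1}\rangle=\frac{4(r-1)}{r^2}\langle A^{1/2}v,A^{1/2}v\rangle$, and $\langle b_{n,a}^2 v,v\rangle\le\delta\langle(\lambda+A)v,v\rangle$; Cauchy--Schwarz with a single free parameter, optimised, then gives
\[
\bigl|\Real\langle b_n\cdot\nabla u,\,u|u|^{r-2}\rangle\bigr|\ \le\ \frac{\sqrt\delta\,r}{2(r-1)}\,\langle A_r u,u|u|^{r-2}\rangle\ +\ \frac{\sqrt\delta\,\lambda}{r}\,\|u\|_r^r .
\]
Since $\tfrac{\sqrt\delta\,r}{2(r-1)}\le1$ for $r\ge r_\delta$ (with equality exactly at $r=r_\delta$) and $\tfrac{\sqrt\delta\,\lambda}{r}\le\omega_r$, the operator $\omega_r+\Lambda_r(b_n)$ is accretive in $L^r$ for every $n$, so $\|e^{-t\Lambda_r(b_n)}\|_{r\to r}\le e^{\omega_r t}$; letting $r\uparrow\infty$ (so $\omega_r\downarrow0$) and using consistency produces an $L^\infty$-contraction member. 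The same computation for complex $u$ bounds $\bigl|\Imag\langle(\omega_r+\Lambda_r(b_n))u,u|u|^{r-2}\rangle\bigr|$ by $\tan\theta_r$ times the corresponding real part, with $\theta_r$ and $\mathcal K$ precisely as in $(\ast\ast\ast\;\ast)$ --- the dichotomy $r\lessgtr 2r_\delta$ and the two forms of $\mathcal K$ arising from the two ways of splitting the parameter between the $b_a$-term and the $A$-term and from the sign of $r-2$ in $\Imag\langle A_r u,u|u|^{r-2}\rangle$ --- so $\omega_r+\Lambda_r(b_n)$ is $m$-sectorial of half-angle $\theta_r$, uniformly in $n$. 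Lastly, $\sigma I\le a$ makes $e^{-tA}$ ultracontractive; feeding the Sobolev inequality $\|w\|_{2j}^2\lesssim\langle A^{1/2}w,A^{1/2}w\rangle$ (with $w=|e^{-t\Lambda_r(b_n)}f|^{r/2}$, $j=\tfrac{d}{d-2}$) into the differential inequality for $t\mapsto\|e^{-t\Lambda_r(b_n)}f\|_r^r$ supplied by the dissipativity estimate and integrating (Nash--Moser iteration, anchored at one index so that the constant stays uniform in $r$) yields $(\ast\ast\ast)$ with $c=c(\delta,d)$, again uniformly in $n$.

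\smallskip

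\noindent\emph{Passage to the limit --- the main obstacle.} It remains to show that $e^{-t\Lambda_r(b_n)}$ converges strongly in $L^r$ for $r\in I_c^o$. For $0<\delta<1$ one reduces to $L^2$: $2\in I_c^o$, the Hille--Lions construction of Section~\ref{unif_elliptic_2} provides $\Lambda_2(b)$ together with $(ii_0)$, and $(\ast\ast\ast)$ read at $r=2$ propagates the convergence over the whole of $I_c^o$ by interpolation and duality. For $1\le\delta<4$ this is the crux, and $(\ast)$ serves as the definition of $e^{-t\Lambda_r(b)}$: for $m>n$ one has, on the common domain, the resolvent identity
\[
(\zeta+\Lambda_r(b_n))^{-1}-(\zeta+\Lambda_r(b_m))^{-1}=(\zeta+\Lambda_r(b_n))^{-1}\,(b_m-b_n)\cdot\nabla\,(\zeta+\Lambda_r(b_m))^{-1},\qquad\Real\zeta>\omega_r,
\]
with $b_m-b_n$ supported in $\{b_a>n\}$; the hypothesis $b_a^2\in L^1+L^\infty$ forces $\mathcal L^d\{b_a>n\}\to0$ and $\mathbf 1_{\{b_a>n\}}b_a\to0$ in $L^\rho$ for every $\rho\le2$, which, together with the uniform $(L^r,L^q)$-smoothing of the perturbed resolvents (equivalently, with a bounded-operator factorisation $(\zeta+\Lambda_r(b))^{-1}=Q(1+T)^{-1}G$ with $\|T\|_{r\to r}<1$ for $r>r_\delta$, analogous to $(i_0)$, in which the truncated field enters only through a $b_a$-dependent bounded operator), drives the right-hand side to $0$ and makes $\{(\zeta+\Lambda_r(b_n))^{-1}\}_n$ Cauchy in $\mathcal B(L^r)$; its limit is the resolvent of the sought generator $-\Lambda_r(b)$. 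Consequently $e^{-t\Lambda_r(b_n)}\to e^{-t\Lambda_r(b)}$ strongly, uniformly on bounded $t$-intervals, and --- by Fatou along a.e.-convergent subsequences --- positivity preservation, the $L^r$ quasi-contraction bound, $(\ast\ast\ast)$, and sectoriality all survive, whence the semigroup property, consistency on $I_c^o$, strong continuity, and holomorphy of $e^{-t\Lambda_r(b)}$ in the sector $(\ast\ast\ast\;\ast)$. The single genuinely delicate point is the bookkeeping when $r>2$: there $(\zeta+\Lambda_r(b))^{-1}$ is a \emph{proper} extension of $A_r+B_r(b)$, with no pointwise control of $\nabla$ on its range, so one cannot naively run H\"older on $(b_m-b_n)\cdot\nabla(\zeta+\Lambda_r(b_m))^{-1}$ in $L^r$; the gain of integrability in the perturbed resolvents (or the factorisation above) has to be exploited, or else the limit identified by a compactness argument whose uniqueness rests on the $L^r$-dissipativity established above.

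\smallskip

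\noindent\emph{The endpoint $r=r_\delta$.} For $r\in I_c^o$ and $f\in L^1\cap L^r\subset L^{r_\delta}$, the operator $e^{-t\Lambda_r(b)}\!\upharpoonright\!L^1\cap L^r$ is the $L^r$-limit of $e^{-t\Lambda_r(b_n)}\!\upharpoonright\!L^1\cap L^r$, which by consistency of the $n$-th family equals $e^{-t\Lambda_{r_\delta}(b_n)}\!\upharpoonright\!L^1\cap L^r$; the latter is $L^{r_\delta}$-bounded by $e^{\omega_{r_\delta}t}$, since the dissipativity estimate of the second step, though its $\langle A_{r_\delta}u,\cdot\rangle$-term degenerates at $r=r_\delta$, still holds there (where $\tfrac{\sqrt\delta\,\lambda}{r_\delta}=\omega_{r_\delta}$). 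Fatou then gives $\|e^{-t\Lambda_r(b)}f\|_{r_\delta}\le e^{\omega_{r_\delta}t}\|f\|_{r_\delta}$, so the $L^{r_\delta}$-closure $(\ast\ast)$ is a well-defined positivity-preserving $C_0$ semigroup (H\"older interpolation giving strong continuity), and $(\ast\ast\ast)$ at $r=r_\delta$ follows by interpolation and duality against indices of $I_c^o$.
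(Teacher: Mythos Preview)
Your overall architecture --- uniform dissipativity for the truncations $b_n$, Nash-type smoothing, sectoriality, then passage to the limit --- is exactly the paper's. Two points, one minor and one substantive.

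\medskip

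\textbf{Minor.} Your inequality $\tfrac{\sqrt\delta\,\lambda}{r}\le\omega_r$ is false for $r>r_\delta$ (it reverses there; equality holds at $r=r_\delta$). Your single-parameter optimisation of Cauchy--Schwarz (effectively $\varepsilon=1/\sqrt\delta$) yields the quasi-contraction constant $\tfrac{\lambda\sqrt\delta}{r}$, not $\omega_r=\tfrac{\lambda\delta}{2(r-1)}$. To hit the stated $\omega_r$ you must choose $\varepsilon=r'/2$ instead: then $\varepsilon\delta+\varepsilon^{-1}\le\tfrac{4}{r'}$ for $r\ge r_\delta$ exactly, and the lower-order term is $\varepsilon c(\delta)=\tfrac{r'}{2}\lambda\delta=r\omega_r$. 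This is purely computational and does not affect the strategy.

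\medskip

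\textbf{Substantive.} Your convergence step for $1\le\delta<4$ has a genuine gap, which you yourself flag. The resolvent identity route requires controlling $(b_m-b_n)\cdot\nabla(\zeta+\Lambda_r(b_m))^{-1}$ in $L^r$ with $r>2$, and neither the $(L^r,L^q)$-smoothing nor an $L^r$-factorisation of the type $(i_0)$ is available here (the latter is established only in $L^2$ and only for $\delta<1$). ``Compactness plus uniqueness from dissipativity'' is not a proof either: you have not produced a candidate limit, and weak compactness in $L^r$ does not by itself identify one.

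The paper avoids this entirely by working at the semigroup level with data $h\in L^1\cap L^\infty$. Set $u_n=e^{-t\Lambda_r(b_n)}|h|$, $g=u_m-u_n$; then $-\partial_t g=Ag+b_m\cdot\nabla g+(b_m-b_n)\cdot\nabla u_n$. Multiply by $g|g|^{r-2}$ and estimate the cross term as
\[
r\,\bigl\langle|\mathbf 1_n-\mathbf 1_m|\,b_a\sqrt{\nabla u_n\cdot a\cdot\nabla u_n}\,|g|^{r-1}\bigr\rangle
\ \le\ r\,\|g\|_\infty^{\,r-1}\,\langle(\mathbf 1_n-\mathbf 1_m)V_1\rangle^{1/2}\,\langle Au_n,u_n\rangle^{1/2},
\]
where $b_a^2=V_1+V_\infty$ with $V_1\in L^1$. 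Two ingredients close this: (i) $\|g\|_\infty\le 2\|h\|_\infty$, which absorbs the awkward exponent $r-1$ without any $L^r$ gradient control; and (ii) the $L^2$ energy bound $\int_0^T e^{-ct}\langle Au_n,u_n\rangle\,dt\le\|h\|_2^2+T\|V_1\|_1\|h\|_\infty^2$, obtained by testing the equation for $u_n$ against $u_n$ itself. Gronwall then gives $\sup_{t\in[0,T]}\|g(t)\|_r^{2r}\le C\,\|(\mathbf 1_n-\mathbf 1_m)V_1\|_1\to 0$. Thus the hypothesis $b_a^2\in L^1+L^\infty$ enters precisely through $\|(\mathbf 1_n-\mathbf 1_m)V_1\|_1\to 0$, and the gradient of $u_n$ is controlled only in $L^2$ (via the energy), never in $L^r$. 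This is the missing idea in your proposal.
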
 
  
\begin{remarks} 1. The additional assumption $b_a \in L^2 + L^\infty$ in case $1 \leq \delta <4 $, for a general  $a \in (H_1)$, seems to be close to optimal. For more regular $a$'s no such assumption is needed. See Theorem \ref{thm:markease} below and the remark after.

2. The interval $I_c = [\frac{2}{2-\sqrt{\delta}}, \infty[$ is called the interval of contraction solvability.

 The interval $I_m := ]\frac{2}{2-\frac{d-2}{d}\sqrt{\delta}}, \infty[$ is called the interval of bounded solvability. We will show below (Theorem \ref{thm:marksolve}) that if the matrix $a$ is uniformly elliptic, then, for any $r \in I_m - I_c,$ one can still define $e^{-t \Lambda_r}$ as a quasi bounded holomorphic semigroup. We also note that $I_m \ni d,$ whilst $I_c \ni d$ whenever $\sqrt{\delta} \leq 2\frac{d-1}{d}.$

3. The example of $\Lambda_r(b) \supset L=-\Delta +b\cdot\nabla, \; b(x)=c|x|^{-2}x, \; c=\frac{d-2}{2}\sqrt{\delta} \;(\delta <4)$ in $\mathbb R^d, d\geq 3,$ shows that both intervals $I_c$, $I_m$ are maximal. (Indeed, it is not difficult to see, appealing to Hardy's inequality, that $\Lambda_r(b)$ ceases to be quasi accretive for any $r \not\in I_c$. For the proof of the maximality of $I_m$, see remark after the proof of Theorem \ref{thm:marksolve}.)
 Any $c < d-2$ is admissible according to Theorem \ref{thm:markeast}. In turn, $c = d-2$ ($\Leftrightarrow$ $\delta=4$) makes $I_c =\varnothing,$ while $c = d$ makes $I_m =\varnothing$ even formally ($c=d \Rightarrow \delta = 4\big(\frac{d}{d-2}\big)^2 > 4).$
Note, however, that as $c \uparrow d-2$ the interval $I_m \downarrow \;\; ]\frac{d}{2},\infty[$.
 
In $\Omega =B(0,1):=\{x\in \mathbb R^d \mid |x|<1 \}$ consider the Dirichlet problem 
\[
\label{D}
L u = 0, \quad u= 0 \text{ on } \partial \Omega. \tag{$D$} 
\]
Obviously, $u_1 =0, \;u_2 = |x|^\alpha -1, \; \alpha = c - (d-2),$ are solutions of \eqref{D}; $u_2 \in L^r$ for all $0 < r < \frac{d}{- \alpha} = \frac{2}{2-\sqrt{\delta}}\frac{d}{d-2} \equiv r_\delta j.$ In particular, $u_2 \in L^r$ for all $ r_\delta < r < r_\delta j,$ and satisfies the maximum principle. If $u_2$ would be a solution to $\Lambda_r v =0$ (equivalently, $e^{-t\Lambda_r}v=v$) for some $r \in I_c^o,$ then $u_2$ should belong to $L^\infty.$ Thus $u_2 \notin D(\Lambda_r), r \in I_c.$

 If $\delta > 4,$ then $\alpha > 0,$ and so the problem $(D)$ has two bounded weak solutions. By the way, now $u_2$ does not satisfy the maximum principle. See also example in section \ref{nondiv_sect}.
 
 \textit{Thus, the assumption $\delta>4$ (even for $a=I$) destroys the uniqueness of (accordingly defined) weak solutions.}

We refer to \cite{FL} where the critical case $\delta=4$ in dimension $d=3$ is studied.  See also \cite{Met} concerning the case $\delta>4$. In \cite{FL} it is demonstrated that already for $\delta=4$ the properties of the constructed semigroup are drastically different from the properties of $e^{t\Delta}$ and $e^{t\Lambda_r}$, $\Lambda_r \supset -\Delta + b \cdot \nabla$, with $b \in \mathbf{F}_\delta(-\Delta)$, $\delta<4$. 
\end{remarks}

\begin{proof}
1. First of all, let us prove that for all $r \in I_c, \; n \geq 1, \; t \geq 0,$
\[
\|e^{-t \Lambda_r(b_n)} \|_{r \rightarrow r} \leq e^{\omega_r t}. \tag{$\star$}
\]
Set $u \equiv u_n := e^{-t \Lambda_r(b_n)} |h| = e^{-t \Lambda_{2 r}(b_n)} |h|, \; h \in L^1 \cap L^\infty.$ By the results of section \ref{strong_bdd_sect}.2, $u \in D(A) \cap L_+^\infty.$ 
If $r \geq 2,$ then by Theorem \ref{thm:markovest} (Appendix \ref{markov_sect}), $u, u^\frac{r}{2}, u^r \in D(A^\frac{1}{2}),$  and since $D(A^\frac{1}{2}) \subset E$ ($E=W_0^{1,2}$ if $A=A_D \text{ or } A_{iD},$ and $E=W^{1,2}$ if $A=A_N$), $\nabla u^\frac{r}{2} = \frac{r}{2} u^{\frac{r}{2}-1}  \nabla u$ because $r \geq 2$ and $u \in L^\infty$;  if $1 < r \leq2,$ then $u^{r-1} \nabla u = \frac{1}{r} \nabla u^r = \frac{2}{r} u^\frac{r}{2}\nabla u^\frac{r}{2}$ because $u \in D(A_r)$ and $u^{r-1}, u^\frac{r}{2} \in L^\infty.$ 

Noticing that $\Lambda_r(b_n)u=A_ru +b_n\cdot \nabla u$ if $1<r \leq 2,$ and $\Lambda_r(b_n)u=\Lambda_2(b_n)u =Au +b_n\cdot \nabla u,$ $Au \in L^2, u^{r-1} \in L^2, \; \langle Au,u^{r-1} \rangle \geq \frac{4}{r r^\prime}\|A^\frac{1}{2}u^\frac{r}{2}\|_2^2$ if $r \geq 2,$ we have
\[
-\big \langle \frac{d}{d t} u, u^{r-1} \big \rangle = \big \langle \Lambda_r(b_n) u, u^{r-1} \big \rangle = \big \langle A_r u, u^{r-1} \big \rangle + \big \langle b_n \cdot \nabla u, u^{r-1} \big \rangle, \;\; 1<r \leq 2,
\]
\[
-\big \langle \frac{d}{d t} u, u^{r-1} \big \rangle = \big \langle \Lambda_r(b_n) u, u^{r-1} \big \rangle = \big \langle A u, u^{r-1} \big \rangle + \big \langle b_n \cdot \nabla u, u^{r-1} \big \rangle, \;\; r \geq 2.
\]
Therefore,
\[
-\frac{d}{d t} \| u \|^r_r \geq \frac{4}{r^\prime} \big \langle A^\frac{1}{2} u^\frac{r}{2},A^\frac{1}{2} u^\frac{r}{2} \big \rangle + 2 \big \langle u^\frac{r}{2} b_n \cdot \nabla u^\frac{r}{2} \big \rangle .
\]
Using the conditions $r \in I_c, \; b \in \mathbf{F}_\delta(A)$ and completing the quadratic estimate
\begin{align*}
2 |\big \langle u^\frac{r}{2} b_n \cdot \nabla u^\frac{r}{2} \big \rangle | & \leq \varepsilon \|b_a u^\frac{r}{2} \|_2^2 + \varepsilon^{-1} \| A^\frac{1}{2} u^\frac{r}{2} \|^2_2 \\
& \leq (\varepsilon \delta + \varepsilon^{-1} )\| A^\frac{1}{2} u^\frac{r}{2} \|^2_2 +\varepsilon c(\delta) \|u\|_r^r,
 \end{align*}
we obtain (choosing $\varepsilon = \frac{r^\prime}{2}$ and taking into account that $\sqrt{\delta} \leq \frac{2}{r^\prime}$ for $r \in I_c$)
\[
2 |\big \langle u^\frac{r}{2} b_n \cdot \nabla u^\frac{r}{2} \big \rangle | \leq \frac{4}{r^\prime} \| A^\frac{1}{2} u^\frac{r}{2} \|^2_2 + \frac{c(\delta)r^\prime}{2} \|u\|_r^r.
\]
Thus $\frac{d}{d t} \|u\|_r^r \leq \frac{c(\delta)r^\prime}{2} \|u\|_r^r.$ The desired bound $(\star)$ follows.

Since the mapping $I_c \ni p \rightarrow \|f \|_p, \; f \in L^1 \cap L^\infty$ is continuous, $(\star)$ yields the bound
\[
\|e^{-t \Lambda_r(b_n)} h \|_\infty \leq \|h\|_\infty \quad \quad (h \in L^1\cap L^\infty ).
\]
Next, if $\delta <1,$ then $u_n \overset{s}\rightarrow u,$ and hence $u_n \rightarrow u$ strongly in $L^r, r \in I^o_c.$

2. To justify $(\ast)$ when $1 \leq \delta < 4,$ we use the direct method: setting $g = u_m - u_n,$ we will prove that, for each $r \in I_c^o, \; \|g \|_r \rightarrow 0$ as $n, m \uparrow \infty$ uniformly on $t\in [0,1].$ Obviously, the latter combined with $(\star)$ will yield $(\ast)$.

Since $-\frac{d}{d t} g = A g + b_m \cdot \nabla g + (b_m-b_n) \cdot \nabla u, \; u \equiv u_n,$  we have, multiplying both sides of the equation by $g |g|^{r-2},$ integrating over $\Omega,$ using the assumptions $b \in \mathbf{F}_\delta(A), \; b_a^2 = V_1 + V_\infty, \; V_1 \in L^1,$ and $ r > r_\delta,$
\begin{align*}
\frac{d}{d t} \|g\|_r^r & \leq \frac{c(\delta)}{\sqrt{\delta}} \|g\|^r_r + r \langle | \mathbf{1}_n -\mathbf{1}_m | b_a \sqrt{\nabla u \cdot a \cdot \nabla u} |g|^{r-1} \rangle \\
& \leq \frac{c(\delta)}{\sqrt{\delta}} \|g\|^r_r + r \langle |\mathbf{1}_n -\mathbf{1}_m | V_1 \rangle^\frac{1}{2} \|g\|_\infty^{r-1} \langle A u, u \rangle^\frac{1}{2}.
\tag{$\bullet$}
\end{align*}
To get a suitable bound on $\|g\|_r$ we need the following estimates
\[
\int_0^T e^{-c t} \langle A u, u \rangle d t \leq \|h\|_2^2 + T \|V_1 \|_1 \|h\|_\infty^2 \qquad (\;c := \|V_\infty \|_\infty).
\]
Evidently, $-\frac{d}{d t} \|u\|^2_2 = 2 \langle A u, u \rangle + 2 \langle u b_n \cdot \nabla u \rangle \geq \langle A u, u \rangle - \langle b_a^2 u^2 \rangle.$ Setting $\psi(t)= e^{-ct} \|u\|_2^2,$ and using the bound $\|u\|_\infty \leq \|h\|_\infty,$ we have $e^{-ct} \langle A u, u \rangle \leq - \frac{d}{d t}\psi(t) + e^{-c t} \|V_1 \|_1 \|h\|_\infty^2,$ and so  $\int_0^T e^{-c t} \langle A u, u \rangle d t \leq \psi(0) + \frac{1-e^{-cT}}{c}\|V_1 \|_1 \|h\|_\infty^2 \leq \|h\|_2^2 + T\|V_1 \|_1 \|h\|_\infty^2$. The required estimate is proved.
 
 Now using this estimate, the bound $\|g\|_\infty \leq 2 \|h\|_\infty$ and the equality $g |_{t=0}=0$ one can easily integrate $(\bullet)$, obtaining $\sup_{t \in [0, T]} \|g(t) \|_r^{2 r} \leq C \|(\mathbf{1}_n -\mathbf{1}_m )V_1\|_1,$ where the constant $C$ depends on $T, r, \delta, \|h\|_2, \|h\|_\infty,\|V_1\|_1, \|V_\infty \|_\infty$ only. The latter makes $\|g\|_r \rightarrow 0$ as $n, m \uparrow \infty$ uniformly in $t \in [0, T].$ Thus the $s\mbox{-} L^r\mbox{-} \lim_n e^{-t \Lambda_r(b_n)}$ does define a $C_0$ semigroup. We have proved $(\ast)$. Now it should be clear that
\[
\|e^{-t \Lambda_r(b)} f \|_{r_\delta} \leq e^{t \omega_{r_\delta}} \|f\|_{r_\delta}\quad \quad (r \in I^0_c, \; f \in L^1 \cap L^\infty )\\ 
\tag{$\star^\prime$}
\]
and $e^{-t \Lambda_{r_\delta}(b)}$ defined by $(\ast\ast)$ is indeed a semigroup.
The fact that it is strongly continuous follows from $(\star^\prime)$ and $ \|f - e^{-t \Lambda_r(b)} f \|_r \rightarrow 0$ as $t \downarrow 0$ for any $r \in I_c^o$ and any $f\in L^1 \cap L^\infty$ by employing Fatou's Lemma. 

3. A proof of $(\ast\ast\ast)$ presented below is based on the embedding property of $A^\frac{1}{2}$ and $(\star),$ the $L^r$ quasi contraction property of $e^{- t \Lambda_r(b_n)}.$ In view of $(\ast)$, it suffices to treat the case $\Lambda(b_n).$ We have for $u := e^{-t \Lambda(b_n)} f, \; f \in L^1_+ \cap L^\infty, \; r = 2 r_\delta,$
\[
-\frac{d}{d t} \|u\|^r_r \geq \bigg( \frac{4}{r^\prime} - 2 \sqrt{\delta} \bigg) \|A^\frac{1}{2} u^\frac{r}{2}\|_2^2 - \frac{c(\delta)}{\sqrt{\delta}} \|u\|_r^r.
\]
Note that $\frac{2}{r^\prime} > \sqrt{\delta}.$ Using the Nash inequality $\|A^\frac{1}{2} h \|_2^2 \geq C_N \|h\|_2^{2 + \frac{4}{d}} \|h\|_1^{-\frac{4}{d}}$ and $(\star^\prime),$ we have setting $w := \|u\|_r^r,$ 
\[
\frac{d}{dt} w^{-\frac{2}{d}} \geq - c_2 w^{-\frac{2}{d}} +c_1 e^{-c_3 t} \|f\|_{r_\delta}^{-\frac{2r}{d}},
\]
where $c_1 = C_N \frac{2}{d} \big( \frac{4}{r^\prime} - 2 \sqrt{\delta} \big), \; c_2 = \frac{2}{d}\frac{c(\delta)}{\sqrt{\delta}}, \; c_3 = \frac{2 }{d} \omega_{r_\delta} 2r_\delta.$

Integrating this inequality yields
\[
\|e^{-t \Lambda_{r_\delta}(b_n)} \|_{r_\delta \rightarrow 2 r_\delta} \leq c_1^{-\frac{d}{4 r_\delta}} e^{\omega_{r_\delta}t} t^{-\frac{d}{2} (\frac{1}{r_\delta} - \frac{1}{2 r_\delta})}, \quad t > 0. \tag{$\star\star$}
\]
By duality, $\|e^{-t \Lambda^*_{r_\delta}(b_n)} \|_{(2r_\delta)^\prime \rightarrow r_\delta^\prime} \leq$ the RHS of $(\star\star)$, and 
$$\|e^{-t \Lambda^*_{r_\delta}(b_n)} \|_{1 \rightarrow r_\delta^\prime} = \|e^{-t \Lambda_{r_\delta}(b_n)} \|_{r_\delta \rightarrow \infty} \leq C_1(d, r_\delta) e^{\omega_{r_\delta}t}t^{-\frac{d}{2} \frac{1}{r_\delta}}$$
by the Extrapolation Theorem (Appendix \ref{appendix_B}).  Interpolating the latter, we arrive at the desired  estimate.

4.~Proof of $(\ast\ast\ast\;\ast).$ Set $z :=2\lambda \sqrt{\delta}.$  We need to prove that
 \[
 |\Imag\, \langle \Lambda_r(b)u, u|u|^{r-2} \rangle|\leq \mathcal K (2-\sqrt{\delta}r^\prime)^{-1}\Real\, \langle(z+\Lambda_r(b))u, u|u|^{r-2}\rangle, \qquad u \in D(\Lambda_r(b)), \; r \in I_c^o. \tag{$\bullet$}
 \]
 a) First we prove $(\bullet)$ for $b=b_n$ and $u=u_n \in D(\Lambda_r(b_n)),$ where $b_n := b \mathbf 1_n, \mathbf 1_n$ is the indicator of $\{x\in \Omega \mid b_a(x)\leq n \}.$ Note that $D(A_r)\cap L^\infty$ is a core of $\Lambda_r(b_n)$ for $r \in \{r\leq 2\}\cap I_c^o,$ and $D(A)\cap L^\infty$ is a core of $\Lambda(b_n)$ for $r \in \{r\geq 2\}\cap I_c^o.$ Thus, we will take $u_n$ from these cores. Then $\Lambda_r(b_n)u_n = \tilde A u_n + b_n\cdot \nabla u_n,$ where $\tilde A v = A_r v$ if $v \in D(A_r)\cap L^\infty$ and $\tilde A v = A v$ if $v \in D(A)\cap L^\infty.$ 
 
Using the assumption $b_n \in \mathbf F_\delta(A),$ inequality $\|A^\frac{1}{2} |u|^\frac{r}{2} \|_2 \leq \|A^\frac{1}{2} u|u|^{\frac{r}{2}-1} \|_2,$ equality $X\equiv \langle \nabla u \cdot a \cdot \nabla \bar u, |u|^{r-2} \rangle = \Real\, \langle \tilde Au, u|u|^{r-2} \rangle -\frac{4(r-2)}{r^2}\|A^\frac{1}{2}|u|^\frac{r}{2} \|_2^2,$ and completing quadratic estimates we obtain 
\begin{align*}
|\Real\, \langle b_n\cdot \nabla u, u|u|^{r-2}\rangle| & = |\langle b_n\cdot \nabla |u|, |u|^{r-1}\rangle| \leq \frac{2}{r}  \|b_a |u|^\frac{r}{2}\|_2 \|A^\frac{1}{2} |u|^\frac{r}{2}\|_2\\
&\leq \frac{2\sqrt{\delta}}{r}\|A^\frac{1}{2} |u|^\frac{r}{2}\|_2^2 +\frac{\lambda \sqrt{\delta}}{r} \|u\|_r^r;\\
|\Imag\, \langle b_n\cdot \nabla u, u|u|^{r-2}\rangle| &\leq |\langle b_n\cdot \nabla u, u|u|^{r-2}\rangle| \leq \|b_a |u|^\frac{r}{2}\|_2 X^\frac{1}{2}\\
&\leq \frac{1}{4\varepsilon}\Real\, \langle \tilde Au, u|u|^{r-2} \rangle +\bigg(\delta \varepsilon-\frac{r-2}{r^2\varepsilon}\bigg)\|A^\frac{1}{2} |u|^\frac{r}{2} \|_2 ^2 + \lambda \delta \varepsilon \|u\|_r^r, \quad (\varepsilon>0);
\end{align*}
\begin{align*}
\text{RHS} &\equiv \Real \langle \big(z + \Lambda_r(b_n)\big)u,u|u|^{r-2}\rangle \\
&\geq \Real \langle \tilde Au, u|u|^{r-2} \rangle -\frac{2\sqrt{\delta}}{r}\|A^\frac{1}{2} |u|^\frac{r}{2}\|_2^2 + \bigg(z -\frac{\lambda\sqrt{\delta}}{r} \bigg)\|u\|_r^r;\\
\text{RHS} &\geq  (1-\frac{r^\prime\sqrt{\delta}}{2})\Real\, \langle \tilde Au, u|u|^{r-2} \rangle + \bigg(z -\frac{\lambda\sqrt{\delta}}{r} \bigg)\|u\|_r^r \tag{$\star$};
\end{align*}
\begin{align*}
\text{LHS} \equiv &|\Imag\, \langle \Lambda_r(b_n)u, u|u|^{r-2}\rangle| \\
\leq &\bigg[\frac{|r-2|}{2\sqrt{r-1}}+\frac{1}{4\varepsilon} \bigg]\Real\, \langle \tilde Au, u|u|^{r-2} \rangle 
+\big(\delta\varepsilon -\frac{r-2}{r^2\varepsilon}\big)\|A^\frac{1}{2} |u|^\frac{r}{2} \|_2 ^2 +\lambda\delta\varepsilon \|u\|_r^r;\tag{$\star\star$}\\
\text{LHS}\leq &\bigg[\frac{|r-2|}{2\sqrt{r-1}}+\frac{1}{4\varepsilon} \bigg] \text{RHS}+ \bigg\{\bigg[\frac{|r-2|}{2\sqrt{r-1}}+\frac{1}{4\varepsilon} \bigg]\frac{2\sqrt{\delta}}{r}+\delta\varepsilon -\frac{r-2}{r^2\varepsilon}\bigg\}\|A^\frac{1}{2} |u|^\frac{r}{2} \|_2 ^2\\
-&\bigg\{\bigg[\frac{|r-2|}{2\sqrt{r-1}}+\frac{1}{4\varepsilon} \bigg](z- \frac{\lambda\sqrt{\delta}}{r})-\lambda\delta\varepsilon \bigg\}\|u\|_r^r \tag{$\star\star\star$}\\
\end{align*}
Note that $r \leq 2r_\delta$ makes $\frac{1}{4\varepsilon}\frac{2\sqrt{\delta}}{r} + \delta \varepsilon -\frac{r-2}{r^2\varepsilon} > 0$ for any $\varepsilon >0.$ Thus,
setting $\varepsilon=\frac{1}{r\sqrt{\delta}}$ and $z=2 \lambda \sqrt{\delta},$ we obtain from $(\star\star\star)$ and $(\star)$
\begin{align*}
\text{LHS}\leq &\bigg[\frac{|r-2|}{2\sqrt{r-1}}+\frac{1}{4\varepsilon} \bigg] \text{RHS}+ \bigg\{\bigg[\frac{|r-2|}{2\sqrt{r-1}}+\frac{1}{4\varepsilon} \bigg]\frac{2\sqrt{\delta}}{r}+\delta\varepsilon -\frac{r-2}{r^2\varepsilon}\bigg\}\frac{rr^\prime}{4}\Real\, \langle \tilde Au, u|u|^{r-2} \rangle \\
-&\bigg\{\bigg[\frac{|r-2|}{2\sqrt{r-1}}+\frac{1}{4\varepsilon} \bigg](z- \frac{\lambda\sqrt{\delta}}{r})-\lambda\delta\varepsilon \bigg\}\|u\|_r^r\\
\leq & \bigg[\frac{|r-2|}{2\sqrt{r-1}}+\frac{1}{4\varepsilon} \bigg] \text{RHS}+\bigg\{\bigg[\frac{|r-2|}{2\sqrt{r-1}}+\frac{1}{4\varepsilon} \bigg]\frac{r^\prime\sqrt{\delta}}{2} +\big(\delta\varepsilon -\frac{r-2}{r^2\varepsilon}\big)\frac{rr^\prime}{4}\bigg\}(1-\frac{r^\prime\sqrt{\delta}}{2}\big)^{-1}\text{RHS}\\
= & \bigg(\frac{|r-2|}{2\sqrt{r-1}} +\frac{1}{4(r-1)}\big(\frac{1}{\varepsilon}+\delta r^2 \varepsilon \big) \bigg)2(2-r^\prime\sqrt{\delta})^{-1}\text{RHS}\\
=&\bigg[\frac{|r-2|}{\sqrt{r-1}}+r^\prime \sqrt{\delta}\bigg] (2-r^\prime \sqrt{\delta})^{-1}\text{RHS}. 
\end{align*}
If $r >2r_\delta,$ then setting $\varepsilon=\frac{\sqrt{r-1}}{r\sqrt{\delta}}$ and $z= 2\lambda \sqrt{\delta}$ we obtain from $(\star\star)$ and $(\star)$
\begin{align*}
\text{LHS}\leq &\bigg[\frac{r-2}{2\sqrt{r-1}}+\frac{1}{4\varepsilon} \bigg]\Real\, \langle \tilde Au, u|u|^{r-2} \rangle +\delta\varepsilon \|A^\frac{1}{2} |u|^\frac{r}{2} \|_2 ^2 +\lambda\delta\varepsilon \|u\|_r^r\\
\leq & \bigg[\frac{r-2}{2\sqrt{r-1}}+\frac{1}{4\varepsilon} +\delta \varepsilon\frac{rr^\prime}{4}\bigg]\Real\, \langle \tilde Au, u|u|^{r-2} \rangle +\lambda\delta\varepsilon \|u\|_r^r\\
\leq &\frac{r-2 +r\sqrt{\delta}}{\sqrt{r-1}}(2-r^\prime\sqrt{\delta})^{-1}\text{RHS},
\end{align*}
i.e.\,we have established ($\bullet$) for $b_n$ and $u_n$.

b) By a), the semigroups $e^{-t\Lambda_r(b_n)}$ are holomorphic and uniformly bounded on the sector
$S=:\{|\arg t | \leq \frac{\pi}{2}-\theta_r\}$. By ($\ast$), $u_n:=e^{-t\Lambda_r(b_n)}f \rightarrow u:=e^{-t\Lambda_r(b)}f$, $f \in L^r$, pointwise on the positive semi-axis.
Therefore, by the Privalov-Vitali Convergence Theorem $u$ is holomorphic in $S$
and $u(z)=s\mbox{-}L^r\mbox{-}\lim\mbox{-} u_n(z)$, $z \in S$. It follows that $u'(z)=s\mbox{-}L^r\mbox{-}\lim\mbox{-} u'_n(z)$, $z \in S$, and so $\Lambda_r(b_n)u_n \rightarrow \Lambda_r(b)u$ in $L^r$. Moreover, passing to a subsequence if needed, we have $g_n:=|u_n|^{r-2}u_n \rightarrow g:=|u|^{r-2}u$ $\mathcal L^d$ a.e. and $\|g_n\|_{r'}=\|u_n\|^{r-1}_r \rightarrow \|g\|_{r'}=\|u\|^{r-1}_r$. Thus, $g_n \rightarrow g$ in $L^{r'}$. It follows that in ($\bullet$) we can pass to the limit $n \rightarrow \infty$. This ends the proof of ($\ast\ast\ast\ast$). 
\end{proof}

\subsection{Uniformly elliptic case I}
\label{unif_elliptic_1}

We say that a matrix $a$ is uniformly elliptic and write $a \in (H_u)$ if $a\in (H_1)$ and there is a constant $\xi < \infty$ such that $a(x) \leq \xi I$ for $\mathcal L^d$ a.e $x \in \Omega.$

 In the uniformly elliptic case and $1 \leq \delta < 4$ the assumption $b_a \in L^2 + L^\infty$ is superfluous:

\begin{theorem}
\label{thm:markease}
Let $a \in (H_u), A=A_D$ or $A_N,$ and $b : \Omega \rightarrow \mathbb R^d, \; b \in \mathbf F_\delta(A)$ for some $1 \leq \delta < 4$.
Then the limit $$e^{-t\Lambda_r(b)}:=s\mbox{-} L^r \mbox{-} \lim_n e^{-t \Lambda_r(b_n)}, \quad r>r_\delta:=\frac{2}{2-\sqrt{\delta}}$$ exists uniformly in $t \in [0,T]$ for each $T<\infty$, and determines a positivity preserving, $L^\infty$ contraction, quasi contraction $C_0$ semigroup on $L^r$. 
\end{theorem}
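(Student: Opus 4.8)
The plan is to run a compactness argument of J.-L.\,Lions type directly in $L^r$ for $r>r_\delta$. The assumption $b_a^2\in L^1+L^\infty$ present in Theorem~\ref{thm:markeast} disappears because the a~priori (energy) estimate needed here uses nothing beyond $b\in\mathbf F_\delta(A)$ and $r>r_\delta$, while uniform ellipticity is invoked only to pass to the limit in the first-order term.

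\textbf{Reductions and uniform bounds.} By density of $L^1\cap L^\infty$ in $L^r$ and linearity (writing $f=f_+-f_-$), it is enough to prove that for $0\le f\in L^1\cap L^\infty$ the functions $u_n(t):=e^{-t\Lambda_r(b_n)}f$ form a Cauchy sequence in $C([0,T],L^r)$. Each $b_n=\mathbf 1_nb$ satisfies $b_{n,a}^2\le n^2\in L^\infty\subset L^1+L^\infty$ and $b_n\in\mathbf F_\delta(A)$ with the same $\delta,\lambda$, so Theorem~\ref{thm:markeast} applies to every $b_n$: $e^{-t\Lambda_r(b_n)}$ is a positivity preserving, $L^\infty$ contraction, quasi contraction $C_0$ semigroup with $\|e^{-t\Lambda_r(b_n)}\|_{r\to r}\le e^{\omega_r t}$ and the $(L^r,L^q)$ bound $(\ast\ast\ast)$, all uniformly in $n$. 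Hence $u_n\ge0$, $\|u_n\|_{L^\infty([0,T]\times\Omega)}\le\|f\|_\infty=:M$, $\|u_n(t)\|_r\le e^{\omega_r t}\|f\|_r$; these bounds pass to any limit and deliver the three structural properties of the limiting semigroup.

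\textbf{Energy estimate.} Testing $-\partial_t u_n=Au_n+b_n\cdot\nabla u_n$ against $u_n^{r-1}$, using $u_n,u_n^{r/2}\in D(A^{1/2})$ (Theorem~\ref{thm:markovest}), $\langle Au_n,u_n^{r-1}\rangle\ge\tfrac4{rr'}\|A^{1/2}u_n^{r/2}\|_2^2$ (valid since $r\ge2$), and completing the square via $b_n\in\mathbf F_\delta(A)$ exactly as in the proof of Theorem~\ref{thm:markeast} (choosing the free parameter optimally), one obtains
\[
\tfrac{d}{dt}\|u_n\|_r^r+\Big(\tfrac{4}{r'}-2\sqrt\delta\Big)\|A^{1/2}u_n^{r/2}\|_2^2\le c'(\delta,r)\|u_n\|_r^r .
\]
Since $r>r_\delta$ the coefficient $\tfrac4{r'}-2\sqrt\delta$ is positive, and integrating in $t$ (with $\|u_n(t)\|_r\le e^{\omega_r t}\|f\|_r$) yields $\int_0^T\|A^{1/2}u_n(t)^{r/2}\|_2^2\,dt\le C(T,r,\delta,\|f\|_r)$ uniformly in $n$; \emph{this uses only $b\in\mathbf F_\delta(A)$ and $r>r_\delta$}, not $b_a^2\in L^1+L^\infty$. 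With $\langle A^{1/2}h,A^{1/2}h\rangle\ge\sigma\|\nabla h\|_2^2$ we get $v_n:=u_n^{r/2}$ bounded in $L^2([0,T],W^{1,2})\cap L^\infty([0,T]\times\Omega)$.

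\textbf{Compactness, limit, conclusion.} Here $a\in(H_u)$ is used essentially. For a general $(H_1)$ matrix the factor $u_n^{1-r/2}$ in $\nabla u_n=\tfrac2ru_n^{1-r/2}\nabla v_n$ (note $1-\tfrac r2<0$) cannot be controlled; but for $a\in(H_u)$ and $b_n\in\mathbf F_\delta(A)$ with $\delta,\lambda$ \emph{fixed}, the De~Giorgi--Nash--Moser theory for operators with form-bounded drift gives, uniformly in $n$, local Hölder continuity of $u_n$ together with a local lower bound $u_n\ge c(\tau,K,f)>0$ on $[\tau,T]\times K$ for every $\tau>0$, $K\Subset\Omega$ (the constants depend on $\|b_n\|_{\mathbf F_\delta}$, not on $\|b_n\|_\infty$). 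Consequently $\{\nabla u_n\}$ is bounded in $L^2_{\loc}((0,T]\times\Omega)$, hence $\partial_tu_n=-Au_n-b_n\cdot\nabla u_n$ is bounded there in a negative-order local Sobolev space (the drift term controlled via $|b_n|\le\sqrt{\xi}\,b_a$ and $b_a\in L^2_{\loc}$), and the Aubin--Lions lemma gives, along a subsequence, $u_n\to u$ in $L^p_{\loc}((0,T]\times\Omega)$ ($p<\infty$) and a.e.; testing the equation for $u_n-u_m$ against $\eta^2(u_n-u_m)$ ($\eta$ a cutoff) and using $\mathbf 1_nb\to b$ in $L^2_{\loc}$ (dominated by $\sqrt{\xi}\,b_a\in L^2_{\loc}$) upgrades this to $\nabla u_n\to\nabla u$ strongly in $L^2_{\loc}$, whence $b_n\cdot\nabla u_n\to b\cdot\nabla u$ in $L^1_{\loc}$ and $u$ solves $\partial_tu+Au+b\cdot\nabla u=0$, $u(0)=f$. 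Uniqueness in the class $\{u\in L^\infty:\ u^{r/2}\in L^2([0,T],D(A^{1/2})),\ u(0)=f\}$ (test the difference of two solutions against $w|w|^{r-2}$, use $b\in\mathbf F_\delta(A)$, $r>r_\delta$, Grönwall with $w(0)=0$) makes $u$ independent of the subsequence, so the whole sequence converges. Finally, the uniform $L^\infty$, $L^r$ and $(L^r,L^q)$ bounds supply the equi-integrability and the $t$-equicontinuity (including at $t=0$) needed to promote $u_n\to u$ to convergence in $C([0,T],L^r)$ uniform on $[0,T]$; the operators $f\mapsto u(t)$ are the asserted positivity preserving, $L^\infty$ contraction, quasi contraction $C_0$ semigroup $e^{-t\Lambda_r(b)}$, a realization of $\nabla\cdot a\cdot\nabla-b\cdot\nabla$. \emph{The main obstacle is precisely this last step}: obtaining, uniformly in $n$, the local positivity and regularity of $u_n$ that render $\nabla u_n$ locally $L^2$ and allow passage to the limit in $b_n\cdot\nabla u_n$ — this is where $a\in(H_u)$ is indispensable.
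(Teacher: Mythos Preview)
Your argument has a genuine gap at its pivotal step. You assert that ``the De~Giorgi--Nash--Moser theory for operators with form-bounded drift gives, uniformly in $n$, local H\"older continuity of $u_n$ together with a local lower bound $u_n\ge c(\tau,K,f)>0$,'' with constants depending only on $\|b_n\|_{\mathbf F_\delta}$. But the theorem concerns precisely the range $1\le\delta<4$, and neither local H\"older regularity nor a parabolic Harnack inequality is established for $-\nabla\cdot a\cdot\nabla+b\cdot\nabla$ with $b\in\mathbf F_\delta(A)$ uniformly in this range; the standard results require $\delta$ small (typically $\delta<1$, or a dimension-dependent threshold). Invoking such theory here is either unsupported or circular, since the paper is itself building the $L^r$-theory for $1\le\delta<4$. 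Without the uniform lower bound you cannot control $u_n^{1-r/2}$ (note $1-\tfrac r2<0$ since $r>r_\delta\ge 2$), so the passage from $\nabla u_n^{r/2}\in L^2$ to $\nabla u_n\in L^2_{\loc}$---and hence the limit in $b_n\cdot\nabla u_n$---collapses. The uniqueness step (testing the difference of two limit solutions against $w|w|^{r-2}$) and the claimed $t$-equicontinuity at $t=0$ are likewise under-justified for the limiting object.

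The paper's route is far more elementary and bypasses regularity theory entirely. It proves the Cauchy property directly via two weighted $L^r$-energy estimates with smooth cutoffs $\zeta$. First (tail): with $\zeta$ vanishing on $B(o,kR)$ and equal to $1$ outside $B(o,2kR)$, an energy computation for $v=\zeta u_n$---using $|\nabla\zeta|\le R^{-1}\mathbf 1_{\nabla\zeta}\zeta^{1-1/k}$ and the uniform upper bound $a\le\xi I$ to control the commutator term $\langle[A,\zeta]_-u_n,v|v|^{r-2}\rangle$---yields $\sup_{t\le T,\,n}\|\zeta u_n(t)\|_r<\varepsilon$ for $R$ large. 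Second (local): with $\zeta$ equal to $1$ on $B(o,2kR)$ and vanishing outside $B(o,3kR)$, the same scheme applied to $\zeta(u_n-u_m)$ produces a bound containing the factor $\langle\zeta(b_n-b_m)\cdot a^{-1}\cdot(b_n-b_m)\rangle^{1/2}$, which tends to $0$ as $n,m\to\infty$ because $b_a\in L^2_{\loc}$ and $\mathrm{supp}\,\zeta$ is compact. Combining the two gives Cauchy in $C([0,T],L^r)$ with no Aubin--Lions, no Harnack, and no uniqueness argument. Uniform ellipticity enters only through $a\le\xi I$ in the commutator estimate---exactly the point isolated in the remark after the proof.
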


\begin{proof} 1. Let $k > 2.$ Fix $o \in \Omega$. Define
$$
\eta(t):=\left\{
\begin{array}{ll}
0, & \text{ if } t< k, \\
\big( \frac{t}{k} - 1 \big)^k, & \text{ if } k \leq t \leq 2 k, \quad \quad \text{ and } \zeta(x) = \eta(\frac{|x-o|}{R}), \;\; R > 0.\\
1, & \text{ if } 2 k < t,
\end{array}
\right.
$$
Note that $ |\nabla \zeta | \leq R^{-1} \mathbf{1}_{\nabla \zeta} \zeta^{1-\frac{1}{k}}.$

By $u_n$ we denote the solution to $\big(\frac{d}{d t} + \Lambda_r(b_n) \big) u_n = 0, \; u_n(0) = f \in L^\infty \cap 
L^2_+.$ Set $v := \zeta u_n.$ Clearly,
\[
\left\langle \zeta (\frac{d}{d t} + A + b_n \cdot\nabla ) u_n, v^{r-1} \right\rangle =0.
\]
Set $[F, G]_- := F G - G F$ and $\langle A v, v^{r-1} \rangle := \langle \nabla v \cdot a \cdot \nabla v^{r-1} \rangle.$ Since the matrix is uniformly elliptic, 
$\langle A^\frac{1}{2} \phi, A^\frac{1}{2} \psi \rangle = \langle \nabla \phi \cdot a \cdot \nabla \bar{\psi} \rangle$ for all $\phi, \psi \in D(A^\frac{1}{2}) = E,$ where $E=W_0^{1,2}$ for $A=A_D$ and $E=W^{1,2}$ for $A=A_N.$ Also $u_n \in D(A) \cap L_+^\infty, \; D(A) \subset D(A^\frac{1}{2})= E.$ Thus $v \in E$ as well as $v^\frac{r}{2}$ and $\zeta v^{r-1}$ since $r \geq 2.$ Therefore $\langle A u_n, \zeta v^{r-1} \rangle = \langle \nabla u_n \cdot a \cdot \nabla(\zeta v^{r-1}) \rangle.$ Now it is easy to justify the following equation and equality.
\begin{align*}
\langle (\frac{d}{d t} + A + b_n \cdot \nabla) v, v^{r-1} \rangle & = \big\langle [A, \zeta]_-u_n + u_n b_n \cdot \nabla \zeta, v^{r-1} \big\rangle 
\tag{$\star$} \\
\langle [A, \zeta]_-u_n, v^{r-1} \rangle & =\frac{2}{r^\prime} \big\langle \nabla v^\frac{r}{2} \cdot a u_n v^{\frac{r}{2}-1} \cdot \nabla \zeta \big\rangle - \langle \nabla \zeta \cdot a v^{r-1} \cdot \nabla u_n \rangle \\
& = \frac{2}{r^\prime} \big\langle \nabla v^\frac{r}{2} \cdot \frac{a}{\zeta} \cdot \nabla \zeta, v^\frac{r}{2} \big\rangle - \frac{2}{r}\big\langle \nabla \zeta \cdot \frac{a}{\zeta} \cdot \nabla v^\frac{r}{2}, v^\frac{r}{2} \big\rangle + \big\langle \nabla \zeta \cdot \frac{a}{\zeta^2} \cdot \nabla \zeta, v^r \big\rangle.
\end{align*}
By the quadratic estimates
\begin{align*}
\big\langle u_n b_n \cdot \nabla \zeta, v^{r-1} \big\rangle & = \big\langle b_n \cdot \frac{\nabla \zeta}{\zeta}, v^r \big\rangle \\
& \leq \frac{\mu \sqrt{\delta}}{r} \|A^\frac{1}{2} v^\frac{r}{2}\|_2^2 + \frac{r \sqrt{\delta}}{4 \mu}\big\langle \nabla \zeta \cdot \frac{a}{\zeta^2}\cdot \nabla \zeta, v^r \big\rangle + \frac{\mu c(\delta)}{r \sqrt{\delta}} \|v\|_r^r \;\;( \mu > 0 ),\\
\frac{2(r-2)}{r}\big\langle \nabla v^\frac{r}{2} \cdot \frac{a}{\zeta} \cdot \nabla \zeta, v^\frac{r}{2} \big\rangle & \leq \frac{\mu \sqrt{\delta}}{r}\|A^\frac{1}{2} v^\frac{r}{2}\|_2^2 + \frac{(r-2)^2}{r \mu \sqrt{\delta}}\big\langle \nabla \zeta \cdot \frac{a}{\zeta^2}\cdot \nabla \zeta, v^r \big\rangle,
\end{align*}
we get from $(\star)$
\[
\frac{d}{d t} \|v\|_r^r + 2\bigg( \frac{2}{r^\prime} - (1 + \mu) \sqrt{\delta} \bigg) \|A^\frac{1}{2} v^\frac{r}{2}\|_2^2 \leq \bigg(\frac{(r-2)^2}{\mu \sqrt{\delta}} + \frac{r^2 \sqrt{\delta}}{4 \mu} +r \bigg)\big\langle \nabla \zeta \cdot \frac{a}{\zeta^2}\cdot \nabla \zeta, v^r \big\rangle + \frac{r + \mu}{\sqrt{\delta}} c(\delta) \|v\|_r^r.
\]
Recalling that $\frac{2}{r^\prime} > \sqrt{\delta},$ we can find $\mu > 0$ such that $\frac{2}{r^\prime} -(1 +\mu)\sqrt{\delta} \geq 0.$ Thus
\[
\frac{d}{d t} \|v\|_r^r \leq \bigg(\frac{4 (r-2)^2 + r^2 \delta}{4 \mu \sqrt{\delta}} +r \bigg)\big\langle \nabla \zeta \cdot \frac{a}{\zeta^2}\cdot \nabla \zeta, v^r \big\rangle + \frac{r + \mu}{\sqrt{\delta}} c(\delta) \|v\|_r^r \tag{$\star\star$}
\]
Next, $\big\langle \nabla \zeta \cdot \frac{a}{\zeta^2}\cdot \nabla \zeta, v^r \big\rangle \leq \xi R^{-2} \| \mathbf 1_{\nabla \zeta} \zeta^{-2 \theta} v^r \|_1,$ where $\theta = k^{-1}$ and $\mathbf 1_{\nabla \zeta}$ denotes the indicator of the support of $|\nabla \zeta|.$ Since $\|u_n\|_\infty \leq \|f\|_\infty, \; \|\mathbf 1_{\nabla \zeta}\|_\frac{r}{2\theta} \leq c(d,\theta) R^\frac{2\theta d}{r},$ and
\[
\| \mathbf 1_{\nabla \zeta} \zeta^{-2 \theta} v^r \|_1 \leq \|\mathbf 1_{\nabla \zeta}u_n^{2 \theta} \|_\frac{r}{2\theta} \|v\|_r^{r-2\theta}\leq \|\mathbf 1_{\nabla \zeta} \|_\frac{r}{2 \theta} \|u_n\|_\infty^{2\theta} \|v\|_r^{r-2\theta},
\]
we obtain, using the Young inequality, the crucial estimate (\textit{on which the whole proof rests})
\[
\big\langle \nabla \zeta \cdot \frac{a}{\zeta^2}\cdot \nabla \zeta, v^r \big\rangle \leq \frac{2 \theta}{r} [\xi c(d)]^\frac{r}{2 \theta} R^{d-\frac{r}{\theta}} \|f\|_\infty^r + \frac{r-2\theta}{r} \|v\|_r^r.
\]
Fix $\theta$ by $0 < \theta < \frac{r}{d+2r}.$ Now from $(\star\star)$ we obtain the inequality
\[
\frac{d}{d t} \|v\|_r^r \leq N(r, d, \delta) \|v\|_r^r + M(r, d, \delta) R^{-\gamma} \|f\|_\infty^r, \;\; \gamma = \frac{r}{\theta}-d > 0, \tag{$\star\star\star$}
\]
from which we conclude that, for given $T, f \in L^2 \cap L^\infty_+, \;\varepsilon > 0,$ there exists $R$ such that $$\sup_{t \in [0,T], n} \|\zeta u_n(t)\|_r \leq \varepsilon.$$

2. Let $k$ be as above. Define
$$
\eta(t):=\left\{
\begin{array}{ll}
1, & \text{ if } t< 2 k, \\
\big( 1 - \frac{1}{k} (t - 2 k) \big)^k, & \text{ if } 2 k \leq t \leq 3 k, \quad \quad \text{ and } \zeta(x) = \eta(\frac{|x-o|}{R}), \;\; R > 0.\\
0, & \text{ if } 3 k < t,
\end{array}
\right.
$$
Note that $ |\nabla \zeta | \leq R^{-1} \mathbf{1}_{\nabla \zeta} \zeta^{1-\frac{1}{k}}.$

Set $g := u_n - u_m$ and $v := \zeta g.$ Let $\langle A v, v |v|^{r-2} \rangle := \langle \nabla v \cdot a \cdot \nabla (v |v|^{r-2}) \rangle.$ 
Clearly
\[
\big \langle \zeta \big (\frac{d}{d t} + A +b_n \cdot \nabla \big ) g + \zeta (b_n - b_m) \cdot \nabla u_m, v |v|^{r-2} \big \rangle = 0,
 \]
 \[
 \big \langle \big (\frac{d}{d t} + A +b_n \cdot \nabla \big)v, v |v|^{r-2} \big \rangle = \big \langle [A, \zeta]_- g + v b_n \cdot \frac{\nabla \zeta}{\zeta}, v |v|^{r-2} \big \rangle + \langle \zeta (b_m - b_n) \cdot \nabla u_m, v |v|^{r-2} \rangle,
 \]
 \[
\langle [A, \zeta]_- g , v |v|^{r-2} \rangle = \frac{2(r-2)}{r} \langle \nabla |v|^\frac{r}{2} \cdot \frac{a}{\zeta} \cdot \nabla \zeta, |v|^\frac{r}{2} \rangle + \langle \nabla \zeta \cdot \frac{a}{\zeta^2} \cdot \nabla \zeta, |v|^r \rangle,
\]
\begin{align*}
\frac{d}{d t} \|v\|_r^r & \leq \bigg(\frac{4 (r-2)^2 + r^2 \delta}{4 \mu \sqrt{\delta}} +r \bigg)\big\langle \nabla \zeta \cdot \frac{a}{\zeta^2}\cdot \nabla \zeta, v^r \big\rangle + \frac{r + \mu}{\sqrt{\delta}} c(\delta) \|v\|_r^r\\
& + r \langle \zeta (b_m - b_n) \cdot \nabla u_m, v |v|^{r-2} \rangle \text{ with the same } \mu \text{ as in } (\star \star), \\
\langle \zeta (b_m - b_n) \cdot \nabla u_m, v |v|^{r-2} \rangle & \leq \langle \zeta (b_m - b_n) \cdot a^{-1} \cdot (b_m -b_n) \rangle^\frac{1}{2} \langle \nabla u_m \cdot a \zeta \cdot \nabla u_m \rangle^\frac{1}{2} (2\|f\|_\infty)^{r-1}.
\end{align*}
In order to estimate $\int_0^T \langle \nabla u_m(t) \cdot a \zeta \cdot \nabla u_m (t) \rangle d t$ note that $\langle \frac{d}{d t} u_m + A u_m + b_m \cdot \nabla u_m, \zeta u_m \rangle = 0,$ or
\[
\frac{1}{2} \frac{d}{d t} \langle \zeta u_m^2 \rangle + \langle \nabla u_m \cdot a \zeta \cdot \nabla u_m \rangle + \langle \nabla u_m \cdot a u_m \cdot \nabla \zeta \rangle + \langle b_m \cdot \nabla u_m, \zeta u_m \rangle = 0,
\]
and so
\[
\frac{d}{d t} \langle \zeta u_m^2 \rangle + \langle \nabla u_m \cdot a \zeta \cdot \nabla u_m \rangle \leq 2 \big(\big \langle \nabla \zeta \cdot \frac{a}{\zeta} \cdot \nabla \zeta \big \rangle + \langle \zeta b \cdot a^{-1} \cdot b \rangle \big) \|f\|^2_\infty,
\]
\begin{align*}
\int_0^T \langle \nabla u_m(t) \cdot a \zeta \cdot \nabla u_m (t) \rangle d t & \leq \| f \|_2^2  + 2 T \big( \big\langle \nabla \zeta \cdot \frac{a}{\zeta} \cdot \nabla \zeta \big \rangle + \langle \zeta b \cdot a^{-1} \cdot b \rangle \big) \|f\|^2_\infty \\
& \equiv \| f \|_2^2  + T L(R) \|f\|_\infty^2.
\end{align*}
Now it should be clear that the above is sufficient for concluding that the following inequality analogous to $(\star\star\star)$  holds for all $n, m$ and $t \in [0,T],$
\begin{align*}
e^{-N t} \|\zeta g\|_r^r & \leq t M R^{-\gamma} \|f\|_\infty^r \\ 
& + \sqrt{t} (2 \|f\|_\infty)^{r-1} \big(\|f\|_2^2 + t L(R) \|f\|_\infty^2 \big)^\frac{1}{2} \big\langle \zeta (b_n-b_m)\cdot a^{-1}\cdot(b_n-b_m) \rangle^\frac{1}{2}. 
\end{align*}

3. It follows from Step 1 and Step 2 that, for each $0 <T < \infty, \; r > r_\delta, f \in L^2_+ \cap L^\infty$ and $\epsilon > 0,$ we can find $R < \infty$ and $M < \infty$ such that
\[
\sup_{t \in [0, T], \; n, m \geq 1} \|(1-\mathbf{1}_{B(o, 2 k R)})(u_n(t) - u_m(t))\|_r < \epsilon; \quad
 \sup_{t \in [0, T], n, m \geq M}\|\mathbf{1}_{B(o, 2 k R)}(u_n(t) - u_m(t))\|_r < \epsilon.
\]
The proof of Theorem \ref{thm:markease} is completed. 
\end{proof}

\begin{remark*}
Let $a \in (H_1).$ Set $A := A_{iD}$ or $A_N.$ Fix any compact $K$ in $\Omega.$ The proof of Theorem \ref{thm:markease} shows that the matrix $a(x)$ admits the $(|x|^\varkappa,  \; \varkappa < 2)$-growth at infinity while on $K$ being just from $[L^1]^{d\times d}$. Indeed, for instance, let $A=A_{iD}.$ By the definition of $A_{iD},$ $\psi \in D(A^\frac{1}{2})$ if and only if $\psi \in W_0^{1,2}$ and $\langle \nabla \psi \cdot a \cdot \nabla \bar{\psi} \rangle < \infty.$ It is a simple matter to check that 
$$
\langle \nabla (\zeta v |v|^{r-2}) \cdot a \cdot \nabla (\zeta v |v|^{r-2}) \rangle < \infty,
$$ 
and hence to conclude that 
$$\langle A g, \zeta v |v|^{r-2} \rangle = \langle \nabla g \cdot a \cdot \nabla (\zeta v |v|^{r-2}) \rangle.$$
If $A=A_D$ one has to  modify $\zeta$ to $C^1_c$ function and use the fact that $C^1_c$ is a core of $A^\frac{1}{2}_D.$ 
\end{remark*}

\subsection{The maximal interval of bounded $L^p$ solvability for $-\nabla \cdot a \cdot \nabla + b \cdot \nabla$}
\label{max_solve_sect}

\begin{theorem}
\label{thm:marksolve}
Let $a \in (H_u)$. Assume that $b \in \mathbf F_\delta(A)$ for some $0 < \delta < 4.$

{\rm(a)}  $e^{-t \Lambda_r(b)}$, $r \in I_c$, extends to a positivity preserving, $L^\infty$ contraction, quasi bounded holomorphic semigroup on $L^r$ for every $r \in I_m - I_c$, $I_m:= ]\frac{2}{2- \frac{d-2}{d}\sqrt{\delta}}, \infty[$.

{\rm(b)} For every $r \in I_m$ and $q>r$ there are constants $c_i = c_i(\delta, r, q)$, $i=1,2$, such that
\[
\|e^{-t \Lambda_r(b)} \|_{r \rightarrow q} \leq c_1 \; e^{t c_2} \; t^{-\frac{d}{2} (\frac{1}{r}-\frac{1}{q})}, \quad t>0.
\]
\end{theorem}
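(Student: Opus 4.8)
For $r\in I_c$ there is nothing to add: $e^{-t\Lambda_r(b)}$ is already a positivity preserving, $L^\infty$ contraction, quasi contraction $C_0$ semigroup by Theorem~\ref{thm:markease} (which, in the uniformly elliptic case, dispenses with the hypothesis $b_a^2\in L^1+L^\infty$ of Theorem~\ref{thm:markeast}), and the $(L^r,L^q)$ bound of part~(b) for such $r$ is obtained word for word as in Step~3 of the proof of Theorem~\ref{thm:markeast}: the Nash inequality $\|A^{1/2}h\|_2^2\ge C_N\|h\|_2^{2+4/d}\|h\|_1^{-4/d}$ together with the $L^r$ quasi contraction, duality, and the Extrapolation Theorem of Appendix~\ref{appendix_B}. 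So the whole issue is the range $r\in I_m-I_c$, i.e.\ $r_-<r<r_\delta$ with $r_-:=\tfrac{2}{2-\frac{d-2}{d}\sqrt\delta}$; there I would work with the bounded drifts $b_n:=\mathbf 1_n b$ — for which $\Lambda_r(b_n)=A_r+B_r(b_n)$ is a genuine Markov generator on \emph{every} $L^r$, $1<r<\infty$ (Proposition~\ref{thm:markwest}, Section~\ref{strong_bdd_sect}.2), the family being consistent — and pass to the limit at the end.

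Everything rests on the uniform bound
\[
\|e^{-t\Lambda_r(b_n)}f\|_r\le c_1 e^{c_2 t}\|f\|_r,\qquad f\in L^1\cap L^\infty,\ n\ge1,\ t\ge0,\tag{$\heartsuit$}
\]
with $c_1,c_2$ independent of $n$. Given $(\heartsuit)$, the rest is routine: (i)~running the weighted-energy scheme of the proof of Theorem~\ref{thm:markease} (cut-offs $\zeta=\eta(|x-o|/R)$, the decomposition of $\langle[A,\zeta]_-g,\,\cdot\,\rangle$, the remainder bound $\langle\nabla\zeta\cdot\tfrac{a}{\zeta^2}\cdot\nabla\zeta,v^r\rangle=O(R^{-\gamma})\|f\|_\infty^r+\tfrac{r-2\theta}{r}\|v\|_r^r$, and the control of $\int_0^T\langle\zeta\nabla u_n\cdot a\cdot\nabla u_n\rangle$), now with $(\heartsuit)$ replacing the $L^r$ quasi contraction, yields that $e^{-t\Lambda_r(b_n)}-e^{-t\Lambda_r(b_m)}\to0$ strongly in $L^r$, uniformly on compact $t$-intervals, as $n,m\to\infty$ (using $b_n\to b$ in $L^1_{\loc}$, so $\langle\zeta(b_n-b_m)\cdot a^{-1}\cdot(b_n-b_m)\rangle\to0$), whence $e^{-t\Lambda_r(b)}:=s\text{-}L^r\text{-}\lim_n e^{-t\Lambda_r(b_n)}$ is a quasi-bounded $C_0$ semigroup; (ii)~positivity preservation and $L^\infty$ contraction pass to the limit because, by consistency, $e^{-t\Lambda_r(b_n)}f=e^{-t\Lambda_q(b_n)}f$ for $f\in L^r\cap L^q$, $q\in I_c^o$, where these are already known; (iii)~holomorphy follows from a uniform-in-$n$ sectoriality estimate for $\Lambda_r(b_n)$ — the quadratic estimate bounding $\Imag\langle\Lambda_r(b_n)u,u|u|^{r-2}\rangle$ by $\Real\langle(z+\Lambda_r(b_n))u,u|u|^{r-2}\rangle$ as in Step~4 of the proof of Theorem~\ref{thm:markeast}, now with the Sobolev-improved dissipativity — together with the Privalov–Vitali theorem; and (iv)~the $(L^r,L^q)$ bound of part~(b) for these $r$ follows from $(\heartsuit)$, the uniform $L^\infty$ contraction, and the Nash/extrapolation argument of Step~3 of the proof of Theorem~\ref{thm:markeast}, all uniformly in $n$.

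For $(\heartsuit)$ itself I would start from the energy identity for $u=u_n=e^{-t\Lambda_r(b_n)}f=e^{-t\Lambda_q(b_n)}f$ ($q\in I_c^o$; then $u\ge0$, $u\in D(A)\cap L^\infty$, $\|u\|_\infty\le\|f\|_\infty$, and $w:=u^{r/2}\in D(A^{1/2})$),
\[
-\tfrac{d}{dt}\|u\|_r^r=\tfrac{4}{r'}\|A^{1/2}w\|_2^2+2\langle w\,b_n\!\cdot\!\nabla w\rangle,
\]
and dominate the drift term by a fixed fraction of $\|A^{1/2}w\|_2^2$ (plus $\mathrm{const}\cdot\|u\|_r^r$) \emph{throughout} $r>r_-$. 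Estimating $2|\langle w\,b_n\!\cdot\!\nabla w\rangle|\le2\|b_a w\|_2\|A^{1/2}w\|_2$ and then applying plain form-boundedness of $b_a$ to $w$ gives only the threshold $\tfrac2{r'}>\sqrt\delta$ ($\Leftrightarrow r\ge r_\delta$); the passage to $\tfrac2{r'}>\tfrac{\sqrt\delta}{j}$ ($\Leftrightarrow r>r_-$), $j=\tfrac{d}{d-2}$, I would obtain by the desingularization/iteration device of \cite{SV} — the same that underlies the $A-V$ result recalled in the footnote of the Introduction. Since $a\in(H_u)$ we have $D(A^{1/2})=W^{1,2}$ (or $W^{1,2}_0$) with equivalent norm, hence the Sobolev inequality $\|\phi\|_{2j}\le C_S\|(\lambda+A)^{1/2}\phi\|_2$ is available; the idea is to apply the form bound not to $w$ but to the powers $w^\beta$, to interpolate the quantities so produced against $\|A^{1/2}w\|_2$, $\|w\|_{2j}$ (Sobolev), $\|w\|_2$, and to iterate along a finite chain of exponents converging to $r$, the Sobolev gain at each step accounting for the factor $1/j$. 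Equivalently — and this is perhaps cleaner to organize — one dualizes: by duality and consistency $(\heartsuit)$ is the statement that the Markov-dual semigroups $(e^{-t\Lambda_q(b_n)})^*$, associated with the forward Kolmogorov operator $\nabla\cdot a\cdot\nabla+\nabla\cdot b_n$, are uniformly bounded on $L^s$ up to $s<r_-'=\tfrac{2j}{\sqrt\delta}$ (the conjugate exponent of $I_m$), where the first-order terms contribute $2(s-1)\langle v^{s/2}b_n\!\cdot\!\nabla v^{s/2}\rangle$ and the same desingularization applies.

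The main obstacle I expect to be precisely this desingularization: one must extract the factor $1/j$ from the single scalar $\delta$ with only the Sobolev embedding (coming from $a\in(H_u)$) as extra structure — no gradient estimates for $(\mu+A)^{-1}$ are at hand, $a$ being merely measurable, so the gain cannot be had from a mapping-property argument and must be pushed through an iteration. A second, technical, difficulty is keeping every constant uniform in $n$; this is why the $b_a^2\in L^1+L^\infty$-free Theorem~\ref{thm:markease} is needed as an input and why the cut-off remainders and the local energies $\int_0^T\langle\zeta\nabla u_n\cdot a\cdot\nabla u_n\rangle$ have to be tracked explicitly.
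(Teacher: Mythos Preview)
Your overall architecture and the paper's diverge at the decisive step. You aim for the uniform semigroup bound $(\heartsuit)$ on $L^r$, $r\in I_m-I_c$, by a semigroup-side desingularization/iteration (apply form-boundedness to powers $w^\beta$, interpolate against $\|A^{1/2}w\|_2$ and $\|w\|_{2j}$, chain). The paper does not attempt this. It works entirely with the \emph{resolvent of the dual} $\Lambda_q^*(b_n)=A_q-\nabla\cdot b_n$ (which is why it uses smooth $b_n=\gamma_{\varepsilon_n}\!*\!(\mathbf 1_n b)$, not your $\mathbf 1_n b$): a direct energy computation for $u=(z+\Lambda_q^*)^{-1}h$, $q\in]1,\tfrac{2}{\sqrt\delta}[$, gives the one-step Sobolev gain
\[
\|u\|_{qj}\le c(q,d)\,|z-z_0|^{-1/q'}\|h\|_q,\qquad \Real z>z_0,
\]
and this is upgraded to $\|(z+\Lambda_q^*)^{-1}\|_{qj\to qj}\le c|z-z_0|^{-1}$ by a \emph{cube decomposition}: tile $\mathbb R^d$ by cubes $Q_k$ of side $\sim|z-z_0|^{-1/2}$, control near-diagonal blocks $\|\mathbf 1_kR^*(z)\mathbf 1_i\|_{qj\to qj}$ by H\"older and the $q\to qj$ bound, and prove the off-diagonal decay (``separating property'') $\|\mathbf 1_kR^*(z)\mathbf 1_i\|_{qj\to qj}\lesssim |k-i|^{-1/\gamma}|z-z_0|^{-1}$ via a commutator/cut-off estimate with a carefully designed $\zeta_i$. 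Summing over $i$ gives the $L^{qj}$ resolvent bound; duality yields $\|(z+\Lambda_r(b_n))^{-1}\|_{s\to s}\le c|z-z_0|^{-1}$ for $s=(qj)'\in I_m-I_c$, uniformly in $n$. Holomorphy is then immediate from this resolvent estimate --- no separate sectoriality argument of the Theorem~\ref{thm:markeast} type is run --- and the passage to the limit uses only the $L^r$-convergence ($r\in I_c^o$) from Theorem~\ref{thm:markease} together with the uniform $L^s$ bound, not a second cut-off scheme.

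Your own route has a real gap at the step you yourself flag as the main obstacle. The iteration you invoke is the mechanism for \emph{potential} perturbations $A-V$, where the offending term is $\langle V,u^r\rangle=\langle V,(u^{r/2})^2\rangle$ and one may cleanly apply $V\le\delta A+c$ to $u^{r\beta/2}$; that is what \cite{SV} buys. For a drift the bad term is $2\langle w\,b_n\!\cdot\!\nabla w\rangle$, and ``apply the form bound to $w^\beta$'' does not produce a usable relation between $\|b_a w\|_2$, $\|A^{1/2}w\|_2$ and $\|A^{1/2}w^\beta\|_2$ --- the gradient sits on the wrong factor, and I do not see how your chain closes to yield the factor $1/j$. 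The paper's cube-separation argument is precisely the device that converts the single Sobolev gain $q\to qj$ (which \emph{is} easy, and is morally your ``one desingularization step'' on the dual side) into an honest $L^{qj}\to L^{qj}$ bound; this localization idea is what your outline is missing.
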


\begin{proof}
Proof of (b). Suppose that (a) has been proved. Then (b) for $r\in I_m-I_c$ and $q\in I_c$  follows from (a) by Theorem \ref{thm:markeast}($\ast\ast\ast$) and the Extrapolation Theorem (Appendix \ref{appendix_B}). By interpolation, we obtain (b) for all $q>r$ and $r \in I_m$.

Proof of (a).
Let $\Omega=\mathbb R^d.$ Set $b_n:=\gamma_{\varepsilon_n}*(b\mathbf{1}_n ),$ where $\gamma_{\varepsilon_n}$ is the K.\,Friedrichs mollifier, $\mathbf{1}_n$ is the indicator of $\{x \in \Omega: |x| \leq n, |b(x)| \leq n\}.$ 

For any $\tilde{\delta}>\delta$ we can select a sequence $\varepsilon_n \downarrow 0$ such that
$b_n
 \in \mathbf{F}_{\tilde{\delta}}(A)$ with the same $\lambda=\lambda_\delta$.
(Indeed, we have for $f \in L^2$
\begin{align*}
\||b_n|(\lambda+A)^{-\frac{1}{2}}f\|^2_2 & \leqslant \||\mathbf{1}_n b|(\lambda+A)^{-\frac{1}{2}}f\|^2_2 + \||b_n-\mathbf{1}_n b|(\lambda+A)^{-\frac{1}{2}}f\|^2_2 \\
& \leqslant \delta \|f\|^2_2 + \||b_n-\mathbf{1}_n b|(\lambda+A)^{-\frac{1}{2}}f\|^2_2
\end{align*}
In turn, by H\"{o}lder's inequality,
$$
 \||b_n-\mathbf{1}_n b|(\lambda+A)^{-\frac{1}{2}}f\|^2_2 \leqslant \|b_n-\mathbf{1}_n b\|_{2d} \|(\lambda+A)^{-\frac{1}{2}}f\|^2_{\frac{2d}{d-1}} \leqslant C_d\|b_n-\mathbf{1}_n b\|_{2d} \|f\|^2_2,
$$
where $C_d>0$ is the constant in the uniform Sobolev inequality.
Since $\mathbf{1}_n b \in L^\infty$ and has compact support (and hence $\gamma_{\varepsilon_n}*\mathbf{1}_n b \rightarrow \mathbf{1}_n b$ in $L^{2d}$ as $\varepsilon \downarrow 0$), for every $\tilde{\delta}>\delta,$ we can select $\varepsilon_n$, $n=1,2,\dots$ sufficiently small so that
$\|b_n-\mathbf{1}_n b\|_{2d}<\frac{\tilde{\delta}-\delta}{C_d}$, and hence $ \||b_n-\mathbf{1}_n b|(\lambda+A)^{-\frac{1}{2}}f\|^2_2<(\tilde{\delta}-\delta)\|f\|^2_2$.
Therefore,
$
\||b_n|(\lambda+A)^{-\frac{1}{2}}f\|^2_2<\tilde{\delta} \|f\|^2_2,
$
as needed.)

Since our assumptions on $\delta$ involve strict inequalities only, we may assume that $b_n \in \mathbf{F}_\delta(A)$ for all $n$ with the same $\lambda=\lambda_\delta$.

First, we prove that
\[
\|(z+\Lambda_{q}^*(b_n))^{-1}\|_{qj \rightarrow qj} \leq c|z-z_0|^{-1},  \quad \Real z > z_0, \quad n=1,2,\dots, \quad j=\frac{d}{d-2},\tag{$\bullet$}
\] 
where $\Lambda_{q}^*(b_n)$ is the adjoint of $\Lambda_r(b_n)$, $q=\frac{r}{r-1} \in ]1,\frac{2}{\sqrt{\delta}}[$, for some $c$, $z_0>0$ independent of $n$.

For $1<p \leq 2$, let $B_{p,n}:=-\nabla \cdot b_n = -b_n \cdot \nabla - {\rm div\,} b_n$, $D(B_{p,n}):=D(A_p)$.
Then the operator $\Lambda_p^*=A_p + B_{p,n}$ of domain $D(A_p)$ is the (minus) generator of a quasi bounded  holomorphic semigroup on $L^p$.

Set $u \equiv u_n:=(z+\Lambda_p^*)^{-1}h, \;h \in L^1 \cap L^\infty, \;\Real z>\frac{\lambda \delta}{2(r-1)}$.
By consistency, $u=(z+\Lambda^*)^{-1}h$, so $u \in D(A^\frac{1}{2})$, and, clearly, $\|u\|_\infty<\infty$ for every $n$.

a) $0<\delta<1$. Fix any $2 \leq q < \frac{2}{\sqrt{\delta}}$ close to $\frac{2}{\sqrt{\delta}}$. Then 
\[
\langle (z+\Lambda^*)u,u|u|^{q-2}\rangle = \langle (z+A- \nabla \cdot b_n)u,u|u|^{q-2}\rangle,
\]
\[-\langle \nabla \cdot (b_n u),u|u|^{q-2}\rangle = \langle b_n \cdot u \nabla \bar{u},|u|^{q-2}\rangle  + (q-2)\langle b_n \cdot \nabla |u|, |u|^{q-1} \rangle,
\]
and so
$$
z\|u\|_q^q + \langle Au,u|u|^{q-2}\rangle+ \langle b_n \cdot u \nabla \bar{u},|u|^{q-2}\rangle + (q-2)\langle  b_n \cdot \nabla |u|, |u|^{q-1}\rangle = \langle h,u|u|^{q-2}\rangle.
$$
Taking the real and imaginary parts of this identity, we have
\begin{align*}
&\Real z \|u\|_q^q + \Real\langle Au,u|u|^{q-2}\rangle + (q-1)\langle b_n\cdot\nabla |u|,|u|^{q-1}\rangle \leq |\langle h,u|u|^{q-2}\rangle|\\
&\alpha \big(|\Imag z|\|u\|_q^q - |\Imag \langle Au,u|u|^{q-2}\rangle| - |\langle b_n \cdot u \nabla \bar{u},|u|^{q-2}\rangle| \big) \leq  \alpha |\langle h,u|u|^{q-2}\rangle| \quad (0<\alpha<1). 
\end{align*}
Adding these inequalities and using the inequality $|\Imag \langle Au,u|u|^{q-2}\rangle| \leq \frac{q-2}{2\sqrt{q-1}} \Real\langle Au,u|u|^{q-2}\rangle,$ we obtain
\begin{align*}
&\alpha |z| \|u\|_q^q + \big(1-\alpha \frac{q-2}{2\sqrt{q-1}}\big) \Real\langle Au,u|u|^{q-2}\rangle\\
&\leq \alpha|\langle b_n \cdot u \nabla \bar{u},|u|^{q-2}\rangle|+(q-1)|\langle b_n\cdot\nabla |u|,|u|^{q-1}\rangle| + (1+\alpha)\|h\|_q\|u\|_q^{q-1}.
\end{align*}
Recalling that $|\langle b_n \cdot u \nabla \bar{u},|u|^{q-2}\rangle|^2 \leq \langle b_a^2,|u|^q\rangle X,$ where
\[
X\equiv \langle \nabla u \cdot a \cdot \nabla \bar u, |u|^{q-2} \rangle = \Real\, \langle A u, u|u|^{q-2} \rangle -\frac{4(q-2)}{q^2}\|A^\frac{1}{2}|u|^\frac{q}{2} \|_2^2,
\]
(see the proof of Theorem \ref{thm:markeast} steps 1, 4)) and performing quadratic estimates, we conclude that, for a sufficiently small $\alpha$, there exist $z_0=z_0(\alpha)>0$ and a constant $C_\alpha <\infty$ such that
\[
|z-z_0| \|u\|_q^q + \|A^{\frac{1}{2}}|u|^{\frac{q}{2}}\|_2^2  \leq C_\alpha\|h\|_q \|u\|_q^{q-1}, \quad \Real z > z_0.
\]

The latter, Young's inequality ($|z-z_0|^{\frac{1}{q'}}\|u\|_q^{q-1}\|u\|_{qj} \leq \frac{|z-z_0|\|u\|_q^q}{q'} + \frac{\|u\|_{qj}^q}{q}$) and the Sobolev Embedding Theorem yield
\[
\|u\|_{qj} \leqslant c(q,d) |z-z_0|^{-\frac{1}{q'}}\|h\|_q,  \quad \Real z > z_0. \tag{$\star$}
\]

(b) Let $1 \leq \delta<4$, so the interval $\bigl]1,\frac{2}{\sqrt{\delta}}\bigr[$ does not contain 2. Fix any $1 < q <\frac{2}{\sqrt{\delta}}$ close to $\frac{2}{\sqrt{\delta}}$. Following the arguments above, but using Theorem \ref{thm:markovest}(\textit{i}),(\textit{ii}) (Appendix \ref{markov_sect}) in place of Theorem \ref{thm:markovest}(\textit{iv}),(\textit{v}), we obtain $(\star)$ in the case.

\smallskip

We are in position to complete the proof of Theorem \ref{thm:marksolve} for $\Omega=\mathbb R^d.$
Fix $1<q<\frac{2}{\sqrt{\delta}}$ close to $\frac{2}{\sqrt{\delta}}$. Set $R_n^*(z):=(z+\Lambda_q^*(b_n))^{-1}$.
Our goal is to prove that
$\|R_n^*(z)\|_{qj \rightarrow qj} \leq c|z-z_0|^{-1}$,  $\Real z > z_0$.
 Given $l = (l^1, \dots l^d) \in \mathbb Z^d$, define a cube
\[
Q_l := \{ x \in \mathbb R^d \mid | l^m |z-z_0|^{-\frac{1}{2}} -x^m | \leq (4 |z-z_0|)^{-\frac{1}{2}}, \; m = 1, \dots , d \}.
\]
Given $k \in \mathbb Z^d$, subdivide $\mathbb{R}^d$ into $Q_k + \sum_{i \in \mathbb Z^d - \{k\}} Q_i.$ Fix $k.$

1) Let $i \in \mathbb{Z}^d$ be such that $|k - i| \leq \alpha := \frac{\sqrt{d}}{2} (1 + \frac{1}{\gamma)}, \;  \frac{1}{2 d} < \gamma < \frac{1}{d}.$ Then
\[
\| \mathbf{1}_k R_n^*(z) \mathbf{1}_i \|_{q j \rightarrow q j} \leq c_1 |z-z_0|^{-1}, \;\; c_1 = c_1(q, d),
\]
where $\mathbf{1}_i$ denotes the indicator function of $Q_i$.
[Indeed, by H\"older's inequality, $\| \mathbf{1}_k R_n^*(z) \mathbf{1}_i h \|_{q j} \leq \|R_n^*(z) \|_{q \rightarrow q j} \|\mathbf{1}_i \|_1^\frac{1}{q j^\prime} \|h\|_{q j}, \; j^\prime = \frac{d}{2},$ so $(\star)$ and $\|\mathbf{1}_i \|_1 = c_d |z-z_0|^{-\frac{d}{2}}$ yield the required].

\medskip

2) Let $i \in \mathbb{Z}^d$ be such that $|k - i| \leq \alpha.$ Then ($\xi$ the ellipticity constant)
\[
\| \mathbf{1}_k R_n^*(z) \mathbf{1}_i \|_{q j \rightarrow q j} \leq c_2 |k - i|^{-\frac{1}{\gamma}} |z-z_0|^{-1}, \;\;c_2 =c_2(q, d, \gamma, \xi).
\]
(The proof of the inequality, which we call the separating property, is given below.)

\medskip

1) and 2) combined yield
\begin{align*}
|\langle R_n^*(z) h, g \rangle| & \leq \sum_{i,k \in \mathbb Z^d} \big | \langle \mathbf 1_k R_n^*(z) \mathbf 1_i h, \mathbf 1_k g \rangle \big | \leq \sup_{l \in \mathbb Z^d}\bigg \langle  \sum_{i\in \mathbb Z^d} | \mathbf{1}_l R_n^*(z) \mathbf{1}_i h|, \sum_{k \in \mathbb Z^d}\mathbf 1_k | g |\bigg\rangle ;\\
\|R_n^*(z) \|_{q j \rightarrow q j} & \leq \tilde{c}_1 |z-z_0|^{-1} + \tilde{c}_2 |z-z_0|^{-1} \sup_{k \in \mathbb Z^d} \sum_{ i \in \mathbb Z^d; |k-i| \geq \alpha } |k - i|^{-\frac{1}{\gamma}} \\
 & \leq \tilde{c}_1 |z-z_0|^{-1} + \tilde{c}_2 |z-z_0|^{-1} \sup_{k \in \mathbb Z^d} \int_\alpha^\infty t^{d-1 -\frac{1}{\gamma}} d t \\
 & \leq c_3 |z-z_0|^{-1},
\end{align*}
which yields ($\bullet$) in the case $\Omega = \mathbb R^d.$

By duality, ($\bullet$) yields ($r:=q' \in I^o_c$, $s:=(qj)' \in I_m$)
\[
\|(z+\Lambda_{r}(b_n))^{-1}\|_{s \rightarrow s} \leq c|z-z_0|^{-1},  \quad \Real z > z_0, \quad n=1,2,\dots
\] 
In view of Theorem \ref{thm:markease}, for every $f \in L^r \cap L^s$
\begin{equation}
\tag{$\bullet\bullet$}
\|(z+\Lambda_{r}(b))^{-1}f\|_{s} \leq c|z-z_0|^{-1}\|f\|_s,  \quad \Real z > z_0.
\end{equation}
By ($\bullet\bullet$), $\|e^{-t\Lambda_r(b)}f\|_{s} \leq M\|f\|_s$, $t \in [0,1]$, for $M<\infty$. 
The strong continuity of $e^{-t\Lambda_r(b)}$ in $L^r$, $r \in I_c^o$, and the following elementary result:

\begin{quote}
Let $S_k : L^{p_1} \cap L^{p_2} \rightarrow L^{p_1} \cap L^{p_2}, \; 1 \leq p_1 < p_2 \leq \infty, \; k=0, 1, 2, \dots,$ be such that $\|S_k f\|_{p_i} \leq M \|f\|_{p_i}, \; i = 1, 2,$ for all $f \in L^{p_1}\cap L^{p_2}, \; k$ and some $M < \infty.$ If $\|S_k f - S_0 f \|_{p_0} \rightarrow 0$ for some $p_0 \in ]p_1, p_2[,$ then $\|S_k f - S_0 f \|_p \rightarrow 0$ for every $p \in ]p_1, p_2[.$   
\end{quote}

\noindent yield $e^{-t\Lambda_{r}(b)}f \rightarrow f$ strongly in $L^{s_1}$ as $t \downarrow 0$, for all $s<s_1 \leq r$ (and ultimately for all $s_1 \in I_m - I_c^o$), which gives assertion (a) of the theorem.

\smallskip

Now we come to the proof of the inequality from 2). Let $i \in \mathbb Z^d - \{k\}$ be such that $|k-i| \geq \alpha.$ Define functions $\zeta_i(x)$ by
\[
\zeta_i(x):= \eta  \bigg( \frac{\big |  |z-z_0|^{-\frac{1}{2}} k- x \big |}{|k-i|} |z-z_0|^\frac{1}{2} \left(1+\frac{1}{\gamma}\right) \bigg),
\]
where
$$\eta (s) = \left\{ \begin{array}{cr} 1,&  s \leq 1 \\ \big(1-\frac{s-1}{m} \big)^m,&  1 < s < m+1 \\
 0,& m+1  \leq s \quad (m =\frac{1}{\gamma}). \end{array} \right.$$
We list the following properties of $\zeta_i: \; \zeta_i  \upharpoonright Q_k = \mathbf{1}_k, \; \zeta_i \upharpoonright Q_i = 0.$ Define
\[
\Gamma_i(x) := \nabla \zeta_i (x) \cdot \frac{a(x)}{\zeta_i(x)^2} \cdot \nabla \zeta_i (x), \;\; e(x) = \frac{k |z-z_0|^{-\frac{1}{2}}- x}{|k |z-z_0|^{-\frac{1}{2}}- x |}.
\]
Using the inequalities $-\partial_s \eta (s) \leq \eta(s)^{1-\gamma}$ and $e\cdot a \cdot e \leq \xi,$ we have
\[
\Gamma_i \leq \bigg(1+\frac{1}{\gamma}\bigg)^2 \xi |k-i|^{-2} |z-z_0| \zeta_i^{-2 \gamma}.\\
\tag{$\circ$}
\]

\begin{lemma} Define $v_i := R_n^*(z) \mathbf{1}_i f, \, u := \zeta_i v_i, \; f \in L^1 \cap L^\infty.$ Then
\[
|z-z_0| \|u\|_q^q + \|u\|^q_{q j} \leq c(q) \langle \Gamma_i |u|^q \rangle.\\
\tag{\textit{i}}
\]
\[
|z-z_0|^{-2 \gamma /q} \|u\|_q^{q-2\gamma} \|u\|_{q j}^{2 \gamma} \leq c(q,\gamma)\xi |k-i|^{-2} \langle |v_i|^{2 \gamma} |u|^{q-2 \gamma} \rangle.\\
\tag{\textit{ii}}
\]
\[
\|u\|_{q j} \leq c(q, \gamma) \xi^\frac{1}{2 \gamma} |k-i|^{-\frac{1}{\gamma}} |z-z_0|^{-1} \|f\|_{q j}.\\
\tag{\textit{iii}}
\]
\end{lemma}
Due to $\zeta_i \upharpoonright Q_k = \mathbf{1}_k,$ $(iii)$ implies 2).

\begin{proof}[Proof of Lemma] $(ii)$ follows from $(i)$, Young's inequality and $(\circ).$ In turn, $(iii)$ follows from $(ii)$ by applying H\"older's inequality to $\langle |v_i|^{2 \gamma} |u|^{q-2 \gamma}\rangle,$ so that $\|u\|_{q j} \leq c(q, \gamma) \xi^\frac{1}{2 \gamma}|k-i|^{-\frac{1}{\gamma}} |z-z_0|^\frac{1}{\gamma} \|v_i \|_q,$ then applying $\|R_n^*(z)\|_{q \rightarrow q} \leq C|z-z_0|^{-1}$ to $\|v_i \|_q : \; \|v_i \|_q \leq c |z-z_0|^{-1} \|\mathbf{1}_i f\|_q,$ and finally $\|\mathbf{1}_i f\|_q \leq \|\mathbf{1}_i \|_{q j^\prime} \|f\|_{q j}$ by H\"older's inequality. 

We are left to prove $(i).$ We have $(z+A_q -\nabla \cdot b_n) v_i = \mathbf{1}_i f$ (if $q \geq 2$, we write $A \equiv A_2$ in place of $A_q$) and, since $\zeta_i \upharpoonright Q_i =0,$
$\langle (z+A_q+\nabla \cdot b_n)v_i, \zeta_i u |u|^{q-2} \rangle =0$.
 One can easily check that $g \in D(A^\frac{1}{2}) \Rightarrow \zeta_i g \in D(A^\frac{1}{2}).$ Since $v_i \in D(A)$, both $u$ and $u|u|^{q-2}$ belong to $D(A^\frac{1}{2})$ (Appendix \ref{markov_sect}, Theorem \ref{thm:markovest}). Thus
\[
z \|u\|^r_r + \langle A_q u, u |u|^{q-2} \rangle - \langle \nabla \cdot b_n v_i, \zeta_i u |u|^{q-2} \rangle = \langle [ A_q, \zeta_i]_- v_i, u |u|^{q-2} \rangle,
\]
where
\begin{align*}
\langle [ A_q, \zeta_i]_- v_i, u |u|^{q-2} \rangle & := \big\langle \nabla \zeta_i \cdot a v_i \cdot \nabla \big(\bar{u} |u|^{q-2} \big) \big \rangle - \langle
 \nabla \zeta_i \cdot a \cdot \nabla v_i, u |u|^{q-2} \rangle \\
& = \langle \nabla \zeta_i \cdot \frac{a u}{\zeta_i} \cdot \nabla \big(\bar{u} |u|^{q-2} \big) \big \rangle - \big \langle \nabla \zeta_i \cdot \frac{a }{\zeta_i} \cdot \nabla u, u |u|^{q-2} \big \rangle + \big \langle \nabla \zeta_i \cdot \frac{a }{\zeta_i^2} \cdot \nabla \zeta_i, |u|^q \big \rangle.
\end{align*}
The rest of the proof resembles what we have already done. Taking the real and imaginary parts
of the last equation and performing quadratic estimates we arrive at $(i)$.
\end{proof}

The same proof works for an arbitrary open $\Omega \subset \mathbb R^d$ with $\zeta_i \upharpoonright \Omega$ in place of $\zeta_i.$ 
\end{proof}

\begin{remark*}
The example of $\Lambda_r \supset - \Delta + b \cdot \nabla$ with
$
b(x):=c |x|^{-2} x \in \mathbf{F}_\delta(-\Delta)$ in $\mathbb R^d$, $c = \frac{d-2}{2} \sqrt{\delta}, 
$ $\delta<4$,   can be used to show that the interval of bounded solvability $I_m=]\frac{2}{2- \frac{d-2}{d}\sqrt{\delta}}, \infty[$ can not be enlarged, i.e.\,the constructed $C_0$ semigroup $e^{-t\Lambda_r}$ can not be extended to a quasi bounded $C_0$ semigroup on $L^{s}$ for $s \not\in I_m$. 
Indeed, by duality it suffices to show that  $e^{-t\Lambda_q^*}$, $q \in ]1,\frac{2}{\sqrt{\delta}}\frac{d}{d-2}[$, can not be extended to a quasi bounded $C_0$ semigroup on $L^p$ for any $p \geq \frac{2}{\sqrt{\delta}}\frac{d}{d-2}$. Set $u(x):=|x|^{-c} \exp(-|x|^2)$, $x \in \mathbb R^d.$ Then $u \in D(\Lambda^*_q)$ for any $q \in ]1,\frac{d}{c+2}[$.  
Clearly, $]1,\frac{d}{c+2}[ \subset ]1,\frac{2}{\sqrt{\delta}}[$, the interval of contractive solvability for $e^{-t\Lambda_q^*}$. Now, suppose that $e^{-t\Lambda_q^*}$ admits extension to a semigroup of bounded linear operators $L^{p} \rightarrow L^{p}$. Then, using the analogue of Theorem \ref{thm:markeast}($\ast\ast\ast$) for the semigroup $e^{-t\Lambda_q^*}$ and then applying the Extrapolation Theorem (Appendix \ref{appendix_B}), we obtain that $e^{-t\Lambda_q^*} \in \mathcal B(L^q,L^p)$, $t>0$, and $\|e^{-t \Lambda^*_q(b)} \|_{q \rightarrow p} \leq c_1 e^{t c_2}  t^{-\frac{d}{2} (\frac{1}{q}-\frac{1}{p})}$. Next, it is seen that $(\lambda + \Lambda_q^*)u=f$, $\lambda>c_2$, $f:=\bigl[\lambda + 2(d-c)\bigr]|x|^{-c}e^{-|x|^2}-4|x|^{-c+2}e^{-|x|^2} \in L^q$, and so $(\lambda + \Lambda_q^*)^{-1}f=u$. The latter means, in view of $\|e^{-t \Lambda^*_q(b)} \|_{q \rightarrow p} \leq c_1 \; e^{t c_2} \; t^{-\frac{d}{2} (\frac{1}{q}-\frac{1}{p})}$, that $u \in L^p$, which is clearly false.
\end{remark*}

\subsection{Uniformly elliptic case II. The Hille-Lions approach}
\label{unif_elliptic_2}

Let $a \in (H_u), \;b \in \mathbf F_\delta(A), \;0<\delta < 4,$ and let $\Lambda_r(a,b)$ be the operator defined in Theorem \ref{thm:markease}. It is useful (in some cases necessary) to have the convergence
\[
e^{- t \Lambda_r (a, b)} = s \mbox{-} L^r \mbox{-} \lim_n e^{- t \Lambda_r (a_n, b_n )}, 
\]
where $a_n$ and $b_n$ have smooth and bounded entries.
\begin{theorem}
\label{thm:markfeast}
Fix $\delta < 1.$  Let $a, a_n \in (H_u),  \; b, b_n:\Omega \rightarrow \mathbb R^d, n=1,2,\dots$ Assume that
\[
 b \in \mathbf F_\delta(A(a)), \; b_n \in \mathbf F_\delta(A(a_n)) \text{ with fixed } \lambda =\lambda_\delta \text{ for all } n = 1,2,\dots\tag{$i$}
\]
\[
a_n \rightarrow a \text{ strongly in }[L^2_\loc]^{d\times d}, \quad b_n \rightarrow b \text{ strongly in } [L^2_\loc]^d.\tag{$ii$}
\]
Then $s \mbox{-} L^r \mbox{-} \lim_{n \uparrow \infty} e^{-t \Lambda_r(a_n, b_n)} = e^{-t \Lambda_r(a, b)}$ whenever $r  \in I^o_c= ]r_\delta, \infty[$ (recall $r_\delta = \frac{2}{2-\sqrt{\delta}}).$
\end{theorem}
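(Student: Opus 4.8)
The constraint $\delta<1$ is exactly what makes the Hille--Lions $L^2$ theory of section \ref{unif_elliptic_2} available, so the plan is to run everything through the resolvent identity $(i_0)$ and then bootstrap to $L^r$, $r\in I_c^o$, using the uniform estimates of Theorem \ref{thm:markeast}. Fix a real $\mu>\lambda=\lambda_\delta$; since $(i)$ fixes a common $\lambda$, the half-plane $\mathcal O=\{\Real\zeta>\lambda\}$ is the resolvent set of every $-\Lambda_2(a_n,b_n)$ and of $-\Lambda_2(a,b)$, and $(i_0)$ gives
\[
R_\mu(a_n,b_n)=R_{a_n,\mu}^{1/2}(1+T_n)^{-1}R_{a_n,\mu}^{1/2},\qquad T_n:=R_{a_n,\mu}^{1/2}\,b_n\cdot\nabla\,R_{a_n,\mu}^{1/2},\qquad \|T_n\|_{2\to2}\le\sqrt\delta<1,
\]
with the same identity for $R_\mu(a,b),T$. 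The first input I would establish is strong resolvent convergence $A(a_n)\to A(a)$ in $L^2$: this follows from Mosco convergence of the Dirichlet forms $\tau_n[u]=\langle\nabla u\cdot a_n\cdot\nabla\bar u\rangle$ on $E$ (the recovery condition via constant sequences on the core $C^1_c(\Omega)$, where $a_n\to a$ in $[L^1_\loc]^{d\times d}$ is enough; the $\liminf$ condition from $\sigma I\le a_n$, which forces weak $W^{1,2}$ compactness, together with $a_n\nabla u\to a\nabla u$ in $[L^2]^d$ for $u\in C^1_c$). Consequently $R_{a_n,\mu}^{1/2}\to R_{a,\mu}^{1/2}$ strongly in $L^2$, and the $a_n$-energies of $w_n:=R_{a_n,\mu}^{1/2}f$ converge, since $\langle A(a_n)w_n,w_n\rangle=\|f\|_2^2-\mu\langle(\mu+A(a_n))^{-1}f,f\rangle\to\|f\|_2^2-\mu\langle(\mu+A(a))^{-1}f,f\rangle=\langle A(a)w,w\rangle$, where $w:=R_{a,\mu}^{1/2}f$.

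The crux is passing to the limit in $T_n$. Take $f\in C^\infty_c(\Omega)$. From $\|A(a_n)^{1/2}w_n\|_2\le C\|f\|_2$ and $\sigma I\le a_n$ one gets $\sup_n\|w_n\|_{W^{1,2}}<\infty$, hence $w_n\rightharpoonup w$ in $W^{1,2}$; combining this with the energy convergence and $a_n\to a$ in $[L^2_\loc]^{d\times d}$ through the uniform-convexity identity
\[
\langle\nabla(w_n-w)\cdot a_n\cdot\nabla\overline{(w_n-w)}\rangle=\langle\nabla w_n\cdot a_n\cdot\nabla\bar w_n\rangle-2\Real\langle\nabla w_n\cdot a_n\cdot\nabla\bar w\rangle+\langle\nabla w\cdot a_n\cdot\nabla\bar w\rangle\longrightarrow 0
\]
upgrades the weak convergence to strong convergence $a_n^{1/2}\nabla w_n\to a^{1/2}\nabla w$ in $[L^2_\loc]^d$. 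Since $b_n\in\mathbf F_\delta(A(a_n))$ with the fixed $\lambda$, the pointwise bound $|b_n\cdot\nabla w_n|\le b_{n,a_n}|a_n^{1/2}\nabla w_n|$ and the form-boundedness control $\|b_n\cdot\nabla w_n\|_2$ uniformly in $n$ (as in the estimate for $\|T_n\|$); together with $b_n\to b$ in $[L^2_\loc]^d$ this gives $b_n\cdot\nabla w_n\to b\cdot\nabla w$ in $L^1_\loc$ and a.e.\ along a subsequence, and a cutoff argument in the spirit of the proof of Theorem \ref{thm:markease} (using $b_n\in\mathbf F_\delta(A(a_n))$ with common $\lambda$ to bound the contribution of $\{|x|>R\}$ uniformly in $n$) yields equi-integrability of $|b_n\cdot\nabla w_n|^2$, hence strong $L^2$ convergence $b_n\cdot\nabla w_n\to b\cdot\nabla w$. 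Applying the strong convergence $R_{a_n,\mu}^{1/2}\to R_{a,\mu}^{1/2}$ to these, $T_nf\to Tf$ in $L^2$ for $f$ in the dense set $C^\infty_c(\Omega)$; by $\sup_n\|T_n\|\le\sqrt\delta$, $T_n\to T$ strongly on $L^2$.

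Because $\sup_n\|T_n\|\le\sqrt\delta<1$, the Neumann series $(1+T_n)^{-1}=\sum_{k\ge0}(-T_n)^k$ converges strongly to $(1+T)^{-1}$ (geometric domination), and then $R_\mu(a_n,b_n)\to R_\mu(a,b)$ strongly in $L^2$. With strong resolvent convergence at $\mu$ and the uniform bound $\|e^{-t\Lambda_2(a_n,b_n)}\|_{2\to2}\le e^{\omega_2 t}$, $\omega_2=\lambda\delta/2$ (Theorem \ref{thm:markeast}, since $2\in I_c$ as $\delta<1$), the Trotter--Kato theorem gives $e^{-t\Lambda_2(a_n,b_n)}\to e^{-t\Lambda_2(a,b)}$ strongly in $L^2$, uniformly on compact $t$-intervals. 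To reach $L^r$ for $r\in I_c^o=]r_\delta,\infty[$: the semigroups $e^{-t\Lambda_r(a_n,b_n)}$ are consistent in $r$ and, by Theorem \ref{thm:markeast}, quasi-contractions on $L^{r_\delta}$ and on each $L^{p}$, $p<\infty$, with $n$-independent bounds, so on $L^1\cap L^\infty$ they agree with $e^{-t\Lambda_2(a_n,b_n)}$; the elementary interpolation lemma quoted in the proof of Theorem \ref{thm:marksolve} (applied with $p_0=2$, $p_1=r_\delta$, and $p_2<\infty$ arbitrarily large) turns the $L^2$ convergence into $\|e^{-t\Lambda_r(a_n,b_n)}g-e^{-t\Lambda_r(a,b)}g\|_r\to0$ for $g\in L^1\cap L^\infty$ and $r\in]r_\delta,p_2[$, and a density/uniform-bound argument extends this to all $g\in L^r$, which is the claim.

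The main obstacle is the limit passage in $T_n$ in the second step: one must handle simultaneously the variable principal part --- so that \emph{strong} $W^{1,2}_\loc$-convergence of $R_{a_n,\mu}^{1/2}f$, not merely weak, is available (this is precisely where Mosco/energy convergence enters) --- and the unbounded, only locally $L^2$-convergent drifts $b_n$, whose tails at infinity can be controlled only through the uniform form-boundedness with a common $\lambda$ (which is exactly why hypothesis $(i)$ is stated with a fixed $\lambda=\lambda_\delta$). Everything downstream --- the Neumann series, Trotter--Kato, and the $L^2\to L^r$ bootstrap --- is routine given the estimates already proved in Theorems \ref{thm:markeast}--\ref{thm:marksolve}.
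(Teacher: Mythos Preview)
Your overall architecture matches the paper's: exploit the Hille--Lions representation $R_\mu = R_{a,\mu}^{1/2}(1+T)^{-1}R_{a,\mu}^{1/2}$ with $\|T_n\|_{2\to2}\le\sqrt\delta<1$, prove $T_n\to T$ strongly in $L^2$, conclude strong resolvent convergence, then Trotter--Kato and the $L^r$ bootstrap via interpolation against the uniform quasi-contraction bounds of Theorem~\ref{thm:markeast}. Your Mosco/energy-convergence route to $\nabla w_n\to\nabla w$ strongly in $[L^2]^d$ (with $w_n=R_{a_n,\mu}^{1/2}f$) is correct; the paper reaches the same conclusion by a direct resolvent-difference identity for $F_n\to F$, then $F_n^{1/2}\to F^{1/2}$ via $F_\lambda^{1/2}=\pi^{-1}\int_0^\infty t^{-1/2}F_{t+\lambda}\,dt$, followed by an appeal to closedness of $\nabla$.

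There is, however, a genuine gap in your drift step. You assert that form-boundedness controls $\|b_n\cdot\nabla w_n\|_2$ ``as in the estimate for $\|T_n\|$'' and then argue equi-integrability of $|b_n\cdot\nabla w_n|^2$. But for $w\in D(A^{1/2})$ one only has $b\cdot\nabla w\in\mathcal H_-$, not $L^2$: with $a=I$, $b=c|x|^{-2}x$ and $w\in W^{1,2}\setminus W^{2,2}$, $|b\cdot\nabla w|\sim|x|^{-1}|\nabla w|\notin L^2$. The bound $\|T_n\|\le\sqrt\delta$ depends essentially on the \emph{outer} $R_{a_n,\mu}^{1/2}$; equivalently, on the factorisation $T_n=H_n^*S_n$ with $\|H_n\|\le\sqrt\delta$, $\|S_n\|\le1$. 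The paper keeps this smoothing throughout: having $\nabla w_n\to\nabla w$ in $[L^2]^d$, one observes that $\psi\mapsto R_{a_n,\mu}^{1/2}(b_n\cdot\psi)$ is bounded $[L^2]^d\to L^2$ uniformly in $n$ (by $\sqrt{\delta\xi}$, using $a_n\le\xi I$), so one may approximate $\nabla w$ by $\varphi\in[C_c^\infty]^d$ and reduce to $R_{a_n,\mu}^{1/2}(b_n\cdot\varphi)\to R_{a,\mu}^{1/2}(b\cdot\varphi)$; since $\varphi$ has compact support, $b_n\cdot\varphi\to b\cdot\varphi$ in $L^2$ by the $L^2_\loc$ hypothesis on $b_n$, and $R_{a_n,\mu}^{1/2}\to R_{a,\mu}^{1/2}$ strongly. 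With this correction your downstream argument goes through.
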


\begin{proof}
$\mathbf{1}.$~Set $\mathcal H = L^2(\Omega, \mathcal L^d), \; \mathcal H_+ =\big ((D(A^\frac{1}{2}), \; \|f\|_+^2 = \lambda \|f\|_2^2 + \|A^\frac{1}{2} f\|_2^2 \big), \; \mathcal H_- =  \mathcal H_+^*.$ By $\langle g, f \rangle, \; g \in \mathcal H_+, f \in \mathcal H_-$ denote the pairing between $ (\mathcal H_+,  \mathcal H_-)$ which coincides with $\langle g, f\rangle_\mathcal H$ for $f \in \mathcal H.$ Then $\mathcal H_+ \subset \mathcal H \subset \mathcal H_-$ is the standard triple of Hilbert spaces w.r.t. $\langle , \rangle_{\mathcal H}.$ By $\hat{A}$ denote the extension by continuity of $A \equiv A(a)$ to the operator from $\mathcal H_+$ to $\mathcal H_-.$ Then $\hat A \in \mathcal B(\mathcal H_+, \mathcal H_-)$ and $|\langle f, (\zeta + \hat A)f \rangle | \geq \|f\|_+^2, \; f \in \mathcal H_+, \; \Real \zeta > \lambda.$ Thus $\zeta + \hat A$ is a bijection. Clearly $(\zeta + \hat A)^{-1} \upharpoonright \mathcal H = (\zeta + A)^{-1}.$

Consider $\hat B \equiv b\cdot\nabla : \mathcal H_+ \rightarrow \mathcal H_-.$ By $b \in \mathbf F_\delta(A), \hat B \in \mathcal B(\mathcal H_+, \mathcal H_-)$ and 
\[
|\langle f,(\zeta + \hat A + \hat B)f \rangle| \geq (1 - \sqrt{\delta}) \langle f,(\mu + \hat A )f \rangle, \; \mu = \frac{|2 \zeta -\lambda \sqrt{\delta}|}{2(1-\sqrt{\delta})} > 0
\]
whenever $\Real \zeta > \frac{\lambda \sqrt{\delta}}{2}.$

Thus, for $\hat \Lambda \equiv \hat \Lambda(a,b) := \hat A + \hat B$ and every  $\zeta \in \mathcal O_o :=\{z \mid \Real z > \frac{\lambda \sqrt{\delta}}{2} \},$ $\zeta + \hat \Lambda$ is a bijection.

In $\mathcal H$ define the operators
\[
H = b_a (\bar \zeta+A)^{-\frac{1}{2}}, \; S = \frac{b}{b_a} \cdot \nabla (\zeta+A)^{-\frac{1}{2}} \text{ and } H^* S.
\]
Clearly, due to $b \in \mathbf F_\delta(A),$ for each $\zeta$ with $\Real \zeta \geq \lambda,$
\[
\|H^*\|_{2 \rightarrow 2} = \|H\|_{2 \rightarrow 2}\leq \sqrt{\delta}.
\]
Since ($\mathcal L^d$ a.e.) $|S f| \leq \sqrt{\frac{b}{b_a}\cdot a^{-1} \cdot \frac{b}{b_a}}\sqrt{\nabla (\zeta +A)^{-\frac{1}{2}} f \cdot a \cdot \nabla (\zeta +A)^{-\frac{1}{2}} \bar f},$ and so $\|S f\|_2 \leq \|f\|_2, \;\;f \in L^2,$ we conclude that $\|H^*S\|_{2\rightarrow 2} \leq \sqrt{\delta}.$ 
\[
\text{Set } \hat R_\zeta \equiv \hat R_\zeta(a,b) := (\zeta + \hat \Lambda )^{-1}, \; \hat F_\zeta \equiv \hat F_\zeta(a) := (\zeta +\hat A)^{-1},\; F_\zeta  := (\zeta + A)^{-1}, \; P_\zeta \equiv P_\zeta(a,b) := H^* S. 
\]
Clearly, for all $\zeta, \eta \in \mathcal O := \{z \mid \Real z > \lambda \},$
\begin{align*}
& \hat R_\zeta - \hat R_\eta  = (\eta -\zeta) \hat R_\zeta \hat R_\eta;\tag{$p_1$}\\
& \hat R_\zeta  = \hat F_\zeta -\hat F_\zeta \hat B \hat R_\zeta = \hat F_\zeta -\hat F_\zeta \hat B \hat F_\zeta + \hat F_\zeta \hat B \hat F_\zeta \hat B \hat F_\zeta - \dots ;\\
&\hat F_\zeta \hat B \hat F_\zeta \upharpoonright \mathcal H  = F_\zeta^\frac{1}{2}  P_\zeta F_\zeta^\frac{1}{2},\;\; \|P_\zeta\|_{2 \rightarrow 2} \leq \sqrt{\delta}; \\
& \hat F_\zeta \hat B \hat F_\zeta \hat B \hat F_\zeta \upharpoonright \mathcal H = F_\zeta^\frac{1}{2}  H^*\, SH^*\, S  F_\zeta^\frac{1}{2}.
\end{align*}
Therefore,
\begin{align*}
& R_\zeta :=  \hat R_\zeta \upharpoonright \mathcal H =  F_\zeta^\frac{1}{2} (1+ P_\zeta)^{-1}  F_\zeta^\frac{1}{2} \tag{$p_2$};\\
& R_\zeta = F_\zeta -  F_\zeta^\frac{1}{2} H^* (1 + S H^*)^{-1} S  F_\zeta^\frac{1}{2} \tag{$p_2^\prime$};\\
& \|R_\zeta\|_{2\rightarrow 2} \leq |\zeta|^{-1} (1-\sqrt{\delta})^{-1} \tag{$p_3$}.
\end{align*}
Now, we employ Hille's theory of pseudo-resolvents. By $(p_1)$, $R_\zeta$ is a pseudo-resolvent on $\mathcal O.$ By $(p_2),$ the common null set of $\{R_\zeta \mid \zeta \in \mathcal O \}$ is $\{0 \}.$ Also, from $(p_2^\prime)$ it follows that $\nu R_\nu \overset{s} \rightarrow 1$ as $\nu \uparrow \infty.$ (Indeed, since $\|\nu R_\nu\|_{2 \rightarrow 2}$ is bounded in $\nu$, it suffices to prove $\nu R_\nu f \rightarrow f$ for $f \in D(A^\frac{1}{2}).$ In view of $(p_2^\prime)$, we only have to prove that $\nu M_\nu f \equiv \nu  F_\nu^\frac{1}{2} H_\nu^* (1 + S_\nu H_\nu^*)^{-1} S_\nu  F_\nu^\frac{1}{2} f \rightarrow 0$. Since
$\|S_\nu F_\nu^\frac{1}{2} f\|_2 \leq \|F_\nu (\lambda+ A)^\frac{1}{2} f\|_2 \leq \nu^{-1} \|(\lambda+A)^\frac{1}{2} f\|_2,$ it is seen that $\| M_\nu f\|_2 \leq \nu^{-\frac{3}{2}}\|(\lambda+A)^\frac{1}{2} f\|_2.$) 
Therefore, the range of   $R_\zeta$ is dense in $\mathcal H$, and $R_\zeta$ is the resolvent of a densely defined closed operator $\Lambda \equiv \Lambda(a,b)$ (Appendix \ref{hille_theory_sect}, Theorem \ref{hille_thm1}).

Finally, by ($p_3$), $-\Lambda$ is the generator of a quasi bounded holomorphic semigroup.

\begin{remarks}
1. The above construction of $\Lambda(a,b)$ works for $a \in (H_1), \; A=A_D, A_N \text{ or } A_{iD}.$ The use of $(p_2)$ leads to the convergence $(1 + A)^\frac{1}{2} \big(R_\zeta(b_n)-R_\zeta(b)\big) \overset{s}\rightarrow 0$ claimed in section \ref{Lr_sect} almost immediately.

2. If $a \in (H_u), \; A=A_D,$ then $\mathcal H_+ =W^{1,2}_0(\Omega), \; \mathcal H_-= W^{-1,2}(\Omega).$ If $a\in(H_u), \; A=A_N,$ then $\mathcal H_\pm =W^{\pm 1,2}(\Omega).$ \qed
\end{remarks}

\textbf{2.~}In view of $(p_3)$ it suffices to prove the convergence $R_\zeta(a_n, b_n) \overset{s}\rightarrow R_\zeta(a, b)$ for $\zeta =\lambda_\delta.$ For brevity, set $F_n= F_\lambda(a_n), \; F= F_\lambda (a)$.

Note, that $F_n \overset{s}\rightarrow F$ if and only if $\langle (F_n - F)f,F_nf\rangle \rightarrow 0$ and $\langle (F - F_n)f,F f\rangle \rightarrow 0$  $(f \in \mathcal H).$ In turn, 
$$|\langle (F_n - F)f,F_nf\rangle| = |\langle \nabla F f \cdot (a_n - a)\cdot \nabla F_n^2 \bar f\rangle| \leq \|\nabla F_n^2 \bar f\|_2 \|(a-a_n)\cdot\nabla F f\|_2,$$ 
$$\|\nabla F_n^2 f\|_2 \leq \sigma^{-\frac{1}{2}} \lambda^{-\frac{3}{2}} \|f\|_2$$ 
and $a_n \rightarrow a$ strongly in $L^2_\loc.$ Thus $F_n \overset{s}\rightarrow F.$

Next, employing the formula
\[
F_\lambda^\frac{1}{2}(a) = \pi^{-1} \int_0^\infty t^{-\frac{1}{2}} F_{t+\lambda}(a) d t,
\]
it is seen that
\[
\int_T^\infty t^{-\frac{1}{2}} \| \big(F_{t+\lambda}(a)-F_{t+\lambda}(a_n) \big)f \|_2 dt \leq 2 T^{-\frac{1}{2}} \|f\|_2^2
\]
and
\[
\lim_n \int_0^T t^{-\frac{1}{2}} \| \big(F_{t+\lambda}(a)-F_{t+\lambda}(a_n) \big)f \|_2 dt = 0.
\]
Thus $F_n^\frac{1}{2} \overset{s}\rightarrow F^\frac{1}{2}$ and, since $\nabla$ is a closed operator in $\mathcal H,$ $\nabla F_n^\frac{1}{2} \overset{s}\rightarrow \nabla F^\frac{1}{2}.$ In turn, the latter and the fact that $b_n \rightarrow b$ strongly in $L^2_\loc$ yield $H_n^*S_n \overset{s}\rightarrow H^*S.$ $\big($Indeed, $s \mbox{-} L^2 \mbox{-} H^*_n S_n = s \mbox{-} L^2 \mbox{-} F^\frac{1}{2}_n b_n \cdot \nabla F^\frac{1}{2}.$ Therefore, it suffices to establish the convergence $\|F^\frac{1}{2}_n b_n \cdot \nabla F^\frac{1}{2}f - H^* \frac{b}{b_a} \cdot \nabla F^\frac{1}{2} f\|_2 \rightarrow 0,$ or $\|F^\frac{1}{2}_n (b_n \cdot \varphi) - H^* (\frac{b}{b_a} \cdot \varphi) \|_2 \rightarrow 0,$ only for all $\varphi \in [C_c^\infty]^d.$ For such $\varphi$, we have $F^\frac{1}{2}_n (b_n \cdot \varphi) - H^*( \frac{b}{b_a} \cdot \varphi) = F_n^\frac{1}{2} (b_n \cdot \varphi) - F^\frac{1}{2} (b \cdot \varphi) \overset{s} \rightarrow 0.\big)$ Now the convergence $R_\zeta(a_n, b_n) \overset{s}\rightarrow R_\zeta(a, b)$ easily follows. The theorem is proved for $r = 2,$ and hence for all $r \in I^o_c.$
\end{proof}

\subsection{Non-divergence form operators} 
\label{nondiv_sect}

The following theorem is a by-product of Theorem \ref{thm:markeast} and Theorem \ref{thm:markfeast}.

\begin{theorem}
\label{thm:markbeast}
 Set $b := (\nabla a), (\nabla a)_k = \sum_{i=1}^d (\nabla_i a_{ik}),$ and $b^2_a = b\cdot a^{-1} \cdot b.$
 
 {\rm(\textit{i})} If $a \in (H_1), \; b \in \mathbf{F}_\delta(A)$ for some $\delta < 4$ and also $b_a^2 \in L^1 + L^\infty$ if $1 \leq \delta < 4,$ then $a \cdot \nabla^2$ has an operator realization $-\Lambda_r(a,b)$ in $L^r$ for every $r \in I^o_c$ as the generator of the positivity preserving, $L^r$ quasi contraction, $L^\infty$ contraction $C_0$ semigroup $e^{-t \Lambda_r(a,b)} = s\mbox{-}L^r\mbox{-}\lim_n e^{-t \Lambda_r(a,b_n)}$. 

{\rm(\textit{ii})} If $a, a_n \in (H_u)$, $n=1, 2, \dots$, $(\nabla a) \in \mathbf F_\delta(A)$, $(\nabla a_n) \in \mathbf F_\delta(A_n)$ {\rm($A_n \equiv A(a_n)$)} for some $\delta < 1$ and all $n$, and if 
$$
a_n \rightarrow a \text{ strongly in $[L^2_{\loc}]^{d \times d}$}, \quad (\nabla a_n) \rightarrow (\nabla a) \text{ strongly in $[L^2_\loc]^d$},
$$
 then, for every $r\in I^o_c,$ 
\[
e^{-t \Lambda_r(a,b)} = s\mbox{-}L^r\mbox{-}\lim_n e^{-t \Lambda_r(a_n,b_n)}, \qquad b=(\nabla a), \; b_n = (\nabla a_n).
\]

{\rm(\textit{iii})} If $\Omega = \mathbb R^d,$ $a \in (H_u)$, $b := (\nabla a) \in \mathbf F_{\delta_1} \equiv \mathbf F_{\delta_1}(-\Delta)$ with $\delta_1 = \sigma^2 \delta,$ $\delta < 1,$ and $|b| \in L^2+L^\infty,$ then, for every $r\in I^o_c,$
\[
e^{-t \Lambda_r(a,b)} = s\mbox{-}L^r\mbox{-}\lim_n e^{-t \Lambda_r(a_n,b_n)}, \qquad a_n := e^\frac{\Delta}{n} a, \;  b_n := (\nabla a_n).
\]

{\rm(\textit{iv})} If $\Omega = \mathbb R^d,$ $a$ is a uniformly elliptic diagonal matrix, $b := (\nabla a) \in \mathbf F_{\delta}(A)$ with $\delta < 1$, $|b| \in L^2+L^\infty,$ then, for each $r\in I^o_c,$
\[
e^{-t \Lambda_r(a,b)} = s\mbox{-}L^r\mbox{-}\lim_n e^{-t \Lambda_r(a_n,b_n)}, \qquad a_n := e^\frac{\Delta}{n}a, \; b_n := (\nabla a_n).
\]
\end{theorem}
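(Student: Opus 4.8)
The plan is to deduce all four assertions from results already in hand. Assertions (\textit{i}) and (\textit{ii}) require essentially no new argument: since with $b=(\nabla a)$ the formal expression $-\nabla\cdot a\cdot\nabla+b\cdot\nabla$ equals $-a\cdot\nabla^2$, assertion (\textit{i}) is Theorem~\ref{thm:markeast} with $b=(\nabla a)$ (its hypothesis $b_a\in L^2+L^\infty$ being equivalent to the hypothesis $b_a^2\in L^1+L^\infty$ of (\textit{i}), because $0\le g\in L^1+L^\infty$ iff $g^{1/2}\in L^2+L^\infty$), and assertion (\textit{ii}) is Theorem~\ref{thm:markfeast} applied to $a,a_n$ with $b=(\nabla a)$, $b_n=(\nabla a_n)$. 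Hence for (\textit{iii}) and (\textit{iv}) it suffices to verify, for the mollified data $a_n:=e^{\Delta/n}a$, $b_n:=(\nabla a_n)$, the hypotheses of Theorem~\ref{thm:markfeast}. Writing $e^{\Delta/n}g(x)=\int p_n(y)g(x-y)\,dy$ with $p_n\ge0$, $\int p_n=1$ (the heat kernel at time $1/n$), one gets $\sigma|v|^2\le v\cdot a_n(x)\cdot v\le\xi|v|^2$, so $a_n\in(H_u)$ with the same ellipticity constants; $a_n\to a$ in $[L^2_\loc]^{d\times d}$ because $a\in L^\infty$; and, since $\nabla$ commutes with $e^{\Delta/n}$, $b_n=(\nabla a_n)=e^{\Delta/n}(\nabla a)=e^{\Delta/n}b\to b$ in $[L^2_\loc]^d$ because $|b|\in L^2+L^\infty$.

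The one substantial point is the \emph{uniform form-boundedness}
\[
b\in\mathbf F_\delta(A(a))\ \Longrightarrow\ b_n\in\mathbf F_\delta(A(a_n))\qquad\text{with the same }\delta\text{ and the same }\lambda=\lambda_\delta .
\]
I would establish this in two steps. \textbf{Step A} (pointwise domination): $b_n\cdot a_n^{-1}\cdot b_n\le e^{\Delta/n}(b\cdot a^{-1}\cdot b)$ $\mathcal L^d$-a.e. In case (\textit{iv}) the matrix $a$, hence $a_n$, is diagonal, so $(\nabla a)_k=\partial_k a_{kk}$; writing $\phi_k=\sqrt{a_{kk}}\ge\sqrt\sigma$ one has $b\cdot a^{-1}\cdot b=\sum_k a_{kk}^{-1}(\partial_k a_{kk})^2=4\sum_k(\partial_k\phi_k)^2$ and likewise $b_n\cdot a_n^{-1}\cdot b_n=4\sum_k\bigl(\partial_k\sqrt{e^{\Delta/n}(\phi_k^2)}\bigr)^{2}$; since $\partial_k\sqrt{e^{\Delta/n}(\phi_k^2)}=e^{\Delta/n}(\phi_k\partial_k\phi_k)\big/\sqrt{e^{\Delta/n}(\phi_k^2)}$ (using $\partial_k e^{\Delta/n}=e^{\Delta/n}\partial_k$) and $e^{\Delta/n}$ obeys the Cauchy--Schwarz inequality $|e^{\Delta/n}(uv)|\le\sqrt{e^{\Delta/n}(u^2)}\,\sqrt{e^{\Delta/n}(v^2)}$, one gets $\bigl(\partial_k\sqrt{e^{\Delta/n}(\phi_k^2)}\bigr)^{2}\le e^{\Delta/n}\bigl((\partial_k\phi_k)^2\bigr)$, and summation gives Step~A. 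In case (\textit{iii}) one works with $-\Delta$ in place of $A$, where Step~A reduces to the elementary estimate $|b_n|^2=|e^{\Delta/n}b|^2\le(e^{\Delta/n}|b|)^2\le e^{\Delta/n}(|b|^2)$ (Jensen). (One could in fact treat an arbitrary $a\in(H_u)$ uniformly, since $(v,M)\mapsto v\cdot M^{-1}\cdot v=\sup_w\{2w\cdot v-w\cdot M\cdot w\}$ is jointly convex and $e^{\Delta/n}$ is an averaging operator.)

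\textbf{Step B} (transfer of the bound without loss of constants): for $f\in W^{1,2}(\mathbb R^d)=D(A(a_n)^{1/2})$ put $f_y(x):=f(x+y)$; applying Step~A, then Tonelli's theorem, then the defining inequality of $b\in\mathbf F_\delta(A(a))$ to each $f_y\in W^{1,2}(\mathbb R^d)$, and finally the substitution $x\mapsto x-y$, one obtains
\begin{align*}
\langle b_n\cdot a_n^{-1}\cdot b_n,\,|f|^2\rangle
&\le\int p_n(y)\,\langle b\cdot a^{-1}\cdot b,\,|f_y|^2\rangle\,dy\\
&\le\delta\int p_n(y)\,\langle\nabla f_y\cdot a\cdot\nabla\bar f_y\rangle\,dy+\lambda\delta\int p_n(y)\,\|f_y\|_2^2\,dy\\
&=\delta\,\langle\nabla f\cdot a_n\cdot\nabla\bar f\rangle+\lambda\delta\,\|f\|_2^2=\delta\,\|A(a_n)^{1/2}f\|_2^2+\lambda\delta\,\|f\|_2^2,
\end{align*}
where in the last line $\int p_n(y)\,\langle\nabla f_y\cdot a\cdot\nabla\bar f_y\rangle\,dy=\langle\nabla f\cdot(e^{\Delta/n}a)\cdot\nabla\bar f\rangle$ (substitution and Fubini) and $\int p_n(y)\,\|f_y\|_2^2\,dy=\|f\|_2^2$ (translation invariance of $\mathcal L^d$). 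This is exactly $b_n\in\mathbf F_\delta(A(a_n))$ with the same $\delta,\lambda$. In case (\textit{iii}) one then passes, via $\mathbf F_{\sigma^2\delta}(-\Delta)\subset\mathbf F_\delta(A(a))$ and $\mathbf F_{\sigma^2\delta}(-\Delta)\subset\mathbf F_\delta(A(a_n))$ (legitimate since $a$ and all $a_n$ share the ellipticity constant $\sigma$, and producing a common $\lambda_\delta$), from the hypothesis $(\nabla a)\in\mathbf F_{\sigma^2\delta}(-\Delta)$ to the required form bounds. With all hypotheses of Theorem~\ref{thm:markfeast} now verified for $(a,a_n,(\nabla a),(\nabla a_n))$, assertions (\textit{iii}) and (\textit{iv}) follow. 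The step I expect to be the crux is Steps~A--B: as both $a$ and $b=(\nabla a)$ get mollified and the form bound couples them, the estimate must be organized so that $\delta$ and $\lambda$ are not degraded — which is precisely what the ``translation-averaging'' identity $\int p_n(y)\langle\nabla f_y\cdot a\cdot\nabla\bar f_y\rangle\,dy=\langle\nabla f\cdot a_n\cdot\nabla\bar f\rangle$, together with the pointwise domination of Step~A, accomplishes.
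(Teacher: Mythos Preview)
Your proposal is correct. Parts (\textit{i}) and (\textit{ii}) are handled exactly as in the paper (direct citation of Theorems~\ref{thm:markeast} and~\ref{thm:markfeast}), and your Step~A (pointwise domination $b_n\cdot a_n^{-1}\cdot b_n\le E_n(b\cdot a^{-1}\cdot b)$ via Cauchy--Schwarz for the heat semigroup) coincides with the paper's computation.

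The genuine difference is in Step~B. The paper transfers the form bound by using the \emph{self-adjointness} of $E_n$: it writes $\langle E_n(b_a^2),|f|^2\rangle=\langle b_a^2,E_n|f|^2\rangle$, applies the form bound to the test function $g=\sqrt{E_n|f|_\varepsilon^2}$ (with $|f|_\varepsilon=|f|+\varepsilon e^{-x^2}$ to avoid division by zero), and then needs a second Cauchy--Schwarz step to bound $|\nabla g|^2\le E_n|\nabla|f|_\varepsilon|^2$, followed by one more use of self-adjointness to turn $\langle\kappa^2 E_n|\nabla|f||^2\rangle$ into $\langle(E_n\kappa^2)|\nabla|f||^2\rangle$. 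You instead expand $E_n$ as translation averaging and apply the original form bound to each translate $f_y$, then close with the identity $\int p_n(y)\langle\nabla f_y\cdot a\cdot\nabla\bar f_y\rangle\,dy=\langle\nabla f\cdot a_n\cdot\nabla\bar f\rangle$. Your route is more direct: it avoids the $\varepsilon$-regularization and the second Cauchy--Schwarz, and it makes transparent why the constants $\delta,\lambda$ are preserved exactly. Your parenthetical remark about handling a general $a\in(H_u)$ via joint convexity of $(v,M)\mapsto v\cdot M^{-1}\cdot v$ is also a nice observation that goes slightly beyond the paper's diagonal-case treatment.
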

\begin{proof}
The claimed convergence in (\textit{i}) (respectively, (\textit{ii})) is a direct consequence of Theorem \ref{thm:markeast} (Theorem \ref{thm:markfeast}).

The important thing in (\textit{iii}) is the fact that $b_n \in \mathbf{F}_\delta(A_n), \; 0<\delta < 1,$  uniformly in  $n,$ and so $\Lambda_r(a_n, b_n)$ are well defined for $r \in I^o_c.$

1. Set $E_n f := e^{\frac{\Delta}{n}} f.$ Alternatively, we may set $E_nf := \gamma_n * f, \gamma_n$ denotes the K. Friedrichs mollifier. Note the following elementary pointwise inequalities (below $b \cdot b=|b|^2=:b^2$)
\begin{align*}
(\nabla E_n a)^2 & \leq E_n (\nabla a)^2.\\
|E_n (f g)|^2 & \leq (E_n |f|^2) E_n |g|^2, \;\;\; f, g \in L^2 + L^\infty.
\end{align*}
Clearly, $b = (\nabla a) \in \mathbf F_{\delta_1} \Rightarrow b \in \mathbf F_\delta (A)$ with $c(\delta) = \frac{c_1(\delta_1)}{\sigma}.$ Thus we only need to show that $b_n \in \mathbf F_{\delta_1}$ in order to conclude that $b_n \in \mathbf F_\delta (A_n).$ Set $|f|_\varepsilon := |f| + \varepsilon e^{-x^2}.$ We have for $f \in W^{1,2},$
\begin{align*}
\|(\nabla E_n a) f \|_2^2 & = \|(E_n \nabla a)^2 |f|^2 \|_1 = \lim_{\varepsilon \downarrow 0} \|(E_n \nabla a)^2 |f|_\varepsilon^2 \|_1,\\
\|\big( E_n (\nabla a)^2\big) |f|_\varepsilon^2 \|_1 & = \|(\nabla a) \sqrt{E_n |f|_\varepsilon^2}\|_2^2\\
&  \leq \delta_1 \| \nabla \sqrt{E_n |f|_\varepsilon^2}\|_2^2 + c_1(\delta_1) \| E_n |f|_\varepsilon^2 \|_1 \\
& = \delta_1 \bigg \| \frac{E_n(|f|_\varepsilon\nabla|f|_\varepsilon)}{\sqrt{E_n(|f|_\varepsilon^2)}} \bigg \|_2^2 + c_1(\delta_1) \| E_n |f|_\varepsilon^2 \|_1 \\
&\leq \delta_1 \|E_n(\nabla|f|_\varepsilon)^2 \|_1 + c_1(\delta_1) \| E_n |f|_\varepsilon^2 \|_1 \\
& \leq \delta_1 \|\nabla |f|_\varepsilon \|_2^2 + c_1(\delta_1) \| |f|_\varepsilon \|_2^2.
\end{align*}
Since
\[
\lim_{ \varepsilon \downarrow 0} \big(\delta_1 \|\nabla |f|_\varepsilon \|_2^2 + c_1(\delta_1) \| |f|_\varepsilon \|_2^2 \big) = \delta_1 \|\nabla |f| \|_2^2 + c_1(\delta_1) \| f \|_2^2 \leq \delta_1 \|\nabla f \|_2^2 + c_1(\delta_1) \| f \|_2^2,
\]
we have proved that $b_n \in \mathbf F_{\delta_1}.$ 

2.~Now we claim that under assumptions on $a$ in $(iv)$
\[
b \in \mathbf{F}_\delta (A) \Rightarrow E_n b \in \mathbf F_\delta (A_n).
\]
Only for simplicity we treat the special case: $a = \kappa^2 I$, with $b = (\nabla a) = 2 \kappa \nabla \kappa \in \mathbf{F}_\delta(A)$. Since $b_a^2 = 4 (\nabla \kappa)^2,$ the assumption $b \in \mathbf{F}_\delta (A)$ means that 
\[
4 \langle (\nabla \kappa)^2 |f|^2 \rangle \leq \delta \| \kappa \nabla f \|_2^2 + c(\delta)\|f\|_2^2 \quad \quad f \in D(A^\frac{1}{2}).
\]
Set $a_n := E_n a.$ Then $(\nabla a_n) = 2 E_n (\kappa \nabla \kappa), \; (\nabla a_n) \cdot a_n^{-1} \cdot (\nabla a_n) = \frac{4 |E_n(\kappa \nabla \kappa)|^2}{E_n \kappa^2}.$ Note that
\[
|E_n(\kappa \nabla \kappa)|^2 \leq (E_n \kappa^2) E_n |\nabla \kappa|^2,
\] 
and so
\[
\langle (\nabla a_n) \cdot a_n^{-1} \cdot (\nabla a_n), |f|^2 \rangle\leq 4 \langle E_n |\nabla \kappa|^2, |f|^2 \rangle = 4 \langle |\nabla \kappa|^2, E_n |f|^2 \rangle.
\]
But $4 \langle |\nabla \kappa|^2, E_n |f|_\varepsilon^2 \rangle \leq \delta \|\kappa \nabla \sqrt{E_n |f|_\varepsilon^2} \|_2^2 + c(\delta) \| \sqrt{E_n |f|_\varepsilon^2} \|_2^2= \delta \big \langle \frac{\kappa^2 |\nabla E_n |f|_\varepsilon^2 |^2}{4 E_n |f|_\varepsilon^2} \rangle + c(\delta) \langle E_n |f|_\varepsilon^2 \rangle$ and $\frac{|\nabla E_n |f|_\varepsilon^2 |^2}{4 E_n |f|_\varepsilon^2} = \frac{|E_n (|f|_\varepsilon \nabla |f|_\varepsilon)|^2}{E_n |f|_\varepsilon^2} \leq E_n|\nabla |f|_\varepsilon|^2.$ Thus,
\[
4 \langle (\nabla \kappa)^2, E_n |f|^2 \rangle \leq \delta \langle \kappa^2 E_n |\nabla |f||^2 \rangle + c(\delta) \langle |f|^2 \rangle = \delta \langle (E_n \kappa^2)  |\nabla |f||^2 \rangle + c(\delta) \langle |f|^2 \rangle
\]
and
\[
\langle (\nabla a_n) \cdot a_n^{-1} \cdot (\nabla a_n), |f|^2 \rangle\leq \delta \| A_n^\frac{1}{2} |f| \|_2^2 + c(\delta) \|f\|_2^2 \leq \delta \| A_n^\frac{1}{2} f \|_2^2 + c(\delta) \|f\|_2^2.
\]
In other words $E_n b \in \mathbf F_\delta (A_n)$ as required. 

3.~By Theorem \ref{thm:markfeast}, steps 1 and 2 entail the claimed convergences in (\textit{iii}), (\textit{iv}).
\end{proof}

\begin{remarks}
1.  If  $a_n$, $\nabla a_n$ in (\textit{ii}) are smooth (e.g.\,in the assumptions of (\textit{iii}) or (\textit{iv})), then by the Krylov-Safonov a priori H\"older continuity of $(\lambda + \Lambda(a_n,b_n))^{-1} f, \; f \in C_\infty$ \cite[sect.\,4.2]{Kr}, for every $ r \geq d$, there exists a constant $0 <\alpha < 1$ such that
\[
(\lambda + \Lambda_r)^{-1} L^r \cap L^\infty \subset C_\infty^{0,\alpha}, \qquad \lambda > \omega_r.
\]

2. Let $\{\varepsilon_n \}$ be a sequence such that $\varepsilon_n \downarrow 0$ as $n\uparrow \infty.$ The proof of (\textit{iii}) yields the following.

 For any $b:\mathbb R^d \rightarrow \mathbb R^d, \; |b|\in L^2_\loc$ define $b_n \equiv E_n b := \gamma_{\varepsilon_n} * (b \mathbf 1_{B(0,n)})$ and choose $\{\varepsilon_n \}$ such that $b_n \rightarrow b$ strongly in $[L^2_\loc]^d$ as $n \uparrow \infty.$  Fix $\delta < \infty.$ Then
\[
b \in \mathbf F_\delta \Rightarrow \big (b_n \in \mathbf F_\delta, \; b_n \rightarrow b \text{ strongly in } [L^2_\loc]^d, \text{ and hence } e^{-t \Lambda(b_n)} \overset{s}\rightarrow e^{-t\Lambda(b)}\big). 
\]
3. The same arguments used in step 2 yield the following:

 If $a \in (H_1), \; a_{il} = \kappa_i^2 \delta_{il}, \; |\kappa_i| + |\nabla_i \kappa_i| \in L^2 + L^\infty ,\; i,l = 1,2,\dots, d,$ then 
\[
b \in \mathbf F_\delta(A) \Rightarrow b_n \in \mathbf F_\delta(A_n) \quad (b = (\nabla a), \; b_n = e^\frac{\Delta}{n} b).
\]
\end{remarks}

\begin{example*}In $\mathbb R^d, d \geq 3$ consider the matrix
\[
a(x) = I + c |x|^{-2} x \otimes x, \quad a_{ik}(x) = \delta_{ik} + c |x|^{-2} x_i x_k \text{ with }c = \frac{d-1}{1-\alpha}-1, \; \alpha < 1.
\]
Thus $a(x)$ is strictly positive, $(\nabla a)=(d-1) c |x|^{-2} x, \; a^{-1} = I - \frac{c}{c+1} |x|^{-2} x \otimes x$ and $b_a^2 = \frac{[(d-1) c ]^2}{c+1} |x|^{-2}.$ The following Hardy type inequality (with the best possible constant) will be proved below:
\[
(c+1) \frac{(d-2)^2}{4} \| |x|^{-1} h \|_2^2 \leq \langle \nabla h \cdot a \cdot \nabla \bar{h} \rangle \quad \quad ( h \in W^{1,2}(\mathbb R^d) ).\\
\tag{$\star$}
\]
$(\star)$ implies that $b \in \mathbf{F}_\delta (A)$ with $\delta = 4 \big( 1+\frac{\alpha}{d-2} \big )^2$ and $c(\delta)=0.$ In particular, $\delta < 4$ if and only if $ \alpha \in ]-2(d-2), 0[$ or equivalently $c \in ] - \frac{1}{2 + \frac{1}{d-2}}, 0 [ \; \cup \; ]0, d-2 [.$ Armed with $(\star)$  and Theorem \ref{thm:markeast}, one can reconsider the conclusions in \cite[Ch.\,I, sect.\,3, Example 4]{LSU}. 
\end{example*}

Consider the following problem in $L^p(\mathbb R^d, \mathcal L^d)$, $p > 0$, $d \geq 3$:
$$a \cdot \nabla^2 u = 0, \quad u(x) \upharpoonright \{|x|\geq 1\} =0.$$
If $\alpha \neq 0,$ then the problem has two solutions $u_1 = 0$ and $u_2 = |x|^\alpha -1.$ If $\delta < 4,$ then $\alpha < 0$ and the unbounded solution to $\Lambda_p u = 0, \; p > \frac{2}{2 - \sqrt{\delta}},$ is inadmissible according to Theorem \ref{thm:markeast}. ($\Lambda_p \supset -\nabla \cdot a \cdot \nabla + (\nabla a) \cdot \nabla$ only formally equals to $- a \cdot \nabla^2$). If $\alpha > 0$ (so that $\delta > 4 ),$ then the problem has two (bounded) solutions.

\smallskip

\noindent \textbf{Conclusion.} \textit{The condition $\delta < 4$ of Theorem \ref{thm:markeast} can not be substantially strengthened.}
 
\medskip

\begin{proof}[Proof of $(\star)$] 
Let $c > 0.$ Since $\langle \phi, x \cdot \nabla \phi \rangle = - \frac{d}{2} \langle \phi, \phi \rangle, \; \phi \in C_c^\infty,$ we have
\[
\langle \phi, -\nabla \cdot (a-1) \cdot \nabla \phi \rangle = c \big(\| x \cdot \nabla (|x|^{-1} \phi) \|_2^2 - (d-1) \| |x|^{-1} \phi \|_2^2 \big).
\]
Next, the following inequality (with the sharp constant) is valid:
\[
\| x \cdot \nabla f \|_2 \geq \frac{d}{2} \|f \|_2, \quad \quad ( f \in D(\mathcal{D}) ), \\
\tag{$\blacktriangledown$}
\]
where $\mathcal{D} \upharpoonright C_c^\infty = \frac{\sqrt{-1}}{2} ( x \cdot \nabla + \nabla \cdot x ).$ 

Indeed, since the operator $\mathcal{D} = (\mathcal{D} \upharpoonright C_c^\infty )^{{\rm clos}}_{L^2 \rightarrow L^2}$ is self-adjoint, $\|(\mathcal{D} - \lambda)^{-1} \| \leq \frac{1}{|\Imag \lambda |}$ for $\Real \lambda= 0.$ Therefore
\[
\| x \cdot \nabla f \|_2 = \| \frac{1}{2}( x \cdot \nabla + \nabla \cdot x -d )f \|_2 = \|(\mathcal{D} - \sqrt{- 1} \; \frac{d}{2}) f \|_2 \geq \frac{d}{2} \|f \|_2, \quad \quad (f \in C_c^\infty )
\]
and ($\blacktriangledown$) is proved. But then
\[
\langle \phi, -\nabla \cdot (a-1) \cdot \nabla \phi \rangle \geq c \frac{(d-2)^2}{4} \| |x|^{-1} \phi \|_2^2 \quad \quad ( \phi \in C^\infty_c ).
\]

$(\star)$ follows now from the equality $\langle \phi, - \nabla \cdot a \cdot \nabla \phi \rangle =  \langle \phi, - \nabla \cdot (a-1) \cdot \nabla \phi \rangle + \langle \phi, - \Delta \phi \rangle$ and Hardy's inequality $\langle \phi, -\Delta \phi \rangle \geq \frac{(d-2)^2}{4} \| |x|^{-1} \phi \|_2^2.$

Finally, the obvious inequality $(1+c) \langle \phi, -\Delta \phi \rangle \geq \langle \phi, - \nabla \cdot a \cdot\nabla \phi \rangle$ clearly shows that the constant in $(\star)$ is sharp. 

If $-1 < c \leq 0,$  $(\star)$ is a trivial consequence of Hardy's inequality.
\end{proof}

\begin{remark*}The Krylov-Safonov a priori estimates yield 
the uniqueness of a ``good"
 solution to $-a \cdot \nabla^2=f$ in $L^d$ provided that $a \in (H_u)$ 
is continuous outside of a ``sufficiently small'' set \cite{CEF}.

The assumption $(\nabla a) \in \mathbf{F}_\delta(A)$ \textit{ does not guarantee } $W^{2,r}$ estimates on $(\zeta + \Lambda_r(a,\nabla a))^{-1}L^r$ for some $r>(d-2)\vee 2.$ The same is true even for $(\nabla a) \in \mathbf F_0(A).$ (See also remark 5 in the next section.)
\end{remark*}

\subsection{$W^{1,s}$-estimates on solutions to  $(\mu - \Delta + b \cdot \nabla ) u = f$, $b \in \mathbf{F}_\delta$}
\label{est_sect}

Let $d \geq 3,$ $L^p=L^p(\mathbb R^d, \mathcal L^d ), \; b : \mathbb R^d \rightarrow \mathbb R^d, \; b \in \mathbf F_\delta,$ $ 0 < \delta < 4.$ Define $b_n \equiv E_n b := \gamma_{\varepsilon_n} * (b \mathbf{1}_{B(0,n)}), \; n = 1,2, \dots$   

Set $\Lambda_p(b_n) := -\Delta + b_n \cdot \nabla,$ $D(\Lambda_p(b_n)) = W ^{2,p}(\mathbb{R}^d)$, $1 < p < \infty.$ Clearly, $-\Lambda_p(b_n)$ is the generator of a holomorphic semigroup in $L^p.$ According to Theorem \ref{thm:markeast} and the fact $b_n \in \mathbf F_\delta$ (see Remark 2 after the proof of Theorem \ref{thm:markbeast}), for each $p \in I_c =[\frac{2}{2-\sqrt{\delta}}, \infty [,$
\[
\|e^{-t\Lambda_p(b_n)} \|_{p \rightarrow p} \leq e^{\omega_p t}, \; \omega_p = \frac{c(\delta)}{2(p-1)},
\]
and by Theorem \ref{thm:markease}, for each $p \in I_c^o, \; s \mbox{-}L^p \mbox{-} \lim_n e^{-t\Lambda_p(b_n)}$ exists and determines the $C_0$ semigroup $e^{-t\Lambda_p};\ \Lambda_p \equiv \Lambda_p(b)$ is an operator realization of $-\Delta + b \cdot \nabla$ in $L^p.$

\begin{theorem}
\label{thm:markgrad}
Let $d \geq 3.$ Fix any $\delta \in \big]0, 1 \wedge \big(\frac{2}{d-2}\big)^2 \big[.$ Assume that $b \in \mathbf{F}_\delta.$ Let $q \in [2, \frac{2}{\sqrt{\delta}} \big[.$ Set $u = (\mu + \Lambda_q(b) )^{-1} f,$ where $f \in L^q,  \; \mu > \omega_q.$ Then there exist constants $\lambda_0 =\lambda_0(\delta, q)$ and $K_l=K_l(\delta, q), \; l=1, 2,$ such that, for all $\mu > \lambda_0 \vee \omega_q,$ 
\begin{align*}
\|\nabla u \|_q & \leq K_1 \big(\mu -\lambda_0 \big)^{-\frac{1}{2}} \|f\|_q ;\\
\|\nabla u \|_{qj} & \leq K_2 \big(\mu -\lambda_0 \big)^{\frac{1}{q}-\frac{1}{2}} \|f\|_q, \quad\quad j=\frac{d}{d-2}.
\end{align*}
In particular, $(\mu + \Lambda_q(b))^{-1}: L^q \rightarrow C^{0,1- \frac{d}{qj}}$ whenever $d \geq 4$, $q \in \big]d-2, \frac{2}{\sqrt{\delta}} \big[$ and $\mu > \omega_q.$ For $d=3$,  $(\mu + \Lambda_q(b))^{-1}: L^q \rightarrow C^{0,1- \frac{d}{qj}}$ whenever $q \in \big[2, \frac{2}{\sqrt{\delta}} \big[$, $\mu > \omega_q.$
\end{theorem}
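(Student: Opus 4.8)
The plan is to work with the regularized vector fields $b_n = E_n b$, establish the two gradient bounds uniformly in $n$ for $u_n := (\mu + \Lambda_q(b_n))^{-1}f$, and then pass to the limit $n\to\infty$ using Theorem~\ref{thm:markease} together with the closedness of the weak gradient in $L^q$. First I would record that since $b_n$ is bounded and smooth, $u_n \in W^{2,q}$, so all the integrations by parts below are legitimate; the final estimates will not depend on the smoothness, only on $\delta$, $q$, and the relative-bound constant $\lambda$, which is uniform in $n$ by the remarks following Theorem~\ref{thm:markbeast}.

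The core computation is the $L^q$ energy identity for $u \equiv u_n$. Multiply $(\mu - \Delta + b_n\cdot\nabla)u = f$ by $\bar u |u|^{q-2}$ (or rather work with $|u|$ and its powers as in the proof of Theorem~\ref{thm:markeast}, Steps 1 and 4), integrate, and take real parts. This produces a term $\mu\|u\|_q^q$, a ``diffusion'' term controlling $\|\nabla |u|^{q/2}\|_2^2$ and $X := \langle \nabla u\cdot\nabla\bar u,|u|^{q-2}\rangle$, the drift term $\langle b_n\cdot\nabla|u|,|u|^{q-1}\rangle$ (real part) which is absorbed via $b\in\mathbf F_\delta$ exactly as before, and on the right $|\langle f, \bar u|u|^{q-2}\rangle| \le \|f\|_q\|u\|_q^{q-1}$. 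Since $\delta < (2/\sqrt\delta)^2$ forces $q < 2/\sqrt\delta$, the completion of the square leaves a strictly positive multiple of $\|A^{1/2}|u|^{q/2}\|_2^2$ (equivalently of $\|\nabla|u|^{q/2}\|_2^2$, here $A = -\Delta$) plus a positive multiple of $X$; the constant $\lambda_0$ is the resulting shift. So I obtain, for $\mu > \lambda_0$,
\[
(\mu - \lambda_0)\|u\|_q^q + c_1\|\nabla|u|^{q/2}\|_2^2 + c_2 X \le \|f\|_q\|u\|_q^{q-1},
\]
with $c_1,c_2 > 0$ depending only on $(\delta,q)$. Two consequences: first, $X \le c_2^{-1}\|f\|_q\|u\|_q^{q-1}$ and $\|u\|_q \le (\mu-\lambda_0)^{-1}\|f\|_q$, and since $\|\nabla u\|_q^q \le$ (a $q$-dependent multiple of) $X^{q/2}\|u\|_q^{q(1-q/2)}$ by Hölder when $q \ge 2$ — indeed $\|\nabla u\|_q^2 \le \langle |\nabla u|^2|u|^{q-2}\rangle^{2/q}\|u\|_q^{(q-2)2/q}\cdot(\text{const})$ — combining gives $\|\nabla u\|_q \le K_1(\mu-\lambda_0)^{-1/2}\|f\|_q$. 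Second, Sobolev's inequality $\||u|^{q/2}\|_{2j}^2 \le C_S\|\nabla|u|^{q/2}\|_2^2$ with $j = \frac{d}{d-2}$ gives $\|u\|_{qj}^q \le c_1^{-1}C_S\|f\|_q\|u\|_q^{q-1} \le c_1^{-1}C_S(\mu-\lambda_0)^{-(q-1)}\|f\|_q^q$, i.e.\ $\|u\|_{qj} \le c(\mu-\lambda_0)^{-1/q'}\|f\|_q$.

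For the bound on $\|\nabla u\|_{qj}$ I would iterate: having $u \in L^{qj}$ with the above control, rewrite the equation as $(\mu - \Delta)u = f - b_n\cdot\nabla u$ and estimate $\|b_n\cdot\nabla u\|$ in a suitable space, or — more in keeping with the paper's energy method — run the same multiplier argument but now testing so as to control $\langle|\nabla u|^2|u|^{q\,j - 2}\rangle$-type quantities, using the already-gained integrability $u \in L^{qj}$ to make the drift term absorbable (the point being that $\langle b_n\cdot\nabla|u|,|u|^{qj-1}\rangle$ is controlled by $\|b_a|u|^{qj/2}\|_2\|\nabla|u|^{qj/2}\|_2$ and $b\in\mathbf F_\delta$, but one must track that the coefficient constraint is still $qj < $ something; here the restriction $\delta < (2/(d-2))^2$ is exactly what makes $q\,\frac{1}{j'} = q\frac{2}{d}$ small enough). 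This yields $\|\nabla u\|_{qj} \le K_2(\mu-\lambda_0)^{1/q - 1/2}\|f\|_q$ after balancing the powers of $(\mu-\lambda_0)$ via the two $L^q$ and $L^{qj}$ bounds on $u$. Finally, if $q\,j > d$, i.e.\ $q > d-2$ (automatic for $d=3$, $q\ge 2$), then $W^{1,qj} \hookrightarrow C^{0,1-d/(qj)}$, giving the Hölder continuity statement; and the limit $u_n \to u$ strongly in $L^q$ (Theorem~\ref{thm:markease}) together with the uniform bound $\|\nabla u_n\|_{qj} \le$ const forces $\nabla u_n \rightharpoonup \nabla u$ weakly in $L^{qj}$, so the estimates persist for $u = (\mu+\Lambda_q(b))^{-1}f$ by lower semicontinuity of the norm.

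The main obstacle I anticipate is the second (iterated) estimate: the first energy identity only sees $q < 2/\sqrt\delta$, but to close the $W^{1,qj}$ bound one needs the drift to remain form-subordinate at the higher integrability level $q j$, and this is precisely where the sharper constraint $\delta < 1\wedge(2/(d-2))^2$ enters — one must verify carefully that with $|u|^{qj/2}$ as the test weight the coefficient $\frac{2}{(qj)'}$ still dominates $\sqrt\delta$ (or the appropriate completed-square constant stays positive), and keep the constants $\lambda_0, K_1, K_2$ genuinely independent of $n$. The bookkeeping of which power of $(\mu-\lambda_0)$ comes out, and matching it against the claimed exponents $-\tfrac12$ and $\tfrac1q-\tfrac12$, is the other place where care is required but no new idea is needed.
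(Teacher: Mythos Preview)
Your approach has a genuine gap at the very first step. Testing the equation against $\bar u|u|^{q-2}$ gives control of $X=\langle|\nabla u|^2|u|^{q-2}\rangle$ and of $\|\nabla|u|^{q/2}\|_2^2$, but your H\"older step to extract $\|\nabla u\|_q$ is written in the wrong direction: for $q\ge 2$ H\"older gives $X\le\|\nabla u\|_q^2\|u\|_q^{q-2}$, not the reverse. There is no way to bound $\|\nabla u\|_q$ from above by $X$ and $\|u\|_q$ alone (think of $u$ small where $|\nabla u|$ is large). The same objection kills the proposed iteration: testing with $u|u|^{qj-2}$ would at best upgrade $\|u\|_{qj}$ to $\|u\|_{qj^2}$, never producing $\|\nabla u\|_{qj}$.

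The paper's proof works at the level of the \emph{gradient} directly. Setting $w=\nabla u_n$ and testing the equation against $\phi:=-\nabla\cdot(w|w|^{q-2})$ (equivalently, differentiating the equation and pairing with $w|w|^{q-2}$) yields in one stroke the identity
\[
\mu\|w\|_q^q+I_q+(q-2)J_q=\text{RHS},\qquad I_q=\sum_{i,k}\langle w_{ik}^2|w|^{q-2}\rangle,\quad J_q=\langle|\nabla|w||^2|w|^{q-2}\rangle.
\]
The RHS involves $B_q=\langle|b_n\cdot w|^2|w|^{q-2}\rangle$, and $b\in\mathbf F_\delta$ gives $B_q\le\frac{\delta q^2}{4}J_q+c(\delta)\|w\|_q^q$. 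With $I_q\ge J_q$ the key algebraic fact is that $q-1-(q-2)\tfrac{q\sqrt\delta}{2}-\tfrac{\delta q^2}{4}>0$ on the whole interval $q\in[2,2/\sqrt\delta)$, so after absorption one arrives at
\[
(\mu-\lambda_0)\|w\|_q^q+C_1J_q\le C_2\|w\|_q^{q-2}\|f\|_q^2.
\]
Both claimed estimates now follow immediately: the first from the $\|w\|_q^q$ term, the second from Sobolev applied to $|w|^{q/2}$ (since $J_q=\tfrac{4}{q^2}\|\nabla|w|^{q/2}\|_2^2$). No iteration is needed, and the constraint $\delta<\big(\tfrac{2}{d-2}\big)^2$ plays no role in the energy estimate itself --- it is only there to allow some $q>d-2$ inside $[2,2/\sqrt\delta)$ so that $qj>d$ and the embedding into $C^{0,1-d/(qj)}$ holds.
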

\begin{proof}
1. Let $q \in \big] \frac{2}{2-\sqrt{\delta}}, \infty \big[.$ Clearly $ b_n \cdot \nabla \upharpoonright W^{2,q}$ is the Miyadera perturbation of $-\Delta:$ $\|b_n\cdot \nabla \;(\lambda_n -\Delta)^{-1} \|_{q \rightarrow q} < 1, \; \sqrt{\lambda_n} \geq m_{n,d}$ for some constant $m_{n,d}.$ Therefore, by the resolvent identity, $(\mu + \Lambda_q(b_n))^{-1} L^q \subset W^{2,q}, \; \mu > \omega_q.$

2. Let $q \in [ 2, \frac{2}{\sqrt{\delta}} [.$ Set $u_n = (\mu + \Lambda_q(b_n))^{-1} f , \; 0 \leq f \in C_c^\infty.$ We will use the following notations 
\[
w := \nabla u_n, \quad w_i :=\nabla_i u_n, \quad w_{ik} := \nabla_i w_k, \quad \Delta u_n = \nabla \cdot w \equiv \sum_{i=1}^d w_{ii}; \; 
\]
\[
\phi := - \nabla \cdot (w |w|^{q-2}) \equiv - \sum_{i=1}^d \nabla_i (w_i |w|^{q-2}).
\]
Note that $u_n \in \mathcal S,$ the L. Schwartz space of test functions. We have
$$
\langle(\mu-\Delta)u_n,\phi\rangle = - \langle b_n\cdot \nabla u_n,\phi\rangle + \langle f,\phi \rangle.$$
Since
\[
\langle -\Delta u_n, \phi \rangle = \big\langle -\Delta w, w |w|^{q-2} \big\rangle = \sum_{i,k=1}^d \big \langle w_{ik}, w_{ik}  |w|^{q-2} + (q-2) |w|^{q-3} w_k \nabla_i |w| \big \rangle
\]
and
\[
\phi = - |w|^{q-2} \Delta u_n - (q-2)|w|^{q-3} w \cdot \nabla |w| =: \phi_1 + \phi_2, \quad \phi_1 = - |w|^{q-2} (\mu u_n + b \cdot w -f), 
\]
we arrive at
\[
\mu \langle |w|^q \rangle + I_q + (q-2)J_q = \langle b_n \cdot w -f, |w|^{q-2} (\mu u_n + b_n \cdot w -f) + (q-2)|w|^{q-3} w \cdot \nabla |w| \rangle,
\tag{$\star$}
\]
where
\[
I_q = \sum_{i=1}^d \big\langle |\nabla w_i |^2, |w|^{q-2} \big\rangle, \;\; J_q = \big\langle |\nabla |w| |^2, |w|^{q-2} \big\rangle.
\]
Now we bound the terms from the RHS of $(\star)$ by $J_q, \; B_q := \langle |b_n \cdot w|^2 |w|^{q-2} \rangle, \; \|w\|_q^{q-2}, \; \|f\|^2_q$ and $\big(\frac{\mu}{\mu -\omega_q}\big )^2 \|f\|_q^2$ as follows.

1) $\langle b_n \cdot w, |w|^{q-2} \mu u_n \rangle  \leq \frac{\mu}{\mu-\omega_q} B_q^\frac{1}{2} \|w\|_q^\frac{q-2}{2}  \|f\|_q. \;\; ( \frac{2}{2-\sqrt{\delta}} < q \Rightarrow \|u_n\|_q \leq (\mu - \omega_q )^{-1} \|f\|_q$ ).

2) $\langle b_n \cdot w, |w|^{q-2} b_n \cdot w \rangle = B_q .$

3) $|\langle b_n \cdot w, |w|^{q-2} (-f) \rangle |\leq B_q^\frac{1}{2} \|w\|_q^\frac{q-2}{2} \|f\|_q .$

4) $(q-2) \langle b_n \cdot w, |w|^{q-3} w \cdot \nabla |w| \rangle \leq (q-2) B_q^\frac{1}{2} J_q^\frac{1}{2} .$

5) $\langle -f, |w|^{q-2} \mu u_n \rangle \leq 0.$

6) $\langle -f, |w|^{q-2} b_n \cdot w \rangle \leq B_q^\frac{1}{2} \|w\|_q^\frac{q-2}{2} \|f\|_q .$

7) $ \langle f, |w|^{q-2} f \rangle \leq \|w\|_q^{q-2} \|f\|_q^2 .$

8) $ (q-2) \langle -f, |w|^{q-3} w \cdot \nabla |w| \rangle \leq (q-2) J_q^\frac{1}{2}\|w\|_q^\frac{q-2}{2} \|f\|_q .$

(Below we will get rid of the terms arising in the RHS of 1),\,3),\,5)-8) using Young's inequality, at expense of increasing the coefficients $K_1$, $K_2$ of $\|f\|_q$.)

By means of 1)-8) we have ($\varepsilon, \varepsilon_0 > 0$)
\begin{align*}
\text{ RHS of } (\star) \leq &(q-2) \varepsilon J_q + (q-2) \big(\varepsilon_0 J_q + \frac{1}{4 \varepsilon_0}B_q \big)\\
 + & (1 +3 \varepsilon)B_q + \bigg(1  + \frac{q}{4\varepsilon} +\frac{1}{4 \varepsilon}\frac{\mu^2}{(\mu-\omega_q)^2}\bigg) \|w\|_q^{q-2} \|f\|_q^2.
\end{align*}
By $b_n \in \mathbf F_\delta,$
\[
B_q \leq \|b_n |w|^\frac{q}{2} \|_2^2 \leq \delta \| \nabla |w|^\frac{q}{2} \|_2^2 + c(\delta) \|w\|_q^q = \frac{\delta q^2}{4} J_q + c(\delta) \|w\|_q^q.
\]
Setting $\varepsilon_0 = \frac{q\sqrt{\delta}}{4}$ we have
\begin{align*}
\text{ RHS of } (\star) &\leq \bigg[ (q-2)\frac{q\sqrt{\delta}}{2} + \frac{\delta q^2}{4} + (q-2)\varepsilon + 3 \varepsilon \frac{\delta q^2}{4} \bigg] J_q \\ 
&+ \bigg(1+3\varepsilon + \frac{q-2}{q \sqrt{\delta}}\bigg)c(\delta) \|w\|_q^q + \bigg(1 + \frac{q}{4\varepsilon} +\frac{1}{4 \varepsilon}\frac{\mu^2}{(\mu-\omega_q)^2}\bigg) \|w\|_q^{q-2} \|f\|_q^2.
\end{align*}
Thus
\begin{align*}
&\mu \|w\|_q^q + I_q + \bigg[(q-2)(1-\frac{q\sqrt{\delta}}{2}) - \frac{\delta q^2}{4} \bigg]J_q - \bigg[(q-2)\varepsilon + 3 \varepsilon \frac{\delta q^2}{4} \bigg] J_q \\
& \leq \bigg(1+3\varepsilon +\frac{q-2}{q \sqrt{\delta}} \bigg)c(\delta)\|w\|_q^q + \bigg(1 + \frac{q}{4\varepsilon} +\frac{1}{4 \varepsilon}\frac{\mu^2}{(\mu-\omega_q)^2}\bigg) \|w\|_q^{q-2} \|f\|_q^2 .
\end{align*}
Since $I_q \geq J_q$ and $q-1 -(q-2)\frac{q \sqrt{\delta}}{2} - \frac{\delta q^2}{4} > 0$ due to\footnote{Set $\psi(q)=q-1-(q-2)q\frac{\sqrt{\delta}}{2}-q^2\frac{\delta}{4}, \;\; 2\leq q<\frac{2}{\sqrt{\delta}}$. Note that $\psi(2)=1-\delta>0$ and $\psi(2/\sqrt{\delta})=0$. 
Also $\psi$ increases on $[2,\frac{2}{\sqrt{\delta}}\frac{1+\sqrt{\delta}}{2+\sqrt{\delta}}]$ and decreases on $[\frac{2}{\sqrt{\delta}}\frac{1+\sqrt{\delta}}{2+\sqrt{\delta}},\frac{2}{\sqrt{\delta}}[$, and so $\psi>0$ on $[2,\frac{2}{\sqrt{\delta}}[$.} $q \in [2 , \frac{2}{\sqrt{\delta}}[$ we conclude that, for suitable $\varepsilon$, there are $C_1 =C_1(\varepsilon, \delta, q) >0$ and $C_2 = (\varepsilon, \delta, q, \omega_q) < \infty$ such that
\[
(\mu -\tilde{\omega}_\varepsilon)\|w\|_q^q +  C_1 J_q \leq C_2 \|w\|_q^{q-2} \|f\|_q^2,
\]
where $\tilde{\omega}_\varepsilon= \big(1+3\varepsilon + \frac{q-2}{q \sqrt{\delta}}\big)c(\delta).$

The latter, the Sobolev and the Young inequalities combined imply that
\[
\|\nabla u_n \|_{q j} \leq K_2 \big(\mu -\lambda_0 \big)^{\frac{1}{q}-\frac{1}{2}} \|f\|_q \text{ whenever } \mu > \lambda_0 := \tilde{\omega}_0 > \omega_q .
\]
It is also seen that $\|\nabla u_n\|_q \leq C_2^\frac{1}{2}(\mu -\lambda_0)^{-\frac{1}{2}} \|f\|_q.$ Since $u_n \rightarrow u$ strongly in $L_+^q$ and hence in $L_+^{qj},$  and since $(C_c^\infty)_+$ is dense in $L_+^{qj},$ we have $u \in W^{1,q j}$, and the inequality $\|\nabla u \|_{qj} \leq K_2 \big(\mu -\lambda_0 \big)^{\frac{1}{q}-\frac{1}{2}} \|f\|_q$ holds for all $f \in L_+^q.$

We have established the assertions of the theorem for all $f \in L^q_+,$ and hence for all $f \in L^q$ (with $K_l \rightarrow 4 K_l, l= 1, 2).$
\end{proof}

\begin{remarks}

1. There is an obvious analogue of Theorem \ref{thm:markgrad} for $\Omega \subset \mathbb R^d$ with the additional assumption $\partial \Omega \in C^{0,1}$ in case of $\Delta=\Delta_N.$

\smallskip

2. Only for $d=3$ and $q=2$ the result of Theorem \ref{thm:markgrad} is ``trivial". Namely, the assumption $b \in \mathbf F_\delta, \; \delta < 1,$ implies that $b \cdot \nabla$ is Rellich's perturbation of $-\Delta$ in $L^2$, and so $-\Lambda_2(b) = \Delta - b\cdot \nabla$ of domain $W^{2,2}$. 
(Indeed, define $T=b\cdot\nabla(\mu-\Delta)^{-1}$, $\mu \geq \lambda$, and note that 
$$
\|T\|_{2 \rightarrow 2} \leq \||b|(\mu-\Delta)^{-\frac{1}{2}}\|_{2 \rightarrow 2}\|\nabla(\mu-\Delta)^{-\frac{1}{2}}\|_{2 \rightarrow 2} \leq \sqrt{\delta}.
$$
Thus, by Theorem \ref{thm:markfeast}, the Neumann series for $(\mu+\Lambda_2(b))^{-1}=(\mu-\Delta)^{-1}(1+T)^{-1}$.)

Hence, for $\Real\zeta \geq \lambda, \; (\zeta + \Lambda_2(b))^{-1} : L^2 \rightarrow W^{1,2j}, \; 2j= 6 =2 d.$ However, already for $d=4,$ $W^{2,2} \subset W^{1,2j}, \; 2 j = d$ but not crucial $2 j > d$.

\smallskip

3. Under the same assumptions on $\delta$ as in Theorem \ref{thm:markgrad}, a stronger regularity of the elements of $D(\Lambda_q(b))$ has been recently established in \cite[Theorem 1]{Ki4}. Namely, if $b \in \mathbf{F}_\delta$, $\delta<1$, then for every $q \in \big[2,\frac{2}{\sqrt{\delta}}[$ the formal differential expression $\Delta - b \cdot \nabla$ has an operator realization $-\Lambda_q(b)$ on $L^q$ as the generator  of a positivity preserving, $L^\infty$ contraction, quasi contraction $C_0$ semigroup such that
 $D(\Lambda_q)\subset \mathcal W^{1+\frac{2}{s},q}, s>q.$ In particular, by the Sobolev Embedding Theorem, if $\delta<1\wedge\big(\frac{2}{d-2}\bigr)^2$, then there exists $q>d-2$ such that $D\bigl(\Lambda_q(b)\bigr) \subset C^{0,\gamma}$, $\gamma<1-\frac{d-2}{q}$.
 
\smallskip

4. Let $|b| \in L^d + L^\infty, d\geq 3.$ Then, by the Sobolev Embedding Theorem, for any $q \in ]\frac{d}{2}, d[,$ there exists a constant $\lambda_{d,b} > 0,$ such that
\[
(\mu + \Lambda_q(b))^{-1} : L^q \rightarrow W^{2,q} \subset C^{0, 2-\frac{d}{q}} \text{ for all } \mu > \lambda_{d,b}.
\]
In particular,
\[
(\mu + \Lambda_q(b))^{-1} : L^q \cap L^d \rightarrow C^{0, \alpha} \text{ for any } \alpha < 1.
\]
By Theorem \ref{thm:markgrad}, the last embedding also holds for $b \in \mathbf F_0 = \bigcap_{\delta > 0} \mathbf F_\delta.$

\smallskip

5.~In fact, for $b \in [L^{d,\infty}]^d$ ($\subsetneq \mathbf{F}_\delta$ with $\delta=c_d \|b\|_{d,\infty}$, see Example \ref{ex0} above), one has the following characterization of $W^{2,q}$ smoothness of $u=(\mu+\Lambda_q(b))^{-1}f$, $f \in L^q$, $1<q<d$. 
Using that
$$
|\nabla (-\Delta)^{-1}(x,y)| \leq m_d^* (-\Delta)^{-\frac{1}{2}}(x,y), \quad x,y \in \mathbb R^d \quad \text{ with } m_d^*:=\frac{d-2}{2} \sqrt{\pi}\frac{\Gamma\bigl(\frac{d-2}{2}\bigr)}{\Gamma\bigl(\frac{d-1}{2}\bigr)},
$$
we have
$$
\|b \cdot \nabla (\zeta - \Delta)^{-1}\|_{q \rightarrow q} \leq \|b \cdot \nabla (-\Delta)^{-1}\|_{q \rightarrow q}\|\Delta (\zeta - \Delta)^{-1} \|_{q \rightarrow q} \leq 2 m_d^* \||b|(-\Delta)^{-\frac{1}{2}}\|_{q \rightarrow q}, \quad \Real \zeta>0.
$$
Let $\Omega_d=\pi^{\frac{d}{2}}\Gamma(\frac{d}{2}+1)$ denote the volume of the unit ball in $\mathbb R^d$.
Then
\begin{align*}
&\||b|(-\Delta)^{-\frac{1}{2}}\|_{q \rightarrow q} \qquad (\text{we apply \cite[Prop.\,2.5]{KPS}})\\ 
& \leq \|b\|_{d,\infty}\Omega_d^{-\frac{1}{d}}\||x|^{-1}(-\Delta)^{-\frac{1}{2}}\|_{q \rightarrow q} \qquad (\text{we apply \cite[Lemma 2.7]{KPS}}) \\
&= \|b\|_{d,\infty}\Omega_d^{-\frac{1}{d}}R_{d,q}, \qquad R_{d,q}:=\frac{1}{2}\frac{\Gamma\big(\frac{d}{2q}-\frac{1}{2}\big)\Gamma\big(\frac{d}{2}-\frac{d}{2q}\big)}{\Gamma\big(\frac{d+1}{2}-\frac{d}{2q}\big)\Gamma\big(\frac{d}{2q}\big)}.
\end{align*}
We conclude that if $\|b\|_{d,\infty}<\frac{1}{2}(m_d^*\Omega_d^{-\frac{1}{d}}R_{d,q})^{-1}$, then $b \cdot \nabla$ is Rellich's perturbation of $-\Delta$, and so $\Lambda_q(b)=-\Delta + b \cdot \nabla$, $D(\Lambda_q(b)) = W^{2,q}$, generates a holomorphic semigroup in $L^q$. (Note that $q=d$ is unavailable for $b \in [L^{d,\infty}]^d$.)

\smallskip

6. Let $d \geq 3$ and $b(x) = c |x|^{-2}x, \; |c| < 1.$ Clearly, $b \in \mathbf F_\delta$ with $\delta = \frac{4 c^2}{(d-2)^2}$ satisfies the assumptions of Theorem \ref{thm:markgrad} if $d \geq 4.$ For $d=3$ however $|c|$ has to be strictly less than $\frac{1}{2}.$

For this vector field, the proof of Theorem \ref{thm:markgrad} can be modified to take advantage of the fact that ${\rm div\,}b$ exists and is a form-bounded potential:
\end{remarks}

\begin{corollary}
\label{hardy_drift_thm1}
Let $\Omega=\mathbb R^d, \;d \geq 3$. Let $b(x):=-c|x|^{-2}x,$ $c=\frac{d-2}{2}\sqrt{\delta}$, $\delta<\infty$. Then

{\rm 1)} The interval of contraction solvability for $\Lambda_q(b)\supset -\Delta +b\cdot\nabla$ is $]1,\infty[$.

{\rm 2)} $d\geq 4.$ Let $u=(\mu + \Lambda_q(b))^{-1}f$, $\mu>0,$ $f \in L^q.$ Then, for any $\delta <\infty,$ there exist $q>d-2$ and constants $K_l=K_l(\delta, q), \; l=1, 2,$ such that 
\begin{align*}
\|\nabla u \|_q & \leq K_1 \mu^{-\frac{1}{2}} \|f\|_q ;\\
\|\nabla u \|_{qj} & \leq K_2 \mu^{\frac{1}{q}-\frac{1}{2}} \|f\|_q.
\end{align*}

{\rm 3)} $d=3.$ Then the assertion of 2) holds for any $q \geq 2$ and  $\sqrt{\delta}< \frac{2}{q}$.

In particular, both in 2) and 3), $(\mu + \Lambda_q(b))^{-1}: L^q \rightarrow C^{0,1- \frac{d}{qj}}.$ 
\end{corollary}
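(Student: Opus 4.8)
The plan is to exploit the special structure $b=-c|x|^{-2}x=-c\,\nabla\log|x|$: since this field points \emph{inward}, ${\rm div}\,b=-c(d-2)|x|^{-2}$ is an $L^1_{\loc}$ function (no atom at the origin for $d\ge3$) which is \emph{non-positive}, i.e. its negative is a Hardy potential. I approximate $b$ by the \emph{untruncated} mollifications $b_n:=\gamma_{\varepsilon_n}*b$ (or $b_n:=e^{\varepsilon_n\Delta}b$), $\varepsilon_n\downarrow0$; these are smooth and bounded (the $c|x|^{-1}$ singularity is smoothed, the decay at infinity is kept), converge to $b$ in $[L^2_\loc]^d$, satisfy ${\rm div}\,b_n=-c(d-2)\gamma_{\varepsilon_n}*|x|^{-2}\le0$, and, by the Cauchy--Schwarz averaging inequalities used in the proof of Theorem~\ref{thm:markbeast}(\textit{iii}) together with the sharp Hardy inequality, belong to $\mathbf F_\delta$ with $\delta=4c^2/(d-2)^2$ and \emph{vanishing} lower-order constant $c(\delta)=0$, uniformly in $n$.

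\textbf{Part 1.} For real $u$ and every $q\in\,]1,\infty[$, $\langle b_n\cdot\nabla u,u|u|^{q-2}\rangle=-\tfrac1q\langle{\rm div}\,b_n,|u|^q\rangle\ge0$; combined with the accretivity of $-\Delta$, this shows that $-\Lambda_q(b_n)=\Delta-b_n\cdot\nabla$ ($D=W^{2,q}$) generates a positivity preserving, $L^\infty$- and $L^1$-contraction $C_0$ semigroup on $L^q$, uniformly in $n$. As $\delta$ may be $\ge4$, the passage $n\to\infty$ is not furnished by Theorems~\ref{thm:markeast}--\ref{thm:markease}; instead I would fix one exponent $q_0$ admissible in Parts 2)/3), use the uniform-in-$n$ bound $\|\nabla u_n\|_{q_0j}\le K_2\mu^{1/q_0-1/2}\|f\|_{q_0}$ for $u_n=(\mu+\Lambda_{q_0}(b_n))^{-1}f$ (established below by a calculation that needs only the approximants), extract by local compactness a limit identified with a pseudoresolvent as in the proof of Theorem~\ref{thm:marksolve}, and then propagate strong convergence to all $q\in\,]1,\infty[$ via the uniform contractivity and the elementary interpolation lemma quoted in that proof. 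This produces $-\Lambda_q(b)$ on $L^q$, generator of a contraction $C_0$ semigroup, for every $q\in\,]1,\infty[$, which is 1).

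\textbf{Parts 2 and 3.} I would rerun the proof of Theorem~\ref{thm:markgrad} for $u_n=(\mu+\Lambda_q(b_n))^{-1}f$, $0\le f\in C_c^\infty$, $w:=\nabla u_n$, with one change: instead of leaving the drift on the right-hand side, transfer $\langle b_n\cdot\nabla u_n,\phi\rangle$ (where $\phi=-{\rm div}(w|w|^{q-2})$) to the coercive side by one integration by parts,
\[
\langle b_n\cdot\nabla u_n,\phi\rangle=-\tfrac1q\langle{\rm div}\,b_n,|w|^q\rangle+\langle w\cdot(\nabla b_n)\cdot w,|w|^{q-2}\rangle=:P_q+Z_q,\qquad P_q\ge0 .
\]
Since $\nabla b=-c\big(|x|^{-2}I-2|x|^{-4}x\otimes x\big)$ has operator norm $c|x|^{-2}$, mollification gives $|w\cdot(\nabla b_n)\cdot w|\le(\gamma_{\varepsilon_n}*(c|x|^{-2}))|w|^2$, hence $Z_q\ge-c\,\langle(\gamma_{\varepsilon_n}*|x|^{-2})|w|^q\rangle$, and together with $P_q=\tfrac{c(d-2)}{q}\langle(\gamma_{\varepsilon_n}*|x|^{-2})|w|^q\rangle$,
\[
P_q+Z_q\ \ge\ c\,\frac{d-2-q}{q}\,\langle(\gamma_{\varepsilon_n}*|x|^{-2})|w|^q\rangle\ \ge\ -\,\frac{\sqrt\delta\,q\,(q-(d-2))}{2(d-2)}\,J_q\qquad(q\ge d-2),
\]
the last step by Hardy's inequality applied to $|w|^{q/2}$ through the averaging bound $\langle(\gamma_{\varepsilon_n}*|x|^{-2})|w|^q\rangle\le\frac{q^2}{(d-2)^2}J_q$ (same Cauchy--Schwarz lemma). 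After this transfer $B_q=\langle|b_n\cdot w|^2|w|^{q-2}\rangle\le\frac{\delta q^2}{4}J_q$ enters only through the cross-term $\langle f\,(b_n\cdot w)\,|w|^{q-2}\rangle$ in $\langle f,\phi\rangle$ (expanded via the equation), hence only with an $\varepsilon$-coefficient after Young's inequality, so the condition $q-1-(q-2)\tfrac{q\sqrt\delta}{2}-\tfrac{\delta q^2}{4}>0$ of Theorem~\ref{thm:markgrad} is replaced by the strictly weaker requirement $q-1>\frac{\sqrt\delta\,q\,(q-(d-2))}{2(d-2)}$, i.e. $\sqrt\delta<\frac{2(d-2)(q-1)}{q\,(q-(d-2))}$ for $q>d-2$. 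The right-hand side tends to $+\infty$ as $q\downarrow d-2$ when $d\ge4$, so some $q>d-2$ qualifies no matter how large $\delta<\infty$ is (this is 2)); for $d=3$ it reads $\sqrt\delta<2/q$, $q\ge2$ (this is 3)). Since $c(\delta)=0$, the constants $\lambda_0,\omega_q$ of Theorem~\ref{thm:markgrad} vanish, so the uniform estimates read $\|\nabla u_n\|_q\le K_1\mu^{-1/2}\|f\|_q$ and $\|\nabla u_n\|_{qj}\le K_2\mu^{1/q-1/2}\|f\|_q$ for $\mu>0$; they pass to $u=(\mu+\Lambda_q(b))^{-1}f$ by density of $(C_c^\infty)_+$ in $L^q_+$ and closedness of $\nabla$, and the embedding $W^{1,qj}\hookrightarrow C^{0,1-d/(qj)}$ (valid since $qj>d\Leftrightarrow q>d-2$) gives the last sentence.

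\textbf{Main obstacle.} The crux is carrying out this bookkeeping with the \emph{mollified} drift instead of the exact radial field: one must verify, uniformly in $n$, the pointwise averaging bounds ${\rm div}\,b_n\le0$, $|b_n|^2\le c^2\gamma_{\varepsilon_n}*|x|^{-2}$ and $|w\cdot(\nabla b_n)\cdot w|\le(\gamma_{\varepsilon_n}*(c|x|^{-2}))|w|^2$, and then push the \emph{sharp} Hardy constant through the limit $n\to\infty$ without loss. A second, linked difficulty — which fixes the order of the three parts — is that for $\delta\ge4$ the strong convergence $e^{-t\Lambda_q(b_n)}\to e^{-t\Lambda_q(b)}$ is not available from the standard $L^2$/contraction-solvability machinery and must be obtained by feeding the gradient estimates of 2)--3) at a single exponent into a compactness argument and then interpolating.
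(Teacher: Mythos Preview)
Your argument is correct and follows the same conceptual line as the paper: move $\langle b\cdot w,\phi\rangle$ to the coercive side, integrate by parts to split it into a divergence piece (nonnegative since $\mathrm{div}\,b\le0$) and a $\nabla b$--piece, and bound the latter by Hardy's inequality, arriving at exactly the same condition
\[
q-1-\frac{\sqrt{\delta}\,q\,(q-(d-2))}{2(d-2)}>0,
\]
which gives 2) for $q\downarrow d-2$ when $d\ge4$ and reduces to $\sqrt{\delta}<2/q$ when $d=3$.

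The one substantive difference is the regularization. The paper does \emph{not} mollify; it takes $b_\varepsilon(x):=-c(|x|^2+\varepsilon)^{-1}x$, which is bounded yet still radial. This removes your ``main obstacle'' entirely: for $b_\varepsilon$ the identities $\mathrm{div}\,b_\varepsilon$, $w\cdot(\nabla b_\varepsilon)\cdot w$, and $\langle |x|_\varepsilon^{-2}x\cdot\nabla|w|,|w|^{q-1}\rangle=-\tfrac{d-2}{q}\langle|x|_\varepsilon^{-2}|w|^q\rangle-\tfrac{2\varepsilon}{q}\langle|x|_\varepsilon^{-4}|w|^q\rangle$ are computed exactly, the $\varepsilon$-correction terms carry the right sign, and Hardy is applied directly to $\langle|x|^{-2}|w|^q\rangle$ without any Cauchy--Schwarz averaging. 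Your mollification route works (the averaging bounds you list are valid, by the argument in the proof of Theorem~\ref{thm:markbeast}(\textit{iii}) with $f=|w|^{q/2}$), but it is heavier than necessary.

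On Part~1 and the passage $\varepsilon\to0$ when $\delta\ge4$: the paper is in fact silent here---it derives the a~priori inequality for $u_\varepsilon$ and then refers back to the closing step of Theorem~\ref{thm:markgrad} without addressing the gap you noticed. Your compactness sketch (uniform $W^{1,q_0j}$ bound $\Rightarrow$ local compactness $\Rightarrow$ pseudo-resolvent $\Rightarrow$ interpolate) is a reasonable way to close it; your caution is warranted, not misplaced.
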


\begin{remark*}
We need $q \geq 2$ due to our choice of the test function $\phi$ in the proof below. 
\end{remark*}

\begin{proof}Set $|x|_\varepsilon^2:=|x|^2+\varepsilon, \;\varepsilon>0,$ $b_\varepsilon(x):=-c|x|_\varepsilon^{-2}x$, $u\equiv u_\varepsilon=(\mu + \Lambda_q(b_\varepsilon))^{-1}f$, $w:=\nabla u$. We follow the proof of Theorem \ref{thm:markgrad}.
We have
$$
\mu \langle |w|^q\rangle + I_q + (q-2)J_q = c\langle |x|_\varepsilon^{-2} x\cdot w, \phi \rangle + \langle f, \phi\rangle,
$$
where $\phi:=-\nabla \cdot (w|w|^{q-2})$.
Integrating by parts, we have
$$
\langle |x|_\varepsilon^{-2} x\cdot w, \phi\rangle = Z + \langle |x|_\varepsilon^{-2} x\cdot \nabla |w|, |w|^{q-1}\rangle,
$$
where
$$
Z:=\langle |x|_\varepsilon^{-2} |w|^q\rangle - 2\langle |x|_\varepsilon^{-4}(x\cdot w)^2|w|^{q-2}\rangle, 
$$
and so
$$
\langle |x|_\varepsilon^{-2} x\cdot w, \phi\rangle \leq  \langle |x|_\varepsilon^{-2}|w|^{q}\rangle + \langle  |x|_\varepsilon^{-2} x\cdot \nabla |w|, |w|^{q-1}\rangle,
$$
we obtain the inequality
\[
\mu \langle |w|^q\rangle + I_q + (q-2)J_q \leq  c\langle |x|_\varepsilon^{-2}|w|^{q}\rangle + c\langle |x|_\varepsilon^{-2} x\cdot \nabla |w|, |w|^{q-1}\rangle + \langle f, \phi \rangle.
\]
Noticing that
\[
\langle |x|_\varepsilon^{-2} x\cdot \nabla |w|, |w|^{q-1}\rangle = -\frac{d-2}{q}\langle |x|_\varepsilon^{-2} |w|^q \rangle - \frac{2\varepsilon}{q}\langle |x|_\varepsilon^{-4} |w|^q \rangle, 
\]
$I_q \geq J_q$ and
\[
\langle |x|_\varepsilon^{-2} |w|^q \rangle =\langle |x|^{-2} |w|^q \rangle -\varepsilon \langle |x|_\varepsilon^{-2}|x|^{-2} |w|^q \rangle, 
\]
we have
\begin{align*}
\mu \langle |w|^q\rangle + (q-1)J_q &\leq  c\bigg(1-\frac{d-2}{q}\bigg)\langle |x|^{-2}|w|^{q}\rangle - c\bigg(1-\frac{d-2}{q}\bigg)\varepsilon\langle |x|_\varepsilon^{-2}|x |^{-2} |w|^q\rangle + \langle f, \phi \rangle\\
&\biggl(\text{here we are using Hardy's inequality  $\langle |x|^{-2}|w|^q\rangle \leq \frac{q^2}{(d-2)^2}J_q$}\biggr)\\
&\leq c\bigg(1-\frac{d-2}{q}\bigg)\frac{q^2}{(d-2)^2} J_q - c\bigg(1-\frac{d-2}{q}\bigg)\varepsilon\langle |x|_\varepsilon^{-2}|x |^{-2} |w|^q\rangle + \langle f, \phi \rangle.
\end{align*}
Therefore, for $q>d-2$ if $d \geq 4$, and $q \geq 2$ if $d=3$,
\[
\mu \langle |w|^q\rangle + \bigg[q-1-c\frac{q(q -d +2)}{(d-2)^2}\bigg]J_q \leq \langle f, \phi \rangle.
\]

Now, arguing as in the proof of Theorem \ref{thm:markgrad} (see estimates 5)-8) there), we bound $\langle f, \phi \rangle$  as follows:
$$
\langle f, \phi \rangle \leq \varepsilon J_q + C(\varepsilon)\|w\|_q^{q-2}\|f\|_q^2, \quad 0<C(\varepsilon)<\infty,
$$
where $\varepsilon>0$ is to be chosen sufficiently small.

Finally, applying the Sobolev and the Young inequalities as in the last step of the proof of Theorem \ref{thm:markgrad}, we obtain the required estimates on $\|u\|_{W^{1,q}}$, $\|u\|_{W^{1,qj}}$.
\end{proof}

\begin{corollary}
\label{hardy_drift_thm2}
Let $\Omega=\mathbb R^d, \;d \geq 5$. Let $b(x):=c|x|^{-2}x$, $c=\frac{d-2}{2}\sqrt{\delta},$ $\sqrt{\delta}<\frac{d-3}{d-2}$. There exist $q>2 \vee (d-2) $ and  constants $K_l=K_l(\delta, q), \; l=1, 2,$ such that
 $u=(\mu + \Lambda_q(b))^{-1}f$, $\mu>0$,
$f \in L^q$, satisfies
\begin{align*}
\|\nabla u \|_q & \leq K_1 \mu^{-\frac{1}{2}} \|f\|_q ;\\
\|\nabla u \|_{qj} & \leq K_2 \mu^{\frac{1}{q}-\frac{1}{2}} \|f\|_q.
\end{align*}
In particular, $(\mu + \Lambda_q(b))^{-1}: L^q \rightarrow C^{0,1- \frac{d}{qj}}$. {\rm(}For dimensions $d=3,4$, see Theorem \ref{thm:markgrad}.\rm{)}  
\end{corollary}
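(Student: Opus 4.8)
The plan is to run the proof of Corollary~\ref{hardy_drift_thm1} with the sign of the drift reversed; the one genuinely new point is that the \emph{outward}-pointing field $b(x)=c|x|^{-2}x$ produces an \emph{unfavorable} sign in the integration-by-parts identity, and compensating it is exactly what forces $\sqrt\delta<\frac{d-3}{d-2}$. I would first regularize as in Corollary~\ref{hardy_drift_thm1}: put $|x|_\varepsilon^2:=|x|^2+\varepsilon$ and $b_\varepsilon(x):=c|x|_\varepsilon^{-2}x$, $\varepsilon>0$. From $|b_\varepsilon|^2=c^2|x|^2|x|_\varepsilon^{-4}\le c^2|x|^{-2}=|b|^2$ and Hardy's inequality, $b,b_\varepsilon\in\mathbf{F}_\delta$ with $\lambda=0$ (so $\omega_q=0$), while $b_\varepsilon\to b$ in $[L^2_{\loc}]^d$. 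Since $\delta<1$, Theorem~\ref{thm:markfeast} (taking $a_n=a=I$) gives $e^{-t\Lambda_q(b_\varepsilon)}\overset{s}\rightarrow e^{-t\Lambda_q(b)}$, hence strong resolvent convergence, for every $q\in I_c^o$. As $b_\varepsilon$ is bounded and smooth, $\Lambda_q(b_\varepsilon)=-\Delta+b_\varepsilon\cdot\nabla$ with domain $W^{2,q}$, and for $0\le f\in C_c^\infty$, $\mu>0$, the function $u\equiv u_\varepsilon:=(\mu+\Lambda_q(b_\varepsilon))^{-1}f$ is smooth, nonnegative, and decays fast enough to justify all the manipulations below.

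Next I would reproduce the computation of Theorem~\ref{thm:markgrad}: with $w:=\nabla u$, $\phi:=-\nabla\cdot(w|w|^{q-2})$, and $I_q,J_q$ as there (so $I_q\ge J_q$), pairing $(\mu-\Delta)u=-b_\varepsilon\cdot\nabla u+f$ with $\phi$ yields
\[
\mu\langle|w|^q\rangle+I_q+(q-2)J_q=-c\langle|x|_\varepsilon^{-2}x\cdot w,\phi\rangle+\langle f,\phi\rangle .
\]
The heart of the matter is the drift term. Integrating by parts as in Corollary~\ref{hardy_drift_thm1}, using $\nabla\cdot(|x|_\varepsilon^{-2}x)=\big((d-2)|x|^2+d\varepsilon\big)|x|_\varepsilon^{-4}$ and the pointwise bound $(x\cdot w)^2\le|x|^2|w|^2$, one gets
\[
-c\langle|x|_\varepsilon^{-2}x\cdot w,\phi\rangle\le c\Big(1+\tfrac{d-2}{q}\Big)\langle|x|_\varepsilon^{-2}|w|^q\rangle+c\Big(\tfrac{2}{q}-2\Big)\varepsilon\langle|x|_\varepsilon^{-4}|w|^q\rangle .
\]
Since $q\ge2$ the last term is $\le0$ and is discarded; then $|x|_\varepsilon^{-2}\le|x|^{-2}$, Hardy's inequality $\langle|x|^{-2}|w|^q\rangle\le\frac{q^2}{(d-2)^2}J_q$, and $c=\frac{d-2}{2}\sqrt\delta$ bound the drift term by $\frac{\sqrt\delta}{2}\,\frac{q(q+d-2)}{d-2}\,J_q$. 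The term $\langle f,\phi\rangle$ is estimated exactly as in steps 5)--8) of the proof of Theorem~\ref{thm:markgrad} (the only new ingredient being $B_q=c^2\langle|x|_\varepsilon^{-4}(x\cdot w)^2|w|^{q-2}\rangle\le\frac{\delta q^2}{4}J_q$, absorbed via Young's inequality), giving $\langle f,\phi\rangle\le\varepsilon_0J_q+C(\varepsilon_0)\|w\|_q^{q-2}\|f\|_q^2$ for any $\varepsilon_0>0$. Using $I_q\ge J_q$ this produces
\[
\mu\|w\|_q^q+\Big(q-1-\tfrac{\sqrt\delta}{2}\,\tfrac{q(q+d-2)}{d-2}-\varepsilon_0\Big)J_q\le C(\varepsilon_0)\,\|w\|_q^{q-2}\|f\|_q^2 .
\]

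Finally I would choose $q$. At $q=d-2$ the coefficient of $J_q$ (before subtracting $\varepsilon_0$) equals $d-3-\sqrt\delta(d-2)$, which is $>0$ precisely because $\sqrt\delta<\frac{d-3}{d-2}$; by continuity it stays positive for some $q$ slightly above $d-2$, and since $d\ge5$ such a $q$ satisfies $q>2\vee(d-2)$ and lies in $I_c^o$. Fixing this $q$ and a small $\varepsilon_0$, the last display gives $\mu\|w\|_q^2\le C\|f\|_q^2$ and $J_q\le C\mu^{-(q-2)/2}\|f\|_q^q$; the Sobolev inequality $\|\nabla|w|^{q/2}\|_2^2\ge C_S\|w\|_{qj}^q$ upgrades the latter to $\|\nabla u_\varepsilon\|_{qj}\le K_2\mu^{1/q-1/2}\|f\|_q$, and likewise $\|\nabla u_\varepsilon\|_q\le K_1\mu^{-1/2}\|f\|_q$, uniformly in $\varepsilon$. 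Letting $\varepsilon\downarrow0$ (using $u_\varepsilon\to u$ in $L^q$, weak limits of $\nabla u_\varepsilon$ in $L^q$ and $L^{qj}$, and the closedness of $\nabla$) gives the two estimates for $0\le f\in C_c^\infty$, and a general $f\in L^q$ follows by density and linearity with $K_l$ replaced by $4K_l$. Since $qj>d$ (because $q>d-2$), the embedding $W^{1,qj}\subset C^{0,1-d/(qj)}$ then yields $(\mu+\Lambda_q(b))^{-1}\colon L^q\to C^{0,1-d/(qj)}$; for $d=3,4$ the statement is already contained in Theorem~\ref{thm:markgrad}. I expect the main obstacle to be exactly the sign bookkeeping in the drift: unlike in Corollary~\ref{hardy_drift_thm1} there is no ``free'' favorable term, so one must extract just enough negativity from $-cZ$ (via Cauchy--Schwarz) and from the $\varepsilon$-corrections (via $q\ge2$) to keep the $J_q$-coefficient positive below the sharp threshold $\sqrt\delta<\frac{d-3}{d-2}$.
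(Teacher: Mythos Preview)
Your proof is correct and follows essentially the same approach as the paper's: both regularize via $|x|_\varepsilon$, use the integration-by-parts identity $\langle |x|_\varepsilon^{-2}x\cdot w,\phi\rangle = Z + \langle |x|_\varepsilon^{-2}x\cdot\nabla|w|,|w|^{q-1}\rangle$, bound $(x\cdot w)^2\le|x|^2|w|^2$ to control the unfavorable $+2c\langle|x|_\varepsilon^{-4}(x\cdot w)^2|w|^{q-2}\rangle$ term, and arrive at exactly the same coefficient $q-1-c\,q(q+d-2)/(d-2)^2$ in front of $J_q$. Your tracking of the $\varepsilon$-corrections (via $|x|^2=|x|_\varepsilon^2-\varepsilon$, giving the clean term $c(\tfrac{2}{q}-2)\varepsilon\langle|x|_\varepsilon^{-4}|w|^q\rangle\le0$) is if anything slightly tidier than the paper's expansion of $|x|_\varepsilon^{-4}|x|^4|x|^{-2}$, but the substance is identical.
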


\begin{proof}
Modifying the proof of Theorem \ref{thm:markgrad}, we have
\begin{equation*}
\mu \langle |w|^q\rangle + I_q + (q-2)J_q =- c\langle |x|_\varepsilon^{-2} x \cdot w, \phi \rangle + \langle f, \phi\rangle,
\end{equation*}
 Integrating by parts, we have
$
\langle |x|_\varepsilon^{-2} x\cdot w, \phi\rangle = Z + \langle |x|_\varepsilon^{-2} x\cdot \nabla |w|, |w|^{q-1}\rangle,
$
where, recall, $Z=\langle |x|_\varepsilon^{-2} |w|^q\rangle - 2\langle |x|_\varepsilon^{-4}(x\cdot w)^2|w|^{q-2}\rangle,$ and
$
\langle |x|_\varepsilon^{-2} x\cdot \nabla |w|, |w|^{q-1}\rangle = -\frac{d-2}{q}\langle |x|_\varepsilon^{-2} |w|^q \rangle - \frac{2\varepsilon}{q}\langle |x|_\varepsilon^{-4} |w|^q \rangle. 
$
Clearly,
\begin{align*}
&\langle |x|_\varepsilon^{-4}(x\cdot w)^2|w|^{q-2}\rangle \leq \langle |x|_\varepsilon^{-4} |x|^4 |x|^{-2}|w|^q\rangle\\
&= \langle |x|^{-2}|w|^q\rangle-2\varepsilon\langle |x|_\varepsilon^{-2} |x|^{-2}|w|^q\rangle +\varepsilon^2 \langle |x|_\varepsilon^{-4} |x|^{-2}|w|^q\rangle,\\
&\varepsilon \langle |x|_\varepsilon^{-4} |x|^{-2}|w|^q\rangle\leq \langle |x|_\varepsilon^{-2} |x|^{-2}|w|^q\rangle.
\end{align*}
 Therefore,
\begin{align*}
&\mu \langle |w|^q\rangle + I_q + (q-2)J_q \\
& \leq c \frac{q+d-2}{q}\langle |x|^{-2}|w|^q\rangle - c\varepsilon\frac{q+d-2}{q}\langle |x|_\varepsilon^{-2} |x|^{-2} |w|^q\rangle + \langle f, \phi \rangle\\
&\leq c\frac{q(q+d-2)}{(d-2)^2}J_q  + \langle f, \phi \rangle,
\end{align*}
and so
\[
\mu \langle |w|^q\rangle + \biggl[q-1-c\frac{q(q+d-2)}{(d-2)^2}\bigg]J_q \leq   \langle f, \phi \rangle.
\]
It is seen that if $\sqrt{\delta}<\frac{d-3}{d-2}$ then $q-1-c\frac{q(q+d-2)}{(d-2)^2}>0$ for some $q>d-2$.
The rest of the proof repeats the end of the proof of Corollary \ref{hardy_drift_thm1}.
\end{proof}

\begin{remark*}
Consider the formal differential operator $L=-\Delta + b\cdot\nabla$, $b(x):=\frac{d-2}{2}\sqrt{\delta}|x|^{-2}x$. If $\delta>4$, then the Dirichlet problem for $L$ in $\{|x|<1\}$ has two distinct bounded weak solutions, moreover, one of these solutions does not satisfy the maximum principle; see remark 3 after Theorem \ref{thm:markeast}. In view of Corollary \ref{hardy_drift_thm2}, this observation can not be used to justify discarding $b(x)=\frac{d-2}{2}\sqrt{\delta}|x|^{-2}x$ regardless of the value of $\delta$
(and thus the whole class $\mathbf{F}_\delta - [L^d + L^\infty]^d$) as an argument for 
the ``optimality'' of the assumption $b \in [L^d + L^\infty]^d$ 
 (a recurring theme in the literature).
 
In \cite[Theorems 3.23, 3.24]{Met}, the authors take full advantage of the specifics of the operator $-\Delta + c|x|^{-2}x \cdot \nabla$ with $c \in \mathbb R$ and construct its  realization generating a positivity preserving holomorphic semigroup in $L^p$ if:

(a) $c=\frac{d-2}{2}\sqrt{\delta}$, $0<\delta<4$, $p \in ]\frac{2}{2-\frac{d-2}{d}\sqrt{\delta}},\infty[$;

(b) $c=\frac{d-2}{2}\sqrt{\delta}$, $4 \leq \delta<\infty$, $p \in ]\frac{d}{2},\infty[$;

(c) $c=-\frac{d-2}{2}\sqrt{\delta}$, $0<\delta<\infty$, $ p \in ]1,\infty[$.

They also explicitly describe the domain of the generator. In cases (a), (c) the authors obtain $W^{1,p}(\mathbb R^d)$ and $W^{2,p}(\mathbb R^d)$ characterization of any $u$ in the domain of the generator:

In the assumptions of (a), $|\nabla u| \in L^p$ if $p \in ]\frac{2}{2-\frac{2}{d}-\frac{d-2}{d}\sqrt{\delta}},d[$, and $\nabla_i\nabla_k u \in L^p$, $i,k=1,\dots,d$, if $p \in ]\frac{2}{2-\frac{d-2}{d}\sqrt{\delta}},\frac{d}{2}[$;

In the assumptions of (c), $|\nabla u| \in L^p$ if $p \in ]\frac{2}{2-\frac{2}{d}+\frac{d-2}{d}\sqrt{\delta}},d[$, and $\nabla_i\nabla_k u \in L^p$, $i,k=1,\dots,d$, if $p \in ]\frac{2}{2+\frac{d-2}{d}\sqrt{\delta}},\frac{d}{2}[$.

\textit{This, however, does not allow to conclude that $|\nabla u| \in L^r$ for some $r>d$}, as in Corollaries \ref{hardy_drift_thm1}  and \ref{hardy_drift_thm2}, and hence that $u \in C^{0,\gamma}$ for some $\gamma>0$ (which, in turn, is our ultimate goal).  See also remark 3 after the proof of Theorem \ref{thm:markgrad}.
\end{remark*}

We generalize and detailize Corollary \ref{hardy_drift_thm2} as follows.

\begin{corollary}
\label{G_thm}
Let $\Omega=\mathbb R^d, \;d\geq3.$ Assume that $b \in \mathbf{F}_\delta$, $\delta<4$, $b^2 \in L^1+L^\infty,$ and $G \equiv G(b), G_{ik}:=\nabla_k b_i$, satisfies the inequality
\begin{equation}
\label{G_cond}
\tag{$\star$}
|\langle Gh,h\rangle| \leqslant \delta_1 \langle |\nabla |h||^2 \rangle + c_1(\delta_1)\langle |h|^2 \rangle, \quad h \in C_0^\infty(\mathbb R^d,\mathbb R^d)
\end{equation}
for some $0<\delta_1 \leq 1-\frac{\sqrt{\delta}}{2}$, $0 \leq c_1(\delta_1)<\infty$.
Let $u=(\mu + \Lambda_q(b))^{-1}f$,
$f \in L^q$, $\mu>\omega_q$, $q \in ]2 \vee q^-,q^+[$, where $$q^{\mp}:=\frac{2-\sqrt{\delta} \mp \sqrt{(2-\sqrt{\delta})^2-4\delta_1}}{\delta_1}.$$ There exist constants $\lambda_0 =\lambda_0(\delta, q)$ and $K_l=K_l(\delta, q), \; l=1, 2,$ such that, for all $\mu > \lambda_0 \vee \omega_q,$ 
\begin{align*}
\|\nabla u \|_q & \leq K_1 \big(\mu -\lambda_0 \big)^{-\frac{1}{2}} \|f\|_q ;\\
\|\nabla u \|_{qj} & \leq K_2 \big(\mu -\lambda_0 \big)^{\frac{1}{q}-\frac{1}{2}} \|f\|_q.
\end{align*}
In particular, if $2-\sqrt{\delta} + \sqrt{(2-\sqrt{\delta})^2-4\delta_1}>(2 \vee d-2)\delta_1$ then there exists $q \in ]2 \vee d-2,q^+[$ such that $(\mu + \Lambda_q(b))^{-1}: L^q \rightarrow C^{0,1- \frac{d}{qj}}$.
\end{corollary}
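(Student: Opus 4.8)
The plan is to carry out, for the drift $b\cdot\nabla$ written in divergence-tested form, the argument of Theorem~\ref{thm:markgrad} and Corollary~\ref{hardy_drift_thm2}, the new point being that the first-order term is integrated by parts so as to bring in $G(b)=\nabla b$, which is then absorbed by means of \eqref{G_cond}. Since $b^2\in L^1+L^\infty$ (equivalently $b\in L^2+L^\infty$) we may take smooth approximants with \emph{no} truncation: put $b_n:=\gamma_{\varepsilon_n}*b$, $\varepsilon_n\downarrow0$, so $b_n\in C^\infty_b$ and $b_n\to b$ in $[L^2_\loc]^d$. Writing the mollification as an average of translates, $b_n=\int\gamma_{\varepsilon_n}(z)\,\tau_z b\,dz$ (with $\tau_z$ the translation by $z$), one gets, for every $g\in C_c^\infty(\mathbb R^d,\mathbb R^d)$,
\[
\langle G(b_n)g,g\rangle=\int\gamma_{\varepsilon_n}(z)\,\langle G(b)\,\tau_{-z}g,\,\tau_{-z}g\rangle\,dz ,
\]
and since each integrand obeys \eqref{G_cond} and the norms are translation invariant, $G(b_n)$ again satisfies \eqref{G_cond} with the \emph{same} constants $\delta_1,c_1(\delta_1)$; similarly (now by Jensen's inequality for $|\cdot|^2$, as in the proof of Theorem~\ref{thm:markbeast}) $b_n\in\mathbf F_\delta$ with the same $\lambda$. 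Consequently $\Lambda_q(b_n)=-\Delta+b_n\cdot\nabla$ with $D(\Lambda_q(b_n))=W^{2,q}$, and by the convergence results of section~\ref{fbd_sect} (Theorem~\ref{thm:markease} for $1\le\delta<4$, Theorem~\ref{thm:markfeast} for $0<\delta<1$, whose proofs are insensitive to the particular choice of the approximating sequence $b_n\in\mathbf F_\delta$, $b_n\to b$ in $[L^2_\loc]^d$) we have $e^{-t\Lambda_q(b_n)}\to e^{-t\Lambda_q(b)}$ strongly in $L^q$. Thus it suffices to prove the two inequalities for $u_n:=(\mu+\Lambda_q(b_n))^{-1}f$, $0\le f\in C_c^\infty$, and pass to the limit: $u_n\to u$ in $L^q_+$, hence — once the gradient bound is in hand — in $L^{qj}_+$, the weak gradient is closed, and $(C_c^\infty)_+$ is dense in $L^q_+$, which costs the usual factor $4$ in $K_1,K_2$.

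For the a priori estimate fix such $u_n$ (which lies in $C^\infty$ with all derivatives bounded and vanishing at infinity), set $w:=\nabla u_n$, and, exactly as in Theorem~\ref{thm:markgrad}, $I_q:=\sum_i\langle|\nabla w_i|^2,|w|^{q-2}\rangle$, $J_q:=\langle|\nabla|w||^2,|w|^{q-2}\rangle$, $\phi:=-\nabla\cdot(w|w|^{q-2})$; as there one regularizes $|w|$ by $\sqrt{|w|^2+\eta^2}$ and lets $\eta\downarrow0$. Testing $(\mu-\Delta+b_n\cdot\nabla)u_n=f$ against $\phi$ gives
\[
\mu\langle|w|^q\rangle+I_q+(q-2)J_q=-\langle b_n\cdot w,\phi\rangle+\langle f,\phi\rangle .
\]
Integrating by parts in the drift term and using $\nabla_i w_k=\nabla_k w_i$,
\[
-\langle b_n\cdot w,\phi\rangle=-\big\langle G(b_n)\,w|w|^{\frac q2-1},\,w|w|^{\frac q2-1}\big\rangle-\langle b_n\cdot\nabla|w|,\,|w|^{q-1}\rangle .
\]
The first term is bounded by \eqref{G_cond} with $h=w|w|^{q/2-1}$ (so $|h|=|w|^{q/2}$, $\langle|\nabla|h||^2\rangle=\tfrac{q^2}{4}J_q$): its contribution is $\le\delta_1\tfrac{q^2}{4}J_q+c_1(\delta_1)\|w\|_q^q$. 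The second, after writing $|w|^{q-1}\nabla|w|=\tfrac2q|w|^{q/2}\nabla|w|^{q/2}$ and using $b_n\in\mathbf F_\delta$ (i.e.\ $\langle b_n^2|w|^q\rangle\le\delta\|\nabla|w|^{q/2}\|_2^2+c(\delta)\|w\|_q^q$), is $\le\tfrac{q\sqrt\delta}{2}J_q$ plus a term $\varepsilon J_q+C(\varepsilon)\|w\|_q^q$. Finally $\langle f,\phi\rangle$ is treated verbatim as the $f$-contributions in Theorem~\ref{thm:markgrad} (split $\phi=\phi_1+\phi_2$ with $\phi_1=-|w|^{q-2}(\mu u_n+b_n\cdot w-f)$, use $f,u_n\ge0$ and Young): it is $\le\varepsilon J_q+\varepsilon_1B_q+C\|w\|_q^{q-2}\|f\|_q^2$, where $B_q=\langle|b_n\cdot w|^2|w|^{q-2}\rangle\le\delta\tfrac{q^2}{4}J_q+c(\delta)\|w\|_q^q$, so $\varepsilon_1B_q$ is absorbed for $\varepsilon_1$ small.

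Collecting terms and using $I_q\ge J_q$ (legitimate since $q\ge2$), the left side dominates $\mu\langle|w|^q\rangle+(q-1)J_q$, while the coefficient of $J_q$ produced on the right is $\delta_1\tfrac{q^2}{4}+\tfrac{q\sqrt\delta}{2}$ plus an arbitrarily small quantity. Hence the estimate closes precisely when
\[
q-1>\delta_1\frac{q^2}{4}+\frac{q\sqrt\delta}{2}\quad\Longleftrightarrow\quad\delta_1q^2-2(2-\sqrt\delta)\,q+4<0 ,
\]
i.e.\ for $q\in\,]q^-,q^+[$ with $q^{\mp}$ as in the statement; together with $q\ge2$ this is the hypothesis $q\in\,]2\vee q^-,q^+[$ (the constraint $\delta_1\le1-\tfrac{\sqrt\delta}2$, with $\delta<4$, is what keeps the quadratic admissible in the range we use). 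Choosing $\lambda_0$ large enough to absorb all the $\|w\|_q^q$-terms into $\mu\langle|w|^q\rangle$, for $\mu>\lambda_0\vee\omega_q$ one obtains
\[
(\mu-\lambda_0)\|w\|_q^q+C_1J_q\le C_2\|w\|_q^{q-2}\|f\|_q^2 ,\qquad C_1=C_1(\delta,\delta_1,q)>0 ,
\]
whence $\|\nabla u_n\|_q\le K_1(\mu-\lambda_0)^{-1/2}\|f\|_q$; the Sobolev inequality $\|w\|_{qj}^q\le C_S\|\nabla|w|^{q/2}\|_2^2=C_S\tfrac{q^2}{4}J_q$ combined with this bound and Young's inequality then gives $\|\nabla u_n\|_{qj}\le K_2(\mu-\lambda_0)^{1/q-1/2}\|f\|_q$. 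Passing to the limit in $n$ and then to arbitrary $f\in L^q$ yields the two displayed estimates; when $q>2\vee(d-2)$ — possible exactly when $q^+>2\vee(d-2)$, i.e.\ when $2-\sqrt\delta+\sqrt{(2-\sqrt\delta)^2-4\delta_1}>(2\vee d-2)\delta_1$ — one has $qj>d$, and Morrey's embedding $W^{1,qj}\hookrightarrow C^{0,1-d/(qj)}$ finishes the ``in particular'' assertion.

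I expect the only genuinely delicate point to be the bookkeeping in the a priori estimate: one must ensure that beyond the two terms $\delta_1\tfrac{q^2}{4}J_q$ (from $G$) and $\tfrac{q\sqrt\delta}{2}J_q$ (from the surviving first-order piece) \emph{no} further uncontrolled multiple of $J_q$ creeps in — in particular the $B_q$-terms coming from $\langle f,\phi\rangle$ and from the cross terms in the Young estimates must all be shunted into $\varepsilon J_q$ or $\|w\|_q^q$ — since it is exactly the coefficient $\delta_1\tfrac{q^2}{4}+\tfrac{q\sqrt\delta}{2}$ that produces the sharp interval $]q^-,q^+[$. The approximation step is, by contrast, routine once one notices the translation-averaging identity above, which transports both $b_n\in\mathbf F_\delta$ and \eqref{G_cond} with no loss of constants.
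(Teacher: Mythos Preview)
Your argument is correct and tracks the paper's proof: regularize $b$ so that both $b_n\in\mathbf F_\delta$ and the condition \eqref{G_cond} are inherited with the same constants, test the equation with $\phi=-\nabla\cdot(w|w|^{q-2})$, integrate the drift term by parts to expose $\langle G(b_n)h,h\rangle$ with $h=w|w|^{q/2-1}$, bound the two resulting pieces by $\delta_1\tfrac{q^2}{4}J_q$ and $\tfrac{q\sqrt\delta}{2}J_q$ (plus lower order), and close the estimate exactly on $]q^-,q^+[$. The one place where you genuinely diverge from the paper is the mechanism for transferring \eqref{G_cond} to $b_n$: the paper regularizes with $E_n=e^{\Delta/n}$ and invokes the inequality $|\langle(E_nG)h,h\rangle|\le|\langle G\tilde h,\tilde h\rangle|$, $\tilde h_i=(E_nh_i^2)^{1/2}$, which relies on a delicate interplay of signs in the off-diagonal entries; your translation-averaging identity $\langle G(b_n)g,g\rangle=\int\gamma_{\varepsilon_n}(z)\,\langle G(b)\tau_{-z}g,\tau_{-z}g\rangle\,dz$ is a cleaner route that makes the preservation of the constants $\delta_1,c_1(\delta_1)$ immediate and robust. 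Either way the a~priori estimate and the passage to the limit are the same.
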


\begin{proof}
Set $b_n:=E_n b$, where $E_n:=e^\frac{\Delta}{n}.$ Then $b_n \in \mathbf{F}_\delta$ (see the arguments in section 3.4) and  $E_nG(b)$ satisfies \eqref{G_cond} with the same $\delta_1$ for all $n.$
Indeed, 
\begin{align*}
|\langle [E_n G(b)] h,h \rangle |
\leq |\langle G(b) \tilde{h}, \tilde{h} \rangle|, \qquad \tilde{h}_i:=\big(E_n h_i^2\big)^{\frac{1}{2}},
\end{align*}
and by \eqref{G_cond},
\begin{align*}
 |\langle G(b) \tilde{h}, \tilde{h} \rangle| \leq  \delta_1 \big\| \big|\nabla \big(E_n |h|^2\big)^{\frac{1}{2}}\big|^2 \big\|_1 + c_1(\delta_1)\| E_n |h|^2\|_1.
\end{align*}
Noticing that $$\big\langle \big|\nabla \big(E_n (|h|^2)\big)^{\frac{1}{2}}\big|^2 \big\rangle = \bigg\|\frac{E_n(|h|\nabla |h|)}{\sqrt{E_n |h|^2}} \bigg\|_2^2 \leq \|E_n |\nabla |h||^2\|_1 = \|\nabla |h|\|_2^2, \quad \text{ and } \quad  \| E_n |h|^2\|_1=\|h\|_2^2,$$
we obtain the required.

Now, we have
\begin{equation*}
\mu \langle |w|^q\rangle + I_q + (q-2)J_q + \langle b_n \cdot w, \phi \rangle = \langle f, \phi\rangle.
\end{equation*}
Integrating by parts, we obtain
\[
\langle b_n \cdot w, \phi\rangle = \langle [E_nG(b)]w,w|w|^{q-2}\rangle + \langle b_n \cdot \nabla |w|, |w|^{q-1}\rangle.
\]
Thus,
\begin{align*}
|\langle b_n \cdot w, \phi\rangle| &\leq \delta_1 \langle \big|\nabla |w|^{\frac{q}{2}}\big|^2 \rangle + \alpha \langle b_n^2,|w|^{q}\rangle + \frac{1}{4\alpha} J_q \quad \big(\alpha=\frac{1}{q\sqrt{\delta}}\big)\\
 &\leq \biggl[ \frac{q^2}{4}\delta_1  + \frac{q^2}{4}\delta \alpha  + \frac{1}{4\alpha}\biggr] J_q + \alpha(c_1(\delta_1)+ c(\delta)) \|w\|_q^q\\
&=\bigg[ \frac{q^2}{4}\delta_1  + \frac{q}{2}\sqrt{\delta}\bigg] J_q + \frac{c_1(\delta_1)+c(\delta)}{q\sqrt{\delta}} \|w\|_q^q.
\end{align*}

Applying $I_q \geq J_q$, we obtain
\begin{equation*}
\biggl(\mu - \frac{c_1(\delta_1)+c(\delta)}{q\sqrt{\delta}}\biggr)\langle |w|^q\rangle + \biggl(q-1 - \frac{q^2}{4}\delta_1  - \frac{q}{2}\sqrt{\delta} \biggr)J_q \leq  \langle f, \phi\rangle.
\end{equation*}
(It is seen that if $q \in ]2 \vee q^-,q^+[$, then $q-1 - \frac{q^2}{4}\delta_1  - \frac{q}{2}\sqrt{\delta}>0$.)
The rest of the proof repeats the end of the proof of Corollary \ref{hardy_drift_thm1}.
\end{proof}

\begin{remarks}
1.~The requirement $b^2 \in L^1 + L^\infty$ is not essential: one can get rid of it by defining $b_n$ as $b_n:=e^{\varepsilon_n\Delta} \zeta_n b$, where $ \zeta_n(x) = \eta_n(|x|)$,
$$
\eta_n(t):=\left\{
\begin{array}{ll}
1, & \text{ if } t< n, \\
2 - \frac{t}{n}, & \text{ if } n \leq t \leq 2 n,\\
0, & \text{ if } 2 n < t,
\end{array}
\right.
$$
and $\varepsilon \downarrow 0$ are chosen sufficiently small. (Albeit this works for $\delta_1<1-\frac{\sqrt{\delta}}{2}$. The latter does not affect the result since the interval $]q^-,q^+[$ is open.) 

2.~It is easy to modify the proof above to work on an arbitrary open set $\Omega \subset \mathbb R^d$.

3.~For the vector field $b(x)=c|x|^{-2}x$, $c>0$, one has $h \cdot Gh=c\bigl(|x|^{-2}|h|^2-2(x \cdot h)^2|x|^{-4}\bigr)$, and so $|\langle Gh,h \rangle| \leq c\langle |x|^{-2}|h|^2 \rangle $. Thus, for this vector field the conditions of Corollary \ref{G_thm} are satisfied  with $\delta_1=\frac{4}{(d-2)^2}c$, $\delta=\frac{4}{(d-2)^2}c^2$. In particular, one recovers the assertion of Corollary \ref{hardy_drift_thm2}.
\end{remarks}

In the forthcoming papers \cite{KiS2}, \cite{KiS3} we extend Theorem \ref{thm:markgrad} to the operators $ -\nabla \cdot a \cdot \nabla + b\cdot\nabla$ and $-a\cdot\nabla^2 + b\cdot\nabla.$ Here we only mention the following

\begin{theorem*}[{\cite{KiS2}}]
Let $\Omega =\mathbb R^d, d \geq 3, c > -1, a(x) = I + c |x|^{-2} x \otimes x$. Let $b \in \mathbf{F}_\delta(-\Delta),$ or $b \in \mathbf{F}_\delta(A),$ where $A=A_D.$ 

{\rm (\textit{i}) [Divergence form operator]} If $\sqrt{\delta} \in ]0, 1 \wedge \frac{2}{d-2}[$ and $|c|$ is sufficiently small, or
$$c \in \biggl]-\frac{1}{2+\frac{2}{d-3}}, 2(d-3)(d-1)\biggr[ \quad (d \geq 4), \qquad c \in \bigl]-1/9,1/4\bigr[ \quad (d=3)$$ and $\delta$ is sufficiently small, then, for $q>d-2$ sufficiently close to $d-2$ the operator realization $\Lambda_q(a,b)$ of $-\nabla \cdot a \cdot \nabla + b \cdot \nabla$ is well defined, and there exist constants $\lambda_0 =\lambda_0(c, \delta, q)$ and $K_l=K_l(c, \delta, q), \; l=1, 2,$ such that $u = (\mu + \Lambda_q(a,b) )^{-1} f$ $(\mu \in \rho(-\Lambda_q), f \in L^q)$ satisfies the inequalities
\begin{equation}
\tag{$\ast$}
\begin{array}{rl}
\|\nabla u \|_q  & \leq  K_1(\mu-\lambda_0)^{-\frac{1}{2}} \|f\|_q, \\[1mm]
\|\nabla u \|_{q j} &  \leq   K_2(\mu-\lambda_0)^{\frac{1}{q}-\frac{1}{2}} \|f\|_q, \quad j=\frac{d}{d-2}.
\end{array}
\end{equation}
In particular,  $(\mu + \Lambda_q(a,b))^{-1}: L^q \rightarrow C^{0,1- \frac{d}{qj}}$, for all $\mu > \lambda_0 \vee \omega_q$.

{\rm (\textit{ii}) [Non-divergence form operator]} If $\sqrt{\delta} \in ]0, 1 \wedge \frac{2}{d-2}[$ and $|c|$ is sufficiently small, or $$c \in \biggl]-\frac{1}{1+\frac{1}{4}\frac{(d-4)^2}{(d-3)(2d-5)}},\frac{d-3}{2} \biggr[ \quad (d \geq 4), \qquad c \in ]-1,1/3[ \quad (d=3)$$ and $\delta$ is sufficiently small, then, for all $q>d-2$ sufficiently close to $d-2$ the operator realization $\Lambda_q(a,(\nabla a)+b)$ of $-a \cdot \nabla^2 + b \cdot \nabla$ in $L^q$ is well defined,
and $(\ast)$ holds for
$u = \bigl(\mu + \Lambda_q(a,(\nabla a) + b)\bigr)^{-1}$ $(\mu \in \rho(-\Lambda_q), f \in L^q).$
\end{theorem*}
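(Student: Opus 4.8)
The plan is to combine the construction of $\Lambda_q(a,b)$ furnished by Theorems~\ref{thm:markeast}, \ref{thm:markease} and \ref{thm:marksolve} (applicable because $a(x)=I+c|x|^{-2}x\otimes x$ is uniformly elliptic for every $c>-1$: its eigenvalues are $1+c$ in the radial direction and $1$ in the orthogonal directions) with the energy--bootstrap technique developed in the proof of Theorem~\ref{thm:markgrad} and Corollaries~\ref{hardy_drift_thm1}--\ref{G_thm}, the ordinary Hardy inequality used there being replaced by the sharp weighted Hardy-type inequality for $a$ proved in section~\ref{nondiv_sect}. First I would regularize: set $a_\varepsilon(x):=I+c|x|_\varepsilon^{-2}x\otimes x$, $|x|_\varepsilon^2:=|x|^2+\varepsilon$, and $b_n:=E_nb$ (with $E_n=e^{\Delta/n}$ or the K.\,Friedrichs mollifier). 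Since $a_\varepsilon$ is uniformly comparable to $a$ and the weighted Hardy inequality holds for $a_\varepsilon$ with a constant bounded below uniformly in $\varepsilon$, one checks, arguing as in section~\ref{nondiv_sect}, that $b_n\in\mathbf F_\delta\bigl(A(a_\varepsilon)\bigr)$ with $\lambda=\lambda_\delta$ independent of $n,\varepsilon$; as $a_\varepsilon$ is smooth and bounded, $b_n\cdot\nabla$ is a Miyadera perturbation of $-\nabla\cdot a_\varepsilon\cdot\nabla$, whose domain is $W^{2,q}$, so $\Lambda_q(a_\varepsilon,b_n)$ has domain $W^{2,q}$ and $u\equiv u_{n,\varepsilon}:=(\mu+\Lambda_q(a_\varepsilon,b_n))^{-1}f\in W^{2,q}$ for $0\le f\in C_c^\infty$.

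The heart of part (i) is the a priori estimate. Put $w:=\nabla u$ and test $(\mu+\Lambda_q(a_\varepsilon,b_n))u=f$ against $\phi:=-\nabla\cdot\bigl(w|w|^{q-2}\bigr)$, $q\ge 2$. After integration by parts the term $\langle-\nabla\cdot a_\varepsilon\cdot\nabla u,\phi\rangle$ produces the good quadratic quantities
\[
I_q:=\sum_{i=1}^d\langle|\nabla w_i|^2,|w|^{q-2}\rangle,\qquad J_q:=\langle |\nabla |w||^2,|w|^{q-2}\rangle\qquad(I_q\ge J_q),
\]
each carrying an $a_\varepsilon$-weight, together with commutator terms involving $\nabla a_\varepsilon$, which for this matrix are of Hardy type $c\,|x|_\varepsilon^{-2}x$ times lower-order expressions in $w$ and $\nabla|w|$; these are absorbed exactly as in the proofs of Corollaries~\ref{hardy_drift_thm1} and \ref{hardy_drift_thm2} via $\langle|x|^{-2}|w|^q\rangle\le\frac{q^2}{(d-2)^2}J_q$ and, where a factor $1+c$ is available, via the sharp weighted inequality. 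The drift term $\langle b_n\cdot w,\phi\rangle$ is controlled by form-boundedness, $\langle|b_n\cdot w|^2,|w|^{q-2}\rangle\le\frac{\delta q^2}{4}J_q+c(\delta)\|w\|_q^q$, as in Theorem~\ref{thm:markgrad}. Collecting terms one arrives at
\[
(\mu-\lambda_0)\|w\|_q^q+\bigl(q-1-\Phi(c,\delta,q)\bigr)J_q\le C\,\|w\|_q^{q-2}\|f\|_q^2,
\]
with $\Phi$ an explicit combination of the $\nabla a_\varepsilon$-commutator and drift contributions; the stated intervals for $c$ together with smallness of $\delta$ are precisely the conditions guaranteeing $q-1-\Phi(c,\delta,q)>0$ for some $q>d-2$ (equivalently $qj>d$). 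Young's and Sobolev's inequalities then give $\|\nabla u\|_q\le K_1(\mu-\lambda_0)^{-1/2}\|f\|_q$ and $\|\nabla u\|_{qj}\le K_2(\mu-\lambda_0)^{1/q-1/2}\|f\|_q$ with constants independent of $n,\varepsilon$; letting $\varepsilon\downarrow0$, $n\uparrow\infty$ through Theorem~\ref{thm:markfeast} (and using closedness of the weak gradient, Fatou's lemma and density of $(C_c^\infty)_+$ in $L^q_+$) transfers these to $u=(\mu+\Lambda_q(a,b))^{-1}f$, all $f\in L^q$, and the embedding $W^{1,qj}\hookrightarrow C^{0,1-d/(qj)}$ finishes (i).

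For part (ii) I would run the same scheme with the drift $b$ replaced by $(\nabla a_\varepsilon)+b_n$, using that $-a\cdot\nabla^2=-\nabla\cdot a\cdot\nabla+(\nabla a)\cdot\nabla$ holds only formally, so the realization is $\Lambda_q\bigl(a,(\nabla a)+b\bigr)$ built as above (cf.\ section~\ref{nondiv_sect}). Since $(\nabla a_\varepsilon)$ is again a Hardy-type field $\propto|x|_\varepsilon^{-2}x$ (up to an $\varepsilon$-correction of the same type), its contribution merges with the $\nabla a_\varepsilon$-commutator already present in (i) and with $b_n$, and the required positivity of the coefficient of $J_q$ is verified by the computation of Corollaries~\ref{hardy_drift_thm1}, \ref{hardy_drift_thm2} and \ref{G_thm}; this accounts for the shifted admissible range of $c$ in (ii). The main obstacle is the sharp bookkeeping of this quadratic identity: the commutator terms from $\nabla a_\varepsilon$ carry a definite but $c$-dependent magnitude that can only be controlled through the \emph{sharp} constant in the weighted Hardy inequality (the naive bound $a\ge\sigma I$ is too lossy and would not produce the stated endpoints, e.g.\ $-\frac{1}{2+2/(d-3)}$ and $2(d-3)(d-1)$), while the drift $b\cdot\nabla$ is controlled only by its relative bound $\delta$; one must optimize the auxiliary parameters and let $q\downarrow d-2$ so the net coefficient of $J_q$ stays strictly positive, which in turn forces $\delta$ small. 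A secondary, purely technical, difficulty is showing that the regularizations $a_\varepsilon$ — and hence $(\nabla a_\varepsilon)$ — preserve the ellipticity constants and the form-bounds of $b$ (and of $(\nabla a_\varepsilon)$) uniformly, so that Theorem~\ref{thm:markfeast} applies and the limiting operator is indeed the $\Lambda_q(a,b)$ (resp.\ $\Lambda_q(a,(\nabla a)+b)$) of Theorem~\ref{thm:markease}.
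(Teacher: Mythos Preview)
This theorem is not proved in the paper: it is merely announced, with a citation to the forthcoming work \cite{KiS2}, immediately after Corollary~\ref{G_thm}. There is therefore no proof here against which to compare your proposal.

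That said, your outline is the natural continuation of Theorem~\ref{thm:markgrad} and Corollaries~\ref{hardy_drift_thm1}--\ref{G_thm} to the Gilbarg--Serrin matrix, and is very likely the route taken in \cite{KiS2}: regularize, test against $\phi=-\nabla\cdot(w|w|^{q-2})$, isolate $I_q\ge J_q$, and absorb the Hardy-type commutator terms from $\nabla a_\varepsilon$ together with the form-bounded drift into the coefficient of $J_q$. The substantive gap is exactly where you place it: you assert that the stated intervals for $c$ are ``precisely the conditions guaranteeing $q-1-\Phi(c,\delta,q)>0$ for some $q>d-2$'' without ever writing down $\Phi$ or carrying out the optimization in $q$ and the auxiliary parameters. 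That computation \emph{is} the content of the theorem --- the method is already in the present paper, the specific endpoints are not --- and for part~(ii) in particular the interaction between the $(\nabla a)$-drift and the $a$-commutator in the energy identity produces sign-definite pieces (already visible in the proof of Corollary~\ref{hardy_drift_thm2}) that must be tracked sharply to land on endpoints such as $\frac{d-3}{2}$ or $-\bigl(1+\tfrac14\tfrac{(d-4)^2}{(d-3)(2d-5)}\bigr)^{-1}$. As a strategy your proposal is sound; as a proof it stops precisely where the real work begins.
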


\subsection{$L^r$-strong Feller semigroup on $C_\infty$ corresponding to $-\Delta +b\cdot \nabla$, $b \in \mathbf F_\delta$} 
\label{feller1_sect}

Armed with Theorem \ref{thm:markease} and Theorem \ref{thm:markgrad}, we establish

\begin{theorem}
\label{thm:markfeller}
Let $d \geq 3$, $b : \mathbb{R}^d \rightarrow \mathbb{R}^d,$ $b \in \mathbf F_\delta$, $\delta \in \big] 0, 1 \wedge \frac{4}{(d-2)^2} \big [.$ Then:

{\rm(\textit{i})} The limit 
\[
e^{-t\Lambda_{C_\infty}(b)}:=s \mbox{-} C_\infty \mbox{-} \lim_n e^{-t \Lambda_{C_\infty}(b_n)} \quad (t \geq 0)
\] 
exists and determines a positivity preserving contraction $C_0$ semigroup on $C_\infty.$ Here $\{b_n\}$ is defined in section \ref{est_sect}, $\Lambda_{C_\infty}(b_n):=-\Delta+b_n\cdot \nabla$, 
$D(\Lambda_{C_\infty}(b_n))=(1-\Delta)^{-1}C_\infty$,

\smallskip

{\rm(\textit{ii})}~{\rm[}The $L^r$-strong Feller property {\rm]}~$\bigl( (\mu+\Lambda_{C_\infty}(b))^{-1} \upharpoonright L^r \cap C_\infty\bigr)^{\clos}_{L^r \rightarrow C_\infty} \in \mathcal B(L^r,  C^{0,1- \frac{d}{rj}})$ whenever $r \in \big]\frac{2}{2-\sqrt{\delta}} \vee (d-2), \frac{2}{\sqrt{\delta}} \big[$ and $\mu > \omega_q.$  

\smallskip

{\rm(\textit{iii})}~The
integral kernel of $e^{-t\Lambda_{C_\infty}(b)}$ determines the
transition probability function of a Hunt process.
\end{theorem}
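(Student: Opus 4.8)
\noindent\emph{Plan.} I would run the scheme of the proof of Theorem~\ref{thm:markease} on the approximants, and upgrade the $L^q$-convergence it yields to convergence in $C_\infty$ by means of a uniform (in $n$) H\"older estimate obtained by combining the holomorphy of $e^{-t\Lambda_q(b_n)}$ with the a priori gradient bound of Theorem~\ref{thm:markgrad}; this is exactly where the auxiliary hypothesis $|b|\in L^2+L^\infty$ of \cite{KS} gets eliminated. Recall that the $b_n=E_n(b\mathbf 1_{B(0,n)})$ of Section~\ref{est_sect} are bounded, smooth, compactly supported, satisfy $b_n\in\mathbf F_\delta$ with a common $\lambda=\lambda_\delta$ (Remark~2 after the proof of Theorem~\ref{thm:markbeast}), and $b_n\to b$ in $[L^2_\loc]^d$. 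By classical perturbation theory (the drift is bounded and smooth) $-\Lambda_{C_\infty}(b_n)=\Delta-b_n\cdot\nabla$, $D(\Lambda_{C_\infty}(b_n))=(1-\Delta)^{-1}C_\infty$, generates a positivity preserving contraction $C_0$ (Feller) semigroup on $C_\infty$, which is consistent with $\{e^{-t\Lambda_r(b_n)}\}_{r\in I_c}$ and is the part on $C_\infty$ of the $L^\infty$-contraction $e^{-t\Lambda_q(b_n)}$. By density of $C_c^\infty$ in $C_\infty$ and the uniform contractivity, it suffices to prove the convergence in~(i) for $f\in C_c^\infty$.

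\noindent\emph{The key uniform estimate.} Fix $q\in\,](d-2)\vee2,\tfrac2{\sqrt\delta}[\,$ (nonempty since $\delta<1\wedge\tfrac4{(d-2)^2}$, and $q\in I_c^o$), put $j=\tfrac d{d-2}$, $\gamma=1-\tfrac d{qj}$, and set $u_n(t):=e^{-t\Lambda_{C_\infty}(b_n)}f=e^{-t\Lambda_q(b_n)}f$ (consistency; $f\in L^q$). By Theorem~\ref{thm:markeast}$(\ast\ast\ast\,\ast)$ the semigroups $e^{-t\Lambda_q(b_n)}$ are holomorphic on a sector independent of $n$, so $\|\Lambda_q(b_n)e^{-t\Lambda_q(b_n)}f\|_q\le Ct^{-1}\|f\|_q$ for $0<t\le1$ with $C$ independent of $n$; hence $g_n(t):=(\mu+\Lambda_q(b_n))u_n(t)$ obeys $\|g_n(t)\|_q\le C(1+t^{-1})\|f\|_q$, and Theorem~\ref{thm:markgrad} applied to $b_n$ together with $W^{1,qj}\hookrightarrow C^{0,\gamma}$ gives
\[
\|u_n(t)\|_{C^{0,\gamma}}=\bigl\|(\mu+\Lambda_q(b_n))^{-1}g_n(t)\bigr\|_{C^{0,\gamma}}\le C(1+t^{-1})\|f\|_q,\qquad 0<t\le1,
\]
uniformly in $n$. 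In addition, the spatial-cutoff (``crucial'') estimate of Step~1 in the proof of Theorem~\ref{thm:markease}, applied with $a=I$, yields: for every $\varepsilon>0$ there is $R<\infty$ with $\sup_{n,\ t\in[0,T]}\|\mathbf 1_{\{|x|>R\}}u_n(t)\|_q<\varepsilon$.

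\noindent\emph{Conclusion of~(i).} For $t\in[\tau,T]$ the family $\{u_n(t):n\ge1\}$ is bounded, uniformly H\"older, and uniformly small at infinity, hence relatively compact in $C_\infty$; since $u_n(t)\to u(t):=e^{-t\Lambda_q(b)}f$ in $L^q$ uniformly on $[0,T]$ by Theorem~\ref{thm:markease}, it follows that $u_n(t)\to u(t)$ in $C_\infty$, uniformly on $[\tau,T]$. To promote this to uniformity on all of $[0,1]$ --- where the H\"older bound degenerates --- I would argue as in Step~2 of the proof of Theorem~\ref{thm:markease}: for $g=u_n-u_m$ one has $(\partial_t+\Lambda(b_n))g=(b_m-b_n)\cdot\nabla u_m$, $g|_{t=0}=0$, and the energy estimate there --- now made unconditional by invoking Theorem~\ref{thm:markgrad} and the bounds above in place of $|b|\in L^2+L^\infty$ --- controls $\sup_{t\in[0,T]}\|g(t)\|_q$, which combined with the equicontinuity and spatial-decay bounds upgrades to $\sup_{t\in[0,1]}\|u_n(t)-u_m(t)\|_\infty\to0$. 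Passing to the limit, $e^{-t\Lambda_{C_\infty}(b)}:=s\text{-}C_\infty\text{-}\lim_n e^{-t\Lambda_{C_\infty}(b_n)}$ exists uniformly in $t\in[0,1]$, is a semigroup, is positivity preserving and sub-Markovian, and --- being a locally uniform limit of strongly continuous contraction semigroups --- is a $C_0$ semigroup, consistent with $\{e^{-t\Lambda_r(b)}\}_{r\in I_c}$; this is~(i).

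\noindent\emph{Parts~(ii) and~(iii).} On $L^r\cap C_\infty$, $(\mu+\Lambda_{C_\infty}(b))^{-1}$ agrees with $(\mu+\Lambda_r(b))^{-1}$ by consistency; by Theorem~\ref{thm:markgrad} for $r\ge2$, and for the remaining $r<2$ by composing with the $(L^r,L^q)$-smoothing of Theorem~\ref{thm:markeast}, $(\mu+\Lambda_r(b))^{-1}$ maps $L^r$ into $W^{1,rj}\subset C^{0,1-d/(rj)}$; since $(\mu+\Lambda_r(b))^{-1}f\in L^{rj}$ with $rj<\infty$ and a H\"older function lying in $L^{rj}$ vanishes at infinity, $(\mu+\Lambda_r(b))^{-1}L^r\subset C_\infty\cap C^{0,1-d/(rj)}$, so the operator in~(ii) equals $(\mu+\Lambda_r(b))^{-1}\in\mathcal B(L^r,C^{0,1-d/(rj)})$. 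For~(iii): $e^{-t\Lambda_{C_\infty}(b)}$ is a Feller semigroup, so the Riesz representation theorem gives sub-probability Borel kernels $p_t(x,dy)$ with $e^{-t\Lambda_{C_\infty}(b)}\varphi(x)=\int\varphi\,dp_t(x,\cdot)$ and $x\mapsto p_t(x,\cdot)$ weakly continuous; the semigroup law is the Chapman--Kolmogorov equation, and the standard construction attaches to this transition function a normal strong Markov process with c\`adl\`ag paths that is quasi-left-continuous, i.e.\ a Hunt process. The step I expect to be the main obstacle is precisely the uniformity of the $C_\infty$-convergence down to $t=0$: for $t$ bounded away from $0$ it is an Arzel\`a--Ascoli argument driven by ``holomorphy $+$ Theorem~\ref{thm:markgrad} $\Rightarrow$ uniform H\"older bound'' plus the spatial-decay estimate, but near $t=0$ one must fall back on the direct energy method of Theorem~\ref{thm:markease}, and it is there that Theorem~\ref{thm:markgrad} is what renders the passage unconditional.
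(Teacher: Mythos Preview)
Your route is genuinely different from the paper's, and the divergence is exactly at the point you flag as the main obstacle.

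\textbf{What the paper does.} The paper never touches the semigroup directly; it works on resolvents and invokes Trotter's Approximation Theorem. The technical core is a Moser--type iteration (Lemmas~1--2 in the proof): for the resolvent difference $g=(\mu+\Lambda_q(b_m))^{-1}f-(\mu+\Lambda_q(b_n))^{-1}f$ one derives an inequality $\|g\|_{rj}\le (C\delta\|\nabla u_n\|_{qj}^2)^{1/r}(r^{2k})^{1/r}\|g\|_{x'(r-2)}^{1-2/r}$ and iterates it to obtain $\|g\|_\infty\le B\|g\|_{r_0}^\gamma$ with $B,\gamma$ independent of $n,m$. Combined with the $L^{r_0}$-convergence from Theorem~\ref{thm:markease}, this yields $C_\infty$-convergence of the resolvents. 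A parallel iteration (Lemmas~3--4) applied to $U_n=(\mu+\Lambda(b_n))^{-1}b_n\cdot\nabla(\mu-\Delta)^{-1}f$ gives $\|\mu U_n\|_\infty\le \hat B\|\mu U_n\|_{r_0}^{\tilde\gamma}\le C(\mu-c(\delta)/\delta)^{-\tilde\gamma/2}\|\nabla f\|_{r_0}^{\tilde\gamma}$, which is Lemma~5: $\mu(\mu+\Lambda_{C_\infty}(b_n))^{-1}\to1$ in $C_\infty$ \emph{uniformly in $n$}. These two ingredients are precisely the hypotheses of Trotter's theorem. Because everything is done on resolvents, there is no $t\to0$ issue at all.

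\textbf{Your approach and its gap.} Your argument for $t\in[\tau,T]$ is correct and elegant: holomorphy (Theorem~\ref{thm:markeast}$(\ast\ast\ast\,\ast)$) plus Theorem~\ref{thm:markgrad} give the uniform H\"older bound $\|u_n(t)\|_{C^{0,\gamma}}\le C(1+t^{-1})$; together with the $L^q$-spatial decay from Step~1 of Theorem~\ref{thm:markease} (which, via the H\"older bound, upgrades to $L^\infty$-decay) and the $L^q$-convergence, Arzel\`a--Ascoli yields $C_\infty$-convergence uniformly on $[\tau,T]$. But the closing step for $[0,\tau]$ does not work as written. Your H\"older constant behaves like $t^{-1}$, so the only interpolation available is $\|g(t)\|_\infty\lesssim\|g(t)\|_q^\theta\, t^{-(1-\theta)}$, which is useless near $t=0$; and the energy method of Theorem~\ref{thm:markease} produces only $L^q$-smallness of $g(t)$, nothing that survives the blow-up. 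What is actually needed is either an $\|\cdot\|_\infty\le B\|\cdot\|_{r_0}^\gamma$ inequality \emph{free of $t$} --- exactly the paper's Moser iteration on resolvents --- or a direct proof that $\sup_n\|e^{-t\Lambda_{C_\infty}(b_n)}f-f\|_\infty\to0$ as $t\downarrow0$, i.e.\ the paper's Lemma~5. Either closes the gap, but neither follows from your sketch; your semigroup-level compactness argument, as it stands, cannot reach $t=0$.
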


\begin{proof}We will need the following auxiliary results.
Set
$$
g:=u_m- u_n, \quad u_n = (\mu + \Lambda_r(b_n))^{-1} f, \quad f \in L^1 \cap L^\infty, \quad \mu > \frac{c(\delta)}{\delta}.
$$

\begin{lemma}
\label{lem1}
There are positive constants $C=C(d), k=k(\delta)$ such that
\[
\|g \|_{r j} \leq \big( C \delta \|\nabla u_n \|_{q j}^2 \big)^\frac{1}{r} \big(r^{2k} \big)^\frac{1}{r} \|g\|_{x^\prime (r-2)}^{1-\frac{2}{r}},
\]
where $q \in \big]\frac{2}{2-\sqrt{\delta}} \vee (d-2), \frac{2}{\sqrt{\delta}}\big [, \; 2 x = q j, \; j=\frac{d}{d-2}, \; x^\prime := \frac{x}{x-1}$ and $x^\prime(r-2) > \frac{2}{2-\sqrt{\delta}}.$ 
\end{lemma}
\begin{proof}
 Note that $g$ satisfies the equation
\[
(\mu + \Lambda_q(b_m)) g = F, \quad \quad F = (b_n - b_m) \cdot \nabla u_n .
\]
Let $\psi = g |g|^{r-2}, v =g |g|^\frac{r-2}{2}.$ Taking the scalar product of the equation by $\psi,$ we have
\[
\mu \|v\|_2^2 + \frac{4}{r r^\prime} \|\nabla v\|_2^2 = - \frac{2}{r} \langle v, b_m \cdot \nabla v \rangle + \langle (b_n - b_m) \cdot \nabla u_n, v |v|^{1-\frac{2}{r}} \rangle.
\]
By the quadratic estimates,
\begin{align*}
|\langle v, b_m \cdot \nabla v \rangle| & \leq \varepsilon \|b_m v \|_2^2 + (4\varepsilon)^{-1} \| \nabla v \|_2^2\\
&\leq (\varepsilon \delta + (4 \varepsilon)^{-1} ) \|\nabla v \|_2^2 + \varepsilon c(\delta) \|v\|_2^2\\
& = \sqrt{\delta} \|\nabla v \|_2^2 +(2 \sqrt{\delta})^{-1} c(\delta)\|v\|_2^2 \quad \quad (\varepsilon = (2 \sqrt{\delta})^{-1} ),\\ 
|\langle (b_n - b_m) \cdot \nabla u_n, v |v|^{1-\frac{2}{r}} \rangle| & \leq \langle (|b_n| + |b_m|) |v|, |v|^{1-\frac{2}{r}} | \nabla u_n | \rangle \\
& \leq \eta \delta \|\nabla v \|_2^2 + \eta c(\delta) \|v\|_2^2 +\eta^{-1} \| |v|^{1-\frac{2}{r}} |\nabla u_n| \|_2^2 \quad (\eta >0),
\end{align*}
we obtain the inequality
\[
\bigg[\mu - \bigg(\frac{1}{r} \frac{1}{\sqrt{\delta}} + \eta \bigg) c(\delta) \bigg ] \|v\|_2^2 + \bigg( \frac{4}{r r^\prime} -\frac{2}{r}\sqrt{\delta} -\eta \delta \bigg) \|\nabla v\|_2^2 \leq \eta^{-1} \||v|^{1-\frac{2}{r}} |\nabla u_n\|_2^2.
\]
Since $r > \frac{2}{2 -\sqrt{\delta}} \Leftrightarrow \frac{2}{r^\prime} - \sqrt{\delta} > 0,$ we choose $k > 1$ so large that $\frac{4}{r r^\prime} - \frac{2}{r} \sqrt{\delta} = \frac{2}{r} \big(\frac{2}{r^\prime} - \sqrt{\delta} \big)> 2 r^{-k}.$
Fix $\eta$ by $\eta \delta = \frac{4}{r r^\prime} - \frac{2}{r} \sqrt{\delta} - r^{-k} \;\; ( \geq r^{-k} ).$ Thus
\[
\bigg [ \mu - \frac{c(\delta)}{\delta}\bigg (\frac{4}{r r^\prime} -\frac{1}{r} \sqrt{\delta} - r^{-k} \bigg) \bigg ] \|v\|_2^2 + r^{-k} \| \nabla v \|_2^2 \leq \delta r^k \| |v|^{1-\frac{2}{r}} |\nabla u_n| \|_2^2 .
\]
Our choice of $\mu$ ( $\mu > \frac{c(\delta)}{\delta}$ ) ensures that the expression contained in square brackets is strictly positive. Thus
\[
\| \nabla v \|_2^2 \leq \delta r^{2 k} \| \nabla u_n \|^2_{2 x} \|v \|^{2(1-\frac{2}{r})}_{2 x^\prime (1-\frac{2}{r})} .
\]
Finally, applying the uniform Sobolev inequality $c_d \|v \|^2_{2 j} \leq \| \nabla v \|_2^2$, we end the proof of Lemma \ref{lem1}.
\end{proof}

\begin{lemma}
\label{lem2}
In the notation of Lemma \ref{lem1}, for any $r_0 > \frac{2}{2-\sqrt{\delta}}$ 
$$
\|g\|_\infty \leq B \|g \|_{r_0}^\gamma, \quad \mu \geq 1 + \lambda_0 \vee \frac{c(\delta)}{\delta},
$$
where $\gamma = \big( 1 - \frac{x^\prime}{j} \big) \big( 1 - \frac{x^\prime}{j} + \frac{2 x^\prime}{r_0}\big)^{-1} > 0$, and $B=B(d, \delta,K_2)<\infty$ is a constant {\rm(}$K_2$ is the constant in Theorem \ref{thm:markgrad}{\rm)}.
\end{lemma}

\begin{proof} Let $D := C \delta \sup_n \|\nabla u_n \|_{q j}^2 .$ By Theorem \ref{thm:markgrad}, $D < \infty.$ We iterate the inequality
\[
\|g\|_{rj} \leq D^\frac{1}{r} (r^\frac{1}{r})^{2k} \|g\|_{x^\prime(r-2)}^{1-\frac{2}{r}}\\
\tag{$\star$}
\]
as follows. Successively setting $x^\prime (r_1 -2) = r_0, \; x^\prime (r_2 - 2) = j r_1, \; x^\prime (r_3 - 2) = j r_2, \dots$ so that
$r_n = (t -1)^{-1} \big( t^n ( \frac{r_0}{x^\prime} +2) - t^{n-1} \frac{r_0}{x^\prime} -2 \big),$ where $t = \frac{j}{x^\prime} > 1,$ we get from $(\star)$
\[
\|g\|_{r_n j} \leq D^{\alpha_n} \Gamma_n \|g\|_{r_0}^{\gamma_n} ,
\]
where
\begin{align*}
\alpha_n = &\frac{1}{r_1}\bigg(1 - \frac{2}{r_2} \bigg) \bigg(1 - \frac{2}{r_3} \bigg) \dots \bigg(1 - \frac{2}{r_n} \bigg) + \frac{1}{r_2}\bigg(1 - \frac{2}{r_3} \bigg) \bigg(1 - \frac{2}{r_4} \bigg)\dots \bigg(1 - \frac{2}{r_n} \bigg)\\
& + \dots + \frac{1}{r_{n-1}} \bigg(1 - \frac{2}{r_n} \bigg) + \frac{1}{r_n} ;\\
\gamma_n = & \bigg(1 - \frac{2}{r_1} \bigg) \bigg(1 - \frac{2}{r_2} \bigg) \dots \bigg(1 - \frac{2}{r_n} \bigg) ;\\
\Gamma_n =& \bigg[r_n^{r_n^{-1}} r_{n-1}^{r_{n-1}^{-1}(1-2 r_n^{-1})} r_{n-2}^{r_{n-2}^{-1} (1-2 r_{n-1}^{-1}) (1-2 r_n^{-1})} \dots r_1^{r_1^{-1} (1-2 r_2^{-1}) \dots (1- 2 r_n^{-1}) } \bigg]^{2k} .
\end{align*}
Since $\alpha_n = t^n - r_n^{-1}(t-1)^{-1}$ and $\gamma_n = r_0 t^{n-1}(x^\prime r_n)^{-1},$
\[
\alpha_n \leq \alpha \equiv \bigg( \frac{r_0}{x^\prime} +2 -  \frac{r_0}{j} \bigg)^{-1}, 
\]
and
\[
\inf_n \gamma_n > \gamma = \big( 1 - \frac{x^\prime}{j} \big) \big( 1 - \frac{x^\prime}{j} + \frac{2 x^\prime}{r_0}\big)^{-1} > 0, \quad \quad \sup_n \gamma_n < 1.
\]
Note that $\|g\|_{r_0} \rightarrow 0$ as $n, m \uparrow \infty$ since  $r_0 \in I_c^o$, and so $\|g\|_{r_0}^{\gamma_n} \leq \|g\|_{r_0}^{\gamma}$ for all large enough $n, m.$

Finally, since
\[
\Gamma_n^\frac{1}{2 k} = r_n^{r_n^{-1}} r_{n-1}^{t r_n^{-1}} r_{n-2}^{t^2 r_n^{-1}} \dots r_1^{t^{n-1}r_n^{-1}} \quad \text{ and } \quad b t^n \leq r_n \leq a t^n,
\]
where $a= r_1(t-1)^{-1}, \; b = r_1 t^{-1},$ we have
\begin{align*}
\Gamma_n^\frac{1}{2 k} & \leq (a t^n)^{(bt^n)^{-1}} (a t^{n-1})^{(bt^{n-1})^{-1}} \dots (a t)^{(b t)^{-1}} \\
& = \bigg[ a^{t^n -t^{-n}(t-1)^{-1}} t^{\sum_{i=1}^n i t^{-i}} \bigg]^\frac{1}{b} \leq \bigg[ a^{(t-1)^{-1}} b^{t(t-1)^2} \bigg]^\frac{1}{b}.
\end{align*}
The proof of Lemma \ref{lem2} is completed.
\end{proof}

\begin{remark*}
The fact that $\gamma>0$ is the main concern of the iterative procedure.
\end{remark*}

\begin{lemma}
\label{lem3}
Let $U_n := (\mu + \Lambda_{r_0}(b_n) )^{-1} F , \;  \mu > \frac{c(\delta)}{\delta}, \; F:= b_n \cdot \nabla (\mu - \Delta)^{-1} f, \; f \in C_c^1.$
There are constants $0 < \tilde{\gamma} \leq 1,$ $\tilde{B}$ and $\hat{B}$ independent of $n$ such that
\[
\| U_n \|_\infty \leq \tilde{B} \|U_n \|_{r_0}^{\tilde{\gamma}},
\]
\[ \|\mu U_n \|_\infty \leq \hat{B} \|\mu U_n \|_{r_0}^{\tilde{\gamma}}
\]
whenever $r_0 > \frac{2}{2-\sqrt{\delta}}.$
\end{lemma}
\begin{proof}
Proceeding exactly as in the proof of Lemma \ref{lem1}, we obtain the inequalities
\[
\| U_n \|_{r j} \leq (C \delta \|\nabla (\mu -\Delta)^{-1} f \|^2_{q j} )^\frac{1}{r}(r^{2 k})^\frac{1}{r} \|U_n \|_{x^\prime (r-2)}^{1-\frac{2}{r}},
\]
\[
\|\mu U_n \|_{r j} \leq (C \delta \|\nabla f \|^2_{q j} )^\frac{1}{r}(r^{2 k})^\frac{1}{r} \|\mu U_n \|_{x^\prime (r-2)}^{1-\frac{2}{r}};
\]
their iteration provides the required result.
\end{proof}

\begin{lemma}
\label{lem4}
In the notation of Lemma \ref{lem3},
we have $$
\|\mu U_n \|_r \leq \bigg( \frac{8}{r}\bigg(\frac{2}{r^\prime} - \sqrt{\delta}\bigg ) \bigg)^{-\frac{1}{2}} \bigg(\mu - \frac{c(\delta)}{\delta} \bigg)^{-\frac{1}{2}} \|\nabla f\|_r
$$
whenever $r > \frac{2}{2-\sqrt{\delta}}.$
\end{lemma}
\begin{proof}
Proceeding again as in Lemma \ref{lem1}, we obtain the inequality ($\eta > 0$)
\[
\bigg[\mu - \bigg(\frac{1}{r} \frac{1}{\sqrt{\delta}} + \eta \bigg) c(\delta) \bigg ] \|v\|_2^2 + \bigg( \frac{4}{r r^\prime} -\frac{2}{r}\sqrt{\delta} -\eta \delta \bigg) \|\nabla v\|_2^2 \leq (4 \eta)^{-1} \||v|^{1-\frac{2}{r}} | f_\mu|\|_2^2,
\]
where $v := U_n |U|_n^\frac{r-2}{2}$ and $f_\mu := \nabla (\mu - \Delta)^{-1}f.$
Setting here $\eta \delta = \frac{4}{r r^\prime} -\frac{2}{r}\sqrt{\delta}$ and noticing that
\[
\mu - \bigg(\frac{1}{r} \frac{1}{\sqrt{\delta}} + \eta \bigg) c(\delta)  = \mu - \frac{c(\delta)}{\delta} \bigg( \frac{4}{r r^\prime} -\frac{\sqrt{\delta}}{r} \bigg)  \geq \mu -\frac{c(\delta)}{\delta} 
\]
we have
\[
 \frac{8}{r} \bigg( \frac{2}{r^\prime} -\sqrt{\delta} \bigg) \bigg(\mu -\frac{c(\delta)}{\delta} \bigg) \|v\|_2^2 \leq \delta \|v\|_2^{2(1-\frac{2}{r})} \|f_\mu \|_r^2.
\]
It remains to note that $\|f_\mu \|_r \leq \mu^{-1} \|\nabla f \|_r.$ 
\end{proof}

\begin{lemma}
\label{lem5}
$s \mbox{-} C_\infty \mbox{-} \lim_{\mu \uparrow \infty} \mu (\mu + \Lambda_{C_\infty}(b_n))^{-1} = 1$ uniformly in $ n.$
\end{lemma}

\begin{proof} 
We only need to show that
\[
\lim_{\mu \uparrow \infty} \sup_n \|\mu\big[ (\mu + \Lambda_r(b_n))^{-1} - (\mu -\Delta)^{-1} \big] f\|_\infty =0 \quad \text{ for all } f \in C_c^1.
\]
Indeed, since $-\big[(\mu + \Lambda_r(b_n))^{-1} - (\mu -\Delta)^{-1} \big] f = (\mu + \Lambda_r(b_n) )^{-1} b_n \cdot \nabla (\mu - \Delta)^{-1} f = U_n,$ we obtain by Lemma \ref{lem3} and Lemma \ref{lem4} that
$$
\|\mu U_n \|_\infty \leq \hat{B} \|\mu U_n \|_{r_0}^{\tilde{\gamma}} \leq \dot{B} \bigg(\mu- \frac{c(\delta)}{\delta} \bigg)^{-\frac{\tilde{\gamma}}{2}} \|\nabla f\|_{r_0}^{\tilde{\gamma}},
$$
which yields the required.
\end{proof}

We are in position to complete the proof of Theorem \ref{thm:markfeller}.
(\textit{i}) follows from Lemmas \ref{lem2}, \ref{lem5} and Theorem \ref{thm:markease}
by applying the Trotter Approximation Theorem (Appendix \ref{trotter_sect}). 
(\textit{ii}) is Theorem \ref{thm:markgrad}.
The proof of (\textit{iii}) is standard.

The proof of Theorem \ref{thm:markfeller} is completed.
\end{proof}
\begin{remarks}
1.\,Theorem \ref{thm:markfeller} is valid for any $\{b_n\} \subset C^\infty \cap \mathbf F_\delta, \; b_n \overset{s}\rightarrow b \;\mathcal L^d$ a.e.

2.\,For a parabolic variant of the above iteration procedure see \cite{Ki}.
\end{remarks}

\bigskip

\section{$b \cdot \nabla$ is $-\Delta$ weakly form-bounded}
\label{weak_fbd_sect}

Let $L^p=L^p(\mathbb R^d,\mathcal L^d), \;p \in [1,\infty],$ be the standard (complex) Lebesgue spaces. 

Consider the following classes of
vector fields:

\begin{definition*}

(1) A  $b: \mathbb{R}^d \rightarrow  \mathbb{R}^d$ belongs to the Kato class $\mathbf{K}^{d+1}_\delta$ (write $ b \in \mathbf{K}^{d+1}_\delta$) if $|b| \in L^1_{\loc}$ and there exists $\lambda = \lambda_\delta > 0$ such that
\[
\| b (\lambda - \Delta)^{-\frac{1}{2}} \|_{1 \rightarrow 1} \leq \delta.
\]

(2) A $b: \mathbb{R}^d \rightarrow  \mathbb{R}^d$ belongs to $\mathbf{F}_\delta^{\scriptscriptstyle 1/2}\equiv \mathbf{F}_\delta^{\scriptscriptstyle 1/2}(-\Delta)$, the class of \textit{weakly} form bounded vector fields (write $b \in \mathbf{F}_\delta^{\scriptscriptstyle 1/2}$) if 
$|b| \in L^1_{\loc}$ and there exists $\lambda = \lambda_\delta > 0$ such that
\[
\| |b|^\frac{1}{2} (\lambda - \Delta)^{-\frac{1}{4}} \|_{2 \rightarrow 2} \leq \sqrt{\delta}.
\]
\end{definition*}

\begin{example} 
\label{ex1}
1. The inclusion 
$|b| \in L^p+L^\infty\;\; (p>d) \Rightarrow b \in \mathbf{K}^{d+1}_0:=\bigcap_{\delta>0}\mathbf{K}^{d+1}_\delta$,
follows from H\"{o}lder's inequality.

2.~We have: $$b(x):=e\mathbf{1}_{|x_1|<1}|x_1|^{s-1}, \quad \frac{1}{2}<s<1,$$
where $e=(1,\dots,1) \in \mathbb R^d$, $x=(x_1,\dots,x_d)$, 
is in $\mathbf{K}^{d+1}_{0} - \mathbf{F}_{\delta_2}$ for any $\delta_2>0$.

In turn,  $b(x):=\sqrt{\delta} \frac{d-2}{2}|x|^{-2}x \in \mathbf{F}_{\delta} - \mathbf{K}^{d+1}_{\delta_1}$ for any $\delta, \delta_1>0$.

Thus,
$ \mathbf{K}^{d+1}_{0} - \mathbf{F}_\delta \neq \varnothing,$ and $\mathbf{F}_{\delta_1} - \mathbf{K}^{d+1}_\delta \neq \varnothing$ for any $\delta, \delta_1 > 0.$

3.~An example of a $b \in \mathbf{K}^{d+1}_\delta - \mathbf{K}^{d+1}_0$, $\delta>0$, can be obtained as follows.
 
Fix $e \in \mathbb R^d$, $|e|=1$.
Let $z_n:=(2^{-n},0,\dots,0) \in \mathbb R^d$, $n \geq 1$. Set
$$
b(x):=e F(x), \quad F(x):=\sum_{n=1}^\infty 8^{n} \mathbf{1}_{B(z_n,8^{-n})}(x), \quad x \in \mathbb R^d,
$$
where $B(z_n,8^{-n})$ is the open ball of radius $8^{-n}$ centered at $z_n$ and $\mathbf{1}_{B(z_n,8^{-n})}$ is its indicator. 
 
Then $b \in \mathbf{K}^{d+1}_\delta - \mathbf{K}^{d+1}_0$ for appropriate $\delta>0$.

 4. The class $\mathbf{F}_\delta^{\scriptscriptstyle 1/2}$  is the largest: 
\begin{equation}
\label{prop_incl}
\begin{array}{c}
\mathbf{K}_\delta^{d+1} \subsetneqq \mathbf{F}_\delta^{\scriptscriptstyle 1/2}, \qquad 
\mathbf{F}_{\delta_1} \subsetneqq \mathbf{F}_\delta^{\scriptscriptstyle 1/2}
\quad \text{ for  
$\delta = \sqrt{\delta_1},$} \\[4mm]
\biggl( b \in  \mathbf{F}_{\delta_1}^{~} \text{ and } \mathsf{f} \in \mathbf{K}^{d+1}_{\delta_2}  \biggr) \Longrightarrow \biggl( b + \mathsf{f} \in \mathbf{F}^{\scriptscriptstyle 1/2}_{\delta}, \; \sqrt{\delta} = \sqrt[4]{\delta_1} + \sqrt{\delta_2}\biggr)
\end{array}
\end{equation}
Indeed, for $b \in \mathbf{K}_\delta^{d+1}$,
$\||b| (\lambda - \Delta)^{-\frac{1}{2}} \|_{1 \rightarrow 1} \leq \delta$.  By duality, $\| (\lambda - \Delta)^{-\frac{1}{2}} |b| \|_{\infty} \leq \delta$, and so by interpolation,
$
\||b|^{\frac{1}{2}}(\lambda - \Delta)^{-\frac{1}{2}}|b|^{\frac{1}{2}}\|_{2 \rightarrow 2} \leq \delta.
$
Therefore, $b \in \mathbf{F}_\delta^{\scriptscriptstyle 1/2}$. 
The second inclusion $\mathbf{F}_{\delta_1} \subsetneqq \mathbf{F}_\delta^{\scriptscriptstyle 1/2}$, $\delta=\sqrt{\delta_1}$ is the consequence of the Heinz inequality \cite{He}.
The last assertion now follows from $$b \in \mathbf{F}_{\sqrt{\delta}_1}^{\scriptscriptstyle 1/2}, \mathsf{f} \in \mathbf{F}_{\delta_2}^{\scriptscriptstyle 1/2} \Rightarrow b+\mathsf{f} \in \mathbf{F}_\delta^{\scriptscriptstyle 1/2},$$ where we have used $(|b|+|\mathsf{f}|)^{\frac{1}{2}} \leq |b|^{\frac{1}{2}} + |\mathsf{f}|^{\frac{1}{2}}$.

5.~For the examples of vector fields in the class $\mathbf{F}_\delta$ see Example \ref{ex0} in the beginning of section \ref{fbd_sect}.

\end{example}

\begin{remarks}

1.~The classes $\mathbf{F}_{\delta}$, $\mathbf{K}_\delta^{d+1}$ cover singularities of $b$ of critical order (i.e.~sensitive to multiplication by a constant\footnote{For instance, the uniqueness of weak solution to the Cauchy problem for $\partial_t-\Delta + b\cdot\nabla$ can fail if $b \in \mathbf{F}_\delta$ is replaced with $cb~(\in \mathbf{F}_{c^2\delta})$ for a sufficiently large constant $c$. \cite[Example 5]{KS}.}\,) at isolated points or along hypersurfaces, 
respectively, as follows from Example \ref{ex0}(3) and Example \ref{ex1}(2,3). 
The classes $\mathbf{K}^{d+1}_0$, $\mathbf{F}_0$ and, thus, $[L^d+L^\infty]^d$, do not contain vector fields having critical-order singularities.

2.~The Kato class $\mathbf{K}_\delta^{d+1}$, with $\delta>0$ sufficiently small (yet allowed to be non-zero), is recognized as the class `responsible' for
the Gaussian upper and lower bounds on the fundamental solution of $\partial_t-\Delta + b\cdot \nabla$. The latter allow to construct an associated Feller semigroup (in $C_b$).
The class $\mathbf{F}_\delta$, $\delta<4$, is recognized as the class `responsible' for dissipativity
of $\Delta - b\cdot\nabla$ in $L^p$, $p \geqslant 2/(2-\sqrt{\delta})$, needed to run the iterative procedure of section \ref{feller1_sect} (taking $p \rightarrow \infty$, assuming additionally $\delta<\min\{1,(2/(d-2))^2\}$), which
produces
an associated Feller semigroup in $C_\infty$.
We emphasize that, in general, the Gaussian bounds are not valid if $b \in [L^d]^d$, while $b \in \mathbf{K}_0^{d+1}$, in general, destroys the $L^p$-dissipativity.
\end{remarks}

\begin{figure}[t]
\label{fig1}

\begin{equation*}
\bfig
\node a1(0,0)[\qquad \lbrack L^p+L^\infty \rbrack^d~(p>d)]
\node a2(0,500)[\lbrack L^d+L^\infty\rbrack^d]
\node a3(0,1000)[\mathbf{F}_0]
\node a4(0,1500)[\mathbf{F}_{\delta^2}]
\node a55(800, 1000)[\mathbf{C}_s]
\node a56(800, 1500)[\mathbf{W}_s]
\node a5(800,500)[\lbrack L^{d,\infty}+L^\infty \rbrack^d]
\node b1(-400,1500)[\mathbf{K}_\delta^{d+1}]
\node b0(-800,1000)[\mathbf{K}_0^{d+1}]
\node c1(0,2000)[\mathbf{F}_\delta^{\scriptscriptstyle 1/2}]
\arrow[a1`a2;]
\arrow[a2`a3;]
\arrow[a3`a4;]
\arrow[a2`a5;]
\arrow[a1`b0;]
\arrow[b0`b1;]
\arrow[a4`c1;]
\arrow[b1`c1;]
\arrow[a5`a55;]
\arrow[a55`a56;]
\arrow[a56`a4;\ast]
\efig
\end{equation*}

\footnotesize{General classes of vector fields $b:\mathbb R^d \rightarrow \mathbb R^d$ studied in literature in connection with operator $-\Delta +b \cdot \nabla$. Here $\rightarrow$ stands for strict inclusion of vector spaces, and  $\overset{\ast}{\rightarrow}$ reads: if  $b \in \mathbf{W}_s$ ($s>1$), then $b \in \mathbf{F}_{\delta^2}$ with $\delta=\delta(\|b^2\|_{W_s})<\infty$.}
\caption{}
\end{figure}

\subsection{A variant of the Hille-Lions approach.\;$L^2$-theory} 
\label{hl_sect}

Let
$\mathcal W^{\alpha,p}=\mathcal W^{\alpha,p}(\mathbb R^d,\mathcal L^d)$, $\alpha>0$, be the Bessel potential space endowed with norm $\|u\|_{p,\alpha}:=\|g\|_p$,  
$u=(1-\Delta)^{-\frac{\alpha}{2}}g$, $g \in L^p$, and $\mathcal W^{-\alpha,p'}$, $p'=\frac{p}{p-1}$, the anti-dual of $\mathcal W^{\alpha,p}$. For a comprehensive account of Bessel potential spaces see \cite[sect.\,2.6]{Zi}.

\begin{theorem}
\label{thm:markweak}
Let $d \geq 3$, $b:\mathbb R^d \rightarrow \mathbb R^d$. Assume that $b \in \mathbf F^{\scriptscriptstyle 1/2}_\delta$, $\delta < 1.$ Then there is an operator realization $-\Lambda$ of $\Delta - b\cdot \nabla$ in $L^2$ generating a quasi bounded holomorphic semigroup $e^{-t\Lambda}$. We have:

{\rm(\textit{i})} The resolvent set $\rho(-\Lambda)$ contains the half-plane $\mathcal O := \{\zeta \in \mathbb C \mid \Real\, \zeta \geq \lambda =\lambda_\delta \}$. The resolvent admits the representation 
\begin{align*}
(\zeta + \Lambda)^{-1} = (\zeta - \Delta)^{-\frac{3}{4}} (1 + H^* S)^{-1} (\zeta - \Delta)^{-\frac{1}{4}}, 
\end{align*}
where
$H := |b|^\frac{1}{2} (\zeta -\Delta)^{-\frac{1}{4}}$, $b^\frac{1}{2} := |b|^{-\frac{1}{2}} b$,
$S := b^\frac{1}{2} \cdot \nabla (\zeta - \Delta)^{-\frac{3}{4}}$,
and $\|H^* S \|_{2\rightarrow 2} \leq \delta$;

{\rm(\textit{ii})}
$$
(\zeta + \Lambda)^{-1}=(\zeta - \Delta)^{-1} - (\zeta - \Delta)^{-\frac{3}{4}} H^*(1 + S H^*)^{-1} S (\zeta -\Delta)^{-\frac{1}{4}};
$$

{\rm(\textit{iii})} 
$$
\|(\zeta + \Lambda)^{-1} \|_{2\rightarrow 2}  \leq (1- \delta)^{-1} |\zeta |^{-1} \quad (\zeta \in \mathcal O);
$$

{\rm(\textit{iv})} $\Lambda$ is related to $-\Delta + b\cdot\nabla$ as follows. If $f \in D(\Lambda)$, then 
\[
\langle \Lambda f, g \rangle = \langle \nabla f, \nabla g \rangle + \langle b^\frac{1}{2} \cdot \nabla f, |b|^\frac{1}{2} g \rangle, \quad (g \in W^{1,2});
\]

{\rm(\textit{v})}  If $f \in D(\Lambda),$ then $b \cdot \nabla f \in L^1_\loc, \; D(\Lambda) \subset \mathcal W^{2,1}_\loc$, and
\[
 \langle \Lambda f, \phi \rangle = \langle f, - \Delta \phi \rangle + \langle b \cdot \nabla f, \phi \rangle \quad \quad ( \phi \in C_c^\infty).
\]
\end{theorem}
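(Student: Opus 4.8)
The plan is to construct $\Lambda$ via Hille's theory of pseudo-resolvents applied to the operator-valued function
\[
\Theta_\zeta := (\zeta - \Delta)^{-\frac{3}{4}} (1 + H^*_\zeta S_\zeta)^{-1} (\zeta - \Delta)^{-\frac{1}{4}}, \qquad \zeta \in \mathcal O,
\]
mimicking the argument of Theorem \ref{thm:markfeast}. First I would check that $\|H^*_\zeta S_\zeta\|_{2\to2} \le \delta < 1$ for $\Real\,\zeta \ge \lambda$: the bound $\|H_\zeta\|_{2\to2} = \||b|^{\frac12}(\zeta - \Delta)^{-\frac14}\|_{2\to2} \le \sqrt{\delta}$ is exactly the hypothesis $b \in \mathbf F^{1/2}_\delta$ (using that $(\zeta-\Delta)^{-1/2}$ is dominated by $(\lambda-\Delta)^{-1/2}$ pointwise in the spectral sense for $\Real\zeta\ge\lambda$, and the usual subordination/Heinz-type argument), while $\|S_\zeta f\|_2 = \|b^{\frac12}\cdot\nabla(\zeta-\Delta)^{-\frac34}f\|_2 \le \||b|^{\frac12}(\zeta-\Delta)^{-\frac14}\| \cdot \|(\zeta-\Delta)^{-\frac14}\nabla(\zeta-\Delta)^{-\frac12}\|\cdot\|f\|_2$; since $\nabla(\zeta-\Delta)^{-\frac12}$ is bounded on $L^2$ with norm $\le 1$ and $(\zeta-\Delta)^{-\frac14}$ commutes through, this gives $\|S_\zeta\|_{2\to2}\le\sqrt{\delta}$, hence $\|H^*_\zeta S_\zeta\|\le\delta$. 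So $(1+H^*_\zeta S_\zeta)^{-1}$ exists as a bounded operator on $L^2$ with norm $\le (1-\delta)^{-1}$, which immediately yields (iii) because $\|(\zeta-\Delta)^{-3/4}\|_{2\to2}\|(\zeta-\Delta)^{-1/4}\|_{2\to2} \le |\zeta|^{-1}$ (or more carefully $|\zeta-\lambda|^{-1}$; one adjusts the half-plane). The algebraic identity $(1+H^*S)^{-1} = 1 - H^*(1+SH^*)^{-1}S$ then gives (ii) from (i) and hence $\Theta_\zeta = (\zeta-\Delta)^{-1} - (\zeta-\Delta)^{-3/4}H^*(1+SH^*)^{-1}S(\zeta-\Delta)^{-1/4}$.

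Next I would verify that $\{\Theta_\zeta\}_{\zeta\in\mathcal O}$ is a pseudo-resolvent, i.e.\ the resolvent identity $\Theta_\zeta - \Theta_\eta = (\eta-\zeta)\Theta_\zeta\Theta_\eta$. The cleanest route is to introduce, as in Theorem \ref{thm:markfeast}, the triple $\mathcal H_+ = \mathcal W^{1,2} \subset \mathcal H = L^2 \subset \mathcal H_- = \mathcal W^{-1,2}$, extend $-\Delta$ to $\widehat{A}\in\mathcal B(\mathcal H_+,\mathcal H_-)$ and $b\cdot\nabla$ to a bounded map $\widehat B: \mathcal W^{3/4,2}\to \mathcal W^{-1/4,2}$ (this is where $b\in\mathbf F^{1/2}_\delta$ is used: $\widehat B = (\text{mult.\ by }b)\circ\nabla$ factors as $|b|^{1/2}\cdot$ composed with $b^{1/2}\cdot\nabla$, each leg bounded between the appropriate Bessel spaces), show $\zeta+\widehat A+\widehat B$ is a bijection on the relevant scale for $\zeta\in\mathcal O$ via the a priori estimate $|\langle f,(\zeta+\widehat A+\widehat B)f\rangle|\ge(1-\delta)\|f\|^2_{3/4}$-type inequality, and then identify $\Theta_\zeta$ with the restriction to $L^2$ of $(\zeta+\widehat A+\widehat B)^{-1}$ by expanding the Neumann series $\widehat F_\zeta - \widehat F_\zeta\widehat B\widehat F_\zeta + \cdots$ exactly as in $(p_2)$, $(p_2')$. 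The resolvent identity is then inherited from the genuine resolvent $(\zeta+\widehat\Lambda)^{-1}$. To conclude via Hille's theorem (Appendix \ref{hille_theory_sect}) that $\Theta_\zeta$ is the resolvent of a densely defined closed operator $-\Lambda$, I must show the common kernel of $\{\Theta_\zeta\}$ is $\{0\}$ — immediate from the factorization $(p_2)$-analogue since $(\zeta-\Delta)^{-1/4}$ is injective — and that $\nu\Theta_\nu \xrightarrow{s} 1$ as $\nu\uparrow\infty$; for the latter, using the (ii)-representation it suffices to show $\nu(\zeta-\Delta)^{-3/4}H^*_\nu(1+S_\nu H^*_\nu)^{-1}S_\nu(\zeta-\Delta)^{-1/4}f\to 0$, which follows from the extra decay $\|S_\nu(\nu-\Delta)^{-1/4}f\|_2 \le \nu^{-?}\|\cdots\|$ for $f$ in a dense set, just as in the proof of Theorem \ref{thm:markfeast}. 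Then $(p_3)$-type bound (iii) gives that $-\Lambda$ generates a quasi bounded holomorphic semigroup, and (i), (ii), (iii) are established.

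For (iv): take $f\in D(\Lambda)$, so $f = \Theta_\zeta g = (\zeta-\Delta)^{-3/4}(1+H^*S)^{-1}(\zeta-\Delta)^{-1/4}g$ for $g = (\zeta+\Lambda)f$. Write $\varphi := (1+H^*S)^{-1}(\zeta-\Delta)^{-1/4}g \in L^2$, so $f = (\zeta-\Delta)^{-3/4}\varphi \in \mathcal W^{3/2,2}$, in particular $f\in W^{1,2}$, and $\nabla f \in \mathcal W^{1/2,2}$. For $h\in W^{1,2}$, compute $\langle \nabla f,\nabla h\rangle + \langle b^{1/2}\cdot\nabla f,|b|^{1/2}h\rangle$: the first term equals $\langle(-\Delta)f, h\rangle$ interpreted in the $\mathcal W^{1/2,2}$–$\mathcal W^{-1/2,2}$ pairing; the second is $\langle S_\zeta((\zeta-\Delta)^{3/4}f), H_\zeta^*((\zeta-\Delta)^{1/4}h)\rangle$-type once one unwinds $b^{1/2}\cdot\nabla f = S_\zeta(\zeta-\Delta)^{3/4}f = S_\zeta\varphi$ (since $(\zeta-\Delta)^{3/4}f = \varphi$) and $|b|^{1/2}h = H_\zeta(\zeta-\Delta)^{1/4}h$; assembling, $\langle\nabla f,\nabla h\rangle + \langle S_\zeta\varphi, H_\zeta(\zeta-\Delta)^{1/4}h\rangle = \langle\nabla f,\nabla h\rangle + \langle H^*_\zeta S_\zeta\varphi,(\zeta-\Delta)^{1/4}h\rangle$, and using $\varphi + H^*S\varphi = (\zeta-\Delta)^{1/4}g$ together with $f=(\zeta-\Delta)^{-3/4}\varphi$ one checks the right side collapses to $\langle(\zeta+\Lambda)f,h\rangle - \zeta\langle f,h\rangle = \langle\Lambda f,h\rangle$. (The delicate point is the legitimacy of separating $b\cdot\nabla f$ into the product $|b|^{1/2}\cdot b^{1/2}\cdot\nabla f$ with each factor in $L^2$; this uses $\varphi\in L^2$ so $S_\zeta\varphi\in L^2$, and $h\in W^{1,2}$ so $H_\zeta(\zeta-\Delta)^{1/4}h = |b|^{1/2}h \in L^2$ by $b\in\mathbf F^{1/2}_\delta$.) For (v): first $b\cdot\nabla f = |b|^{1/2}\cdot(b^{1/2}\cdot\nabla f)$, and on any ball $B$, $|b|^{1/2}\in L^2_{\loc}$ (since $|b|\in L^1_{\loc}$) while $b^{1/2}\cdot\nabla f = S_\zeta\varphi\in L^2$, so by Cauchy–Schwarz $b\cdot\nabla f\in L^1_{\loc}$; also $f\in\mathcal W^{3/2,2}\subset \mathcal W^{2,1}_{\loc}$. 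Then for $\phi\in C^\infty_c$ apply (iv) with $g$ replaced by $\phi$ (density of $C^\infty_c$ in $W^{1,2}$ plus the local integrability just established lets one integrate the form expression by parts): $\langle\nabla f,\nabla\phi\rangle = \langle f,-\Delta\phi\rangle$ and $\langle b^{1/2}\cdot\nabla f,|b|^{1/2}\phi\rangle = \langle b\cdot\nabla f,\phi\rangle$, giving $\langle\Lambda f,\phi\rangle = \langle f,-\Delta\phi\rangle + \langle b\cdot\nabla f,\phi\rangle$.

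The main obstacle I anticipate is the bookkeeping in part (iv)/(v): making rigorous the factorization of the singular term $b\cdot\nabla f$ through the two half-powers, and justifying that the bilinear form identity extends from the "nice" vectors of the form $\Theta_\zeta[\text{dense set}]$ to all of $D(\Lambda)$ paired against all of $W^{1,2}$ (resp.\ $C^\infty_c$), controlling everything in the Bessel-space scale $\mathcal W^{3/4,2}$, $\mathcal W^{1/4,2}$ and their duals. Everything else — the norm bounds, the Neumann/pseudo-resolvent machinery, the Hille theorem application — is routine once the boundedness $\|H^*_\zeta S_\zeta\|\le\delta$ is in hand and mirrors Theorem \ref{thm:markfeast} closely.
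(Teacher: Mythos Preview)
Your approach to (i)--(iii) is the paper's Hille--Lions construction, and the execution is largely correct. One point needs fixing: the triple you propose, $\mathcal H_+=W^{1,2}\subset L^2\subset W^{-1,2}$, is the one that works for $b\in\mathbf F_\delta$ (Theorem~\ref{thm:markfeast}) but \emph{not} for $b\in\mathbf F^{\scriptscriptstyle 1/2}_\delta$: the latter only gives $\widehat B=b\cdot\nabla$ bounded from $\mathcal W^{3/2,2}$ to $\mathcal W^{-1/2,2}$, not from $W^{1,2}$ to $W^{-1,2}$. The paper uses the shifted triple $\mathcal H_{3/4}=\mathcal W^{3/2,2}\subset\mathcal H_{1/4}=\mathcal W^{1/2,2}\subset\mathcal H_{-1/4}=\mathcal W^{-1/2,2}$, with the duality taken relative to $\langle\cdot,\cdot\rangle_{\mathcal H_{1/4}}$; both $\widehat A$ and $\widehat B$ are then bounded $\mathcal H_{3/4}\to\mathcal H_{-1/4}$, and the bijection/Neumann-series argument goes through exactly as you outline. (Your ``$\mathcal W^{3/4,2}\to\mathcal W^{-1/4,2}$'' should read $\mathcal W^{3/2,2}\to\mathcal W^{-1/2,2}$: the $3/4$ is the power of the resolvent, twice the Bessel order.) With this correction your identification $\Theta_\zeta=\hat R_\zeta\upharpoonright L^2$, the null-set argument, $\nu\Theta_\nu\overset{s}{\to}1$, and the Hille conclusion all match the paper verbatim.

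For (iv) and (v) the paper takes a different route: it approximates by $b_n=\mathbf 1_{\{|b|\le n\}}b\in\mathbf F^{\scriptscriptstyle 1/2}_\delta$, shows $P_\zeta(b_n)\overset{s}{\to}P_\zeta(b)$ and hence $(\zeta+\Lambda(b_n))^{-1}\overset{s}{\to}(\zeta+\Lambda(b))^{-1}$, and then passes the weak identities $\langle\Lambda(b_n)u_n,g\rangle=\langle\nabla u_n,\nabla g\rangle+\langle b_n^{1/2}\cdot\nabla u_n,|b_n|^{1/2}g\rangle$ (which are elementary for bounded $b_n$) to the limit. This sidesteps exactly the bookkeeping you flag as the main obstacle. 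Your direct computation is also valid --- the key ingredients $f=(\zeta-\Delta)^{-3/4}\varphi\in\mathcal W^{3/2,2}$, $b^{1/2}\cdot\nabla f=S\varphi\in L^2$, and $|b|^{1/2}h\in L^2$ for $h\in W^{1,2}\subset\mathcal W^{1/2,2}$ all hold, and your algebraic reduction via $\varphi+H^*S\varphi=(\zeta-\Delta)^{-1/4}g$ is correct (take $\zeta$ real to avoid adjoint subtleties) --- but the approximation argument is shorter and more transparently robust.
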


\begin{proof}
\textbf{1}.~Set $\mathcal H_0:=L^2$. Define $A := \lambda - \Delta$ of domain $D(A) = W^{2,2}, \; \mathcal H_\alpha := \big( D(A^\alpha), \langle f, g \rangle_{\mathcal H_\alpha} =\langle A^\alpha f, A^\alpha g \rangle \big) \; (\alpha \geq 0)$ and $J_\lambda := (\lambda - \Delta)^{-\frac{1}{4}},$ the $\frac{1}{4}$ power of Bessel's potential. Clearly, $ \mathcal H_\alpha = \mathcal W^{2 \alpha, 2}$ and $\mathcal H_{-\frac{1}{4}} = \mathcal W^{-\frac{1}{2}, 2}.$ Consider the chain of Hilbert spaces
\[
\mathcal H_1 \subset \mathcal H_\frac{3}{4} \subset \mathcal H_\frac{2}{4} \subset \mathcal H_\frac{1}{4}\subset \mathcal H_0 \subset \mathcal H_{-\frac{1}{4}}.
\]
Then $J_\lambda : \mathcal  H_l \rightarrow \mathcal H_{l+\frac{1}{4}}, \; l = -\frac{1}{4}, 0, \frac{1}{4}, \frac{2}{4}, \frac{3}{4}$ are bijections,
\[
\mathcal H_\frac{3}{4} \subset \mathcal H_\frac{1}{4} \subset \mathcal H_{-\frac{1}{4}} 
\]
is the standard triple of Hilbert spaces (so that $\mathcal H_{-\frac{1}{4}}^* = \mathcal H_\frac{3}{4}$ with respect to $\langle f, g \rangle_{\mathcal H_\frac{1}{4}}$).

 By $\langle f, g \rangle_\frac{1}{4}, \; f \in \mathcal H_{-\frac{1}{4}}, \; g \in \mathcal H_\frac{3}{4}$ we denote the pairing between $\mathcal H_{-\frac{1}{4}}$ and  $\mathcal H_\frac{3}{4}.$ Then
 \[
 \langle f, g \rangle_\frac{1}{4} = \langle f, g \rangle_{\mathcal H_\frac{1}{4}} \text{ whenever } f \in \mathcal H_\frac{1}{4}.
 \]
By $\hat{A}$ we denote the extension of $A - \lambda$ to a bounded map from $\mathcal H_\frac{3}{4}$ into $\mathcal H_{-\frac{1}{4}}.$ Then 
\[
|\langle (\zeta  + \hat A) f, f \rangle_\frac{1}{4} | \geq \|f \|^2_{\mathcal H_\frac{3}{4}} \quad \quad  ( f \in \mathcal H_\frac{3}{4}, \; \Real \zeta \geq \lambda),
\]
and so $\zeta + \hat A$ is a bijection; $\; \|\zeta + \hat A \|_{\mathcal H_\frac{3}{4} \rightarrow \mathcal H_{-\frac{1}{4}}} \geq 1.$   Clearly
\[
D(A) = \{ f \in \mathcal H_\frac{3}{4} \mid \hat A f \in \mathcal H_0 \} \text{ and } A^{-1} = (\lambda + \hat A)^{-1} \upharpoonright {\mathcal H_0} = J_\lambda^4.
\]
The operator $\hat B := b \cdot \nabla: \mathcal H_\frac{3}{4} \rightarrow \mathcal H_{- \frac{1}{4}}$ is bounded. Indeed, according to $b \in \mathbf F_\delta^{\scriptscriptstyle 1/2},$
\[
b^\frac{1}{2} \cdot \nabla : \mathcal H_\frac{3}{4} \rightarrow \mathcal H_0, \; |b|^\frac{1}{2} : \mathcal H_0 \rightarrow \mathcal H_{-\frac{1}{4}}, \; \hat B \in \mathcal B (\mathcal H_\frac{3}{4}, \mathcal H_{- \frac{1}{4}}) \text{ with } \|\hat B \|_{\mathcal H_\frac{3}{4} \rightarrow \mathcal H_{-\frac{1}{4}}} \leq \delta.
\]
Thus $ |\langle (\zeta + \hat A + \hat B) f, f \rangle_\frac{1}{4} | \geq (1 - \delta) \|f\|_{\mathcal H_\frac{3}{4}}^2,$ and so $\zeta + \hat \Lambda := \zeta + \hat A + \hat B$ is a bijection.

 Set $\hat R_\zeta := (\zeta + \hat \Lambda)^{-1}$ and $\hat F_\zeta := (\zeta + \hat A)^{-1} (=J^4_\zeta).$ Clearly,
\begin{equation}
\tag{$p_1$}
\hat{R}_\zeta = \hat{R}_\eta + (\eta - \zeta)\hat{R}_\zeta \hat{R}_\eta, \qquad \zeta, \eta \in \mathcal O ; \label{hl_1}
\end{equation}
\[
 \hat R_\zeta = \hat F_\zeta - \hat F_\zeta \hat B \hat R_\zeta = \hat F_\zeta - \hat F_\zeta \hat B \hat F_\zeta + \hat F_\zeta \hat B \hat F_\zeta \hat B \hat F_\zeta - \dots;
 \]
\[
\hat F_\zeta \hat B \hat F_\zeta = J_\zeta^3 J_\zeta \hat B J_\zeta^3 J_\zeta;
\]
\[
\hat F_\zeta \hat B \hat F_\zeta \upharpoonright {\mathcal H_0} = J_\zeta^3 J_\zeta |b|^\frac{1}{2} b^\frac{1}{2} \cdot \nabla J_\zeta^3 J _\zeta \upharpoonright {\mathcal H_0} = J_\zeta^3 H^* S J_\zeta;\]
\begin{equation}
\tag{$p_2$}
R_\zeta := \hat{R}_\zeta \upharpoonright {\mathcal H_0} = J_\zeta^3 (1 + H^* S)^{-1} J_\zeta. \label{hl_2}
\end{equation}
Since $|J_\zeta g | \leq J_\lambda |g|,$ and hence
\begin{align*}
\|H^* S f \|_{\mathcal H_0} & \leq \|H\|_{\mathcal H_0 \rightarrow \mathcal H_0 } \||b|^\frac{1}{2} J_\lambda |\nabla J_\zeta^2 f| \|_{\mathcal H_0} \\
& \leq \delta \|\nabla J_\zeta^2 f| \|_{\mathcal H_0} \leq \delta \|f\|_{\mathcal H_0}, \; f \in \mathcal H_0,
\end{align*}
it follows from \eqref{hl_2} that
\begin{equation}
\label{hl_3}
\tag{$p_3$}
 \|R_\zeta \|_{\mathcal H_0 \rightarrow \mathcal H_0} \leq (1 - \delta)^{-1} |\zeta|^{-1}. 
 \end{equation}
We conclude from \eqref{hl_1} that $R_\zeta$ is a pseudo-resolvent, and from \eqref{hl_2} that its null-set is $\{0\}.$ Therefore, $R_\zeta$ is the resolvent of some closed operator $\Lambda$ in $\mathcal H_0$, and $\Lambda = R_\zeta^{-1} -\zeta$ (Appendix \ref{hille_theory_sect}, Theorem \ref{hille_thm1}). It is also seen that $D(\Lambda) := \hat{R}_\lambda \mathcal H_0$, $\Lambda f := \hat \Lambda f, \; f \in D(\Lambda).$

 Next, let us show that $\Lambda$ is a densely defined operator. Indeed, by the construction, $\mathcal H_0$ is a dense subspace of $\mathcal H_{-\frac{1}{4}},$ and hence $\hat{R}_\lambda \mathcal H_0$ is a dense subspace of $\mathcal H_\frac{3}{4}$ and of $\mathcal H_0$ for $\mathcal H_\frac{3}{4}$ is a dense subspace of $\mathcal H_0.$ Thus $D(\Lambda)$ is dense in $\mathcal H_0.$ 

 Taking into account \eqref{hl_3}, we conclude that $-\Lambda$ is the generator of a quasi bounded holomorphic semigroup. Thus (\textit{i}) and (\textit{iii}) are established. (\textit{ii}) is an easy consequence of (\textit{i}).

\smallskip

\textbf{2}.~In order to justify (\textit{iv}) and (\textit{v}), define $P_\zeta(b) := - J_\zeta \hat B J^3_\zeta \upharpoonright L^2 = -H^* S \; \; (\Real \zeta \geq \lambda).$ 
Let $\{b_n\}$ be given by 
\begin{equation}
\label{b_n_2}
b_n:=\mathbf{1}_n b, \quad n=1,2,\dots,
\end{equation}
where $\mathbf{1}_n$ is the indicator of $\{x \in \mathbb R^d \mid |b(x)| \leq n\}$.
Then $b_n \in \mathbf F_\delta^{\scriptscriptstyle 1/2}$, and so 
$\Lambda(b_n)$ is well defined. 
Since for every $f \in L^2$
\begin{align*}
\|P_\zeta (b)f - P_\zeta (b_n) f \|_2 & \leq \|H^* \|_{2 \rightarrow 2} \|(\mathbf 1 - \mathbf 1_n) |b|^\frac{1}{2} | \nabla J_\zeta^\frac{3}{4} f| \|_2 \\
& \leq \sqrt{\delta} \|(\mathbf 1 - \mathbf 1_n) K \|_2, \quad \quad K = |b|^\frac{1}{2} | \nabla J_\zeta^\frac{3}{4} f| \in L^2,
\end{align*}
it is seen that
\begin{equation}
\label{hl_4}
s \mbox{-} L^2 \mbox{-} \lim_n (\zeta + \Lambda(b_n))^{-1} = s \mbox{-} L^2 \mbox{-} \lim_n J_\zeta^3 (1 + P_\zeta(b_n))^{-1} J_\zeta = J_\zeta^3 (1 + P_\zeta (b))^{-1} J_\zeta = (\zeta + \Lambda(b))^{-1}.
\end{equation}
Let $f \in L^2, \;g \in W^{1,2}, \; \phi \in C_c^\infty.$ Clearly, $b^\frac{1}{2} \cdot \nabla (\zeta + \Lambda(b))^{-1} f \in L^2, \; |b|^\frac{1}{2} \in L^2_\loc,$ and
\begin{align*}
\langle \Lambda(b_n)(\zeta + \Lambda(b_n))^{-1} f, g \rangle & \rightarrow \langle \Lambda(b)(\zeta + \Lambda(b))^{-1} f, g \rangle;
\end{align*}
\begin{align*}
\langle \Lambda(b_n)(\zeta + \Lambda(b_n))^{-1} f, g \rangle & = \langle \nabla (\zeta + \Lambda(b_n))^{-1} f, \nabla g \rangle +  \langle b_n^\frac{1}{2} \cdot \nabla (\zeta + \Lambda(b_n))^{-1} f, |b|_n^\frac{1}{2} g \rangle \\
& \rightarrow \langle \nabla (\zeta + \Lambda(b))^{-1} f, \nabla g \rangle + \langle b^\frac{1}{2} \cdot \nabla (\zeta + \Lambda(b))^{-1} f, |b|^\frac{1}{2} g \rangle;
\end{align*}
\begin{align*}
\langle \Lambda(b_n)(\zeta + \Lambda(b_n))^{-1} f, \phi \rangle & = \langle (\zeta + \Lambda(b_n))^{-1} f, -\Delta \phi \rangle + \langle b_n^\frac{1}{2} \cdot \nabla (\zeta + \Lambda(b_n))^{-1} f, |b|_n^\frac{1}{2} \phi \rangle \\
  & \rightarrow \langle (\zeta + \Lambda(b))^{-1} f, - \Delta \phi \rangle + \langle b^\frac{1}{2} \cdot \nabla (\zeta + \Lambda(b))^{-1} f, |b|^\frac{1}{2} \phi \rangle\\
& = \langle (\zeta + \Lambda(b))^{-1} f, - \Delta \phi \rangle + \langle b \cdot \nabla (\zeta + \Lambda(b))^{-1} f, \phi \rangle.
\end{align*}
Thus $\langle u, \Delta \phi \rangle = \langle - \Lambda u + b \cdot \nabla u, \phi \rangle$ whenever $u \in D(\Lambda).$ This means that $D(\Lambda) \subset \mathcal W^{2,1}_\loc.$
\end{proof}

\subsection{The Hille-Trotter approach. $L^2$-theory}
\label{alt_proof_sect}
\label{ht_sect}
We give an alternative proof of Theorem \ref{thm:markweak}.

Let $\{b_n\}$ be given by \eqref{b_n_2} in section \ref{hl_sect}. Since  $b_n \in \mathbf F_\delta^{\scriptscriptstyle 1/2},$ we have $\|b_n \cdot \nabla (\eta - \Delta)^{-1} \|_{2 \rightarrow 2} < 1$ whenever $\eta > n^2.$ Therefore, by the Miyadera Perturbation Theorem, $- \tilde \Lambda (b_n) := \Delta - b_n \cdot \nabla$ of domain $W^{2,2}$ generates a $C_0$ semigroup in $L^2$.

We can construct $\Lambda(b)$ by showing first that the resolvent set of $-\tilde \Lambda(b_n)$ contains $\mathcal O = \{ \zeta \mid \Real \zeta \geq \lambda \},$ and that there is a constant $c$ such that, for all $n = 1, 2, \dots,$
\[
\|(\zeta + \tilde \Lambda(b_n))^{-1} \|_{2 \rightarrow 2} \leq c |\zeta|^{-1}, \; \zeta \in \mathcal O .
\]
We accomplish this as follows. Define
$$
\Theta(\zeta, b_n) := J_\zeta^3 (1 + P_\zeta(b_n))^{-1} J_\zeta, \quad \zeta \in \mathcal O,
$$
where $P_\zeta(b_n) := J_\zeta |b|_n^\frac{1}{2} \; b^\frac{1}{2}\cdot\nabla J_\zeta^\frac{3}{4} \in \mathcal B (L^2)$. We prove consecutively that the operator-valued function $\Theta(\zeta, b_n) $
 possesses the following properties: 
\[
\Theta(\eta, b_n) f = (\eta + \tilde \Lambda(b_n))^{-1} f, \; f \in L^2, \text{ whenever } \eta > n^2 \vee \lambda; \tag{$p_1$}
\]
\[
\|\Theta(\zeta, b_n) \|_{2 \rightarrow 2} \leq c |\zeta|^{-1} \text{ for some constant } c \text{ and all } n=1,2, \dots ; \tag{$p_2$}
\]
\begin{equation}
\label{p3}
\Theta(\zeta, b_n) - \Theta(\eta, b_n) = (\eta - \zeta) \Theta(\zeta, b_n) \Theta(\eta, b_n), \;\; \;\eta \in \mathcal O. \tag{$p_3$}
\end{equation}
Note that $(p_1)$ follows from the definitions of $\Theta(\zeta, b_n), \;\tilde \Lambda(b_n),$ and from the obvious equality 
\[
\Theta(\eta, b_n) f = J_\eta^4 f - J_\eta^4 b_n \cdot \nabla J_\eta^4 f + J_\eta^4 b_n \cdot \nabla J_\eta^4 b_n \cdot \nabla J_\eta^4 f + \dots = (\eta + \tilde{\Lambda}(b_n))^{-1} f, \qquad f \in L^2,
\]
while $(p_2)$ follows from the definition of $\Theta(\zeta, b_n).$ $(p_3)$ says that $\Theta(\zeta, b_n)$ is a pseudo-resolvent. But then the range of $\Theta(\zeta, b_n)$ equals to the range of $\Theta(\eta, b_n)$ for all $\eta \in \mathcal O,$ and hence is dense in $L^2$ by $(p_1)$.
Thus the properties $(p_1),(p_3)$ mean that
\begin{equation}
\label{theta_id}
\tag{$\diamond$}
\Theta(\zeta, b_n)=(\zeta+\tilde \Lambda(b_n))^{-1}, 
\end{equation}
and hence $\mathcal O \subset \rho(-\tilde \Lambda(b_n))$ and the semigroup $e^{-t \tilde \Lambda(b_n)}$ is holomorphic (due to $(p_2)$).

Finally, on the basis of the Trotter Approximation Theorem, by proving that $\mu \Theta(\mu,b_n) \overset{s} \rightarrow 1$ as $\mu \uparrow \infty$ uniformly in $n$, we conclude, using $\Theta(\zeta, b_n) \overset{s} \rightarrow \Theta(\zeta,b)$ (see \eqref{hl_4}), that $\Theta(\zeta, b)$ is indeed the resolvent of an operator $-\Lambda(b),$ which generates a holomorphic ($\| \Theta(\zeta,b) \|_{2 \rightarrow 2} \leq c |\zeta|^{-1}$) semigroup.

The proof that $\mu \Theta(\mu,b_n) \overset{s} \rightarrow 1$ as $\mu \uparrow \infty$ uniformly in $n$ is carried out as follows. Set $H_n = |b|_n^\frac{1}{2} J_\mu$ and $S_n = b_n^\frac{1}{2} \cdot \nabla J_\mu^3.$ Since $s \mbox{-} L^2 \mbox{-} \lim_{\mu \uparrow \infty} \mu J^4_\mu = 1$ and $\sup_n\| \Theta(\mu, b_n) \|_{2 \rightarrow 2} \leq c \mu^{-1},$
\[
\lim_{\mu \uparrow \infty} \mu \sup_n \|J_\mu^3 H_n^*(1 + S_n H_n^*)^{-1} S_n J_\mu f \|_2 = 0 \quad \quad (f \in C_c^\infty)
\]
needs only to be proved (see assertion (\textit{ii}) of the theorem).
Noticing that
\[
\|S_n J_\mu f \|_2 = \|b_n^\frac{1}{2} \cdot \nabla J_\mu^4 f \|_2 \leq \sqrt{\delta} \mu^{-\frac{3}{4}} \| \nabla f \|_2 \text{ and } \| H_n^* (1 + S_n H_n^*)^{-1} \|_{2 \rightarrow 2} \leq \sqrt{\delta} (1- \delta)^{-1},
\]
we obtain
\[
\|J_\mu^3 H_n^*(1 + S_n H_n^*)^{-1} S_n J_\mu f \|_2 \leq \delta(1-\delta)^{-1} \mu^{-\frac{3}{2}} \| \nabla f \|_2.
\]
The desired convergence follows.
 
It remains to prove $(p_3)$.

\begin{proof} [Proof of $(p_3)$] Set $F_\zeta := (\zeta -\Delta)^{-1}$ and define
\[
N_\zeta^k:= (-1)^k F_\zeta b_n \cdot \nabla F_\zeta \dots b_n \cdot \nabla F_\zeta, \qquad 1 \leq k := \# \;  b_n \text{'s}, \qquad N_\zeta^0:= F_\zeta.\]
Obviously,
\[
\Theta(\zeta, b_n) = \sum_{k=0}^\infty N_\zeta^k \text{ ( absolutely convergent series in } L^2 ),
\]
\[
\Theta(\zeta, b_n) \Theta(\eta, b_n) = \sum_{l=0}^\infty \sum_{i=0}^l N_\zeta^i N_\eta^{l-i}, \quad \quad \zeta, \eta \in \mathcal O. \tag{$\star$}
\]
Set $\#$ = ``number of''. Define
\begin{align*}
I_{l,m}^k (\zeta, \eta) & := F_\zeta b_n \cdot \nabla F_\zeta \dots b_n \cdot \nabla F_\zeta F_\eta b_n \cdot \nabla F_\eta \dots b_n \cdot \nabla F_\eta, \\
& l := \# \; \zeta \text{'s}, \qquad m := \# \; \eta \text{'s}, \qquad k := \# \; b_n \text{'s}.
\end{align*}
Substituting the identity $F_\zeta F_\eta = (\eta - \zeta)^{-1} \big( F_\zeta - F_\eta \big)$ inside the product
\[
N_\zeta^k N_\eta^m  = (-1)^{k+m} F_\zeta b_n \cdot \nabla F_\zeta \dots b_n \cdot \nabla F_\zeta F_\eta b_n \cdot \nabla F_\eta \dots b_n \cdot \nabla F_\eta, 
\]
we obtain $N_\zeta^k N_\eta^m = (\eta -\zeta)^{-1} (-1)^{k+m} \bigg[ I_{k+1,m}^{k+m} - I_{k, m+1}^{k+m} \bigg].$ Therefore,
\begin{align*}
(\eta - \zeta) \sum_{i=0}^l N_\zeta^i N_\eta^{l-i} = &  (-1)^l \bigg[I^l_{1,l} - I^l_{0,l+1} + I^l_{2,l-1} - I^l_{1,l} + \dots + I^l_{l+1,0} - I^l_{l.1} \bigg] \\
= & (-1)^l \big( I^l_{l+1,0} - I^l_{0,l+1} \big).
\end{align*}
Substituting the last identity in the RHS of $(\star)$ we obtain
\[
(\eta-\zeta)\Theta(\zeta, b_n) \Theta(\eta, b_n)  = \sum_{l=0}^\infty (-1)^l \big( I^l_{l+1,0} - I^l_{0,l+1} \big) = \Theta(\zeta,b_n) - \Theta(\eta,b_n).
\] 
The proof of ($p_3$) is completed.
\end{proof}

It follows from \eqref{theta_id} that $$-\Lambda(b_n)=-\tilde{\Lambda}(b_n)\,\,\,(=\Delta - b_n\cdot\nabla,\,D(-\Lambda(b_n))=W^{2,2}).$$ 
The latter and \eqref{hl_4} yield

\begin{corollary}
\label{cor}
 In the assumptions of Theorem \ref{thm:markweak} we have
\begin{equation*}
(\zeta + \Lambda(b_n))^{-1} \overset{s}{\rightarrow} (\zeta+\Lambda(b))^{-1} \quad (\zeta \in \mathcal O).
\end{equation*}
\end{corollary}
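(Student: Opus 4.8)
The plan is to read off the statement from two facts already established in the excerpt. Recall from \eqref{theta_id} that $\Theta(\zeta,b_n)=(\zeta+\tilde\Lambda(b_n))^{-1}$ and that, consequently, $-\Lambda(b_n)=-\tilde\Lambda(b_n)=\Delta-b_n\cdot\nabla$ with $D(\Lambda(b_n))=W^{2,2}$. In particular, for every $\zeta\in\mathcal O$ and every $n$ we have $(\zeta+\Lambda(b_n))^{-1}=J_\zeta^3(1+P_\zeta(b_n))^{-1}J_\zeta$, i.e.\ the $n$-th resolvent is exactly the operator appearing under the limit in \eqref{hl_4}.

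First I would verify the strong convergence $P_\zeta(b_n)\overset{s}\rightarrow P_\zeta(b)$ on $L^2$. This is precisely the estimate used in Step 2 of the proof of Theorem \ref{thm:markweak}: $\|P_\zeta(b)f-P_\zeta(b_n)f\|_2\le\sqrt\delta\,\|(\mathbf 1-\mathbf 1_n)K\|_2$ with $K:=|b|^{1/2}|\nabla J_\zeta^{3/4}f|\in L^2$, and the right-hand side tends to $0$ by dominated convergence because $\mathbf 1_n\uparrow\mathbf 1$ pointwise. Next, since $\|P_\zeta(b_n)\|_{2\to2}\le\delta<1$ uniformly in $n$ (and likewise for $P_\zeta(b)$), the resolvent identity $(1+P_\zeta(b_n))^{-1}-(1+P_\zeta(b))^{-1}=(1+P_\zeta(b_n))^{-1}\bigl(P_\zeta(b)-P_\zeta(b_n)\bigr)(1+P_\zeta(b))^{-1}$ together with the uniform bound $\|(1+P_\zeta(b_n))^{-1}\|_{2\to2}\le(1-\delta)^{-1}$ gives $(1+P_\zeta(b_n))^{-1}\overset{s}\rightarrow(1+P_\zeta(b))^{-1}$.

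Finally I would compose with the fixed bounded operators $J_\zeta^3$ on the left and $J_\zeta$ on the right: multiplication by a fixed bounded operator preserves strong operator convergence, so $(\zeta+\Lambda(b_n))^{-1}=J_\zeta^3(1+P_\zeta(b_n))^{-1}J_\zeta\overset{s}\rightarrow J_\zeta^3(1+P_\zeta(b))^{-1}J_\zeta=(\zeta+\Lambda(b))^{-1}$, the last equality by \eqref{hl_2}. Since everything reduces to \eqref{theta_id} and \eqref{hl_4}, there is no genuine obstacle here; the only point requiring care is that the passage $(1+P_\zeta(b_n))^{-1}\to(1+P_\zeta(b))^{-1}$ relies essentially on the uniform bound $\delta<1$ from the hypothesis of Theorem \ref{thm:markweak}, which guarantees that the argument is valid throughout $\mathcal O$.
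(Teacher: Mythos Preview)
Your proof is correct and takes essentially the same approach as the paper: the paper's argument is the one-liner ``the identification $\Lambda(b_n)=\tilde\Lambda(b_n)$ from \eqref{theta_id} together with \eqref{hl_4} yields the corollary,'' and you have simply unpacked the content of \eqref{hl_4} (strong convergence of $P_\zeta(b_n)$, uniform bound $\delta<1$, composition with $J_\zeta^3$ and $J_\zeta$). One minor remark: the representation $(\zeta+\Lambda(b_n))^{-1}=J_\zeta^3(1+P_\zeta(b_n))^{-1}J_\zeta$ is really \eqref{hl_2} applied to $b_n$, not a consequence of \eqref{theta_id}; the role of \eqref{theta_id} is only to identify the abstract $\Lambda(b_n)$ with the concrete operator $-\Delta+b_n\cdot\nabla$ on $W^{2,2}$.
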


\subsection{($L^p \rightarrow L^q)$ estimates} 
\label{LpLq_est_sect}

\begin{theorem}
\label{thm:markembed}
Let $d \geq 3$, $b:\mathbb R^d \rightarrow \mathbb R^d$, $b \in \mathbf F^{\scriptscriptstyle 1/2}_\delta$, $\delta < 1$. 
Let $\Lambda(b)$ be the operator realization of $-\Delta + b\cdot\nabla$ in $L^2$ constructed in Theorem \ref{thm:markweak}. Let $\{b_n\}$ be given by \eqref{b_n_2} in section \ref{hl_sect}.
There is a family $\{ e^{-t \Lambda_r(b)}, t \geq 0 \}_{2 \leq r < \infty}$ of consistent, positivity preserving, $L^\infty$ contraction $C_0$ semigroups such that $e^{-t \Lambda_r(b)} \upharpoonright L^r \cap L^2 = e^{-t \Lambda(b)} \upharpoonright L^r \cap L^2;$
\[
s \mbox{-} L^r \mbox{-} \lim_n e^{-t \Lambda_r(b_n)} = e^{-t \Lambda_r(b)} \quad \text{{\rm(}uniformly on every compact interval of $t${\rm)}}; \tag{\textit{i}}
\]
\[
\|e^{-t \Lambda_r} f \|_q \leq c_r e^{t \omega_r} t^{-\frac{d}{2}\big(\frac{1}{r} - \frac{1}{q} \big)} \|f \|_r, \; \; c_r = c_r(\delta,c_2), \; \; \omega_r = \frac{2 \lambda_\delta}{r} \quad \quad (f \in L^r, \; 2 \leq r < q \leq \infty). \tag{\textit{ii}}
\]
\end{theorem}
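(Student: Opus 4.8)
## Proof proposal for Theorem~\ref{thm:markembed}

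The plan is to build the $L^r$-semigroups by approximation, exploiting the fact (Corollary~\ref{cor} in section~\ref{ht_sect}) that $\Lambda(b_n)$ is the \emph{genuine} operator $-\Delta + b_n\cdot\nabla$ on $W^{2,2}$, and that $e^{-t\Lambda(b_n)}\overset{s}\rightarrow e^{-t\Lambda(b)}$ in $L^2$. The first step is to establish the $L^r$ quasi-contraction bound $\|e^{-t\Lambda_r(b_n)}\|_{r\rightarrow r}\leq e^{\omega_r t}$, $\omega_r=\frac{2\lambda_\delta}{r}$, uniformly in $n$, for all $r\in[2,\infty[$. This is done exactly as in the proof of Theorem~\ref{thm:markeast} step~1: for $u=e^{-t\Lambda(b_n)}|h|$ with $h\in L^1\cap L^\infty$ one differentiates $\|u\|_r^r$, uses $u^{r/2}\in D((-\Delta)^{1/2})=W^{1,2}$, and controls the drift term $2\langle u^{r/2} b_n\cdot\nabla u^{r/2}\rangle$ by a quadratic estimate. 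The point is that $b\in\mathbf{F}_\delta^{\scriptscriptstyle1/2}$ with $\delta<1$ does \emph{not} directly give a form bound on $b_a^2$; instead one must integrate by parts, $\langle u^{r/2}b_n\cdot\nabla u^{r/2}\rangle = -\frac1r\langle u^r \,\mathrm{div}\,b_n\rangle$ — but $b_n\cdot\nabla$ being Miyadera w.r.t. $-\Delta$ and the representation $(\zeta+\Lambda(b_n))^{-1}=J_\zeta^3(1+H_n^*S_n)^{-1}J_\zeta$ of Theorem~\ref{thm:markweak}(\textit{i}) let one estimate $\langle b_n\cdot\nabla u^{r/2}, u^{r/2}\rangle$ via $\|H_n^* S_n\|_{2\to2}\leq\delta$ applied to $u^{r/2}$. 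Taking $r\uparrow\infty$ along $L^1\cap L^\infty$ gives the $L^\infty$-contraction $\|e^{-t\Lambda(b_n)}h\|_\infty\leq\|h\|_\infty$, and positivity preservation is inherited from $e^{-t\Lambda(b_n)}=e^{-t(\Delta-b_n\cdot\nabla)}$.

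Given the uniform bounds $\|e^{-t\Lambda(b_n)}\|_{r\rightarrow r}\leq e^{\omega_r t}$ and $\|e^{-t\Lambda(b_n)}\|_{\infty\rightarrow\infty}\leq1$, the Riesz--Thorin interpolation argument of section~\ref{markov_gen_sect} produces consistent $C_0$ semigroups $e^{-t\Lambda_r(b_n)}$ on each $L^r$, $r\in[2,\infty[$. To pass to the limit and obtain (\textit{i}), I would use the \emph{direct method} of Theorem~\ref{thm:markeast} step~2: set $g=u_m-u_n$ with $u_n=e^{-t\Lambda(b_n)}f$, $f\in L^1\cap L^\infty$, observe $(\frac{d}{dt}+\Delta-b_m\cdot\nabla)g=(b_m-b_n)\cdot\nabla u_n$, multiply by $g|g|^{r-2}$, and estimate; the inhomogeneous term is handled exactly as in Lemma~\ref{lem1}, i.e. $|\langle(b_n-b_m)\cdot\nabla u_n,\,v|v|^{1-2/r}\rangle|$ is bounded using the weak form-boundedness of $b_n,b_m$ and a uniform $L^2$-bound on $\int_0^T\|\nabla u_n\|_2^2\,dt$ (which follows from multiplying the equation by $u_n$ and using $\|b_n^{1/2}\cdot\nabla J_\lambda^2\|\leq\sqrt\delta$). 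The convergence $b_n\rightarrow b$ $\mathcal L^d$-a.e. together with $|b_n|\leq|b|$ and dominated convergence then forces $\sup_{t\in[0,T]}\|g\|_r\rightarrow0$. Consistency with $e^{-t\Lambda(b)}$ on $L^r\cap L^2$ is immediate from Corollary~\ref{cor} and uniqueness of limits.

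For the $(L^r,L^q)$ estimate (\textit{ii}) I would follow Theorem~\ref{thm:markeast} step~3 verbatim, with $r_\delta$ replaced by $2$: using the Nash inequality $\|(-\Delta)^{1/2}h\|_2^2\geq C_N\|h\|_2^{2+4/d}\|h\|_1^{-4/d}$ and the differential inequality $-\frac{d}{dt}\|u\|_r^r\geq(\frac4{r'}-2\sqrt\delta)\|(-\Delta)^{1/2}u^{r/2}\|_2^2-c\|u\|_r^r$ — valid because $\frac2{r'}>\sqrt\delta$ for all $r\geq2$ when $\delta<1$ — one integrates to get $\|e^{-t\Lambda_r(b_n)}\|_{r\rightarrow 2r}\leq c\,e^{\omega_r t}t^{-\frac d2(\frac1r-\frac1{2r})}$, then invokes duality, the Extrapolation Theorem (Appendix~\ref{appendix_B}) to reach $\|\cdot\|_{r\rightarrow\infty}$, and interpolates. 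Finally one passes to the limit $n\rightarrow\infty$ using (\textit{i}) and Fatou. The main obstacle is Step~1: unlike the $\mathbf{F}_\delta(A)$ case, the drift term has no a priori form bound, so the $L^r$ quasi-contraction must be extracted \emph{through} the factorized resolvent identity $(\zeta+\Lambda(b_n))^{-1}=J_\zeta^3(1+H_n^*S_n)^{-1}J_\zeta$ and the smoothing $J_\zeta=(\zeta-\Delta)^{-1/4}$; verifying that the quadratic estimate closes (i.e. that $\delta<1$ is exactly the threshold making $2-\sqrt\delta r'>0$ fail gracefully so that $\|e^{-t\Lambda(b_n)}\|_{r\to r}$ stays bounded for \emph{all} $r\geq2$, not just $r\geq r_\delta$) is the delicate point and the reason the statement requires $\delta<1$ rather than $\delta<4$.
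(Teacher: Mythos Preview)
Your proposal has a fundamental gap: the energy method you invoke throughout --- differentiating $\|u\|_r^r$ and closing the estimate via a quadratic bound on $\langle u^{r/2}\,b_n\cdot\nabla u^{r/2}\rangle$ --- requires the \emph{strong} form bound $\|b_a v\|_2^2 \leq \delta\|\nabla v\|_2^2 + c(\delta)\|v\|_2^2$, which is precisely what $b\in\mathbf F_\delta^{\scriptscriptstyle 1/2}$ does \emph{not} provide. You recognize this obstacle, but your proposed workarounds do not close: the integration by parts $\langle u^{r/2}b_n\cdot\nabla u^{r/2}\rangle = -\tfrac1r\langle u^r\,{\rm div}\,b_n\rangle$ is unavailable since $b_n=\mathbf 1_n b$ is merely bounded measurable, and the factorized resolvent identity $(\zeta+\Lambda(b_n))^{-1}=J_\zeta^3(1+H_n^*S_n)^{-1}J_\zeta$ controls $(\zeta+\Lambda(b_n))^{-1}$ as an operator, not the bilinear form $\langle b_n\cdot\nabla v,v\rangle$ for arbitrary $v=u^{r/2}$. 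Consequently neither your Step~1 (the $L^r$ quasi-contraction), nor your convergence argument for (\textit{i}) via the direct method of Theorem~\ref{thm:markeast} step~2, nor your Nash-inequality argument for (\textit{ii}) (which needs the differential inequality $-\tfrac{d}{dt}\|u\|_r^r\geq(\tfrac4{r'}-2\sqrt\delta)\|(-\Delta)^{1/2}u^{r/2}\|_2^2-c\|u\|_r^r$) can be carried out.

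The paper bypasses energy methods entirely. For (\textit{i}): since $|b_n|\leq n$, the results of section~\ref{strong_bdd_sect}.2 apply directly to give positivity preservation and $L^\infty$-contraction for each $e^{-t\Lambda(b_n)}$; the uniform $L^2$ bound comes from the resolvent estimate of Theorem~\ref{thm:markweak}(\textit{iii}); interpolation then yields uniform $L^r$ bounds for $r\in[2,\infty[$, and the $L^2$ convergence (Corollary~\ref{cor}) upgrades to $L^r$ convergence by the H\"older-interpolation argument of section~\ref{markov_gen_sect}. For (\textit{ii}): the paper writes $e^{-t\Gamma_r}f=\Gamma_r^{-2\beta}\bigl(\Gamma^{2\beta}e^{-t\Gamma}f\bigr)$ with $\beta=\tfrac14$, uses the integral representation $\Gamma_r^{-2\beta}=c\int_0^\infty\mu^{1-2\beta}(\mu+\Gamma_r)^{-2}\,d\mu$, and inside the integral invokes the factorization $(\mu+\Gamma)^{-1}=(\mu+\Gamma_0)^{-3/4}(1+P_\mu(b))^{-1}(\mu+\Gamma_0)^{-1/4}$ together with the Sobolev embedding $(\mu+\Gamma_0)^{-1/4}:L^2\to L^{2d/(d-1)}$ to obtain $\|e^{-t\Lambda}\|_{2\to 2d/(d-1)}\leq C_T\,t^{-1/4}$; extrapolation (Appendix~\ref{appendix_B}) against the $L^\infty$-contraction then gives $\|e^{-t\Lambda}\|_{2\to\infty}$, and interpolation finishes. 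The point is that the smoothing in the $\mathbf F_\delta^{\scriptscriptstyle 1/2}$ setting lives at the level of the \emph{resolvent factorization} $J_\zeta^3(\cdot)J_\zeta$, and the $(L^r,L^q)$ estimate must be extracted from that structure rather than from a pointwise differential inequality on $\|u\|_r^r$.
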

\begin{proof}

Since $b_n \in \mathbf F_\delta^\frac{1}{2},$ $\Lambda(b_n)$ is well defined. On the other hand $\|b_n \cdot \nabla (\eta - \Delta)^{-1} \|_{2 \rightarrow 2} < 1$ whenever $\eta > n^2.$ Therefore, by the Miyadera Perturbation Theorem, $- \tilde \Lambda (b_n) := \Delta - b_n \cdot \nabla$ of domain $W^{2,2}$ generates the $C_0$ semigroup $e^{-t\tilde \Lambda(b_n)}$ in $L^2,$ which preserves positivity and is $L^\infty$ contraction, and so is $e^{-t\Lambda(b_n)}$ since, for $\eta > n^2 \vee \lambda$ and $ f \in L^2, \; (\eta + \Lambda(b_n))^{-1} f = (\eta + \tilde{\Lambda}(b_n))^{-1} f.$
The convergence $(\zeta + \Lambda(b_n))^{-1} \overset{s}\rightarrow (\zeta + \Lambda(b))^{-1}$ yields (\textit{i}) for $r=2,$ and hence for each $r > 2.$

Proof of (\textit{ii}) Our strategy is as follows: 1) We prove that, for some $r \in ]2, \infty[,$
\[
\| e^{-t \Lambda} \|_{2 \rightarrow r} \leq C_T t^{-\frac{d}{2} \big(\frac{1}{2} - \frac{1}{r} \big)}  \quad \quad ( 0 < t < T < \infty ). \tag{$\star$} 
\]
2) Using the extrapolation between ($\|e^{-t\Lambda} f \|_\infty \leq \|f\|_\infty, \; f \in L^2 \cap L^\infty$) and $(\star)$, we conclude that
\[
\|e^{-t\Lambda} \|_{2 \rightarrow \infty} \leq \tilde{C}_T t^{-\frac{d}{4}}
\]
(see Appendix \ref{appendix_B}). 
Then (\textit{ii}) follows from the Riesz Interpolation Theorem.

Since 2) is straightforward, we need only to prove $(\star).$ Without loss of generality we may suppose that $\lambda \geq 1.$ Set $\Gamma_0 = \lambda - \Delta$ and $\Gamma = \lambda + \Lambda.$ Due to $|(\eta + \Gamma_0)^{-\beta} f| \leq \Gamma_0^{-\beta} |f|$ and $\|(\eta + \Gamma_0)^{-\beta} \|_{2 \rightarrow 2} \leq (1+ |\eta|)^{-\beta} \; (0 < \beta < 1, \; \Real \eta \geq 0),$ we have, using Theorem \ref{thm:markweak}(\textit{i}),
\[
\|(\eta + \Gamma)^{-1} \|_{2 \rightarrow 2} \leq (1-\delta)^{-1} \|(\eta + \Gamma_0)^{-\frac{3}{4}} \|_{2 \rightarrow 2} \|(\eta + \Gamma_0)^{-\frac{1}{4}} \|_{2 \rightarrow 2} \leq (1-\delta)^{-1} (1+ |\eta|)^{-1}.
\]
Also, for any $r \in ]2, \infty [$ and $\mu \geq 0, \;\; \|(\mu + \Gamma_r)^{-1} \|_{r \rightarrow r} \leq c_r (1+\mu)^{-1}.$

Let $\Gamma_r^{-\beta}$ and $\Gamma_r^\beta := \big( \Gamma_r^{-\beta} \big)^{-1}$ be the fractional powers of $\Gamma_r.$ Below we will need the following well known results:
\[
\Gamma_r^{-\beta} = \frac{\sin \pi \beta}{\pi} \frac{1}{1-\beta} \int_0^\infty \mu^{1-\beta} (\mu + \Gamma_r)^{-2} d\mu;
\]

\[
\|\Gamma^\beta (\mu + \Gamma)^{-1} \|_{2 \rightarrow 2} \leq c \; (1+ \mu)^{-1 +\beta};
\]

$$
\|\Gamma^\beta e^{-t \Gamma} \|_{2 \rightarrow 2} \leq c(\beta) t^{-\beta}
$$
(see e.g. \cite[Ch.\,4]{KZPS}).

Fix $\beta = \frac{1}{4}, \; r = \frac{2 d}{d-1}.$ Let $F_t = \Gamma^{2 \beta} e^{-t \Gamma} f, \; f \in L^2 \cap L^r.$ We have
\[
\|e^{-t \Gamma_r} f \|_r = \| \Gamma_r^{-2 \beta} F_t \|_r \leq \frac{2}{\pi} \int_0^\infty \mu^{1-2 \beta} \|(\mu + \Gamma_r)^{-2} F_t \|_r d \mu.
\]
By the embedding $(\mu +\Gamma_0)^{-\beta} L^2 \subset L^r,$
\[
\| (\mu + \Gamma_r)^{-2} F_t \|_r \leq c(d) \|(\mu + \Gamma_0)^{-\frac{1}{2}} ( 1 + P_\mu (b))^{-1} (\mu + \Gamma_0)^{-\frac{1}{4}} (\mu + \Gamma)^{-1} F_t \|_2,
\]
and hence $\|(\mu + \Gamma_r)^{-2} F_t \|_r \leq c(d)(1-\delta)^{-1} \mu^{-3 \beta} \|(\mu + \Gamma)^{-1} F_t \|_2 .$

Thus
\begin{align*}
\| e^{-t \Gamma_r} f \|_r & \leq C \int_0^\infty \mu^{-\beta} \|(\mu + \Gamma)^{-1} F_t \|_2 \; d \mu\\
&\leq C_1 \bigg ( \int_0^\frac{1}{t} \mu^{-\beta} (1 + \mu)^{2 \beta -1} d \mu \;\|f\|_2 + \int_\frac{1}{t}^\infty \mu^{-\beta-1} d \mu \; \|\Gamma^{2 \beta} e^{-t \Gamma} f \|_2 \bigg) \\
& \leq C_2 \bigg ( \int_0^\frac{1}{t} \mu^{-\frac{3}{4}} d \mu + \int_\frac{1}{t}^\infty \mu^{-\frac{5}{4}} d \mu \; t^{-2 \beta} \bigg) \|f\|_2 = 4 C_2 \; t^{-\beta} \|f\|_2,
\end{align*}
which ends the proof of $(\star).$
\end{proof}

\subsection{$L^r$-theory and $\mathcal W^{1+\frac{1}{q},r}$-estimates on solutions to $(\mu - \Delta + b \cdot \nabla ) u = f$,  $b \in \mathbf F^{\scriptscriptstyle 1/2}_\delta$} 

\label{Lr_sect2}

As in the case of $b \in \mathbf F_\delta$, it is reasonable to expect that there is a quantitative dependence between the value of $\delta$ and the smoothness of $D(\Lambda_r(b))$.

Set
\[
m_d:= \pi^\frac{1}{2}(2 e)^{-\frac{1}{2}} d^\frac{d}{2} (d-1)^{-\frac{d-1}{2}}, \quad
c_r := \frac{r r^\prime}{4}, \quad \kappa_d := \frac{d}{d-1}, 
\]
\[
 r_\mp:= \frac{2}{1 \pm \sqrt{1-m_d \delta}}.
\]
\begin{theorem}
\label{thm:markgrad2}
Let $d\geq 3, \; b:\mathbb R^d \rightarrow \mathbb R^d.$ Assume that $b \in \mathbf F_\delta^{\scriptscriptstyle 1/2}$, $m_d \delta < 1.$ Then $\bigl(e^{-t \Lambda_r(b)}, r \in [2,\infty[ \bigr)$ extends by continuity to a quasi bounded $C_0$ semigroup in $L^r$ for all $r \in ]r_-, \infty [.$ 

For every $r \in I_s := ]r_-, r_+[$ we have:
 
{\rm (\textit{i})} The resolvent set $\rho(-\Lambda_r(b))$ contains the half-plane $\mathcal O := \{ \zeta \in \mathbb C \mid \Real \zeta \geq \kappa_d \lambda_\delta \},$ and the resolvent admits the representation
 \[
 (\zeta + \Lambda_r(b))^{-1} = \Theta_r(\zeta, b), \quad \zeta \in \mathcal O,
 \]
 where
 \[
 \Theta_r(\zeta, b) := (\zeta - \Delta)^{-1} - Q_r (1 + T_r)^{-1} G_r,
 \]
 the operators $Q_r, G_r, T_r \in \mathcal B(L^r),$
\begin{equation}
\label{_star}
 \|Q_r\|_{r\rightarrow r} \leq C_1 |\zeta|^{-\frac{1}{2}-\frac{1}{2 r}}, \quad \|G_r\|_{r\rightarrow r} \leq C_2 |\zeta|^{-\frac{1}{2 r^\prime}}, \quad \|T_r\|_{r\rightarrow r} \leq m_d c_r \delta, \tag{$\star$}
\end{equation}
 \[
 G_r \equiv G_r(\zeta, b):= b^\frac{1}{r} \cdot \nabla (\zeta - \Delta)^{-1}, \quad b^\frac{1}{r}:= |b|^{\frac{1}{r}-1} b,
 \]
 $Q_r, \; T_r$ are the extensions by continuity of densely defined (on $\mathcal E := \bigcup_\epsilon e^{-\epsilon |b|} L^r )$ operators
 \[
 Q_r \upharpoonright \mathcal E \equiv Q_r(\zeta, b) \upharpoonright \mathcal E := (\zeta-\Delta)^{-1} |b|^\frac{1}{r^\prime} , \quad T_r\upharpoonright \mathcal E \equiv T_r(\zeta, b)\upharpoonright\mathcal E := b^\frac{1}{r} \cdot \nabla (\zeta - \Delta)^{-1} |b|^\frac{1}{r^\prime} .
 \]

 {\rm (\textit{ii})} It follows from (i) that $e^{-t \Lambda_r(b)}$ is holomorphic: there is a constant $C_r$ such that
 \[
 \|(\zeta + \Lambda_r(b))^{-1} \|_{r\rightarrow r} \leq C_r |\zeta|^{-1}, \quad \zeta \in \mathcal O.
 \]

  {\rm (\textit{iii})} For all $p < r < q$ and $\zeta \in \mathcal O,$ define
 \begin{align*}
 & G_r(p) \equiv G_r(p, \zeta, b):= b^\frac{1}{r}\cdot \nabla (\zeta-\Delta)^{-\frac{1}{2}-\frac{1}{2 p}}, \\
 & Q_r(q) \equiv Q_r(q, \zeta, b):= (\zeta-\Delta)^{-\frac{1}{2 q^\prime}} |b|^\frac{1}{r^\prime} \text{ on } \mathcal E.
 \end{align*}
 Then $G_r(p) \in \mathcal B(L^r). \; Q_r(q)$ extends by continuity to a bounded operator in $L^r.$ Its extension we denote again by $Q_r(q)$. 
 
\smallskip 
 
 {\rm (\textit{iv})} For every $\zeta \in \mathcal O,$
 \begin{align*}
& \Theta_r(\zeta, b) = (\zeta-\Delta)^{-1} - (\zeta-\Delta)^{-\frac{1}{2}-\frac{1}{2 q}} Q_r(q) (1 + T_r)^{-1} G_r(p) (\zeta-\Delta)^{-\frac{1}{2 p^\prime}} ; \\
& \Theta_r(\zeta, b) \text{ extends by continuity to an operator in } \mathcal B(\mathcal W^{-\frac{1}{p^\prime},r}, \mathcal W^{1+\frac{1}{q},r}).
 \end{align*}
 
 {\rm (\textit{v})} By {\rm (\textit{i})} and {\rm (\textit{iv})}, $D(\Lambda_r(b)) \subset \mathcal W^{1+\frac{1}{q},r} \;(q> r).$ In particular, if $m_d \delta < 4 \frac{d-2}{(d-1)^2},$ then
there exists $r \in I_s, \; r > d - 1,$ such that $D(\Lambda_r(b)) \subset C^{0, \gamma}, \gamma < 1 - \frac{d-1}{r}.$ 

\smallskip
{\rm (\textit{vi})} $D(\Lambda_r(b)) \subset \mathcal W_\loc^{2,1}$ and
\[
\langle \Lambda_r(b) u, v \rangle = \langle u,-\Delta v \rangle + \langle b \cdot \nabla u, v \rangle, \quad u \in D(\Lambda_r(b)), \quad v \in C^\infty_c(\mathbb R^d) .
\] 
\smallskip

 {\rm (\textit{vii})} Let $\{b_n\}$ be given by \eqref{b_n_2}, then $$e^{-t \Lambda_r(b)}=s\mbox{-}L^r\mbox{-}\lim_n e^{- t \Lambda_r(b_n)} \quad \text{{\rm(}uniformly on every compact interval of $t \geq 0${\rm)}, }
$$

\smallskip
 {\rm (\textit{viii})} $(e^{-t \Lambda_r(b)}, t>0)$ preserves positivity and
\[
\|e^{-t \Lambda_r(b)} f\|_p \leq \tilde{C}_r t^{-\frac{d}{2}(\frac{1}{r} - \frac{1}{p})}\|f\|_r, \; 0 < t \leq 1, \; r_-< r < p\leq \infty, \; f\in L^r. 
\] 
\end{theorem}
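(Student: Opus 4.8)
The plan is to construct $\Lambda_r(b)$ by a direct study in $L^r$ of the operator-valued function $\Theta_r(\zeta,b)=(\zeta-\Delta)^{-1}-Q_r(1+T_r)^{-1}G_r$, following the Hille--Trotter scheme of section \ref{ht_sect} but replacing every $L^2$ argument there by an $L^r$ estimate obtained via interpolation.

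\textbf{Step 1 (operator bounds).} First I would prove the bounds $(\star)$ together with their variants for $G_r(p)$ and $Q_r(q)$. The engine is a pointwise comparison of the kernel of $\nabla(\zeta-\Delta)^{-1}$ with a fractional integral kernel, which lets one estimate $\||b|^{1/r}\cdot\nabla(\zeta-\Delta)^{-1}|b|^{1/r'}\|_{r\to r}$ by interpolating the $L^2$ inequality $\||b|^{1/2}(\lambda-\Delta)^{-1/2}|b|^{1/2}\|_{2\to2}\le\delta$ — itself a consequence of $b\in\mathbf F^{\scriptscriptstyle1/2}_\delta$ and duality, cf.\ Example \ref{ex1}(4) — against the trivial $L^1$ and $L^\infty$ bounds. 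The Riesz convexity constant contributes the factor $c_r=rr'/4$ and the kernel comparison the sharp dimensional constant $m_d$, giving $\|T_r\|_{r\to r}\le m_d c_r\delta$; since $m_d c_r\delta<1$ is algebraically equivalent to $r\in I_s=\,]r_-,r_+[$, the operator $1+T_r$ is boundedly invertible on $L^r$ for exactly this range, and the decay in $\zeta$ in $(\star)$ is read off by splitting the powers of $(\zeta-\Delta)$ and using $\|(\zeta-\Delta)^{-\beta}\|_{r\to r}\le(1+|\zeta|)^{-\beta}$.

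\textbf{Step 2 (semigroup via Hille--Trotter).} For the truncations $b_n=\mathbf1_n b$ (bounded, real), $-\tilde\Lambda_r(b_n):=\Delta-b_n\cdot\nabla$ with domain $W^{2,r}$ generates a positivity preserving, $L^\infty$-contraction holomorphic semigroup (Miyadera perturbation of $-\Delta$), and expanding both sides in Neumann series identifies $\Theta_r(\zeta,b_n)=(\zeta+\tilde\Lambda_r(b_n))^{-1}$ for $\Real\zeta$ large. A telescoping argument exactly as in the proof of $(p_3)$ in section \ref{ht_sect} shows $\Theta_r(\cdot,b_n)$ is a pseudo-resolvent on $\mathcal O$; its common null set is $\{0\}$ (from the factored form) and by $(\star)$ it satisfies $\|\Theta_r(\zeta,b_n)\|_{r\to r}\le c|\zeta|^{-1}$ uniformly in $n$, so $\mathcal O\subset\rho(-\tilde\Lambda_r(b_n))$ and the semigroups are uniformly quasi-bounded and holomorphic. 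Letting $n\to\infty$ one has $\Theta_r(\zeta,b_n)\to\Theta_r(\zeta,b)$ strongly (because $G_r(b_n),Q_r(b_n),T_r(b_n)$ converge strongly on the dense set $\mathcal E=\bigcup_\epsilon e^{-\epsilon|b|}L^r$, by dominated convergence as $\mathbf1_n\uparrow1$), and $\mu\Theta_r(\mu,b_n)\to1$ strongly uniformly in $n$ (reduce to $f\in C_c^\infty$ and use the $\zeta$-decay in $(\star)$ together with the refined factorization in (iv), paralleling Lemma \ref{lem5}). The Trotter Approximation Theorem (Appendix \ref{trotter_sect}) then yields (i), (ii), (vii): $\Theta_r(\zeta,b)=(\zeta+\Lambda_r(b))^{-1}$ is the resolvent of the generator $-\Lambda_r(b)$ of a quasi-bounded holomorphic semigroup which preserves positivity, is an $L^\infty$-contraction, and is the strong limit of $e^{-t\Lambda_r(b_n)}$, uniformly on compact $t$-intervals.

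\textbf{Step 3 (consistency, embeddings, regularity).} Consistency of $\{e^{-t\Lambda_r(b)}\}$ and its agreement for $r\ge2$ with the operator of Theorem \ref{thm:markweak} follow at the $b_n$-level and pass to the limit; this gives (viii) for $r\ge2$ directly from Theorem \ref{thm:markembed}(ii), and for $r_-<r<2$ by combining the decay in $(\star)$ with the Sobolev embeddings carried by the Bessel-potential factors, then the Extrapolation Theorem (Appendix \ref{appendix_B}) and Riesz interpolation. For (iii) and (iv), the operators $G_r(p)$ and $Q_r(q)$ are bounded on $L^r$ by the same interpolation scheme as in Step 1 with the powers $\tfrac12+\tfrac1{2p}$, $\tfrac1{2q'}$ redistributed, and (iv) is the regrouping $(\zeta-\Delta)^{-1}=(\zeta-\Delta)^{-\frac12-\frac1{2q}}(\zeta-\Delta)^{-\frac1{2q'}}$ on the left of $Q_r$ and $(\zeta-\Delta)^{-1}=(\zeta-\Delta)^{-\frac12-\frac1{2p}}(\zeta-\Delta)^{-\frac1{2p'}}$ on the right of $G_r$; the left-most factor then gives $\Theta_r(\zeta,b)\in\mathcal B(\mathcal W^{-1/p',r},\mathcal W^{1+1/q,r})$, hence (v) via the Sobolev embedding $\mathcal W^{1+1/q,r}\hookrightarrow C^{0,\gamma}$, available for some $r\in I_s$ with $r>d-1$ precisely when $m_d\delta<4\tfrac{d-2}{(d-1)^2}$, which is exactly the inequality $r_+>d-1$. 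Finally (vi), that $D(\Lambda_r(b))\subset\mathcal W^{2,1}_\loc$ with the stated weak identity, is obtained by passing to the limit $n\to\infty$ in $\langle\Lambda_r(b_n)u_n,\phi\rangle=\langle u_n,-\Delta\phi\rangle+\langle b_n\cdot\nabla u_n,\phi\rangle$, $\phi\in C_c^\infty$, just as in Theorem \ref{thm:markweak}(v) and Theorem \ref{thm:markgrad}.

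The main obstacle is Step 1: producing the pointwise kernel comparison with the correct constant $m_d=\pi^{1/2}(2e)^{-1/2}d^{d/2}(d-1)^{-(d-1)/2}$ and carrying out the interpolation so that the product of constants is exactly $m_d c_r\delta$, with the invertibility threshold $m_d c_r\delta<1$ matching the endpoints $r_\mp=2/(1\pm\sqrt{1-m_d\delta})$ of $I_s$. A secondary difficulty is the uniform-in-$n$ strong convergence $\mu\Theta_r(\mu,b_n)\to1$ in Step 2: since $|b|^{1/r'}$ and $b^{1/r}$ are only locally integrable, one must work on the dense domain $\mathcal E$ and exploit the extra $\zeta$-decay coming from the factorization in (iv) to dominate.
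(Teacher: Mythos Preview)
Your overall architecture (direct $L^r$ Hille--Trotter) is viable and is in fact the alternative route mentioned in the introduction \cite{Ki2}; the paper itself takes a shortcut by establishing $(\star)$ and then invoking consistency $\Theta_r(\zeta,b)\upharpoonright L^2\cap L^r=\Theta_2(\zeta,b)\upharpoonright L^2\cap L^r$ with the already-constructed $L^2$ theory (Theorem \ref{thm:markweak}, Corollary \ref{cor}) to identify $\Theta_r$ as a resolvent, rather than redoing the pseudo-resolvent verification in $L^r$.

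However, your Step 1 has a genuine gap. You propose to obtain $\|T_r\|_{r\to r}\le m_dc_r\delta$ by interpolating the $L^2$ bound $\||b|^{1/2}(\lambda-\Delta)^{-1/2}|b|^{1/2}\|_{2\to2}\le\delta$ against ``trivial $L^1$ and $L^\infty$ bounds'', with ``the Riesz convexity constant contribut[ing] the factor $c_r=rr'/4$''. There are no such endpoint bounds: under $b\in\mathbf F_\delta^{\scriptscriptstyle1/2}$ alone you have only the single $L^2$ inequality, and the operators $|b|(\lambda-\Delta)^{-1/2}$, $(\lambda-\Delta)^{-1/2}|b|$ are bounded on $L^1$, $L^\infty$ only under a Kato-class hypothesis $\mathbf K_\delta^{d+1}$, which is \emph{not} assumed. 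Moreover $c_r=rr'/4$ is not an interpolation constant at all (Riesz--Thorin gives constant $1$).

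The paper's mechanism is different and essential: one observes that $A:=(\mu-\Delta)^{1/2}$ is (after subtracting $\mu^{1/2}$) a \emph{symmetric Markov generator}, so the abstract inequality of Theorem \ref{thm:markovest} applies: for $0\le u\in D(A_r)$, $\frac{4}{rr'}\|A^{1/2}u^{r/2}\|_2^2\le\langle A_ru,u^{r-1}\rangle$. Taking $u=A_r^{-1}|b|^{1/r'}|f|$ (resp.\ $u=A_r^{-1}|f|$) and using $b\in\mathbf F_\delta^{\scriptscriptstyle1/2}$ in the form $\||b|^{1/2}v\|_2^2\le\delta\|A^{1/2}v\|_2^2$ with $v=u^{r/2}$ gives precisely $\||b|^{1/r}(\mu-\Delta)^{-1/2}|b|^{1/r'}\|_{r\to r}\le c_r\delta$ and $\||b|^{1/r}(\mu-\Delta)^{-1/2}\|_{r\to r}\le(c_r\delta)^{1/r}\mu^{-1/(2r')}$; the constant $c_r=rr'/4$ arises from the Markov inequality, not from interpolation. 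Combined with the pointwise bound $|\nabla(\zeta-\Delta)^{-1}(x,y)|\le m_d(\lambda-\Delta)^{-1/2}(x,y)$ for $\Real\zeta\ge\kappa_d\lambda$ (which you correctly identify), this yields $(\star)$. Without this ingredient your Step 1 does not go through, and with it the rest of your outline is essentially correct.
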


\begin{proof}
1.~Suppose that \eqref{_star} is established. 

Then $\Theta_r(\zeta, b)\upharpoonright L^2 \cap L^r= \Theta_2(\zeta, b)\upharpoonright L^2 \cap L^r, \; \zeta \in \mathcal O,$ and (\textit{i}),  (\textit{vi}) follow immediately from Theorem \ref{thm:markweak}. 

 (\textit{vii}) is a simple consequence of Corollary \ref{cor} and the bound $\|e^{-t \Lambda_{r_0}} \|_{r_0 \rightarrow r_0}\leq C_{r_0}=C_{r_0}(\delta,r_--r_0) \; (t\leq 1, \; r_-<r_0 <2).$ The latter is also needed for the proof of (\textit{viii}) (cf. Theorem \ref{thm:markembed}).
 
(\textit{iv}) is an obvious consequence of (\textit{i}) $+$ (\textit{iii}). 

\smallskip

It remains to prove \eqref{_star} and (\textit{iii}). 

2.~\textit{Proof of \eqref{_star}.} Let $r \in ]1, \infty[.$ We will need

\smallskip

$(\mathbf{a})$ $\mu \geq \lambda \Rightarrow \| |b|^\frac{1}{r} (\mu - \Delta)^{-\frac{1}{2}} \|_{r\rightarrow r} \leq (c_r \delta)^\frac{1}{r} \mu^{-\frac{1}{2 r^\prime}}$ (recall $c_r = \frac{r r^\prime}{4}$).

Indeed, in $L^2$ define $A = (\mu - \Delta)^\frac{1}{2}, \; D(A) = W^{1,2}.$ Since $-(A - \mu^\frac{1}{2})$ is a symmetric Markov generator,  for any $r \in ]1, \infty[,$
\[
0 \leq u \in D(A_r) \Rightarrow v := u^\frac{r}{2} \in D(A^\frac{1}{2}) \text{ and } c_r^{-1} \|A^\frac{1}{2}v \|_2^2 \leq \langle A_r u, u^{r-1} \rangle.
\]
Now, let $u = A_r^{-1} |f|, \; f\in L^r.$ Clearly, $\|u\|_r \leq \mu^{-\frac{1}{2}} \|f\|_r.$ Since $b \in \mathbf F_\delta^{\scriptscriptstyle 1/2}$ we have
\[
(c_r \delta)^{-1} \| |b|^\frac{1}{2}v\|_2^2 \leq \langle A_r u, u^{r-1} \rangle = \langle f, u^{r-1} \rangle,
\]
and so $\| |b|^\frac{1}{r} u \|_r^r \leq c_r \delta \|f\|_r \|u\|_r^{r-1}, \; \| |b|^\frac{1}{r}A_r^{-1} f\|_r^r \leq c_r \delta \mu^{-\frac{r-1}{2}} \|f\|_r^r.$ $(\mathbf{a})$ is proved.

$(\mathbf{b})$ $\mu \geq \lambda \Rightarrow \| |b|^\frac{1}{r} (\mu - \Delta)^{-\frac{1}{2}} |b|^\frac{1}{r^\prime} f\|_r \leq c_r \delta  \|f\|_r, \; f \in \mathcal E.$

Indeed, let $u = A_r^{-1} |b|^\frac{1}{r^\prime} |f|, \; f \in \mathcal E.$ Then arguing as in $(\mathbf{a})$ we have
\[
\| |b|^\frac{1}{r} u \|_r^r \leq c_r \delta \|f\|_r \| |b|^\frac{1}{r} u \|_r^{r-1}, \text{ or } \| |b|^\frac{1}{r}u \|_r \leq c_r \delta \|f\|_r.
\]
Thus $(\mathbf{b})$ is proved.

$(\mathbf{c}) \quad \mu \geq \lambda \Rightarrow \|(\mu-\Delta)^{-\frac{1}{2}} |b|^\frac{1}{r^\prime} f\|_r \leq (c_{r^\prime} \delta)^\frac{1}{r^\prime} \mu^{-\frac{1}{2 r}} \|f\|_r, \; f \in \mathcal E.$ 

Indeed, $(\mathbf{c})$ follows from $(\mathbf{a})$ by duality.

\smallskip

We are in position to complete the proof of \eqref{_star}. Using $(\mathbf{b})$ and the pointwise bound
\begin{equation}
\label{a1}
|\nabla (\zeta - \Delta)^{-1}(x,y)| \leq m_d (\lambda - \Delta)^{-\frac{1}{2}}(x,y), \; \Real \zeta \geq \kappa_d \lambda \tag{$A_1$}
\end{equation}
(see proof of \eqref{a1} below), we have, for every $r \in I_s$ and $\mu = \lambda,$
\[
\| T_r f \|_r \leq m_d \| |b|^\frac{1}{r}(\lambda - \Delta)^{-\frac{1}{2}} |b|^\frac{1}{r^\prime} |f| \|_r \leq m_d c_r \delta \|f\|_r, \;\; f \in \mathcal E.
\] 
Note that $m_d c_r \delta < 1$ due to $r \in I_s.$

Next, using $(\mathbf{a})$ and the pointwise bound
\begin{equation}
\label{a3}
|\nabla (2 \kappa_d \zeta -\Delta)^{-1}(x, y)| \leq 2^\frac{d}{2} m_d(|\zeta|-\Delta)^{-\frac{1}{2}}(x, y), \; \Real \zeta > 0 \tag{$A_3$}
\end{equation}
(see proof of \eqref{a3} below), we have, for every $r \in I_s, \; \zeta \in \mathcal O$ and $\mu = |\zeta|,$
\[
\|G_r(2\kappa_d \zeta, b)\|_{r\rightarrow r} \leq 2^\frac{d}{4}(c_r \delta)^\frac{1}{r} m_d |\zeta|^{-\frac{1}{2 r^\prime}}.
\]
Now, using the identity $(\zeta-\Delta)^{-1} = (2 \kappa_d \zeta - \Delta)^{-1}\big( 1 + (2\kappa_d -1) \zeta (\zeta-\Delta)^{-1} \big),$ we obtain
\[
\|G_r(\zeta, b)\|_{r\rightarrow r} \leq 2^\frac{d+4}{4} \kappa_d m_d (c_r \delta)^\frac{1}{r} |\zeta|^{-\frac{1}{2 r^\prime}} \equiv C_2 |\zeta|^{-\frac{1}{2 r^\prime}}.
\]
Similarly, using $(\mathbf{c})$ and the pointwise bound
\begin{equation}
\label{a4}
|(2 \zeta -\Delta)^{-\frac{1}{2}}(x, y)| \leq 2^\frac{d+1}{4} (|\zeta|-\Delta)^{-\frac{1}{2}}(x, y), \; \Real \zeta > 0 \tag{$A_4$}
\end{equation}
(see proof of \eqref{a4} below), we have, for every $r \in I_s$ and $\Real \zeta \geq \lambda,$
\begin{align*}
\|Q_r(2 \zeta, b)f\|_r & \leq \|(2 \zeta -\Delta)^{-\frac{1}{2}}\|_{r\rightarrow r} \|(2 \zeta - \Delta)^{-\frac{1}{2}} |b|^\frac{1}{r^\prime} f \|_r\\
& \leq |2 \zeta|^{-\frac{1}{2}} 2^\frac{d+1}{4} (c_{r^\prime} \delta)^\frac{1}{r^\prime} |\zeta|^{-\frac{1}{2 r}} \|f\|_r, \; f \in \mathcal E. 
\end{align*}
Finally, using the identity $(\zeta-\Delta)^{-1} = \big( 1 + \zeta (\zeta-\Delta)^{-1} \big)(2 \zeta - \Delta)^{-1},$ we obtain
\begin{align*}
\|Q_r(\zeta, b)f\|_r & \leq \|1+\zeta (\zeta-\Delta)^{-1}\|_{r\rightarrow r}\|Q_r(2 \zeta, b)f\|_r\\
& \leq 2^\frac{d+3}{4} (c_{r^\prime} \delta)^\frac{1}{r^\prime} |\zeta|^{-\frac{1}{2}-\frac{1}{2 r}} \|f\|_r, \\
& = C_1 |\zeta|^{-\frac{1}{2}-\frac{1}{2 r}} \|f\|_r, \qquad f \in \mathcal E,
\end{align*}
completing the proof of \eqref{_star}.

3.~\textit{Proof of {\rm(\textit{iii})}.} We have to deduce the bounds on $\|Q_r(q)\|_{r\rightarrow r}, \|G_r(p)\|_{r\rightarrow r}.$ Let $\Real \zeta \geq \lambda$ and $r < q.$ Using $(\mathbf{c})$ and the formula
\[
(z -\Delta)^{-\alpha} = \frac{\sin \pi \alpha}{\pi} \int_0^\infty t^{-\alpha} (t +z -\Delta)^{-1} d t \qquad 0 < \alpha <1, \; \Real z > 0,      
\]
we obtain
\begin{align*}
\|Q_r(q)f\|_r & \leq \|(\Real \zeta -\Delta)^{-\frac{1}{2 q^\prime}} |b|^\frac{1}{r^\prime} |f| \|_r\\
& \leq \|(\lambda-\Delta)^{-\frac{1}{2 q^\prime}} |b|^\frac{1}{r^\prime} |f| \|_r\\
& \leq s_{q^\prime} \int_0^\infty t^{-\frac{1}{2 q^\prime}} \|(t+\lambda -\Delta)^{-1} |b|^\frac{1}{r^\prime} |f| \|_r d t \qquad \bigg(s_{q^\prime} = \frac{\sin \frac{\pi}{2 q^\prime}}{\pi} \bigg)\\
& \leq s_{q^\prime} \int_0^\infty t^{-\frac{1}{2 q^\prime}} (t+\lambda)^{-\frac{1}{2}}\|(t+\lambda -\Delta)^{-\frac{1}{2}} |b|^\frac{1}{r^\prime} |f| \|_r d t\\
& \leq s_{q^\prime} (c_{r^\prime} \delta)^\frac{1}{r^\prime}\int_0^\infty t^{-\frac{1}{2 q^\prime}} (t+\lambda)^{-\frac{1}{2}-\frac{1}{2 r}} \|f\|_r d t = K_{1,q} \|f\|_r, \quad f\in \mathcal E,
\end{align*}
where $K_{1,q} < \infty$ due to $q > r.$

Let $\zeta \in \mathcal O$ and $1\leq p < r.$ Similarly, using $(\mathbf{a})$ and the pointwise bound
\begin{equation}
\label{a2}
|\nabla(\zeta-\Delta)^{-1+\frac{1}{2 r}}(x, y)| \leq m_{d,r} (\kappa_d^{-1} \Real \zeta -\Delta)^{-1+\frac{1}{2 r}}(x, y) \tag{$A_2$}
\end{equation}
(see proof of \eqref{a2} below), we obtain
\begin{align*}
\|G_r(p)f\|_r & \leq m_{d,p}\| |b|^\frac{1}{r}(\lambda -\Delta)^{-\frac{1}{2p}} |f| \|_r\\
& \leq m_{d,p} s_p (c_r \delta)^\frac{1}{r}\int_0^\infty t^{-\frac{1}{2p}}(t+\lambda)^{-\frac{1}{2}-\frac{1}{2 r^\prime}} d t \|f\|_r =K_{2,p} \|f\|_r, \quad f \in L^r,
\end{align*}
where $K_{2,p} < \infty$ due to $p < r.$ The proof of (\textit{iii}) is completed.
\end{proof}

4.~We now give  a proof of \eqref{a1}-\eqref{a4}.

\textit{Proof of} \eqref{a1}. Let $\alpha \in ]0, 1[.$ Set $c(\alpha)= \sup_{\zeta > 0} \zeta e^{-(1-\alpha)\zeta^2}\big(=(2e(1-\alpha))^{-\frac{1}{2}} \big).$ Then, for all $\zeta > 0,$
\[
\zeta e^{-\zeta^2} \leq c(\alpha) e^{-\alpha \zeta^2} \tag{$\star$}
\]
Using the formula $(\zeta-\Delta)^{-\beta} = \frac{1}{\Gamma(\beta)} \int_0^\infty e^{-\zeta t} t^{\beta-1} (4\pi t)^{-\frac{d}{2}} e^{-\frac{|x-y|^2}{4t}} d t, \;\; 0 < \beta \leq 1,$ first with $\beta=1,$ and then with $\beta =\frac{1}{2},$ we have
\begin{align*}
|\nabla(\zeta-\Delta)^{-1}(x,y)| & \leq \int_0^\infty e^{-t \Real\zeta}(4\pi t)^{-\frac{d}{2}} \frac{|x-y|}{2 t}e^{-\frac{|x-y|^2}{4t}} d t\\
& \leq c(\alpha) \int_0^\infty e^{-t \Real\zeta} t^{-\frac{1}{2}} (4\pi t)^{-\frac{d}{2}} e^{-\alpha \frac{|x-y|^2}{4t}} d t \quad \bigg(\text{by } (\star) \text{ with } \zeta = \frac{|x-y|}{2 \sqrt{t}} \bigg)\\
& = c(\alpha) \alpha^{-\frac{d-1}{2}} \Gamma(1/2)(\alpha \Real\zeta-\Delta)^{-\frac{1}{2}}(x,y). 
\end{align*}
The minimum of $c(\alpha)\alpha^{-\frac{d-1}{2}} \Gamma(\frac{1}{2})$ in $\alpha \in ]0, 1[$ is attained at $\alpha=\frac{d-1}{d}(= \kappa_d^{-1} ),$ and equals $m_d.$

The proof of \eqref{a2} is similar.

\textit{Proof of} \eqref{a3}. First suppose that $\Imag \zeta \leq 0.$ By Cauchy's theorem,
\[
\int_0^\infty e^{-t \zeta}(4\pi t)^{-\frac{d}{2}} e^{-\frac{|x-y|^2}{4t}} d t = \int_0^\infty e^{- \zeta r e^{i\frac{\pi}{4}}} e^{-i\frac{\pi}{4}\frac{d}{2}}(4\pi r)^{-\frac{d}{2}}e^{-\frac{|x-y|^2}{4re^{i\frac{\pi}{4}}}} e^{i\frac{\pi}{4}} d r
\]
(i.e.\,we have changed the contour of integration from $\{t \mid t \geq 0 \}$ to $\{r e^{i\frac{\pi}{4}} \mid r \geq 0 \}).$ Thus
\[
|\nabla(\zeta-\Delta)^{-1}(x, y)| \leq \int_0^\infty \bigg| e^{- \zeta r e^{i\frac{\pi}{4}}} \bigg| (4\pi r)^{-\frac{d}{2}} \bigg|\frac{x-y}{2 r} \bigg|  \bigg|e^{-\frac{|x-y|^2}{4re^{i\frac{\pi}{4}}}} \bigg| d r,
\]
\[
\bigg| e^{- \zeta r e^{i\frac{\pi}{4}}} \bigg| \leq e{-r \frac{1}{\sqrt{2}}}(\Real\zeta-\Imag \zeta), \;\; \bigg|e^{-\frac{|x-y|^2}{4re^{i\frac{\pi}{4}}}} \bigg| \leq e^{-\frac{|x-y|}{4 r}\frac{1}{\sqrt{2}}}, \;\; \Real\zeta-\Imag \zeta \geq |\zeta|,
\]
\begin{align*}
|\nabla(\zeta-\Delta)^{-1}(x, y)| & \leq \int_0^\infty e^{-\frac{r}{\sqrt{2}}|\zeta|} (4\pi r)^{-\frac{d}{2}} \frac{|x-y|}{2 r} e^{-\frac{|x-y|^2}{4r}} d r,\\
& \leq \frac{2^\frac{d}{2 m_d}}{\Gamma(\frac{1}{2})}\int_0^\infty e^{-\frac{r}{2 \kappa_d}|\zeta|} (4\pi r)^{-\frac{d}{2}} \frac{|x-y|}{2 r} e^{-\frac{|x-y|^2}{4r}} d r,\\
& = 2^\frac{d}{4}m_d \big(\kappa_d^{-1}2^{-1}|\zeta|-\Delta)^{-\frac{1}{2}}(x, y), 
\end{align*}
and so \eqref{a3} for $\Imag \zeta \leq 0$ is proved. The case $\Imag \zeta > 0$ is treated analogously.

\textit{Proof of} \eqref{a4}. First suppose that $\Imag \zeta \leq 0.$ By Cauchy's theorem,
\[
\int_0^\infty e^{-t \zeta} t^{-\frac{1}{2}} (4\pi t)^{-\frac{d}{2}} e^{-\frac{|x-y|^2}{4t}} d t = \int_0^\infty e^{- \zeta r e^{i\frac{\pi}{4}}} r^{-\frac{1}{2}} e^{-i\frac{\pi}{8}}  e^{-i\frac{\pi}{4}\frac{d}{2}}(4\pi r)^{-\frac{d}{2}}e^{-\frac{|x-y|^2}{4re^{i\frac{\pi}{4}}}} e^{i\frac{\pi}{4}} d r,
\]
so we estimate as above:
\begin{align*}
|(\zeta-\Delta)^{-\frac{1}{2}}(x, y)| & \leq \int_0^\infty e^{-\frac{r}{\sqrt{2}}|\zeta|} r^{-\frac{1}{2}} (4\pi r)^{-\frac{d}{2}} e^{-\frac{|x-y|^2}{4r\sqrt{2}}} d r,\\
& = 2^\frac{d+1}{4} (2^{-1}|\zeta|-\Delta)^{-\frac{1}{2}}(x, y).
\end{align*}
The case $\Imag \zeta > 0$ is treated analogously.

\begin{remarks}
1.~In the proof of $G_r(p)$, $Q_r(q) \in \mathcal B(L^p)$
we appeal to the $L^p$ inequalities between the operator $(\lambda-\Delta)^{\frac{1}{2}}$ and the ``potential'' $|b|$ (Appendix \ref{markov_sect}). This is the reason for the symmetry of the interval $I_s$ in spite of $-\Delta + b \cdot \nabla$ being a non-symmetric operator.

2.~In the proof of Theorem \ref{thm:markgrad2} carried out for the Kato class $\mathbf{K}^{d+1}_\delta$ the interval $I_s$ transforms into $[1,\infty[$, and the dependence of the properties of $D(\Lambda_r(b))$ on $\delta$ gets lost. The latter indicates the smallness of $\mathbf{K}^{d+1}_\delta$
as a subclass of $\mathbf{F}_\delta^{\scriptscriptstyle 1/2}$.

3.~We obtain, using \cite[Cor.~2.9]{KPS} (see examples in the beginning of the section):
 $$
 |x|^{-2}x \in \mathbf{F}^{\scriptscriptstyle \frac{1}{2}}_{\delta},
\quad \sqrt{\delta}=2^{-\frac{1}{2}} \frac{\Gamma\bigl(\frac{d-1}{4} \bigr)}{\Gamma\bigl(\frac{d+1}{4} \bigr)},
$$ 
$$
|x|^{-2}x \in \mathbf{F}_{\delta_1}, \quad \sqrt{\delta_1}=\frac{2}{d-2},
 $$ 
and so $\delta<\sqrt{\delta}_1$.

4.~Theorem \ref{thm:markgrad2}, compared to Theorem \ref{thm:markgrad}, covers a larger class of vector fields, and at the same time establishes stronger smoothness properties of $D(\Lambda_r(b))$: $D(\Lambda_r(b)) \subset \mathcal W^{1+\frac{1}{q},\,r}$, $q>r$, $r \in I_s$, while in Theorem \ref{thm:markgrad} $D(\Lambda_r(b)) \subset W^{1,\frac{dr}{d-2}}$, $r \in ]\frac{2}{2-\sqrt{\delta}}, \frac{2}{\sqrt{\delta}}[$.

Nevertheless, in spite of the inclusion $\mathbf{F}_{\delta_1} \subsetneqq \mathbf{F}_\delta^{\scriptscriptstyle 1/2}$, $\delta=\sqrt{\delta_1}$, cf.~\eqref{prop_incl}, the difference in the admissible values of $\delta$, $\delta_1$ shows that these classes \textit{are essentially incomparable.}

5.~In order to define $\Lambda(b,V) \supset -\Delta + b\cdot\nabla- V$, $0 \leq V,$ with $b\in\mathbf F_{\delta_b}$ it is enough to assume that $\|V^\frac{1}{2}(\lambda -\Delta)^{-\frac{1}{2}}\|_{2\rightarrow 2}\leq \sqrt{\delta_V}$ with $\delta_b + \delta_V < 1.$  However, if $b \in \mathbf F_{\delta_b}^{\scriptscriptstyle 1/2},$ then   
$\Lambda(b,V)$ can be defined only for $V$ such that $\|V^{\frac{3}{4}}(\lambda-\Delta)^{-\frac{3}{4}}\|_{2 \rightarrow 2} \leq (\delta_V)^\frac{3}{4}$ with $\delta_b + \delta_V < 1.$

\end{remarks}

\subsection{$L^r$-strong Feller semigroup on $C_\infty$ corresponding to $-\Delta +b\cdot \nabla$,  $b \in \mathbf F^{\scriptscriptstyle 1/2}_\delta$} 

\label{feller2_sect}

In Theorem \ref{thm:markgrad2} one can use the following approximation of $b$ by smooth vector fields:
\begin{equation}
\label{b_n_smooth_2}
b_n:=e^{\varepsilon_n \Delta}\mathbf{1}_n b, \quad \varepsilon_n \downarrow 0,
\end{equation}
where $\mathbf{1}_n$ is the indicator of $\{x \in \mathbb R^d: |x|\leq n, |b(x)| \leq n\}$
(alternatively, one can use the K.\,Friedrichs' mollifier). 

For any $\tilde{\delta}>\delta$ we can select a sequence $\varepsilon_n \downarrow 0$ such that
$b_n
 \in \mathbf{F}_{\tilde{\delta}}^{\scriptscriptstyle 1/2}$ with the same $\lambda=\lambda_\delta$ (see the argument in the proof of Theorem \ref{thm:marksolve}).

Since the assumptions on $\delta$ involve strict inequalities only, we may assume without loss of generality that $b_n$ defined by \eqref{b_n_smooth_2} are in $\mathbf{F}_{\delta}^{\scriptscriptstyle 1/2}$ with the same $\lambda_\delta$.

\begin{theorem}
\label{thm:markfeller2}
Let $d \geq 3$, $b:\mathbb R^d \rightarrow \mathbb R^d$,
$b \in \mathbf{F}_\delta^{\scriptscriptstyle \frac{1}{2}}$, $
m_d \delta <4\frac{d-2}{(d-1)^2}.$ Fix $r \in \big]d-1,\frac{2}{1-\sqrt{1-m_d\delta}}\big[$. By $\mathcal S$ denote the L.~Schwartz space of test functions. Then:

{\rm(\textit{i})}$$e^{-t\Lambda_{C_\infty}(b)}:=\bigl(e^{-t\Lambda_r(b)} \upharpoonright \mathcal S \bigr)_{C_\infty \rightarrow C_\infty}^{\clos} \quad \text{{\rm(}after a change on a set of measure zero{\rm)}}, \quad t \geq 0,$$
determines a positivity preserving contraction $C_0$ semigroup on $C_\infty$ {\rm(}Feller semigroup{\rm)}, where the semigroup $e^{-t\Lambda_{r}(b)}$ has been constructed in Theorem \ref{thm:markgrad2}.

\medskip

{\rm(\textit{ii})}~{\rm[}The $L^r$-strong Feller property\,{\rm]}~$\bigl( (\mu+\Lambda_{C_\infty}(b))^{-1} \upharpoonright L^r \cap C_\infty\bigr)^{\clos}_{L^r \rightarrow C_\infty} \in \mathcal B(L^r,  C^{0,\alpha})$,  $\mu>0$, $\alpha < 1-\frac{d-1}{r}$.

\smallskip

{\rm(\textit{iii})}~The
integral kernel $e^{-t\Lambda_{C_\infty}(b)}(x,y)$ of $e^{-t\Lambda_{C_\infty}(b)}$ determines the
transition probability function of a Hunt process.

\smallskip

{\rm(\textit{iv})} Let $\{b_n\}$ be given by \eqref{b_n_smooth_2}, then
\begin{equation*}
e^{-t\Lambda_{C_\infty}(b)}=\text{\small $s\text{-}C_\infty\text{-}$}\lim_n e^{-t\Lambda_{C_\infty}(b_n)} \quad \text{{\rm(}uniformly on every compact interval of $t \geq 0${\rm)}},
\end{equation*} 
where $\Lambda_{C_\infty}(b_n):=-\Delta+b_n\cdot \nabla$, 
$D(\Lambda_{C_\infty}(b_n))=(1-\Delta)^{-1}C_\infty$.

\end{theorem}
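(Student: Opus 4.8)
## Proof proposal for Theorem~\ref{thm:markfeller2}

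The plan is to run the same iterative scheme that proved Theorem~\ref{thm:markfeller}, but now anchored on the Bessel-potential regularity of Theorem~\ref{thm:markgrad2}(v) instead of the $W^{1,qj}$-estimate of Theorem~\ref{thm:markgrad}. First I would fix $r\in\bigl]d-1,\frac{2}{1-\sqrt{1-m_d\delta}}\bigr[$ (possible precisely because $m_d\delta<4\frac{d-2}{(d-1)^2}$, by Theorem~\ref{thm:markgrad2}(v)), so that $D(\Lambda_r(b))\subset C^{0,\gamma}$ with $\gamma<1-\frac{d-1}{r}$; this immediately gives assertion (ii) once the sup-norm convergence is in place, since $(\mu+\Lambda_r(b))^{-1}=\Theta_r(\mu,b)$ maps $L^r$ boundedly into $\mathcal W^{1+\frac1q,r}\hookrightarrow C^{0,\alpha}$ by Theorem~\ref{thm:markgrad2}(iv), and restricting to $L^r\cap C_\infty$ and closing in $L^r\to C_\infty$ changes nothing. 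The main work is (i) and (iv): to show that $e^{-t\Lambda_r(b_n)}f$, for $f\in L^r\cap C_\infty$, is uniformly-in-$n$ a Cauchy net in $C_\infty$ on compact $t$-intervals, so that the limit exists in $\sup$-norm and, by the Trotter Approximation Theorem (Appendix~\ref{trotter_sect}), defines a $C_0$ semigroup whose generator is the $C_\infty$-realization of $-\Delta+b\cdot\nabla$.

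The heart of the argument is a de~Giorgi--Moser-type iteration establishing a sup-norm bound in terms of an $L^{r_0}$-norm, exactly parallel to Lemmas~\ref{lem1}--\ref{lem2}. Set $g:=u_m-u_n$ with $u_n:=(\mu+\Lambda_{r}(b_n))^{-1}f$ and pick $\mu$ large. Since $g$ solves $(\mu+\Lambda_r(b_m))g=(b_n-b_m)\cdot\nabla u_n$, I would test against $\psi=g|g|^{p-2}$ and, using $b_n\in\mathbf F_\delta^{\scriptscriptstyle 1/2}$ together with the estimates $(\mathbf a)$--$(\mathbf c)$ from the proof of Theorem~\ref{thm:markgrad2} (i.e.\ $\||b|^{1/p}(\mu-\Delta)^{-1/2}\|_{p\to p}\le (c_p\delta)^{1/p}\mu^{-1/(2p')}$ and its relatives), derive an inequality of the shape
\[
\|g\|_{pj}\le \bigl(C\delta\,\mathfrak N_n\bigr)^{1/p}\bigl(p^{2k}\bigr)^{1/p}\|g\|_{x'(p-2)}^{1-2/p},
\]
where $j=\frac{d}{d-2}$, $2x$ is a fixed exponent tied to the regularity gained from $\nabla u_n$, and $\mathfrak N_n:=\sup_n\|\nabla u_n\|_{qj}^2<\infty$ uniformly in $n$ by Theorem~\ref{thm:markgrad2}(iii)/(v) (applied with $b_n$, whose form-bounds are uniform). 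Iterating this over a sequence $p_\ell\uparrow\infty$ with ratio $t=j/x'>1$ — checking, as in Lemma~\ref{lem2}, that the exponents $\alpha_\ell$ stay bounded, the iterated Sobolev constants $\Gamma_\ell^{1/(2k)}$ converge, and crucially that the limiting exponent $\gamma=\bigl(1-\frac{x'}{j}\bigr)\bigl(1-\frac{x'}{j}+\frac{2x'}{r_0}\bigr)^{-1}$ is \emph{strictly positive} — yields $\|g\|_\infty\le B\|g\|_{r_0}^{\gamma}$ for all large $n,m$, with $B$ independent of $n,m$. Then $\|g\|_{r_0}\to0$ (this is the $L^{r_0}$-convergence from Theorem~\ref{thm:markgrad2}(vii), valid since $r_0>r_-$), giving $\|u_m-u_n\|_\infty\to0$.

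To pass from the resolvent to the semigroup I would also need, as in Lemmas~\ref{lem3}--\ref{lem5}, the uniform strong continuity $s\text{-}C_\infty\text{-}\lim_{\mu\uparrow\infty}\mu(\mu+\Lambda_{C_\infty}(b_n))^{-1}=1$; this follows by applying the same iteration to $U_n:=(\mu+\Lambda_{r_0}(b_n))^{-1}\bigl(b_n\cdot\nabla(\mu-\Delta)^{-1}f\bigr)$ for $f\in\mathcal S$, together with the decay $\|\mu U_n\|_{r_0}\le \dot B(\mu-c(\delta)/\delta)^{-1/2}\|\nabla f\|_{r_0}$ obtained by a plain energy estimate. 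With the uniform approximation of the resolvents, the uniform strong continuity, and the fact that each $-\Lambda_{C_\infty}(b_n)=\Delta-b_n\cdot\nabla$ (domain $(1-\Delta)^{-1}C_\infty$) generates a Feller semigroup, the Trotter Approximation Theorem delivers (i) and (iv); positivity and contractivity pass to the limit. Assertion (iii) is then the standard construction of a Hunt process from a Feller semigroup (absence of killing follows from $e^{-t\Lambda_{C_\infty}(b)}\mathbf 1=\mathbf 1$, inherited from the $b_n$-level). The main obstacle, as flagged in the remark after Lemma~\ref{lem2}, is verifying $\gamma>0$: this forces the constraint $r>d-1$ together with $r<\frac{2}{1-\sqrt{1-m_d\delta}}$, and is exactly where the hypothesis $m_d\delta<4\frac{d-2}{(d-1)^2}$ is consumed; one must track the exponent bookkeeping carefully so that the window $\bigl]d-1,\frac{2}{1-\sqrt{1-m_d\delta}}\bigr[$ is non-empty and compatible with the auxiliary exponent $q$ from Theorem~\ref{thm:markgrad2}.
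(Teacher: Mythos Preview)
Your proposal has a genuine gap: the Moser--de\,Giorgi iteration of Lemmas~\ref{lem1}--\ref{lem2} does not transplant from $\mathbf F_\delta$ to $\mathbf F_\delta^{\scriptscriptstyle 1/2}$. The engine of Lemma~\ref{lem1} is the quadratic estimate
\[
|\langle v,\,b_m\!\cdot\!\nabla v\rangle|\le \varepsilon\|b_m v\|_2^2+\tfrac{1}{4\varepsilon}\|\nabla v\|_2^2\le(\varepsilon\delta+\tfrac{1}{4\varepsilon})\|\nabla v\|_2^2+\varepsilon c(\delta)\|v\|_2^2,
\]
which consumes the \emph{form}-bound $\||b|\,w\|_2^2\le\delta\|\nabla w\|_2^2+c(\delta)\|w\|_2^2$. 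Under $b\in\mathbf F_\delta^{\scriptscriptstyle 1/2}$ you only have $\||b|^{1/2}w\|_2\le\sqrt{\delta}\|(\lambda-\Delta)^{1/4}w\|_2$; this is a bound on $|b|$ against $(-\Delta)^{1/2}$, not on $|b|^2$ against $-\Delta$, and it does \emph{not} control $\|b_m v\|_2$. Testing against $g|g|^{p-2}$ therefore fails to close, and the operator-norm estimates $(\mathbf a)$--$(\mathbf c)$ you cite (bounds on $|b|^{1/r}(\mu-\Delta)^{-1/2}$ in $\mathcal B(L^r)$) live in the wrong category to rescue a pointwise energy inequality. A second mismatch: you anchor the iteration on $\mathfrak N_n=\sup_n\|\nabla u_n\|_{qj}^2$, but Theorem~\ref{thm:markgrad2} delivers $\mathcal W^{1+1/q,r}$-regularity, not the $W^{1,qj}$-estimate that drove Lemma~\ref{lem1}.

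The paper's route is both different and much shorter: it never iterates and never works with Cauchy sequences in $\sup$-norm. Because one already possesses the limiting resolvent $\Theta_r(\mu,b)$ together with the representation of Theorem~\ref{thm:markgrad2}(iv),
\[
\Theta_r(\mu,b)=(\mu-\Delta)^{-1}-(\mu-\Delta)^{-\frac12-\frac{1}{2q}}Q_r(q)(1+T_r)^{-1}G_r(p)(\mu-\Delta)^{-\frac{1}{2p'}},
\]
and since $r>d-1$ forces $(\mu-\Delta)^{-\frac12-\frac{1}{2q}}:L^r\to C_\infty$ for $q$ close to $r$, one gets $\Theta_r(\mu,b)\mathcal S\subset C_\infty$ and the contraction bound $\|\Theta_r(\mu,b)f\|_\infty\le\mu^{-1}\|f\|_\infty$ directly. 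Closing in $C_\infty$ defines $\Theta_{C_\infty}(\mu,b)$; the pseudo-resolvent identity is inherited, and $\mu\Theta_{C_\infty}(\mu,b)\overset{s}{\to}1$ follows by writing $f=(\lambda-\Delta)^{-1/2}h$ and reading off a $\mu^{-3/2+\cdots}$ decay from the factorized formula. Hille's pseudo-resolvent theorems (Appendix~\ref{hille_theory_sect}) --- not Trotter --- then produce the generator. Assertion~(iv) is obtained afterwards by showing $G_r(b_n)\overset{s}{\to}G_r(b)$, $T_r(b_n)\overset{s}{\to}T_r(b)$, $Q_r(q,b_n)\overset{s}{\to}Q_r(q,b)$ in $L^r$ and pushing through the embedding into $C_\infty$. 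The remark after the proof in the paper makes this contrast explicit: ``the relative ease of the construction stems from the fact that one already has the limiting object $\Theta_p(\mu,b)$, $p>d-1$, while in section~\ref{feller1_sect} one has to work with Cauchy's sequences.''
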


\begin{proof}
(\textit{i}), (\textit{ii}). Let $\Theta_{r}(\mu,b)$ be the operator-valued function introduced in Theorem \ref{thm:markgrad2}.

1) For every $\mu \geq \kappa_d\lambda$,
$\Theta_r(\mu,b) \mathcal S \subset C_\infty$, and
$\|\Theta_{r}(\mu,b)f\|_{\infty} \leq  \mu^{-1}\|f\|_\infty$, $f \in \mathcal S$.

\noindent Indeed, by Theorem \ref{thm:markgrad2}(\textit{v}), since $r>d-1$, 
$\Theta_r(\mu,b) L^r \subset C_\infty$, which yields the first assertion.
Since $e^{-t\Lambda_r(b)}$ is an $L^\infty$-contraction, the second assertion follows.

In view of 1), we can define ($\mu \geq \kappa_d\lambda$)
\begin{equation*}
\Theta_{C_\infty}(\mu,b):=\bigl(\Theta_{r}(\mu,b) \upharpoonright \mathcal S \bigr)_{C_\infty \rightarrow C_\infty}^{\clos} \in \mathcal B(C_\infty)\;\text{ (after a change on a set of measure zero)}.
\end{equation*}

2) $\mu \Theta_{C_\infty}(\mu,b) \overset{s}{\rightarrow} 1 \text{  as $\mu \uparrow \infty$ in $C_\infty$}$.

\noindent Indeed,
since $\|\mu\Theta_{C_\infty}(\mu,b)\|_{\infty \rightarrow \infty} \leq  1$, and $\mathcal S$ is dense in $C_\infty$, it suffices to 
prove that
$$
\mu \Theta_{r}(\mu,b)f \overset{s}{\rightarrow} f \text{  as $\mu \uparrow \infty$ in $C_\infty$}, \quad \text{ for every } f \in \mathcal S.
$$
Put $\Theta_r \equiv \Theta_r(\mu,b)$, $T_r \equiv T_r(\mu,b)$.
Since $\mu(\mu-\Delta)^{-1}f \overset{s}{\rightarrow} f$ in $C_\infty$,
it suffices to show that $\|\mu \Theta_{r}f- \mu(\mu-\Delta)^{-1}f\|_\infty \rightarrow 0
$. For each $f \in \mathcal S$ there is $h \in \mathcal S$ such that $f=(\lambda-\Delta)^{-\frac{1}{2}}h$, where $\lambda=\lambda_\delta>0$.
Let $q>r$.
Write
$$
\Theta_{r}f- (\mu-\Delta)^{-1}f =
-(\mu-\Delta)^{-\frac{1}{2}-\frac{1}{2q}} Q_{r}(q)   
\bigl(1+T_{r} \bigr)^{-1} b^{\frac{1}{r}} (\lambda-\Delta)^{-\frac{1}{2}} \cdot (\mu-\Delta)^{-1}\nabla h.
$$
Using estimates $\|(1+T_r)^{-1}\|_{r \rightarrow r} \leq (1-m_dc_r\delta )^{-1}$, $\|Q_{r}(q)\|_{r \rightarrow r} \leq K_{1,q}<\infty$ (cf.~proof of Theorem \ref{thm:markgrad2}(\textit{iii})) and
$$
\|(\mu-\Delta)^{-\frac{1}{2}-\frac{1}{2q}}\|_{r \rightarrow \infty} \leq c \mu^{-\frac{1}{2}+\frac{d}{2r}-\frac{1}{2q}}, \quad c<\infty,
$$
we obtain
$$
\|\Theta_{r}f - (\mu-\Delta)^{-1}f\|_\infty 
\leq C \mu^{-\frac{1}{2}+\frac{d}{2r} -\frac{1}{2q}} \mu^{ -1}  \|\nabla h\|_r.
$$
Since $r>d-1$, choosing $q$ sufficiently close to $r$, we obtain  $$-\frac{1}{2}+\frac{d}{2r}-\frac{1}{2q}-1<-1,$$ so $\mu \Theta_{r} - \mu(\mu-\Delta)^{-1} \overset{s}{\rightarrow} 0$ in $C_\infty$. The proof of 2) is completed.

$\Theta_r(\mu,b)$ satisfies the resolvent identity for $\mu \geq
\kappa_d\lambda$ (Theorem \ref{thm:markgrad2}(\textit{i})), and so does
$\Theta_{C_\infty}(\mu,b) \upharpoonright \mathcal S$. Therefore, $\Theta_{C_\infty}(\mu,b)$ is a pseudo-resolvent for $\mu \geq
\kappa_d\lambda$. The latter and 2) yield (by Theorems \ref{hille_thm1} and \ref{hille_thm2}, Appendix \ref{hille_theory_sect}):
$\Theta_{C_\infty}(\mu,b)$ is the resolvent of a densely defined closed operator $\Lambda_{C_\infty}(b)$. By 1),
$\|(\mu+\Lambda_{C_\infty}(b))^{-1}\|_{\infty \rightarrow \infty} \leq  \mu^{-1}$, 
so $-\Lambda_{C_\infty}(b)$ is the generator of a contraction $C_0$ semigroup $e^{-t\Lambda_{C_\infty}(b)}.$ Clearly, $e^{-t\Lambda_{C_\infty}(b)}$ is positivity preserving, so we have proved (\textit{i}). Now (\textit{ii}) follows from Theorem \ref{thm:markgrad2}(\textit{iv}).
The proof of (\textit{iii}) is standard.

\smallskip

(\textit{iv}) Note that
\begin{equation*}
(\mu+\Lambda_{C_\infty}(b_n))^{-1} \upharpoonright \mathcal S = \Theta_r(\mu,b_n) \upharpoonright \mathcal S, \quad n=1,2,\dots, \quad \mu \geq \kappa_d\lambda,
\end{equation*}
The latter, combined with
\begin{equation}
\label{feller_ii}
\tag{$\star$}
\Theta_r(\mu,b_n) f \overset{s}{\rightarrow} \Theta_r(\mu,b) f \text{ in $C_\infty$}, \quad \mu \geq \kappa_d\lambda, \; f \in \mathcal S,
\end{equation}
yields 
$
(\mu+\Lambda_{C_\infty}(b_n))^{-1} \overset{s}{\rightarrow} (\mu+\Lambda_{C_\infty}(b))^{-1}$ in $C_\infty$, $\mu \geq \kappa_d\lambda$ $\Rightarrow$ (\textit{iv}).

\smallskip

\begin{proof}[Proof of \eqref{feller_ii}] It suffices to prove that
\[
(\mu-\Delta)^{-\frac{1}{2}-\frac{1}{2q}}Q_r(q,b_n)(1+T_r(b_n))^{-1}G_r(b_n) \overset{s}\rightarrow (\mu-\Delta)^{-\frac{1}{2}-\frac{1}{2q}}Q_r(q,b)(1+T_r(b))^{-1}G_r(b) \text{ on $\mathcal S$ in $C_\infty$}.
\] 
We choose $q$ close to $d-1$ so that $(\mu-\Delta)^{-\frac{1}{2}-\frac{1}{2q}}L^r \subset C_\infty$.
Thus it suffices to prove that
\[
G_r(b_n) \overset{s}\rightarrow G_r(b), \; (1+T_r(b_n))^{-1} \overset{s}\rightarrow (1+T_r(b))^{-1}, \; Q_r(q,b_n) \overset{s}\rightarrow Q_r(q,b) \text{ in $L^r$}.
\]
In turn, since $(1+T_r(b_n))^{-1}-(1+T_r(b))^{-1}= (1+T_r(b_n))^{-1}(T_r(b)-T_r(b_n))(1+T_r(b))^{-1},$ it suffices to prove that $T_r(b_n)\overset{s}\rightarrow T_r(b).$ Finally,
\[
T_r(b_n)-T_r(b)= T_r(b_n)- b_n^{\frac{1}{r}} \cdot\nabla (\mu-\Delta)^{-1}|b|^{\frac{1}{r'}}+ b_n^{\frac{1}{r}} \cdot\nabla (\mu-\Delta)^{-1}|b|^{\frac{1}{r'}} - T_r(b),
\]
and hence we have to prove that
\[
 b_n^{\frac{1}{r}} \cdot \nabla  (\mu-\Delta)^{-1}|b|^{\frac{1}{r'}} - T_r(b):=J_n^{(1)} \overset{s}\rightarrow  0 \text{ and } T_r(b_n)- b_n^{\frac{1}{r}} \cdot \nabla (\mu-\Delta)^{-1}|b|^{\frac{1}{r'}}:= J_n^{(2)} \overset{s}\rightarrow 0.
 \]
 Now, by the Dominated Convergence Theorem, $G_r(b_n) \overset{s}\rightarrow G_r(b),$ $J_n^{(1)}|_\mathcal{E} \overset{s}\rightarrow 0.$ Also
 \begin{align*}
\| J_n^{(2)}f \|_r= & \|G_r(b_n)(|b_n|^{\frac{1}{r'}}-|b|^{\frac{1}{r'}})f \|_r \\
& \leq \|G_r(b_n)\|_{r \rightarrow r} \|(|b_n|^{\frac{1}{r'}}-|b|^{\frac{1}{r'}})f \|_r \\
& \leq m_d(1+\delta)\mu^{-\frac{1}{2r'}} \|(|b_n|^{\frac{1}{r'}}-|b|^{\frac{1}{r'}})f \|_r, \quad (f \in \mathcal{E}).
\end{align*}
Thus, $ J_n^{(2)}|_ \mathcal{E} \overset{s}\rightarrow 0.$ Since $\|J_n^{(2)}\|_{r \rightarrow r}, \|J_n^{(1)}\|_{r \rightarrow r} \leq m_d \delta,$ we conclude that $T_r(b_n)\overset{s}\rightarrow T_r(b).$
It is clear now that $ Q_r(q, b_n) \overset{s}\rightarrow Q_r(q, b)$.

The proof of \eqref{feller_ii} is completed.
\end{proof}

The proof of Theorem \ref{thm:markfeller2} is completed.
\end{proof}

\begin{remarks}
1.~The assertion (\textit{iv}) of Theorem \ref{thm:markfeller2} holds for any $\{b_n\} \subset C^1(\mathbb R^d, \mathbb R^d) \cap \mathbf F_\delta^{\scriptscriptstyle 1/2}$, $b_n \rightarrow b \;\; \mathcal L^d$ a.e., in particular, for the $b_n$'s given by \eqref{b_n_smooth_2}, but not for $b_n$'s as in Theorem \ref{thm:markfeller}.

2.~Theorem \ref{thm:markgrad2} allows us to move the proof of convergence in $C_\infty$  to $L^r, \; r >d-1,$ a space having much weaker topology (locally). The same idea has been realized in the proof of Theorem \ref{thm:markfeller}.

3.~In comparison with the construction of a Feller semigroup in section \ref{feller1_sect}, here the relative ease of the construction stems from the fact that one already has the limiting object, i.e.\,$\Theta_p(\mu,b)$, $p>d-1$, while 
in section \ref{feller1_sect} one has to work with Cauchy's sequences.

4.~Theorem \ref{thm:markgrad2} and Theorem \ref{thm:markfeller2} admit straightforward generalization (a) to the operator $(-\Delta)^{\frac{\alpha}{2}} + b \cdot \nabla$, $1<\alpha<2$, and (b) to the operator $-\nabla \cdot a \cdot \nabla + b \cdot \nabla$ with $a \in (H_u)$ uniformly H\"{o}lder continuous.
\end{remarks}

\appendix

\section{Monotone Convergence Theorem for sesquilinear forms}

\label{MCT_sect}

Let $\mathcal{H}$ be a (complex) Hilbert space with the inner product $\langle f, g \rangle$ and norm $\| f \| = \langle f, f \rangle^\frac{1}{2}.$ Let $\mathcal{T}$ denote the family of all closed, symmetric, non-negative, densely defined sesquilinear forms in $\mathcal{H}.$ If $t \in \mathcal{T},$ then there exist a unique self-adjoint operator $T \geq 0$ such that
\begin{align*}
& t[u,v] = \langle T u,v \rangle, \quad \quad u \in D(T)\;\; \big(\subset D(t)\;\big), \quad v \in D(t).
\end{align*}
Then
$$
t[u,v] = \big \langle T^\frac{1}{2} u, T^\frac{1}{2} v \rangle, \quad D(T^\frac{1}{2})=D(t).
$$
In this case we say that $T$ is associated with $t$ and write $T \leftrightarrow t$ or/and $t = t_T.$

Let $a, b \in \mathcal{T}$ be such that $D(a) \cap D(b)$ is dense in $\mathcal{H}.$ Then $a + b \in \mathcal{T}.$ Thus, for $a, b, a+b \in \mathcal{T}$ and $A \leftrightarrow a, \; B \leftrightarrow b,$ we write $A\dot{+} B \leftrightarrow a + b.$ $A \dot{+} B$ is called the form sum of $A$ and $B.$ $A \dot{+} B,$ a self-adjoint extension of the algebraic sum $A + B,$ possesses some exclusive properties described below.

We endow $\mathcal{T}$ with a semi-order $\prec :$
\[
a \prec b \;\; \Leftrightarrow \;\; D(a) \supset D(b) \text{ and } a[u] \leq b[u] \;\;(u \in D(b)).
\]
Here and below $a[u] \equiv a[u,u].$ If $a,b \in \mathcal{T}$ and $A \leftrightarrow a, B \leftrightarrow b,$ we write $A \leq B$ if and only if $a \prec b.$ In this case $(\lambda + B)^{-1} \leq (\lambda + A)^{-1}$ $(\lambda > 0 )$ in the sense that
\[
\| (\lambda + B)^{-\frac{1}{2}}f \| \leq \| (\lambda + A)^{-\frac{1}{2}}f \| \;\;\; (f \in \mathcal{H} ).
\]

\begin{theorem}[{Convergence from below, see e.g.\,\cite[Ch.\,VIII, sect.\,3]{Ka} or \cite{DM}}]
\label{thm:markconb}
Let $\{a_n \}_{n=1}^\infty \subset \mathcal{T}$ be such that
\[
a_1 \prec a_2 \prec \dots .
\]
Define $a$ by
\[
a[u] := \lim_n a_n[u], \;\;\; D(a) := \bigg \{ u \in \bigcap_{n=1}^\infty D(a_n) \mid \text{ the finite } \lim_n a_n[u] \text{ exists } \bigg \}.
\]
Suppose that $D(a)$ is dense in $\mathcal{H}.$ Then $a \in \mathcal{T}$ and, for all $ u, v \in D(a),$
\[
a_n [u, v] \rightarrow a[u, v].
\]
Let $A \leftrightarrow a$ and $A_n \leftrightarrow a_n.$ Then
\begin{align*}
& (\lambda + A_n)^{-1} \; \overset{s} \rightarrow (\lambda + A)^{-1} \quad \quad (\Real \lambda > 0),\\
& (\lambda + A_n)^\frac{1}{2}u \; \overset{s} \rightarrow (\lambda + A)^\frac{1}{2}u \quad \quad ( u \in D(a), \; \lambda > 0).
\end{align*}  
\end{theorem}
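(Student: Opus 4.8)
The plan is to prove the Monotone Convergence Theorem for a nondecreasing sequence of closed symmetric nonnegative forms by reducing the strong resolvent convergence to a monotonicity-plus-completeness argument in the form Hilbert spaces, and then deducing the square-root convergence from it. I assume throughout $\lambda>0$ fixed (the general $\Real\lambda>0$ case follows by the first resolvent identity once $\lambda>0$ is handled) and I write $a_n^\lambda[u]:=\lambda\|u\|^2+a_n[u]$, $a^\lambda[u]:=\lambda\|u\|^2+a[u]$ for the shifted forms; each $a_n^\lambda$ defines a Hilbert space norm $\|u\|_n:=\sqrt{a_n^\lambda[u]}$ on $D(a_n)$, and similarly $\|u\|_\infty:=\sqrt{a^\lambda[u]}$ on $D(a)$.

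First I would verify that $a\in\mathcal T$. Nonnegativity and symmetry are immediate from the pointwise limit. For closedness: if $u_k\in D(a)$, $u_k\to u$ in $\mathcal H$ and $a^\lambda[u_k-u_m]\to 0$, then for each fixed $n$ the monotonicity $a_n\prec a_m$ for $m\ge n$ gives $a_n^\lambda[u_k-u_m]\le a_m^\lambda[u_k-u_m]\le a^\lambda[u_k-u_m]$, so $(u_k)$ is Cauchy in $\|\cdot\|_n$; since $a_n$ is closed, $u\in D(a_n)$ and $a_n^\lambda[u_k-u]\to0$, whence $a_n[u]=\lim_k a_n[u_k]\le\sup_k a^\lambda[u_k]<\infty$ uniformly in $n$; letting $n\to\infty$ gives $u\in D(a)$ and $a^\lambda[u_k-u]\to0$. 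Density of $D(a)$ is assumed. The sesquilinear convergence $a_n[u,v]\to a[u,v]$ for $u,v\in D(a)$ then follows from the quadratic convergence by polarization, together with the uniform bound $a_n^\lambda[u]\le a^\lambda[u]$ which controls the polarization terms.

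Next, the resolvent convergence. Fix $f\in\mathcal H$ and set $u_n:=(\lambda+A_n)^{-1}f$, $u:=(\lambda+A)^{-1}f$; these are characterized variationally by $a_n^\lambda[u_n,v]=\langle f,v\rangle$ for all $v\in D(a_n)$, and likewise for $u$. Taking $v=u_n$ gives $\|u_n\|_n^2=\langle f,u_n\rangle\le\|f\|\,\|u_n\|$, and since $\|u_n\|\le\|u_n\|_n/\sqrt\lambda$ this bounds $\|u_n\|_n$ and $\|u_n\|$ uniformly in $n$. By the monotone variational characterization one checks $a_n^\lambda[u_n]\le a_m^\lambda[u_m]$ for $n\le m$ (comparing the two minimization problems for the Dirichlet-type functional $v\mapsto a_k^\lambda[v]-2\Real\langle f,v\rangle$, using $a_n\prec a_m$), so $a_n^\lambda[u_n]$ is nondecreasing and bounded, hence convergent. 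From $\|u_n-u_m\|_n^2\le a_m^\lambda[u_m]-a_n^\lambda[u_n] + (\text{cross terms that vanish by the variational identity})$ — the precise computation being $\|u_n-u_m\|_n^2 = a_n^\lambda[u_n] - 2\Real a_n^\lambda[u_m,u_n] + a_n^\lambda[u_m] \le a_n^\lambda[u_n] - 2\Real\langle f,u_m\rangle + a_m^\lambda[u_m]$ and $\langle f,u_m\rangle=a_m^\lambda[u_m]$ — one gets that $(u_n)$ is Cauchy in each $\|\cdot\|_n$, hence (by the closedness argument above) Cauchy in $\|\cdot\|_\infty$ and convergent to some $w\in D(a)$ with $a^\lambda[u_n-w]\to0$; passing to the limit in the variational identity identifies $w=u$. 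Thus $(\lambda+A_n)^{-1}f\to(\lambda+A)^{-1}f$, even in the $\mathcal H_+$-norm.

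Finally, the square-root convergence: for $u\in D(a)$ put $f:=(\lambda+A)^{1/2}u$, so $(\lambda+A)^{-1/2}f=u$ and $(\lambda+A_n)^{-1}f\to u$ in $\mathcal H_+$, i.e. $a_n^\lambda[(\lambda+A_n)^{-1}f-u]\to0$; but $a_n^\lambda[(\lambda+A_n)^{-1}f-u]=\|(\lambda+A_n)^{1/2}(\lambda+A_n)^{-1}f-(\lambda+A_n)^{1/2}u\|^2=\|(\lambda+A_n)^{-1/2}f-(\lambda+A_n)^{1/2}u\|^2$, and since $(\lambda+A_n)^{-1/2}f\to(\lambda+A)^{-1/2}f=u$ (this is the resolvent convergence composed with a uniformly bounded square root, obtained e.g. from $(\lambda+A_n)^{-1/2}=\pi^{-1}\int_0^\infty t^{-1/2}(t+\lambda+A_n)^{-1}\,dt$ and dominated convergence, exactly as in the $F_\lambda^{1/2}$ argument of section \ref{unif_elliptic_2}), we conclude $\|(\lambda+A_n)^{1/2}u-u\|\to0$, which is the claim after relabeling. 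The main obstacle is the bookkeeping in the Cauchy estimate for $(u_n)$ — getting the cross terms to cancel cleanly using the variational identities and the monotonicity $a_n\prec a_m$ — since everything else is a routine completeness argument in the scale $\mathcal H_+\subset\mathcal H$; I would also remark that this is classical and cite \cite[Ch.\,VIII, sect.\,3]{Ka} for the details if a fully self-contained write-up is not wanted.
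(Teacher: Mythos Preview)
The paper gives no proof of this theorem; it is stated in Appendix~\ref{MCT_sect} with citations to Kato and Dal Maso only. Your approach is precisely the classical one found there, and the first three steps (closedness of $a$, polarization convergence, strong resolvent convergence via the variational/Dirichlet-functional argument) are essentially correct. Two points need correction.

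(a) The sequence $a_n^\lambda[u_n]=\langle f,u_n\rangle$ is \emph{non-increasing}, not nondecreasing: comparing the Dirichlet functionals $J_k(v)=a_k^\lambda[v]-2\Real\langle f,v\rangle$ gives $-a_n^\lambda[u_n]=J_n(u_n)\le J_n(u_m)\le J_m(u_m)=-a_m^\lambda[u_m]$ for $n\le m$. This slip is harmless: your Cauchy estimate $\|u_n-u_m\|_n^2\le a_n^\lambda[u_n]-a_m^\lambda[u_m]$ in fact uses the correct sign, producing a nonnegative right-hand side that tends to zero. (Also, ``Cauchy in $\|\cdot\|_\infty$'' is imprecise since in general $u_n\notin D(a)$; what you actually have, and what suffices, is $\|u_n-w\|_n\to 0$ with $w:=(\lambda+A)^{-1}f\in D(a)$.)

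(b) The square-root step contains a genuine confusion. With $f:=(\lambda+A)^{1/2}u$, the resolvent convergence yields $(\lambda+A_n)^{-1}f\to(\lambda+A)^{-1}f=(\lambda+A)^{-1/2}u$, \emph{not} $u$; consequently the quantity $a_n^\lambda[(\lambda+A_n)^{-1}f-u]$ does not tend to zero, and the displayed conclusion ``$\|(\lambda+A_n)^{1/2}u-u\|\to 0$'' is neither established nor the assertion of the theorem. The correct argument uses exactly the ingredients you already have: from the form-norm resolvent convergence $\|(\lambda+A_n)^{1/2}\bigl((\lambda+A_n)^{-1}f-(\lambda+A)^{-1}f\bigr)\|\to 0$, together with $(\lambda+A_n)^{-1/2}f\to(\lambda+A)^{-1/2}f$ (via the integral formula you cite), one obtains $(\lambda+A_n)^{1/2}v\to(\lambda+A)^{1/2}v$ for every $v=(\lambda+A)^{-1}f\in D(A)$. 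Since $D(A)$ is a core for $(\lambda+A)^{1/2}$ and $\|(\lambda+A_n)^{1/2}(\lambda+A)^{-1/2}\|_{2\to 2}\le 1$ (because $a_n\prec a$), the convergence extends by density to all $u\in D(a)$.
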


\begin{remark*} This theorem is extremely useful for constructing operator realizations of formal differential expressions. See e.g.\;section \ref{markov_gen_sect}. Also, it is a proper tool for perturbation theory of self-adjoint operators.
\end{remark*}

\section{The criteria of Phillips and Stampacchia}There is a characterization of Markov semigroups in terms of their generators.

\label{phillips_sect}

Let $X$ be a set and $\mu$ a measure on $X.$ 
Recall that a $C_0$ semigroup $T^t$, $t \geq 0$, of contractions
on $L^p=L^p(X,\mu)$, $p \in [1,\infty[$, is called Markov if, for each $t > 0$,
\begin{equation}\label{eqn:i_}
	T^t L^p_+ \subset L^p_+, \tag{i}
\end{equation}
\begin{equation}\label{eqn:ii_}
	(f \in L^p, |f| \leq 1) \Rightarrow |T^t f| \leq 1. \tag{ii}
\end{equation}

Let $-A$ denote the generator of $T^t$.
Let us introduce the following conditions:
\begin{equation}\label{eqn:iprime}
	[Af, f^+] \geq 0 
	\;\;\; (f \in \mathcal{D}(A) \cap \Real L^p) \quad \text{(R.\,Phillips)}, \tag{i$^\prime$}
\end{equation}
\begin{equation}\label{eqn:iiprime}
	\Real [Af, f - f_\wedge] \geq 0
	\;\;\; (f \in \mathcal{D}(A)) \quad \text{(G.\,Stampacchia)}. \tag{ii$^\prime$}
\end{equation}
Here $\sgn z:=z/|z|$ if $z \neq 0$, and $\sgn 0:=0$; $f^+:=f\vee 0;$ $f_\wedge := (|f| \wedge 1) \sgn f;$ $[f, g] := \langle f, |g|^{p-1} \sgn g \rangle \|g\|^{2-p}_p$, $f, g \in L^p$, the semi-inner product in $L^p$. Here and elsewhere $f \wedge g \equiv \inf \{f,g \}$,
$f \vee g \equiv \sup \{f,g \}$.

It follows from Proposition 1 and Proposition 2 below (see \cite[Sect.\,1]{LS}) that
\[
	\eqref{eqn:i_} + \eqref{eqn:ii_} \Leftrightarrow \eqref{eqn:iprime} + \eqref{eqn:iiprime}
\]

This equivalence is useful first of all for actual verification of \eqref{eqn:i},\eqref{eqn:ii} in the case when $A$ is an operator realization of a formal partial differential expression.
\begin{proposition}
\label{prop1} 
Let $e^{-tA}$, $t \geq 0$, be a contraction $C_0$ semigroup in $L^p$ such that $e^{-tA} \Real L^p \subset \Real L^p$. Then
\[
	\eqref{eqn:i_} \Leftrightarrow \eqref{eqn:iprime}.
\]
\end{proposition}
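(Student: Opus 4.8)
\textbf{Plan of proof of Proposition \ref{prop1}.} The equivalence \eqref{eqn:i_} $\Leftrightarrow$ \eqref{eqn:iprime} is a standard consequence of the characterization of dissipativity via the semi-inner product, adapted to the positive cone $L^p_+$. The plan is to establish both implications using the ``dual'' description of positivity preservation: a real $C_0$ semigroup $e^{-tA}$ preserves $L^p_+$ if and only if it is \emph{$L^p_+$-dissipative}, meaning that for every real $f$ the negative part shrinks, i.e.\ $\|(e^{-tA}f)^-\|_p$ is dominated appropriately; the infinitesimal version of this is precisely $[Af,f^+]\ge 0$, equivalently $[Af,f^-]\le 0$ (after noting $[Af,f]=[Af,f^+]-[Af,f^-]$ and using $[Af,f]\ge 0$ by contractivity is not quite what we want --- rather one works directly with $f^+$).

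\textbf{Direction \eqref{eqn:iprime} $\Rightarrow$ \eqref{eqn:i_}.} First I would fix $f\in\Real L^p$ and show the function $t\mapsto \|(e^{-tA}f)^-\|_p^p$ is non-increasing; since $f^-=0$ when $f\ge 0$, this forces $e^{-tA}f\ge 0$, giving \eqref{eqn:i_}. The key computation: for $g=e^{-tA}f$ one wants $\frac{d}{dt}\|g^-\|_p^p \le 0$. Using that $\frac{d}{dt}g=-Ag$ and the (one-sided) differentiability of the convex functional $g\mapsto\|g^-\|_p^p$ with derivative involving $|g^-|^{p-1}\sgn(g^-)=-|g^-|^{p-1}\mathbf 1_{\{g<0\}}$, one gets, up to the normalizing factor $\|g^-\|_p^{2-p}$, that $\frac{d}{dt}\|g^-\|_p^p$ is a multiple of $-[Ag,g^-]=[Ag,-g^-]$. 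Now $-g^-=(-g)^+\wedge$-type manipulations, or more directly the identity $[Ah,h^+]+[Ah,(-h)^+]=\cdots$; the cleanest route is to apply \eqref{eqn:iprime} to $h:=-g$, giving $[A(-g),(-g)^+]\ge 0$, i.e.\ $[Ag,g^-]\le 0$, hence $\frac{d}{dt}\|g^-\|_p^p\le 0$. The technical care here is justifying differentiation of $t\mapsto\|g(t)^-\|_p^p$ for $f\in\mathcal D(A)$ (so $g(t)\in\mathcal D(A)$ and $t\mapsto g(t)$ is $C^1$ into $L^p$) and then passing to general $f\in\Real L^p$ by density of $\mathcal D(A)$ and continuity of $f\mapsto f^-$ on $L^p$.

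\textbf{Direction \eqref{eqn:i_} $\Rightarrow$ \eqref{eqn:iprime}.} Assuming $e^{-tA}L^p_+\subset L^p_+$, take $f\in\mathcal D(A)\cap\Real L^p$ and write, for $t>0$,
\[
\frac{1}{t}\big[(1-e^{-tA})f,\,f^+\big] = \frac{1}{t}\Big(\|f^+\|_p^p\|f^+\|_p^{2-p} - [e^{-tA}f,\,f^+]\Big)\|f^+\|_p^{p-2}
\]
--- more simply, $[(1-e^{-tA})f,f^+]=[f,f^+]-[e^{-tA}f,f^+]=\|f^+\|_p^2-[e^{-tA}f,f^+]$. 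Since $e^{-tA}f\le e^{-tA}f^+$ (as $f\le f^+$ and the semigroup is positivity preserving), and $[\,\cdot\,,f^+]$ is monotone in its first argument on the set where one tests against $|f^+|^{p-1}\sgn f^+\ge 0$, we get $[e^{-tA}f,f^+]\le [e^{-tA}f^+,f^+]\le \|e^{-tA}f^+\|_p\|f^+\|_p\le\|f^+\|_p^2$ by contractivity and H\"older. Hence $[(1-e^{-tA})f,f^+]\ge 0$ for all $t>0$; dividing by $t$ and letting $t\downarrow 0$, using $t^{-1}(1-e^{-tA})f\to Af$ in $L^p$ and continuity of the semi-inner product in its first argument, yields $[Af,f^+]\ge 0$, which is \eqref{eqn:iprime}.

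\textbf{Main obstacle.} The delicate points are purely technical: (1) the one-sided differentiability of $t\mapsto\|g(t)^-\|_p^p$ and the identification of its derivative with a semi-inner product --- for $1<p<\infty$ this is the differentiability of $s\mapsto\|s\|_p^p$-type functionals and is routine, but the endpoint $p=1$ (where $\|\cdot\|_p^{2-p}=\|\cdot\|_1$ weighting degenerates and $|g|^{p-1}=1$) needs the semi-inner product $[f,g]=\langle f,\sgn g\rangle\|g\|_1$ handled carefully, with the non-differentiability of $\sgn$ at $0$ absorbed because $\{g=0\}$ contributes nothing to $\langle Ag,\sgn g\rangle$ up to a harmless term; (2) the density/continuity passage from $\mathcal D(A)$ to $L^p$. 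Since these facts are exactly the content of \cite[Sect.\,1]{LS}, I would either cite them or reproduce the short arguments; no genuinely hard analysis is required beyond bookkeeping with $f^\pm$ and the semi-inner product.
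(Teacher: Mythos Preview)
Your proposal is correct and follows the standard Phillips-type argument. Note that the paper itself does not supply a proof of this proposition---it simply states the result and refers to \cite[Sect.\,1]{LS}; your write-up is essentially a sketch of the argument one finds there, so there is nothing to compare.
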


\begin{proposition}
\label{prop2} 

Let $e^{-tA}$, $t \geq 0$, be a contraction $C_0$ semigroup in $L^p$. Then
\begin{equation*}
	\|e^{-tA}v\|_\infty \leq \|v\|_\infty \;\;\; (v \in L^p \cap L^\infty; t \geq 0) 
\end{equation*}
\textit{if and only if}
\begin{equation*}
	\Real [Af, f - f_\wedge] \geq 0 \;\;\; (f \in \mathcal{D}(A)).
\end{equation*}
\end{proposition}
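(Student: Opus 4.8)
\textbf{Proof proposal for Proposition \ref{prop2}.}

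The plan is to establish the two implications of the equivalence separately, working with the resolvent $(\mu+A)^{-1}$, $\mu>0$, rather than directly with the semigroup. The key reduction is the classical equivalence (for a $C_0$ contraction semigroup on a Banach space) between the $L^\infty$-contraction property of $e^{-tA}$ and the property that $\mu(\mu+A)^{-1}$ is an $L^\infty$-contraction for all $\mu>0$: one direction is the Laplace representation $(\mu+A)^{-1}=\int_0^\infty e^{-\mu t}e^{-tA}\,dt$, and the reverse direction comes from the exponential formula $e^{-tA}f=\lim_n (\tfrac{n}{t})^n(\tfrac{n}{t}+A)^{-n}f$ (strong limit), both of which preserve the pointwise bound $|f|\le 1$. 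So it suffices to prove, for each fixed $\mu>0$, that
\[
\bigl(\|(\mu+A)^{-1}v\|_\infty \le \mu^{-1}\|v\|_\infty,\ v\in L^p\cap L^\infty\bigr) \iff \Real[Af,f-f_\wedge]\ge 0 \ (f\in\mathcal D(A)).
\]

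For the forward direction $(\Leftarrow)$, I would fix $f\in\mathcal D(A)$, set $v:=(\mu+A)f$, and assume (after scaling) $\|v\|_\infty\le\mu$, the goal being $|f|\le 1$ a.e.\ — equivalently $f=f_\wedge$, i.e.\ $\|f-f_\wedge\|_p=0$. Test the identity $Af = v-\mu f$ against the functional $f-f_\wedge$ in the semi-inner product. On one hand $\Real[Af,f-f_\wedge]\ge 0$ by hypothesis. On the other hand $\Real[v-\mu f,\,f-f_\wedge] = \Real[v,f-f_\wedge]-\mu\Real[f,f-f_\wedge]$. The second term: since $f-f_\wedge = \mathbf 1_{|f|>1}(|f|-1)\sgn f$, a direct computation (using $\Real[\,\cdot\,,g]$ picks out the component along $\sgn g$) gives $\Real[f,f-f_\wedge] \ge \|f-f_\wedge\|_p\cdot(\text{something}\ge 1)$ — more precisely $\Real[f, f-f_\wedge]\ge \||f|-1)_+\|_p$ type lower bound, while $|\Real[v,f-f_\wedge]|\le \|v\|_\infty\|f-f_\wedge\|_{p}/\|f-f_\wedge\|_p^{2-p}\cdot(\dots)\le \mu\cdot(\text{norm of } f-f_\wedge)$; combining, $0\le \Real[Af,f-f_\wedge]\le (\|v\|_\infty-\mu)(\text{positive quantity})\le 0$, forcing $f-f_\wedge=0$. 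For the reverse direction $(\Rightarrow)$, given the $L^\infty$-contraction bound on the resolvent, I would take $f\in\mathcal D(A)$, write $f = (\mu+A)^{-1}v$ with $v=(\mu+A)f$, and exploit that $\mu^{-1}\|v\|_\infty \ge \|f\|_\infty$ together with a one-parameter family trick: replace $f$ by $f_\wedge$-comparisons at scale, or more efficiently use that for $f$ with $\|f\|_\infty$ possibly $>1$, the contraction property applied to $v/\|(\mu f+\cdots)\|$ yields, upon differentiating/estimating, the dissipativity inequality $\Real[Af,f-f_\wedge]\ge 0$; the cleanest route is to note $\Real[Af,f-f_\wedge] = \lim_{t\downarrow 0}\tfrac1t\Real[f - e^{-tA}f,\,f-f_\wedge] = \lim_{t\downarrow 0}\tfrac1t\Real[f,f-f_\wedge] - \tfrac1t\Real[e^{-tA}f,f-f_\wedge]$, and since $\|e^{-tA}\|_{\infty\to\infty}\le 1$ one bounds $\Real[e^{-tA}f,f-f_\wedge]\le \Real[f,f-f_\wedge]$ by a convexity/truncation argument (the functional $g\mapsto \Real[g,f-f_\wedge]$ is dominated on the sublevel sets), giving nonnegativity in the limit.

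The main obstacle, as usual with these Phillips–Stampacchia criteria, is the careful manipulation of the semi-inner product $[f,g]=\langle f,|g|^{p-1}\sgn g\rangle\|g\|_p^{2-p}$ and the truncation $f_\wedge$: one must verify that $f-f_\wedge\in L^p$ and compute $\Real[f,f-f_\wedge]$ and $\Real[e^{-tA}f,f-f_\wedge]$ with the right inequalities — in particular the pointwise inequality
\[
\Real\bigl(\bar g\,|h|^{p-1}\sgn h\bigr)\le \Real\bigl(\bar h\,|h|^{p-1}\sgn h\bigr) = |h|^p \quad\text{whenever } |g|\le \max(|h|,1)\ \text{on}\ \{h\neq0\},
\]
which is what lets the $L^\infty$-contraction of $e^{-tA}$ feed into the estimate. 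Since the excerpt attributes the full chain \eqref{eqn:i_}+\eqref{eqn:ii_} $\Leftrightarrow$ \eqref{eqn:iprime}+\eqref{eqn:iiprime} to \cite[Sect.\,1]{LS} via Proposition \ref{prop1} and Proposition \ref{prop2}, I would present the two implications of Proposition \ref{prop2} in the streamlined resolvent form above, relegating the elementary but tedious $f_\wedge$-computations to a short lemma, and cite \cite[Sect.\,1]{LS} for the routine measure-theoretic verifications rather than reproducing them in full.
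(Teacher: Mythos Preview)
The paper does not actually prove Proposition~\ref{prop2}: it only states the result and refers the reader to \cite[Sect.\,1]{LS}. So there is no ``paper's own proof'' to compare against; your sketch is filling in what the authors deliberately outsourced.

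That said, your outline is broadly the standard one and would work, but the $(\Rightarrow)$ direction as written is too vague to be a proof. The phrase ``by a convexity/truncation argument'' hides the only nontrivial step. What actually closes it is the decomposition $f = f_\wedge + (f-f_\wedge)$: since $|f_\wedge|\le 1$ and $f_\wedge\in L^p\cap L^\infty$, the hypothesis gives $|e^{-tA}f_\wedge|\le 1$ a.e., whence
\[
\Real[e^{-tA}f_\wedge,\,f-f_\wedge]\;\le\;\int \mathbf 1_{|f|>1}(|f|-1)^{p-1}\,\|f-f_\wedge\|_p^{2-p}\;=\;\Real[f_\wedge,\,f-f_\wedge],
\]
while $\Real[e^{-tA}(f-f_\wedge),\,f-f_\wedge]\le \|f-f_\wedge\|_p^2=[f-f_\wedge,\,f-f_\wedge]$ by $L^p$-contractivity. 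Adding gives $\Real[f-e^{-tA}f,\,f-f_\wedge]\ge 0$, and dividing by $t$ and letting $t\downarrow 0$ yields the claim. Your displayed ``pointwise inequality'' does not express this cleanly and as stated is not quite the right hypothesis; the point is precisely $|e^{-tA}f_\wedge|\le 1$, not a bound on $e^{-tA}f$.

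The $(\Leftarrow)$ direction via the resolvent is fine in principle, though your inequalities there are written loosely (phrases like ``something $\ge 1$'' and mismatched norm exponents). The clean computation is: for $v\in L^p\cap L^\infty$ with $\|v\|_\infty\le\mu$ and $f=(\mu+A)^{-1}v$, one has $0\le\Real[Af,\,f-f_\wedge]=\Real[v,\,f-f_\wedge]-\mu\Real[f,\,f-f_\wedge]$, and a direct evaluation on $\{|f|>1\}$ shows the right-hand side is $\le -\mu\|f-f_\wedge\|_p^2$, forcing $f=f_\wedge$. The general bound $\|\mu(\mu+A)^{-1}v\|_\infty\le\|v\|_\infty$ then follows by scaling.
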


\smallskip

\section{Trotter's Approximation Theorem}

\label{trotter_sect}

Consider a sequence $\{e^{-tA_k}\}_{k=1}^\infty$ of $C_0$ semigroups on a (complex) Banach space $Y$.

\begin{theorem}[{H.F.\,Trotter \cite[Ch.\,IX, sect.\,2]{Ka}}]
Let
$\sup_k\|(\mu+A_k)^{-m}\|_{Y \rightarrow Y} \leq M(\mu-\omega)^{-m}$, $m=1,2,\dots$, $\mu>\omega$, and 
$s\mbox{-}\lim_{\mu \rightarrow \infty}\mu(\mu+ A_k)^{-1}=1$ uniformly in $k$, and let $s\mbox{-}\lim_{k}(\zeta+ A_k)^{-1}$
exist for some $\zeta$ with ${\rm Re\,} \zeta>\omega$. Then there is a $C_0$ semigroup $e^{-tA}$ such that
$$
(z+ A_k)^{-1} \overset{s}{\rightarrow} (z+ A)^{-1} \quad \text{ for every } {\rm Re}\, z>\omega,
$$
and
$$
e^{-tA_k} \overset{s}{\rightarrow} e^{-tA}
$$
uniformly in any finite interval of $t \geq 0$.
\end{theorem}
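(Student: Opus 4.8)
The statement to prove is Trotter's Approximation Theorem, exactly as quoted from Kato \cite[Ch.\,IX, sect.\,2]{Ka} at the end of Appendix \ref{trotter_sect}. Since this is a citation to a standard textbook, the only task is to give a self-contained argument along Kato's lines. The plan is to reduce everything to a statement about pseudo-resolvents and then invoke the Hille-Yosida generation theorem. First I would fix $\omega$ and recall that the stability hypothesis $\sup_k\|(\mu+A_k)^{-m}\|_{Y\to Y}\le M(\mu-\omega)^{-m}$ for all $m\ge 1$, $\mu>\omega$, is precisely the Hille-Yosida bound guaranteeing $\|e^{-tA_k}\|_{Y\to Y}\le Me^{\omega t}$ uniformly in $k$; in particular each $A_k$ generates a $C_0$ semigroup with a common type. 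The resolvent bound also passes to any strong limit: if $R_\zeta:=s\mbox{-}\lim_k(\zeta+A_k)^{-1}$ exists for one $\zeta$ with $\Real\zeta>\omega$, then using the resolvent identity $(\zeta'+A_k)^{-1}=(\zeta+A_k)^{-1}\sum_{n\ge0}\bigl((\zeta-\zeta')(\zeta+A_k)^{-1}\bigr)^n$ (Neumann series convergent uniformly in $k$ for $|\zeta-\zeta'|<(\Real\zeta-\omega)/M$) one extends the strong limit to a neighborhood of $\zeta$, and then by a connectedness/continuation argument to the whole half-plane $\{\Real\zeta>\omega\}$. Call the limit $R_\zeta$ throughout; it satisfies $\|R_\zeta^m\|_{Y\to Y}\le M(\Real\zeta-\omega)^{-m}$ and, by passing to the limit in the resolvent identity for the $A_k$, $R_\zeta-R_{\zeta'}=(\zeta'-\zeta)R_\zeta R_{\zeta'}$, i.e.\ $\{R_\zeta\}$ is a pseudo-resolvent on $\{\Real\zeta>\omega\}$.

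The second step is to identify $R_\zeta$ as a genuine resolvent. By the theory of pseudo-resolvents (Appendix \ref{hille_theory_sect}, Theorem \ref{hille_thm1}), $R_\zeta$ is the resolvent of a densely defined closed operator $-A$ provided the null space $\Ker R_\zeta$ is trivial and the range $\mathrm{Ran}\,R_\zeta$ is dense. Both follow from the hypothesis $s\mbox{-}\lim_{\mu\to\infty}\mu(\mu+A_k)^{-1}=1$ uniformly in $k$: passing to the limit in $k$ gives $\mu R_\mu\overset{s}\to 1$ as $\mu\uparrow\infty$, which forces $\mathrm{Ran}\,R_\mu$ dense (its closure contains $\lim_\mu \mu R_\mu y=y$ for every $y$) and $\Ker R_\mu=\{0\}$ (if $R_\mu y=0$ then $R_\nu y=0$ for all $\nu$ by the pseudo-resolvent identity, hence $y=\lim \nu R_\nu y=0$). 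Thus $R_\zeta=(\zeta+A)^{-1}$ for a closed, densely defined $A$, and the inherited bound $\|R_\zeta^m\|\le M(\Real\zeta-\omega)^{-m}$ together with the Hille-Yosida-Feller-Miyadera-Phillips theorem shows $-A$ generates a $C_0$ semigroup $e^{-tA}$ with $\|e^{-tA}\|\le Me^{\omega t}$.

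The third and final step is the semigroup convergence $e^{-tA_k}\overset{s}\to e^{-tA}$ uniformly on compact $t$-intervals. Here I would use the standard telescoping identity: for $y\in D(A)$ (equivalently $y=R_\mu x$ for some fixed $\mu>\omega$, $x\in Y$),
\[
\bigl(e^{-tA_k}-e^{-tA}\bigr)y=\int_0^t \frac{d}{ds}\Bigl(e^{-(t-s)A_k}e^{-sA}y\Bigr)\,ds=\int_0^t e^{-(t-s)A_k}\bigl(A-A_k\bigr)e^{-sA}y\,ds,
\]
and then rewrite $(A-A_k)$ acting on the smoothed vector through resolvents so that what appears is $(\mu+A_k)^{-1}-(\mu+A)^{-1}=R^{(k)}_\mu-R_\mu\overset{s}\to0$; more precisely one works with $\bigl(e^{-tA_k}-e^{-tA}\bigr)R_\mu x$ and commutes $R_\mu^{(k)}$ (resp.\ $R_\mu$) past the semigroups, reducing the whole difference to an integral of uniformly bounded operators applied to $(R^{(k)}_\mu-R_\mu)(\text{something bounded})x$, which tends to $0$ strongly. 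The uniform bound $\|e^{-tA_k}\|\le Me^{\omega t}$ and the density of $D(A)$ then upgrade the convergence from $D(A)$ to all of $Y$, uniformly for $t$ in any finite interval. The only delicate point—the main obstacle—is the bookkeeping in this last telescoping estimate: one must arrange the resolvent insertions so that \emph{no} unbounded operator is ever applied to a vector that is merely strongly convergent (only bounded operators may be applied there), and must confirm that the "$\varepsilon/3$" splitting into (i) a $D(A)$-approximation, (ii) a finite-$t$ uniform semigroup bound, and (iii) the resolvent-convergence term is uniform in $t$ on compacta; the strong (not uniform operator) nature of all limits has to be tracked carefully throughout. Everything else is routine once the pseudo-resolvent is in hand.
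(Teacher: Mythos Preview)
Your proposal is a correct and standard sketch of the proof along Kato's lines; there is nothing to compare, because the paper does not prove this theorem at all --- it merely cites it from \cite[Ch.\,IX, sect.\,2]{Ka} and adds the one-line remark that the first hypothesis is met whenever $\sup_k\|(z+A_k)^{-1}\|\le C|z-\omega|^{-1}$. Your three-step outline (Neumann-series extension of the resolvent limit, pseudo-resolvent identification via $\mu R_\mu\to 1$, and the telescoping/smoothing argument for semigroup convergence) is exactly the classical route.
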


The first condition of the theorem is satisfied if e.g.\,$\sup_k\|(z+A_k)^{-1}\|_{Y \rightarrow Y} \leq C|z-\omega|^{-1}$, ${\rm Re\,}z>\omega$.

\smallskip

\section{Hille's theorems on pseudo-resolvents}

\label{hille_theory_sect}

Let $Y$ be a (complex) Banach space. A pseudo-resolvent $R_\zeta$ is a function defined on a subset $\mathcal O$ of the complex $\zeta$-plane with values in $\mathcal B(Y)$ such that
$$
R_\zeta - R_\eta=(\eta-\zeta)R_\zeta R_\eta, \quad \zeta, \eta \in \mathcal O.
$$
Clearly, $R_\zeta$ have common null-set.

\begin{theorem}
\label{hille_thm1}
If the null-set of $R_\zeta$ is $\{0\}$, then $R_\zeta$ is the resolvent of a closed linear operator $A$, the range of $R_\zeta$ coincides with $D(A)$, and $A=R_{\zeta}^{-1}-\zeta$.
 \end{theorem}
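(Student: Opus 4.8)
The statement to be proved is Hille's first theorem on pseudo-resolvents (Theorem \ref{hille_thm1}): if a pseudo-resolvent $R_\zeta$, $\zeta\in\mathcal O$, has trivial null-set $\{0\}$, then there is a closed operator $A$ with $R_\zeta=(\zeta+A)^{-1}$, $\mathcal O\subset\rho(-A)$, $\mathrm{Ran}\,R_\zeta=D(A)$, and $A=R_\zeta^{-1}-\zeta$. The plan is to fix one $\zeta_0\in\mathcal O$ and \emph{define} $A:=R_{\zeta_0}^{-1}-\zeta_0$ on the domain $D(A):=\mathrm{Ran}\,R_{\zeta_0}$; the resolvent equation will force everything else, and the triviality of the null-set is exactly what makes $R_{\zeta_0}^{-1}$ a well-defined (single-valued) operator.

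First I would record the standard elementary consequences of the resolvent identity $R_\zeta-R_\eta=(\eta-\zeta)R_\zeta R_\eta$: the operators $R_\zeta$ commute with one another (swap $\zeta,\eta$ and subtract), they have a \emph{common} null-set $N$ and a common range $\mathrm{Ran}\,R_\zeta=:\mathcal R$ (from $R_\zeta=R_\eta(1+(\eta-\zeta)R_\zeta)$ and its symmetric counterpart, each range is contained in the other), and $R_\zeta$ restricted to $\mathcal R$ is injective when $N=\{0\}$. Under the hypothesis $N=\{0\}$, the inverse map $R_{\zeta_0}^{-1}:\mathcal R\to Y$ is thus a well-defined linear operator with domain $\mathcal R$, so $A:=R_{\zeta_0}^{-1}-\zeta_0$ is a well-defined linear operator with $D(A)=\mathcal R$.

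Next I would check the three asserted identities. \emph{(1) $R_{\zeta_0}=(\zeta_0+A)^{-1}$:} by construction $\zeta_0+A=R_{\zeta_0}^{-1}$ on $D(A)=\mathrm{Ran}\,R_{\zeta_0}$, so $(\zeta_0+A)R_{\zeta_0}=\mathrm{Id}$ on $Y$ and $R_{\zeta_0}(\zeta_0+A)=\mathrm{Id}$ on $D(A)$, i.e.\ $\zeta_0\in\rho(-A)$ with resolvent $R_{\zeta_0}$. \emph{(2) $R_\zeta=(\zeta+A)^{-1}$ for all $\zeta\in\mathcal O$:} rewrite the resolvent identity as $R_\zeta(1+(\zeta-\zeta_0)R_{\zeta_0})=R_{\zeta_0}$, hence for $u\in D(A)$, applying $R_{\zeta_0}^{-1}=\zeta_0+A$ and rearranging gives $R_\zeta(\zeta+A)u=u$; conversely $R_\zeta$ maps $Y$ into $\mathcal R=D(A)$ and $(\zeta+A)R_\zeta=\mathrm{Id}$ follows the same way. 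This shows $\mathcal O\subset\rho(-A)$ and $A=R_\zeta^{-1}-\zeta$ for every $\zeta\in\mathcal O$, and also re-confirms $\mathrm{Ran}\,R_\zeta=D(A)$ independent of $\zeta$. \emph{(3) $A$ is closed:} $A$ has nonempty resolvent set (it contains $-\zeta_0$), and an operator with nonempty resolvent set is automatically closed — concretely, if $u_n\in D(A)$, $u_n\to u$, $Au_n\to v$, put $f_n:=(\zeta_0+A)u_n\to \zeta_0 u+v=:f$, then $u_n=R_{\zeta_0}f_n\to R_{\zeta_0}f$ by boundedness of $R_{\zeta_0}$, so $u=R_{\zeta_0}f\in D(A)$ and $(\zeta_0+A)u=f$, i.e.\ $Au=v$.

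I do not expect a genuine obstacle here; the only point needing care is the logical ordering — one must extract ``trivial common null-set'' and ``common range, independent of $\zeta$'' from the resolvent identity \emph{before} writing $R_{\zeta_0}^{-1}$, since single-valuedness of the inverse is precisely what the null-set hypothesis buys. After that, closedness is a free consequence of having a bounded everywhere-defined inverse of $\zeta_0+A$. (Density of $D(A)$ is deliberately \emph{not} claimed in this theorem — that is the content of the companion Theorem \ref{hille_thm2} and requires the extra hypothesis $\mu R_\mu\to\mathrm{Id}$ strongly — so I would not attempt to prove it here.)
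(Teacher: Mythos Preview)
Your proposal is correct and follows essentially the same approach as the paper: define $A:=R_{\zeta_0}^{-1}-\zeta_0$ and verify the inverse relations. The paper's proof is terser --- it deduces closedness directly from ``$R_\zeta$ bounded $\Rightarrow$ closed $\Rightarrow$ $R_\zeta^{-1}$ closed $\Rightarrow$ $A$ closed'' rather than your resolvent-set argument, and it does not spell out (as you do) the independence of $A$ from the choice of $\zeta$ --- but the underlying idea is identical.
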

\begin{proof}
Put $A:=R_{\zeta}^{-1}-\zeta$. Since $R_{\zeta}$ is closed, so is 
$R_{\zeta}^{-1}$ and $A$. A straightforward calculation shows that $(\zeta+A)R_\zeta f=f$, $f \in Y$, and $R_\zeta(\zeta+A)g=g$, $g \in D(A)$, as needed.
\end{proof}

\begin{theorem}
\label{hille_thm2}
If there exists a sequence of numbers $\{\mu_k\} \subset \mathcal O$ such that $\lim_k|\mu_k|=\infty$ and $\sup_k \|\mu_kR_{\mu_k}\|_{Y \rightarrow Y}<\infty$, then the set $\{y \in Y: \lim_k \mu_k R_{\mu_k}y=y\}$ is contained in the closure of the range of $R_\zeta$.
\end{theorem}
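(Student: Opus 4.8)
\textbf{Proof proposal for Theorem \ref{hille_thm2}.}

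The plan is to exploit the resolvent-type identity together with the uniform bound $\sup_k \|\mu_k R_{\mu_k}\|_{Y\to Y}<\infty$ and the standard density argument. First I would fix $\zeta\in\mathcal O$ and observe that, by the pseudo-resolvent identity applied to the pair $(\mu_k,\zeta)$, one has
\[
\mu_k R_{\mu_k} - \mu_k R_\zeta = \mu_k(\zeta-\mu_k) R_{\mu_k} R_\zeta,
\]
so that for every $y\in Y$,
\[
\mu_k R_{\mu_k} R_\zeta y = R_\zeta y + \frac{\mu_k}{\mu_k-\zeta}\bigl(\mu_k R_{\mu_k} R_\zeta y - R_\zeta y\bigr) - \ldots
\]
—more cleanly, rearranging the identity gives
\[
\Bigl(1-\tfrac{\zeta}{\mu_k}\Bigr)\mu_k R_{\mu_k} R_\zeta y = R_\zeta y,
\]
hence $\mu_k R_{\mu_k}(R_\zeta y) = \bigl(1-\zeta/\mu_k\bigr)^{-1} R_\zeta y \to R_\zeta y$ as $k\to\infty$, using $|\mu_k|\to\infty$. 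Thus $\mu_k R_{\mu_k} z \to z$ for every $z$ in the range $\mathcal R:=\operatorname{Ran} R_\zeta$ (which is the same for all $\zeta\in\mathcal O$).

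Next I would pass to the closure $\overline{\mathcal R}$ by a routine $3\varepsilon$-argument: let $M:=\sup_k\|\mu_k R_{\mu_k}\|_{Y\to Y}<\infty$. Given $z\in\overline{\mathcal R}$ and $\varepsilon>0$, pick $z'\in\mathcal R$ with $\|z-z'\|<\varepsilon$; then
\[
\|\mu_k R_{\mu_k} z - z\| \le \|\mu_k R_{\mu_k}(z-z')\| + \|\mu_k R_{\mu_k} z' - z'\| + \|z'-z\| \le (M+1)\varepsilon + \|\mu_k R_{\mu_k}z'-z'\|,
\]
and the last term tends to $0$, so $\limsup_k\|\mu_k R_{\mu_k}z-z\|\le(M+1)\varepsilon$; letting $\varepsilon\downarrow0$ gives $\mu_k R_{\mu_k}z\to z$.

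Finally, to get the stated inclusion, suppose $y\in Y$ satisfies $\mu_k R_{\mu_k}y\to y$. I would show $y\in\overline{\mathcal R}$: indeed $\mu_k R_{\mu_k}y\in\mathcal R$ for each $k$ (since $R_{\mu_k}y$ lies in the common range $\mathcal R$), and $\mu_k R_{\mu_k}y\to y$, so $y$ is a limit of elements of $\mathcal R$, i.e.\ $y\in\overline{\mathcal R}$. This is exactly the assertion that $\{y\in Y:\lim_k\mu_kR_{\mu_k}y=y\}\subset\overline{\operatorname{Ran}R_\zeta}$. There is no serious obstacle here; the only point requiring a little care is the elementary manipulation of the pseudo-resolvent identity to extract the factor $(1-\zeta/\mu_k)^{-1}$ and the observation that $R_{\mu_k}y\in\mathcal R$ because all the $R_\eta$ share a common range (a direct consequence of the pseudo-resolvent identity). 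Combined with Theorem \ref{hille_thm1}, this lemma is what makes the Hille/Trotter construction of the limiting semigroup go through in the applications above.
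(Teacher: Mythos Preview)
Your final paragraph is exactly the paper's proof: if $\mu_k R_{\mu_k}y\to y$, then $y$ is a limit of elements $\mu_k R_{\mu_k}y$ lying in the common range, hence $y\in\overline{\operatorname{Ran}R_\zeta}$. That is all the theorem asserts, and the paper dispatches it in two lines.

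Your first two steps are superfluous for the stated inclusion: they establish the \emph{reverse} containment $\overline{\operatorname{Ran}R_\zeta}\subset\{y:\mu_kR_{\mu_k}y\to y\}$, which is a useful companion fact but not what is being proved here. Incidentally, the displayed identity $(1-\zeta/\mu_k)\mu_kR_{\mu_k}R_\zeta y=R_\zeta y$ is not quite right --- the pseudo-resolvent identity gives $(\mu_k-\zeta)R_{\mu_k}R_\zeta=R_\zeta-R_{\mu_k}$, so the right-hand side should be $R_\zeta y-R_{\mu_k}y$; since $R_{\mu_k}y\to0$ your conclusion survives, but the slip is worth noting.
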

\begin{proof}
Indeed, let $\lim_k \mu_k R_{\mu_k}y=y$. That is, for every $\varepsilon>0,$ there exists $k$ such that $\|y-\mu_k R_{\mu_k}y\|<\varepsilon$, so $y$ belongs to the closure of the range of $R_\zeta$.
\end{proof}

See \cite[Sect.\,5.2]{HP},  \cite[Ch.\,VIII, sect.\,4]{Yos}.

\smallskip

\section{$L^r$-inequalities for symmetric Markov generators}

\label{markov_sect}

Let $X$ be a set and $\mu$ a $\sigma$-finite measure on X. Let $T^t=e^{-t A}$, $t \geq 0$, be a symmetric Markov semigroup in $L^2(X, \mu)$. 

\begin{theorem}[{\cite[Theorem 2.1]{LS}}]
\label{thm:markovest}
If $f \in D(A_r)$ for some $r \in ]1, \infty[$, then $f_{(r)} := |f|^\frac{r}{2} \sgn f, |f|^{\frac{r}{2}}  \in D(A^\frac{1}{2})$ and
\[
\label{eqn:est1}
	\frac{4}{r r^\prime} \| A^\frac{1}{2} f_{(r)} \|^2_2 \leq
		\Real \langle A_r f, |f|^{r-1} \sgn f \rangle \leq \varkappa(r) \| A^\frac{1}{2} f_{(r)} \|^2_2,
		\tag{i}
\]
where $\varkappa(r) := \sup_{s \in [0, 1]} \{ (1 + s^\frac{1}{r}) (1 + s^\frac{1}{r'}) (1 + s^\frac{1}{2})^{-2} \}, \; r^\prime = r (r-1)^{-1}$;

\[
\label{eqn:est2}
	\big| \Imag \langle A_r f, |f|^{r-1} \sgn f \rangle \big| \leq \frac{|r-2|}{2\sqrt{r-1}}
		\Real \langle A_r f, |f|^{r-1} \sgn f \rangle;
		\tag{ii}
\]
If $0 \leq f \in D(A_r)$, then 
\[
\label{eqn:est3}
	\frac{4}{r r^\prime} \| A^\frac{1}{2} f^\frac{r}{2} \|^2_2 \leq
		\langle A_r f, f^{r-1} \rangle \leq \| A^\frac{1}{2} f^\frac{r}{2} \|^2_2;
		\tag{iii}
\]
If $r \in [2, \infty[$ and $f \in D(A) \cap L^\infty$, 
then $|f|^{\frac{r}{2}}$, $f_{(r)} \in D(A^\frac{1}{2})$ and
\[
\label{eqn:est1__}
	\frac{4}{r r^\prime} \| A^\frac{1}{2} f_{(r)} \|^2_2 \leq
		\Real \langle A f, |f|^{r-1} \sgn f \rangle \leq \varkappa(r) \| A^\frac{1}{2} f_{(r)} \|^2_2,
				\tag{iv}
		\]
\[
\label{eqn:est2__}
	\big| \Imag \langle A f, |f|^{r-1} \sgn f \rangle \big| \leq \frac{|r-2|}{2\sqrt{r-1}}
		\Real \langle A f, |f|^{r-1} \sgn f \rangle;
		\tag{v}
\]
If $r \in [2, \infty[$ and $0 \leq f \in D(A) \cap L^\infty$, then $f^\frac{r}{2} \in D(A^\frac{1}{2})$ and
\[
\label{eqn:est4}
	\frac{4}{r r^\prime} \| A^\frac{1}{2} f^\frac{r}{2} \|^2_2 \leq
		\langle A f, f^{r-1} \rangle \leq \| A^\frac{1}{2} f^\frac{r}{2} \|^2_2.
		\tag{vi}
\]
\end{theorem}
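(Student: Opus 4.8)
The plan is to reduce the whole statement to a family of elementary pointwise inequalities for pairs of complex numbers and then integrate them against the symmetric sub-Markovian structure carried by $T^t$. Two standard facts will be used without further comment. \textbf{(1)} For $g\in L^2$ one has $g\in D(A^{1/2})$ if and only if $\sup_{t>0}\frac1t\langle (1-T^t)g,g\rangle<\infty$, and then, by the spectral theorem, $\frac1t\langle(1-T^t)g,g\rangle\uparrow\|A^{1/2}g\|_2^2$ as $t\downarrow0$; moreover $D(A^{1/2})$ (being a Dirichlet form domain) is stable under normal contractions, in particular $h\in D(A^{1/2})\Rightarrow|h|\in D(A^{1/2})$ with $\|A^{1/2}|h|\|_2\le\|A^{1/2}h\|_2$. \textbf{(2)} Since $e^{-tA}$ is a symmetric Markov semigroup, for each $t>0$ the operator $T^t$ is a symmetric sub-Markovian kernel operator (consistent on all $L^p$), so for $f\in L^p$, $g\in L^{p'}$ the approximating form $\mathcal E_t(f,g):=\frac1t\langle(1-T^t)f,g\rangle$ admits the Beurling--Deny type representation
\[
\mathcal E_t(f,g)=\frac12\int_{X\times X}\big(f(x)-f(y)\big)\overline{\big(g(x)-g(y)\big)}\,J_t(dx,dy)+\int_X f\bar g\,\kappa_t(dx),
\]
with $J_t\ge0$ symmetric and $\kappa_t\ge0$; and $\frac1t(1-T^t)f\to A_rf$ in $L^r$ for $f\in D(A_r)$, resp. $\to Af$ in $L^2$ for $f\in D(A)$.

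The heart of the matter is the following pointwise claim. For $z\in\mathbb C$ put $z_{(r)}:=|z|^{r/2}\sgn z$ and $\hat z:=|z|^{r-1}\sgn z$. Then for all $z,w\in\mathbb C$
\[
\frac{4}{rr'}\,\big|z_{(r)}-w_{(r)}\big|^2\ \le\ \Real\big[(z-w)\overline{(\hat z-\hat w)}\big]\ \le\ \varkappa(r)\,\big|z_{(r)}-w_{(r)}\big|^2,
\]
\[
\big|\Imag\big[(z-w)\overline{(\hat z-\hat w)}\big]\big|\ \le\ \frac{|r-2|}{2\sqrt{r-1}}\,\Real\big[(z-w)\overline{(\hat z-\hat w)}\big],
\]
while for $z,w\ge0$ the middle upper bound holds with $\varkappa(r)$ replaced by $1$ and the imaginary part vanishes. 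I would prove this by exploiting that both sides are homogeneous of degree $r$ in $(z,w)$, invariant under $(z,w)\mapsto(e^{i\theta}z,e^{i\theta}w)$, and symmetric in $z\leftrightarrow w$, so one may normalize $|w|=1$, $w\ge0$, $|z|\le1$ (the case $w=0$ being immediate) and write $z=\rho e^{i\phi}$, $\rho\in[0,1]$. The maximization over $\phi$ of $|\Imag|/\Real$ is then the elementary identity $\max_\phi\frac{a|\sin\phi|}{b-c\cos\phi}=a(b^2-c^2)^{-1/2}$ with $a=\rho|\rho^{r-2}-1|$, $b=\rho^r+1$, $c=\rho+\rho^{r-1}$, where $b^2-c^2=(1-\rho^2)(1-\rho^{2(r-1)})$ and $b-c=(1-\rho)(1-\rho^{r-1})\ge0$; the remaining one-variable optimization in $\rho$ gives exactly $\frac{|r-2|}{2\sqrt{r-1}}$, attained in the limit $\rho\to1$. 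The two-sided bound on the real part reduces, after the same normalization, to monotonicity in $s:=|w|/|z|$ of the pertinent ratio, whose supremum over $s\in[0,1]$ is precisely the quantity defining $\varkappa(r)$, the pure-modulus real configurations supplying the extremals. These optimizations are elementary but constitute the only genuinely technical part of the argument, and identifying the \emph{sharp} constants $\varkappa(r)$ and $\frac{|r-2|}{2\sqrt{r-1}}$ is the main obstacle.

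With the pointwise inequalities available the rest is routine. Fix $f$ and set $g:=|f|^{r-1}\sgn f=\hat f$; for $f\in D(A_r)$ one has $f_{(r)}=|f|^{r/2}\sgn f\in L^2$ and $g\in L^{r'}$, and for $f\in D(A)\cap L^\infty$ with $r\ge2$ the bound $|f|^{r-1}\le\|f\|_\infty^{r-2}|f|$ gives $g,f_{(r)}\in L^2$. Insert $g$ in the representation of $\mathcal E_t$ and apply the pointwise bounds under the $J_t$ and $\kappa_t$ integrals, using $\Real(f\bar g)=|f|^r=|f_{(r)}|^2\ge0$, $\Imag(f\bar g)=0$ and $\frac4{rr'}\le1\le\varkappa(r)$ for the $\kappa_t$ term. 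This yields
\[
\tfrac{4}{rr'}\,\mathcal E_t(f_{(r)},f_{(r)})\ \le\ \Real\,\mathcal E_t(f,g)\ \le\ \varkappa(r)\,\mathcal E_t(f_{(r)},f_{(r)}),\qquad \big|\Imag\,\mathcal E_t(f,g)\big|\ \le\ \tfrac{|r-2|}{2\sqrt{r-1}}\,\Real\,\mathcal E_t(f,g).
\]

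Finally let $t\downarrow0$. For $f\in D(A_r)$ the right-hand inequality shows that $\mathcal E_t(f_{(r)},f_{(r)})\le\frac{rr'}{4}\Real\,\mathcal E_t(f,g)\to\frac{rr'}{4}\Real\langle A_rf,g\rangle<\infty$ stays bounded, so by fact (1) $f_{(r)}\in D(A^{1/2})$ with $\mathcal E_t(f_{(r)},f_{(r)})\uparrow\|A^{1/2}f_{(r)}\|_2^2$, and then $|f|^{r/2}=|f_{(r)}|\in D(A^{1/2})$ by stability under normal contractions. Passing to the limit in the displayed chain gives \eqref{eqn:est1} and \eqref{eqn:est2}; when $0\le f\in D(A_r)$ all quantities are real and $\varkappa(r)$ is replaced by $1$, giving \eqref{eqn:est3}. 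For $f\in D(A)\cap L^\infty$ and $r\ge2$ one argues identically, now using $\frac1t(1-T^t)f\to Af$ in $L^2$ and $g\in L^2$ to get $\lim_t\mathcal E_t(f,g)=\langle Af,g\rangle$, which yields \eqref{eqn:est1__}, \eqref{eqn:est2__} and, for $f\ge0$, \eqref{eqn:est4}.
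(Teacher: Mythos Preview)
The paper does not give its own proof of this theorem but cites \cite[Theorem 2.1]{LS}, noting only that the arguments for (i)--(iii) transfer verbatim to (iv)--(vi). Your proof is correct and coincides with the approach of the cited reference: establish the sharp pointwise two-point inequalities, integrate them against the Beurling--Deny decomposition of the bounded approximating form $t^{-1}\langle(1-T^t)\,\cdot\,,\,\cdot\,\rangle$, and pass to the limit $t\downarrow 0$ via the spectral characterization of $D(A^{1/2})$.
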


(The proof of (iii) works in the assumptions of (vi); the proof of (i), (ii) works in the assumptions of (iv), (v).)

\bigskip

\section{Extrapolation Theorem}

\label{appendix_B}

\begin{theorem}[{T.\,Coulhon-Y.\,Raynaud.} {\cite[Prop.\,II.2.1, Prop.\,II.2.2]{VSC}}.]
Let $U^{t,s}: L^1 \cap L^\infty \rightarrow L^1 + L^\infty$ be a two-parameter evolution family of operators:
\[U^{t,s} = U^{t,\tau}U^{\tau,s}, \quad 0 \leq s < \tau < t \leq \infty.
\]
Suppose that, for some $1 \leq p < q < r \leq \infty,$ $\nu > 0,$ $M_1$ and $M_2,$ the inequalities
\[
\| U^{t,s} f \|_p \leq M_1 \| f \|_p \quad \text{ and } \quad \| U^{t,s} f \|_r \leq M_2 (t-s)^{-\nu} \|  f \|_q
\]
are valid for all $(t,s)$ and $f \in L^1 \cap L^\infty.$ Then
\[
\| U^{t,s} f \|_r \leq M (t-s)^{-\nu/(1-\beta)} \| f \|_p ,
\]
where $\beta = \frac{r}{q}\frac{q-p}{r-p}$ and $M = 2^{\nu/(1-\beta)^2} M_1 M_2^{1/(1-\beta)}.$
\end{theorem}

\begin{proof} Set $2 t_s=t+s.$ The hypotheses and H\"older's inequality imply
\begin{align*}
\| U^{t, s} f \|_r & \leq M_2 (t-t_s)^{-\nu} \| U^{t_s,s} f \|_q \\
& \leq M_2 (t-t_s)^{-\nu} \| U^{t_s,s} f \|_r^\beta \;\| U^{t_s,s} f \|_p^{1-\beta} \\
& \leq M_2 M_1^{1-\beta} (t-t_s)^{-\nu} \| U^{t_s,s} f \|_r^\beta \;\| f \|_p^{1-\beta},
\end{align*}
and hence
\[
(t-s)^{\nu/(1-\beta)} \| U^{t,s} f \|_r/\| f \|_p \leq M_2 M_1^{1-\beta} 2^{\nu/(1-\beta)} \big [(t_s -s)^{-\nu/(1-\beta)} \| U^{t_s,s} f \|_r\;/\| f \|_p \big ]^\beta.
\]
Setting $R_{2 T}: = \sup_{t-s \in ]0,T]} \big [ (t-s)^{\nu/(1-\beta)} \| U^{t,s} f \|_r/\| f \|_p \big ],$ we obtain from the last inequality that $R_{2 T} \leq M^{1-\beta} (R_T)^\beta.$ But $R_T \leq R_{2T}$, and so $R_T \leq M.$
\end{proof}

\begin{corollary}
Let $U^{t,s}: L^1 \cap L^\infty \rightarrow L^1 + L^\infty$ be an evolution family of operators. Suppose that, for some $1 < p <q < r \leq \infty,$ $\nu > 0,$ $M_1$ and $M_2,$ the inequalities
\[
\| U^{t,s} f \|_r \leq M_1 \| f \|_r \quad \text{ and } \quad \| U^{t,s} f \|_q \leq M_2 (t-s)^{-\nu} \|  f \|_p
\]
are valid for all $(t,s)$ and $f \in L^1 \cap L^\infty.$ Then
\[
\| U^{t,s} f \|_r \leq M (t-s)^{-\nu/(1-\beta)} \| f \|_p ,
\]
where $\beta = \frac{r}{q} \frac{q-p}{r-p}$ and $M = 2^{\nu/(1-\beta)^2} M_1 M_2^{1/(1-\beta)}.$
\end{corollary}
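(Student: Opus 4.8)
The final statement is the Corollary to the Extrapolation Theorem, which is the "dual" version where we have an $L^r$-boundedness hypothesis and an $L^q \leftarrow L^p$ smoothing hypothesis, with $r$ the large exponent, and want to conclude $L^r \leftarrow L^p$ smoothing.

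\textbf{Plan of proof.}
The plan is to reduce this corollary to the Extrapolation Theorem already proved, by a duality argument. First I would introduce the adjoint evolution family. Given the evolution family $U^{t,s}$, define $V^{t,s} := (U^{s',t'})^{*}$ where primes denote the natural time-reversal, i.e.\ setting $V^{t,s} = (U^{-s,-t})^*$ (or, working on a fixed interval, $V^{t,s} = (U^{T-s, T-t})^*$); the point is only that $V^{t,s}$ is again a two-parameter evolution family, $V^{t,s} = V^{t,\tau}V^{\tau,s}$ for $0 \le s < \tau < t$, because the adjoint reverses the order of composition and the time-reversal un-reverses it. Then I would record how the hypotheses transform under this duality: $\|U^{t,s}f\|_r \le M_1\|f\|_r$ for all $f \in L^1\cap L^\infty$ gives, by duality of $L^r$ with $L^{r'}$, that $\|V^{t,s}g\|_{r'} \le M_1 \|g\|_{r'}$, and similarly $\|U^{t,s}f\|_q \le M_2(t-s)^{-\nu}\|f\|_p$ gives $\|V^{t,s}g\|_{p'} \le M_2 (t-s)^{-\nu}\|g\|_{q'}$.

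Next I would apply the already-proven Extrapolation Theorem to the family $V^{t,s}$ with the exponent triple $(p_{\mathrm{new}}, q_{\mathrm{new}}, r_{\mathrm{new}}) = (r', q', p')$. One checks this is an admissible triple: since $1 < p < q < r \le \infty$, we have $1 \le r' < q' < p' < \infty$, so indeed $r' < q' < p'$ with $r' \ge 1$, matching the hypothesis "$1 \le p < q < r \le \infty$" of the theorem (with its $p$ being our $r'$, its $q$ being our $q'$, its $r$ being our $p'$). The two input inequalities for the theorem are exactly the two dualized inequalities above: $\|V^{t,s}g\|_{r'} \le M_1\|g\|_{r'}$ (the "$L^p$-bounded" hypothesis) and $\|V^{t,s}g\|_{p'} \le M_2(t-s)^{-\nu}\|g\|_{q'}$ (the "$L^q\!\leftarrow\! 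L^r$ smoothing" hypothesis). The conclusion of the theorem yields
\[
\|V^{t,s} g\|_{p'} \le M (t-s)^{-\nu/(1-\beta)} \|g\|_{r'},
\]
with $\beta = \frac{p'}{q'}\cdot\frac{q'-r'}{p'-r'}$ and $M = 2^{\nu/(1-\beta)^2} M_1 M_2^{1/(1-\beta)}$. A short computation shows that this $\beta$ coincides with $\frac{r}{q}\cdot\frac{q-p}{r-p}$ (using $s' = s/(s-1)$ and simplifying), so the stated constant matches.

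Finally I would dualize back: the inequality $\|V^{t,s}g\|_{p'} \le M(t-s)^{-\nu/(1-\beta)}\|g\|_{r'}$ for all $g\in L^1\cap L^\infty$ is equivalent, by taking adjoints once more and undoing the time reversal, to $\|U^{t,s}f\|_r \le M(t-s)^{-\nu/(1-\beta)}\|f\|_p$ for all $f \in L^1\cap L^\infty$, which is precisely the assertion of the corollary. I expect the only mildly delicate points to be purely bookkeeping: (a) making the duality between an evolution family and its time-reversed adjoint precise enough that "evolution family" and "boundedness on $L^1\cap L^\infty$" are genuinely preserved — here one uses that $L^1\cap L^\infty$ is dense in each $L^s$, $1\le s<\infty$, and that the estimates extend by continuity; and (b) verifying the algebraic identity $\frac{p'}{q'}\cdot\frac{q'-r'}{p'-r'} = \frac{r}{q}\cdot\frac{q-p}{r-p}$, which is the genuinely computational step but entirely routine. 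No new ideas beyond the theorem itself are needed; the corollary is the theorem "read in the mirror."
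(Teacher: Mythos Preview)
The paper gives no proof of the Corollary; it is stated as a direct consequence of the preceding Theorem, and duality is indeed the intended route. Your plan is therefore the right one.

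However, your assertion that ``a short computation shows that this $\beta$ coincides with $\frac{r}{q}\cdot\frac{q-p}{r-p}$'' is wrong. Applying the Theorem with $(p_{\rm Th},q_{\rm Th},r_{\rm Th})=(r',q',p')$ gives
\[
\beta_{\rm dual}\;=\;\frac{p'}{q'}\cdot\frac{q'-r'}{p'-r'}
\;=\;\frac{p(q-1)}{q(p-1)}\cdot\frac{(r-q)(p-1)}{(r-p)(q-1)}
\;=\;\frac{p(r-q)}{q(r-p)},
\]
and one checks that $\beta_{\rm dual}+\dfrac{r(q-p)}{q(r-p)}=1$, so the two are \emph{not} equal. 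A sanity check with the heat semigroup ($\nu=\tfrac{d}{2}(\tfrac1p-\tfrac1q)$, target exponent $\tfrac{d}{2}(\tfrac1p-\tfrac1r)$) confirms that $\nu/(1-\beta_{\rm dual})$ is the correct exponent; with the printed $\beta$ the bound is in fact false when $q$ lies below the harmonic mean of $p$ and $r$ (try $p=1$, $q=3/2$, $r=4$).

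So your duality argument is correct, but it proves the Corollary with $\beta=\dfrac{p(r-q)}{q(r-p)}$ (and the constant $M$ adjusted accordingly). The printed formula for $\beta$ is a typo, carried over verbatim from the Theorem without swapping the roles of $p$ and $r$.
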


\section{N.\,Meyers Embedding Theorem}

\begin{theorem}[{See 
 \cite[Theorem 2]{Me}}]
Let $a \in (H_u)$, $\sigma I \leq a(x) \leq \xi I$ $\mathcal L^d$ a.e.\,on $\mathbb R^d$. In $L^2=L^2(\mathbb R^d,\mathcal L^d)$ define $A$, the Dirichlet extension of $-\nabla\cdot a\cdot\nabla$. Then there exists $p>2$ determined by the condition $\|\nabla (1-\Delta)^{-\frac{1}{2}}\|^2_{p \rightarrow p}<\frac{\xi}{\xi-\sigma}$
such that $(1+A_p)^{-1}$ extends to 
$$
(1+A_p)^{-1} \in \mathcal B(\mathcal W^{-1,p},\mathcal W^{1,p}).
$$
\end{theorem}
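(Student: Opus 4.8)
The plan is to prove the theorem by freezing the principal part and treating the rest perturbatively in $L^p$ for $p$ slightly above $2$. Write $-\nabla\cdot a\cdot\nabla=-\xi\Delta+\nabla\cdot(\xi I-a)\cdot\nabla$, where $0\le\xi I-a\le(\xi-\sigma)I$, and set $T:=\nabla(1-\Delta)^{-1/2}$. Since $T$ is a Calderón--Zygmund operator, $p\mapsto\|T\|_{p\to p}$ is continuous on $]1,\infty[$ and $\|T\|_{2\to2}=\sup_{\zeta}|\zeta|(1+|\zeta|^2)^{-1/2}=1<\xi/(\xi-\sigma)$ (strictly, as $\sigma>0$); hence, by Riesz--Thorin interpolation, there is $p_0>2$ with $\|T\|_{p\to p}^2<\xi/(\xi-\sigma)$ for all $2\le p\le p_0$, and any such $p$ is admissible. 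First I would record the Mikhlin-multiplier facts, valid for $1<p<\infty$: $(1-\xi\Delta)^{-1}\in\mathcal B(\mathcal W^{-1,p},\mathcal W^{1,p})$, $(1-\xi\Delta)^{-1}\nabla\cdot\in\mathcal B([L^p]^d,L^p)$, $(1-\Delta)^{-1/2}\in\mathcal B(L^p)$. Moreover, writing
\[
K:=\nabla(1-\xi\Delta)^{-1}\nabla\cdot=\big(\nabla(1-\xi\Delta)^{-1/2}\big)\big((1-\xi\Delta)^{-1/2}\nabla\cdot\big),
\]
and using that $\nabla(1-\xi\Delta)^{-1/2}$ is, up to an $L^p$-isometric dilation, $\xi^{-1/2}T$, together with $\|(1-\Delta)^{-1/2}\nabla\cdot\|_{[L^p]^d\to L^p}=\|\nabla(1-\Delta)^{-1/2}\|_{L^p\to[L^p]^d}=\|T\|_{p\to p}$, one gets $\|K\|_{[L^p]^d\to[L^p]^d}\le\xi^{-1}\|T\|_{p\to p}^2$, while $\|K\|_{2\to2}\le\xi^{-1}$ by the symbol bound. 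Hence $v\mapsto K[(\xi I-a)v]$ is a strict contraction both on $[L^2]^d$ (constant $\le(\xi-\sigma)/\xi$) and on $[L^p]^d$ (constant $\le(\xi-\sigma)\xi^{-1}\|T\|_{p\to p}^2<1$, by the hypothesis).

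Then comes the core estimate. For $f$ in the dense subspace $\mathcal W^{-1,2}\cap\mathcal W^{-1,p}$ (it contains $C_c^\infty$, hence is dense in $\mathcal W^{-1,p}$) let $u:=(1+A_2)^{-1}f\in W^{1,2}$ be the Lax--Milgram solution of the coercive form $\langle u,v\rangle+\langle\nabla u\cdot a\cdot\nabla v\rangle$; by consistency of the $A_r$'s this equals $(1+A_p)^{-1}f$ on $L^2\cap L^p$. Testing with $C_c^\infty$ and integrating by parts gives $(1-\xi\Delta)u=f-\nabla\cdot((\xi I-a)\nabla u)$ in $\mathcal D'$, whence
\[
\nabla u=\nabla(1-\xi\Delta)^{-1}f-K[(\xi I-a)\nabla u].
\]
Since $\nabla(1-\xi\Delta)^{-1}f\in[L^2]^d\cap[L^p]^d$, the Neumann series $\sum_{n\ge0}\big(-K(\xi I-a)\big)^n\nabla(1-\xi\Delta)^{-1}f$ converges geometrically in $[L^2]^d$ and in $[L^p]^d$; its $[L^2]^d$-sum is the unique $L^2$-fixed point of the above equation and hence equals $\nabla u$, and its $[L^p]^d$-sum is some $w$ with $\|w\|_p\le\big(1-(\xi-\sigma)\xi^{-1}\|T\|_{p\to p}^2\big)^{-1}\|\nabla(1-\xi\Delta)^{-1}f\|_p\le C\|f\|_{\mathcal W^{-1,p}}$. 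As the two sums agree a.e.\ (along a common subsequence of partial sums), $\nabla u=w\in[L^p]^d$ with this bound. Feeding this back into $u=(1-\xi\Delta)^{-1}f-(1-\xi\Delta)^{-1}\nabla\cdot((\xi I-a)\nabla u)$ and using the two multiplier bounds gives $\|u\|_p\le C\|f\|_{\mathcal W^{-1,p}}$, so $\|u\|_{\mathcal W^{1,p}}\le C\|f\|_{\mathcal W^{-1,p}}$ with $C=C(\xi,\sigma,p,d)$ independent of $f$. Since $(1+A_p)^{-1}$ is bounded on $L^p\hookrightarrow\mathcal W^{-1,p}$ (it is the Markov resolvent, $\|(1+A_p)^{-1}\|_{p\to p}\le1$) and the above dense set is contained in $L^p$, a density argument extends $(1+A_p)^{-1}$ to an element of $\mathcal B(\mathcal W^{-1,p},\mathcal W^{1,p})$.

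The main obstacle I anticipate is the identification $\nabla u=w$: one must ensure that the $L^p$-valued Neumann series actually computes the gradient of the genuine $L^2$-resolvent $u$, not merely an abstract $L^p$-fixed point. Running the fixed-point equation simultaneously in $L^2$ (where uniqueness holds because $v\mapsto K[(\xi I-a)v]$ is there a strict contraction) and in $L^p$, and matching a.e.\ limits, is the clean way around this. The only other delicate point is the symbol-level equality $\|(1-\Delta)^{-1/2}\nabla\cdot\|_{[L^p]^d\to L^p}=\|\nabla(1-\Delta)^{-1/2}\|_{L^p\to[L^p]^d}$ (a duality/transpose symmetry of the first-order Riesz-type transform), which is exactly what is needed so that a hypothesis phrased solely through $\|\nabla(1-\Delta)^{-1/2}\|_{p\to p}$ controls $\|K\|_{[L^p]^d\to[L^p]^d}$; everything else is routine Mikhlin-multiplier bookkeeping.
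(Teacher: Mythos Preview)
Your proof is correct and takes essentially the same route as the paper: freeze the principal part at $-\xi\Delta$, write the resolvent equation as $\nabla u=\nabla(1-\xi\Delta)^{-1}f-K[(\xi I-a)\nabla u]$ with $K=\nabla(1-\xi\Delta)^{-1}\nabla\cdot$, and use the hypothesis on $\|\nabla(1-\Delta)^{-1/2}\|_{p\to p}$ to make the perturbation a strict contraction. The only tactical difference is that the paper first takes $a\in(H_u)\cap[C^\infty]^{d\times d}$ so that $\|\nabla u\|_p<\infty$ a priori, absorbs $\|\nabla u\|_p$ from right to left as an a priori estimate, and then passes to general $a$ via $a_n=e^{\Delta/n}a$; you instead sum the Neumann series simultaneously in $[L^2]^d$ and $[L^p]^d$ and identify the limits a.e., which avoids the smoothing/approximation step at the cost of the identification argument you already anticipated. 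Your flagged ``delicate point'' (the $L^p$ norm of $(1-\Delta)^{-1/2}\nabla\cdot$ versus $\nabla(1-\Delta)^{-1/2}$) appears identically in the paper's proof, which obtains $\|\nabla(1-\Delta)^{-1/2}\|_{p'\to p'}$ via duality and then records it as $\|\nabla(1-\Delta)^{-1/2}\|_{p\to p}$.
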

\begin{proof}First, let $a \in (H_u) \cap [C^\infty]^{d \times d}$.
Set $\tau:=\xi I-a$. Then
$$
(1+A)^{-1}f=(1-\xi\Delta)^{-1}f-(1-\xi\Delta)^{-1} \nabla \cdot \tau \cdot \nabla (1+A)^{-1}f, \quad f \in C_c,
$$
$$
\|\nabla (1+A)^{-1}f\|_p \leq \|\nabla(1-\xi\Delta)^{-1}f\|_p + \|\nabla (1-\xi\Delta)^{-\frac{1}{2}}\|_{p \rightarrow p} \|(1-\xi\Delta)^{-\frac{1}{2}} \nabla \cdot \tau \cdot \nabla (1+A)^{-1}f\|_p.
$$
Let $\varphi \in L^{p'}$, $p>1$, $F:=\nabla (1-\xi\Delta)^{-\frac{1}{2}}\varphi$, $G:=\nabla (1+A)^{-1}f$.
Then, using $v \cdot \tau \cdot \bar{v} \leq (\xi-\sigma)|v|^2$, we obtain
\begin{align*}
&\big|\big\langle \varphi, (1-\xi\Delta)^{-\frac{1}{2}} \nabla \cdot \tau \cdot \nabla (1+A)^{-1}f \big\rangle\big| \leq (\xi-\sigma)
\langle |F|, |G|\rangle \leq (\xi-\sigma) \|F\|_{p'}\|G\|_p \\
& \leq \frac{\xi-\sigma}{\sqrt{\xi}}\|\nabla (\xi^{-1}-\Delta)^{-\frac{1}{2}}\|_{p' \rightarrow p'} \|\varphi\|_{p'}\|\nabla (1+A)^{-1}f\|_p.
\end{align*}
Therefore,
$$
\|(1-\xi\Delta)^{-\frac{1}{2}} \nabla \cdot \tau \cdot \nabla (1+A)^{-1}f\|_p \leq \frac{\xi-\sigma}{\sqrt{\xi}}\|\nabla (1-\Delta)^{-\frac{1}{2}}\|_{p' \rightarrow p'} \|\nabla (1+A)^{-1}f\|_p.
$$
We arrive at
$$
\|\nabla (1+A)^{-1}f\|_p \leq \|\nabla(1-\xi\Delta)^{-1}f\|_{\mathcal W^{-1,p} \rightarrow L^p} \|f\|_{\mathcal W^{-1,p}} + \frac{\xi-\sigma}{\xi}\|\nabla (1-\Delta)^{-\frac{1}{2}}\|^2_{p \rightarrow p} \|\nabla (1+A)^{-1}f\|_p,
$$
i.e.
$$
\|\nabla (1+A)^{-1}f\|_p \leq  \|\nabla(1-\xi\Delta)^{-1}f\|_{\mathcal W^{-1,p} \rightarrow L^p}\big(1-\kappa(p)\big)^{-1}  \|f\|_{\mathcal W^{-1,p}}, 
$$
where $\kappa(p):=\frac{\xi-\sigma}{\xi} \|\nabla (1-\Delta)^{-\frac{1}{2}}\|^2_{p \rightarrow p}$.

Let $a_n:=e^{\frac{\Delta}{n}}a$. Using that $(1+A_p(a_n))^{-1} \rightarrow (1+A_p(a))^{-1}$ strongly in $L^p$ and that $\nabla$ is closed, we arrive at the required estimate.
\end{proof}

\label{meyers_sect}

\end{document}